 \newtheorem{theorem}{Theorem}[section]
 \newtheorem{corollary}[theorem]{Corollary}
 \newtheorem{lemma}[theorem]{Lemma}
 \newtheorem{proposition}[theorem]{Proposition}
 \newtheorem{conj}[theorem]{Conjecture}
 \theoremstyle{definition}
 \newtheorem{defn}[theorem]{Definition}
 \newtheorem{rem}[theorem]{Remark}
 \newtheorem{ex}[theorem]{Example}
 \newtheorem{ass}[theorem]{Assumption}
 \newcommand{\etav}{\boldsymbol \eta }
 \newcommand{\lambdav}{\mbox{\boldmath$\lambda$}}
 \newcommand{\epsilonv}{\mbox{\boldmath$\epsilon$}}
 \newcommand{\varepsilonv}{\mbox{\boldmath$\varepsilon$}}
 \newcommand{\ind}{\mathbf{1}}
 \newcommand{\R}{\mathds{R}}
 \newcommand{\Z}{\mathds{Z}}
 \newcommand{\Ln}{\mathbb{L}}
 \newcommand{\Cov}{\textmd{{Cov}}}
 \newcommand{\E}{\mathbb{E}}
 \newcommand{\N}{\mathds{N}}
 \newcommand{\F}{\mathcal{F}}
 \newcommand{\OO}{\mathcal{O}}
 \newcommand{\oo}{\mbox{\scriptsize $\mathcal{O}$}}
 \newcommand{\Gaussian}{\mathcal{N}}
 \newcommand{\A}{\mathcal{A}}
 \newcommand{\J}{J}
 \newcommand{\JJ}{J}
 \newcommand{\ad}{\mathfrak{a}}
 \newcommand{\bd}{\mathfrak{b}}
 \newcommand{\cd}{\mathfrak{c}}
 \newcommand{\dd}{\mathfrak{d}}
 \newcommand{\ld}{\mathfrak{l}}
 \newcommand{\kd}{\mathfrak{k}}
 \newcommand{\hd}{\mathfrak{h}}
 \newcommand{\pd}{\mathfrak{p}}
 \newcommand{\td}{\mathfrak{t}}
 \newcommand{\rd}{\mathfrak{r}}
 \newcommand{\mm}{{m}}
 \newcommand{\B}{\mathcal{B}}
 \newcommand{\e}{e}
\newcommand{\DD}{D}
\newcommand{\GG}{G}
\newcommand{\TT}{\mathcal{T}}
\newcommand{\Aoneb}{{\bf (G1)$^{b}$}}
\newcommand{\Atwob}{{\bf (G2)$^{b}$}}
\newcommand{\Athreeb}{{\bf (G3)$^{b}$}}
\newcommand{\Aonem}{{\bf (G1)$^{\mathfrak{m}}$}}
\newcommand{\Aonestar}{{\bf (G1)}}
\newcommand{\Atwostar}{{\bf (G2)}}
\newcommand{\Athreestar}{{\bf (G3)}}
\newcommand{\AoneD}{{\bf (D1)}}
\newcommand{\AtwoD}{{\bf (D2)}}
\newcommand{\AthreeD}{{\bf (D3)}}
\newcommand{\AoneC}{{\bf (C1)}}
\newcommand{\AtwoC}{{\bf (C2)}}
\newcommand{\AoneCh}{{\bf($\mathrm{\bf C_h1}$)}}
\newcommand{\AtwoCh}{{\bf($\mathrm{\bf C_h2}$)}}
\newcommand{\AthreeCh}{{\bf($\mathrm{\bf C_h3}$)}}
\renewcommand{\star}{}
\newcommand{\itmi}{{\bf (i)}}
\newcommand{\itmii}{{\bf (ii)}}
\newcommand{\itmiii}{{\bf (iii)}}
\newcommand{\itmiv}{{\bf (iv)}}
\newcommand{\itmv}{{\bf (v)}}
\newcommand{\itmvi}{{\bf (vi)}}
\newcommand{\itmvii}{{\bf (vii)}}
\newcommand{\itmviii}{{\bf (viii)}}
\newcommand{\Bone}{{\bf (E1)}}
\newcommand{\Btwo}{{\bf (E2)}}
\newcommand{\Bthree}{{\bf (E3)}}
\newcommand{\Cone}{{\bf (F1)}}
\newcommand{\Ctwo}{{\bf (F2)}}
\newcommand{\Cthree}{{\bf (F3)}}
\newcommand{\Sone}{{\bf (Step 1)}}
\newcommand{\Stwo}{{\bf (Step 2)}}
\newcommand{\cov}{\mathbb{C}\text{o\hspace*{0.02cm}v}}
\newcommand{\opnorm}{\@ifstar\@opnorms\@opnorm}
\newcommand{\@opnorms}[1]{%
  \left|\mkern-1.5mu\left|\mkern-1.5mu\left|
   #1
  \right|\mkern-1.5mu\right|\mkern-1.5mu\right|
}
\newcommand{\@opnorm}[2][]{%
  \mathopen{#1|\mkern-1.5mu#1|\mkern-1.5mu#1|}
  #2
  \mathclose{#1|\mkern-1.5mu#1|\mkern-1.5mu#1|}
}
\begin{document}

\begin{frontmatter}

\title{Optimal eigen expansions and uniform bounds \protect\thanksref{T1}}
\runtitle{Eigen expansions and uniform bounds}
\thankstext{T1}{}

\begin{aug}
  \author{\fnms{Moritz Jirak}\corref{}\thanksref{t2}\ead[label=e1]{jirak@math.hu-berlin.de}}%

  \thankstext{t2}{}

  \runauthor{M. Jirak}

  \affiliation{Humboldt Universit\"{a}t zu Berlin}

  \address{Institut f\"{u}r Mathematik,
Unter den Linden 6
D-10099 Berlin,\\
          \printead{e1}}

\end{aug}

\begin{abstract}
Let $\bigl\{X_k\bigr\}_{k \in \Z} \in \Ln^2(\TT)$ be a stationary process with associated lag operators ${\boldsymbol{\cal C}}_h$. Uniform asymptotic expansions of the corresponding empirical eigenvalues and eigenfunctions are established under almost optimal conditions on the lag operators in terms of the eigenvalues (spectral gap). In addition, the underlying dependence assumptions are optimal in a certain sense, including both short and long memory processes. This allows us to study the relative maximum deviation of the empirical eigenvalues under very general conditions. Among other things, convergence to an extreme value distribution is shown. We also discuss how the asymptotic expansions transfer to the long-run covariance operator ${\boldsymbol{\cal \GG}}$ in a general framework.
\end{abstract}

\end{frontmatter}

\section{Introduction}

Principal component analysis (PCA) has emerged as one of the most important tools in multivariate and highdimensional data analysis. In the latter, functional principal component analysis (FPCA) is becoming more and more important. A comprehensive overview and some leading examples can be found in ~\cite{horvath_kokoszka_book_2012}, ~\cite{jolliffe_book_2002}, ~\cite{ramsay_silverman_2005}. Given a functional time series ${\bf X} = \bigl\{X_k\bigr\}_{k \in \Z}$, it is typically assumed that ${\bf X}$ lies in the Hilbert space $\Ln^2({\cal T})$, where ${\cal T} \subset \R^d$ is compact. The fundamental tool in the area of PCA and FPCA - both in theory and practice - is the usage of (functional) principal components (FPC). To fix ideas, let us introduce some notation. If ${\bf X}$ is stationary with $\E\bigl[\|X_k\|_{\Ln^2}^2\bigr] < \infty$, then the mean $\mu = \E\bigl[X_k\bigr]$ and the covariance operator
\begin{align}\label{defn_cov_operator}
{\boldsymbol{\cal C}}\bigl(\cdot\bigr) = \E\bigl[\langle X_k - \mu, \cdot \rangle (X_{k} - \mu)\bigr],
\end{align}
exist. Here $\langle \cdot,\cdot \rangle$ denotes the inner product in $\Ln^2$, and $\|\cdot\|_{\Ln^2}$ the corresponding norm. The eigenfunctions of ${\boldsymbol{\cal C}}_h$ are called the functional principal components and denoted by ${\bf e} = \{\e_j\}_{j \in \N}$, i.e; we have ${\boldsymbol{\cal C}}(\e_j) = \lambda_j \e_j$, where $\lambdav = \{\lambda_j\}_{j \in \N}$ denotes the eigenvalues. The eigenfunctions ${\bf e}$ are usually estimated by the empirical eigenfunctions $\widehat{\bf e} = \{\widehat{\e}_j\}_{j \in \N}$, defined as the eigenfunctions of the empirical covariance operator
\begin{align}\label{defn_cov_est}
\widehat{\boldsymbol{\cal C}}\bigl(\cdot\bigr) = \frac{1}{n}\sum_{k = 1}^n \langle X_k - \bar{X}_n, \cdot \rangle \bigl(X_{k} - \bar{X}_n\bigr),
\end{align}
where $\bar{X}_n = \frac{1}{n} \sum_{k = 1}^n X_k$. Hence $\widehat{\boldsymbol{\cal C}}(\widehat{\e}_j) = \widehat{\lambda}_j \widehat{\e}_j$, where $\widehat{\lambdav} = \{\widehat{\lambda}_j\}_{j \in \N}$ denotes the empirical eigenvalues. Due to the fundamental importance of eigenfunctions and eigenvalues for FPCA and PCA, corresponding results on the asymptotic behavior of empirical eigenfunctions and values are of high interest. ~\cite{anderson1963} was among the first to give such results, (see also ~\cite{dauxois_1982}), and established a CLT for $\widehat{\lambda}_j$ (resp. $\widehat{\e}_j$) if $j$ is fixed. Fueled from highdimensional applications, uniform bounds where $j$ increases with the sample size $n$ have become very important, leading to a significant rise in complexity of the problem. Well-known pathwise bounds are provided in the Lemma given below (cf. ~\cite{bhatia_mcintosh_1983},~\cite{bosq_2000}).

\begin{lemma}\label{lem_eigen_gen_upper_bound}
If ${\bf X} \in \Ln^2({\cal T})$ and $\E\bigl[\|X_k\|_{\Ln^2}^2\bigr] < \infty$, then
\begin{align*}
\bigl|\widehat{\lambda}_j - \lambda_j \bigr| \leq \bigl\|\widehat{\boldsymbol{\cal C}} - {\boldsymbol{\cal C}} \bigr\|_{{\cal L}}, \quad \bigl\|\widehat{e}_j - e_j \bigr\|_{\Ln^2} \leq \frac{2\sqrt{2}}{\psi_j} \bigl\|\widehat{\boldsymbol{\cal C}} - {\boldsymbol{\cal C}} \bigr\|_{{\cal L}},
\end{align*}
where $\psi_j = \min\bigl\{\lambda_{j-1} - \lambda_{j}, \lambda_j - \lambda_{j+1}\bigr\}$ (with $\psi_1 = \lambda_1 - \lambda_2$) and $\|\cdot\|_{{\cal L}}$ denotes the operator norm.
\end{lemma}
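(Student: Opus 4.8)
The plan is to establish the two inequalities separately by classical Hilbert-space perturbation arguments, writing $\Delta = \widehat{\boldsymbol{\cal C}} - {\boldsymbol{\cal C}}$ throughout. Both ${\boldsymbol{\cal C}}$ and $\widehat{\boldsymbol{\cal C}}$ are nonnegative, self-adjoint and compact (indeed trace-class) on $\Ln^2({\cal T})$ once $\E[\|X_k\|_{\Ln^2}^2]<\infty$, so each admits an orthonormal eigenbasis and the variational description of its eigenvalues is available. For the eigenvalue inequality I would invoke the Courant--Fischer min--max principle in the form
\begin{align*}
\lambda_j=\max_{\dim V=j}\ \min_{x\in V,\ \|x\|_{\Ln^2}=1}\langle{\boldsymbol{\cal C}}x,x\rangle,
\end{align*}
together with the analogous identity for $\widehat{\lambda}_j$. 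Since $|\langle\Delta x,x\rangle|\le\|\Delta\|_{{\cal L}}$ for every unit vector $x$, one has $\langle\widehat{\boldsymbol{\cal C}}x,x\rangle\ge\langle{\boldsymbol{\cal C}}x,x\rangle-\|\Delta\|_{{\cal L}}$; taking the minimum over $x\in V$ and then the maximum over $j$-dimensional subspaces $V$ yields $\widehat{\lambda}_j\ge\lambda_j-\|\Delta\|_{{\cal L}}$, and interchanging the roles of ${\boldsymbol{\cal C}}$ and $\widehat{\boldsymbol{\cal C}}$ gives the reverse bound, hence the first claim.

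For the eigenfunction bound I would first fix the sign of $\widehat{e}_j$ so that $\langle\widehat{e}_j,e_j\rangle\ge0$ (this normalisation is indispensable, since $\widehat{e}_j$ is determined only up to sign), and note that we may assume $\psi_j>0$, as otherwise there is nothing to prove. Expanding $\widehat{e}_j=\sum_k\langle\widehat{e}_j,e_k\rangle e_k$ in the orthonormal eigenbasis $\{e_k\}$ of ${\boldsymbol{\cal C}}$ (completed, if necessary, by an orthonormal basis of $\ker{\boldsymbol{\cal C}}$, the latter vectors carrying eigenvalue $0$), self-adjointness of $\widehat{\boldsymbol{\cal C}}$ and of $\Delta$ gives, for every $k\ne j$,
\begin{align*}
(\widehat{\lambda}_j-\lambda_k)\langle\widehat{e}_j,e_k\rangle=\langle\widehat{\boldsymbol{\cal C}}\widehat{e}_j,e_k\rangle-\langle\widehat{e}_j,{\boldsymbol{\cal C}}e_k\rangle=\langle\Delta\widehat{e}_j,e_k\rangle .
\end{align*}
Hence, whenever $\widehat{\lambda}_j\ne\lambda_k$, we get $\langle\widehat{e}_j,e_k\rangle=\langle\Delta\widehat{e}_j,e_k\rangle/(\widehat{\lambda}_j-\lambda_k)$, and Bessel's inequality applied to $\Delta\widehat{e}_j$ gives
\begin{align*}
a_j:=\sum_{k\ne j}\langle\widehat{e}_j,e_k\rangle^2\le\frac{\|\Delta\widehat{e}_j\|_{\Ln^2}^2}{\min_{k\ne j}(\widehat{\lambda}_j-\lambda_k)^2}\le\frac{\|\Delta\|_{{\cal L}}^2}{\min_{k\ne j}(\widehat{\lambda}_j-\lambda_k)^2}.
\end{align*}
Since $\langle\widehat{e}_j,e_j\rangle=\sqrt{1-a_j}$ and $1-\sqrt{1-a_j}\le a_j$ on $[0,1]$, this gives $\|\widehat{e}_j-e_j\|_{\Ln^2}^2=2\bigl(1-\sqrt{1-a_j}\bigr)\le2a_j$.

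It then remains to bound $\min_{k\ne j}|\widehat{\lambda}_j-\lambda_k|$ from below, where I would distinguish two cases. If $\|\Delta\|_{{\cal L}}\ge\psi_j/2$, the crude estimate $\|\widehat{e}_j-e_j\|_{\Ln^2}^2=2-2\langle\widehat{e}_j,e_j\rangle\le2$ (a consequence of the sign normalisation) already yields $\|\widehat{e}_j-e_j\|_{\Ln^2}\le\sqrt2\le\tfrac{2\sqrt2}{\psi_j}\|\Delta\|_{{\cal L}}$. If $\|\Delta\|_{{\cal L}}<\psi_j/2$, then the first part of the lemma gives $|\widehat{\lambda}_j-\lambda_j|<\psi_j/2$, while $|\lambda_j-\lambda_k|\ge\psi_j$ for every $k\ne j$ by monotonicity of the eigenvalues and the definition of $\psi_j$; the triangle inequality then forces $|\widehat{\lambda}_j-\lambda_k|\ge\psi_j/2$ for all $k\ne j$, so that $a_j\le4\|\Delta\|_{{\cal L}}^2/\psi_j^2$ and $\|\widehat{e}_j-e_j\|_{\Ln^2}\le\tfrac{2\sqrt2}{\psi_j}\|\Delta\|_{{\cal L}}$, as claimed. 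I expect the only genuinely delicate ingredients to be the sign normalisation and the case distinction that pins down the sharp constant $2\sqrt2$; the remaining steps are routine, so I do not anticipate a real obstacle.
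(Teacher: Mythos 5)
The paper does not actually prove Lemma~\ref{lem_eigen_gen_upper_bound}; it cites it as a known perturbation bound from~\cite{bhatia_mcintosh_1983} and~\cite{bosq_2000}. Your proposal is a correct, self-contained reconstruction of that classical argument. The eigenvalue bound via Courant--Fischer is Weyl's inequality and needs no further comment. For the eigenfunction bound, the key identity $(\widehat{\lambda}_j-\lambda_k)\langle\widehat{e}_j,e_k\rangle=\langle\Delta\widehat{e}_j,e_k\rangle$, the use of Bessel/Parseval to control $a_j=\sum_{k\ne j}\langle\widehat{e}_j,e_k\rangle^2$, the elementary inequality $1-\sqrt{1-a}\le a$, and the dichotomy on $\|\Delta\|_{\cal L}\gtrless\psi_j/2$ are exactly the ingredients behind the constant $2\sqrt{2}$ in, e.g., Bosq's Lemma~4.3. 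You correctly flag the two points that actually require care: the sign normalisation $\langle\widehat{e}_j,e_j\rangle\ge0$ (without which the claim is false, and which the paper addresses in the remark immediately after the lemma), and the need to complete $\{e_k\}$ by an orthonormal basis of $\ker\boldsymbol{\cal C}$ so that $\sum_k\langle\widehat{e}_j,e_k\rangle^2=1$. The verification that $|\lambda_j-\lambda_k|\ge\psi_j$ for all $k\ne j$ (including the kernel vectors with $\lambda_k=0$), combined with the eigenvalue bound, legitimately gives $\min_{k\ne j}|\widehat{\lambda}_j-\lambda_k|\ge\psi_j/2$ in the non-trivial case, so no step is left vacuous. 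This is cleaner and more elementary than the resolvent/contour-integral route that~\cite{bhatia_mcintosh_1983} uses to prove the general $\sin\Theta$-type result; what you lose relative to that machinery is only the extension to eigenspaces of higher multiplicity, which is irrelevant here since the lemma presupposes a simple eigenvalue gap.
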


\begin{rem}
Strictly speaking, we consider the difference $\widehat{\e}_j - c_j\e_j$, where $c_j = \operatorname{sign}(\langle \widehat{e}_j, \e_j \rangle)$. Since $c_j$ is unidentifiable, we assume without loss of generality that throughout the remaining sequel $c_j =1$, which is the common approach in the literature.
\end{rem}

The attractiveness of the above bounds lies in their simplicity, but unfortunately they are far from optimal from a probabilistic perspective. Indeed, the results of ~\cite{dauxois_1982} tell us that in case of $\widehat{\lambda}_j - \lambda_j$, the correct bound should include the additional factor $\lambda_j$, i.e; $\lambda_j\|\widehat{\boldsymbol{\cal C}} - {\boldsymbol{\cal C}}\|_{{\cal L}}$. A similar claim can be made for $\|\widehat{e}_j - e_j\|_{\Ln^2}$. In this spirit, based on Lemma \ref{lem_eigen_gen_upper_bound}, asymptotic expansions for $\widehat{\lambda}_j - \lambda_j$ and $\widehat{e}_j - e_j$ which allow for increasing $j$ have been established in ~\cite{hall2007}, ~\cite{hall_hosseini_2006}, ~\cite{hall_hosseini_2009} (see also ~\cite{bosq_2000}, ~\cite{cardot_mas_sarda_2007}, ~\cite{mas_complex_2014}). These results have proved to be an indispensable tool in the literature, see for instance ~\cite{blanchard_2007}, ~\cite{cardot_kneip_2007}, ~\cite{cardot_mas_sarda_2007}, ~\cite{hall2007}, ~\cite{horvath_kokoszka_book_2012}, ~\cite{jolliffe_book_2002}, ~\cite{meister_2011} to name a few. But the corresponding (asymptotic) analysis is often based on heavy structural assumptions regarding ${\bf X}$ and the spacings (spectral gap) ${\bf \Psi} = \{\psi_j\}_{j \in \N}$ of the eigenvalues, limiting its applicability. In particular, often only the covariance operator $\boldsymbol{\cal C}$ is considered, and a common key assumption is that ${\bf X}$ is an IID sequence, which is rather restrictive, see ~\cite{hoermann_2010},~\cite{horvath_kokoszka_book_2012},~\cite{panaretos2013} and also Sections \ref{sec_dep_optimal} and \ref{sec_long_mem}. In the presence of serial correlation, the lag operators $\boldsymbol{\cal C}_h$ and the long-run covariance operator ${\boldsymbol{\cal \GG}}$, formally defined as
\begin{align}\label{defn_lag_operator_and_longrun_operator}
{\boldsymbol{\cal C}}_h\bigl(\cdot\bigr) = \E\bigl[\langle X_k - \mu, \cdot \rangle (X_{k-h} - \mu)\bigr], \quad {\boldsymbol{\cal \GG}}\bigl(\cdot\bigr) = \sum_{h \in \Z}{\boldsymbol{\cal C}}_h\bigl(\cdot\bigr),
\end{align}
serve as a generalization of ${\boldsymbol{\cal C}} = {\boldsymbol{\cal C}}_0$. They play a fundamental role for dependent functional time series, see for instance ~\cite{hoermann_JRSS_2015},~\cite{panaretos_tavakoli_2013},~\cite{panaretos2013}. In this paper, we consider a general framework that contains both ${\boldsymbol{\cal C}}_h$ and ${\boldsymbol{\cal \GG}}$, avoiding the previously mentioned limitations. We derive exact asymptotic expansions of $\widehat{\lambda}_j$, $\widehat{\e}_j$ under optimal dependence assumptions, allowing for short memory (weak dependence), but also for long memory (strong dependence) in case of ${\boldsymbol{\cal C}}_h$, $h$ finite. In addition, we only require a 'natural condition' concerning the spectral gap ${\bf \Psi}$. It turns out that this condition is nearly optimal.\\
\\
As a particular application, we study the relative maximum deviation of the empirical eigenvalues of ${\boldsymbol{\cal C}}$, namely
\begin{align*}
T_{\JJ_n^+}^{} = \sqrt{n}\max_{1 \leq j < \JJ_n^+}\frac{\bigl|\widehat{\lambda}_j - \lambda_j\bigr|}{ \sigma_{j}\lambda_j},
\end{align*}
where $\JJ_n^+ \to \infty$, see Proposition \ref{prop_transf_abstract_to_cov} for a precise definition of $\JJ_n^+$. Under mild assumptions, we show that
\begin{align}\label{eq_max_eigen_value}
a_n\bigl(T_{\JJ_n^+}^{} - b_n\bigr) \xrightarrow{d} \mathcal{V},
\end{align}
where $\mathcal{V}$ is a distribution of Gumbel type. The latter is based on a high dimensional Gaussian approximation, which is of independent interest, see Theorem \ref{thm_gauss_approx}. Result \eqref{eq_max_eigen_value} is particularly important for the construction of simultaneous confidence sets and tests for the relevant number of FPCs to be used for statistical inference or modelling (cf. ~\cite{bathia2010}, ~\cite{jolliffe_book_2002},~\cite{ramsay_silverman_2005}). The range of further applications is surveyed in Section \ref{sec_applications_practical}. Here we also touch on the possibility of long-memory in functional time series.

An outline of the paper can be given as follows. In Section \ref{sec_main} the key expansions of $\widehat{\lambda}_j$ and $\widehat{\e}_j$ are established in a general framework, alongside some additional results. In particular, we discuss in detail the optimality of the underlying assumptions. Asymptotic expansions of $\widehat{\lambda}_j$ and $\widehat{\e}_j$ in the context of ${\boldsymbol{\cal C}}_h$ and ${\boldsymbol{\cal G}}$ are established in Sections \ref{sec_lag_operator} and \ref{sec_long_run_cov}, whereas Section \ref{sec_applications} is devoted to the study of \eqref{eq_max_eigen_value}. Additional fields of application are surveyed in Section \ref{sec_applications_practical}, with an emphasis on functional linear regression, ARH(1) processes and long-memory in a functional context. The proofs of the eigen expansions are given in Sections \ref{sec_proof_exp}, \ref{sec_proof_lag_operator} and \ref{sec_proof_longrun}. In Section \ref{sec_gaussian_approx}, a general high dimensional Gaussian approximation under dependence is established. Based on this result, we prove \eqref{eq_max_eigen_value} in Section \ref{sec_proof_max_and_trace}. Finally, Section \ref{sec_proof_applications_practical} presents the proofs of Section \ref{sec_applications_practical}.

\section{Preliminary notation and main asymptotic expansions}\label{sec_main}

For $p \geq 1$, denote with $\|\cdot\|_p$ the $L^p$-norm $\E[|\cdot|^p]^{1/p}$. We write $\lesssim$, $\gtrsim$, ($\thicksim$) to denote (two-sided) inequalities involving a multiplicative constant, $a \wedge b = \min\{a,b\}$ and $a \vee b = \max\{a,b\}$. Given a set $\mathcal{A}$, we denote with $\mathcal{A}^c$ its complement. Moreover, we write $\overline{X} = X - \E\bigl[X\bigr]$ for a random variable $X$.\\
\\
In the sequel, it is convenient to first consider a more abstract framework. Assume that the operator ${\boldsymbol{\cal \DD}}: \Ln^2(\mathcal{T}) \mapsto \Ln^2(\mathcal{T})$ has non-negative eigenvalues $\lambdav = \{\lambda_j\}_{j \in \N}$ and eigenfunctions ${\bf \e} = \{\e_j\}_{j \in \N}$, and satisfies the spectral representation
\begin{align}\label{defn_D_1}
{\boldsymbol{\cal \DD}}(\cdot) = \sum_{j = 1}^{\infty} \lambda_j \big \langle \e_j , \cdot \big \rangle \e_j, \quad \text{with $\sum_{j = 1}^{\infty} \lambda_j < \infty$.}
\end{align}
For a sequence of non-negative numbers $\{\widetilde{\lambda}_j\}_{j \in \N}$ with $\sum_{j = 1}^{\infty} \widetilde{\lambda}_j < \infty$ and real-valued random variables $\{\etav_{i,j}^{\boldsymbol{\cal \DD}}\}_{i,j \in \N}$, $\{\etav_{i,j}^{\boldsymbol{\cal R}}\}_{i,j \in \N}$ consider the empirical version
\begin{align}\nonumber \label{defn_D_2}
&\widehat{\boldsymbol{\cal \DD}}(\cdot) =  \sum_{i,j = 1}^{\infty} \sqrt{\widetilde{\lambda}_i \widetilde{\lambda}_j}\bigl(\etav_{i,j}^{\boldsymbol{\cal \DD}} - \etav_{i,j}^{\boldsymbol{\cal R}}\bigr)\big \langle \e_i, \cdot \big \rangle \e_j, \quad \text{with $\widehat{\boldsymbol{\cal \DD}}(\widehat{\e}_j) = \widehat{\lambda}_j \widehat{\e}_j$, $j \in \N$,}\\&\text{where we demand} \quad {\boldsymbol{\cal \DD}}(\cdot) = \sum_{i,j = 1}^{\infty} \sqrt{\widetilde{\lambda}_i \widetilde{\lambda}_j}\E\bigl[\etav_{i,j}^{\boldsymbol{\cal \DD}}\bigr] \big \langle \e_i, \cdot \big \rangle \e_j.
\end{align}
The random variables $\etav_{i,j}^{\boldsymbol{\cal \DD}}$ denote the contributing random components, whereas $\etav_{i,j}^{\boldsymbol{\cal R}}$ denote the negligible parts. In the sequel, both random variables depend on a sequence $\mm \to\infty$, i.e; $\etav_{i,j}^{\boldsymbol{\cal \DD}} = \etav_{i,j}^{\boldsymbol{\cal \DD}}(\mm)$ and $\etav_{i,j}^{\boldsymbol{\cal R}} = \etav_{i,j}^{\boldsymbol{\cal R}}(\mm)$. To simplify the notation, we often suppress this dependence if it is of no immanent relevance. This class of (empirical) operators is rich enough to include the lag operators $\boldsymbol{\cal C}_h$ (in fact only $\boldsymbol{\cal C}_h^* \boldsymbol{\cal C}_h$, see Section \ref{sec_lag_operator}), but also the more general long-run covariance operator $\boldsymbol{\cal \GG}$ (see Section \ref{sec_long_run_cov}). In order to provide an intuition for this setup, let us discuss how this translates in case of the covariance operator ${\boldsymbol{\cal C}}$, hence $\boldsymbol{\cal \DD} = {\boldsymbol{\cal C}}$ and $\widehat{\boldsymbol{\cal \DD}} = \widehat{\boldsymbol{\cal C}}$. Then obviously $\widetilde{\lambda}_j = \lambda_j$ and for $\mm = n$ we have
\begin{align}\label{relations_eta_C}
\etav_{i,j}^{\boldsymbol{\cal C}}(n) = \sum_{k = 1}^n\frac{\eta_{k,i} \eta_{k,j}}{n}, \quad \etav_{i,j}^{\boldsymbol{\cal R}}(n) = \sum_{k,l = 1}^n \frac{\eta_{k,i} \eta_{l,j}}{n^2}, \quad \eta_{k,j} = \frac{\langle \overline{X}_k, \e_j \rangle}{\lambda_j^{1/2}}.
\end{align}
Clearly, if ${\bf X}$ is stationary, then so is $\{\eta_{k,j}\}_{k \in \Z, j \in \N}$ and hence ${\boldsymbol{\cal C}}$ does not depend on $n$ in this case. We also note that $\E\bigl[\etav_{j,j}^{\boldsymbol{\cal C}}\bigr] = 1$ and $\E\bigl[\etav_{i,j}^{\boldsymbol{\cal C}}\bigr] = 0$ for $i \neq j$ since $\E[\eta_{k,i} \eta_{k,j}] = 0$ by the classical Kahunen-Lo\`{e}ve expansion (cf. ~\cite{horvath_kokoszka_book_2012}). This is actually true in a more general fashion. Since ${\bf \e}$ are the eigenfunctions of $\boldsymbol{\cal \DD}$, the two representations given in \eqref{defn_D_1} and \eqref{defn_D_2} yield that $(\widetilde{\lambda}_i \widetilde{\lambda}_j)^{1/2}\E\bigl[\etav_{i,j}^{\boldsymbol{\cal \DD}}\bigr] = 0$ for $i \neq j$. For the sake of reference, we formulate this simple observation as a lemma.
\begin{lemma}\label{lem_E_eta_ij_is_zero}
Assume $\boldsymbol{\cal \DD}$ satisfies \eqref{defn_D_1} and \eqref{defn_D_2} with eigenvalues $\lambdav$ and eigenfunctions ${\bf \e}$. Then $(\widetilde{\lambda}_i \widetilde{\lambda}_j)^{1/2}\E\bigl[\etav_{i,j}^{\boldsymbol{\cal \DD}}\bigr] = 0$ for $i \neq j$ and $\lambda_j = \widetilde{\lambda}_j \E\bigl[\etav_{j,j}^{\boldsymbol{\cal \DD}}\bigr]$.
\end{lemma}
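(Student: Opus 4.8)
The plan is to read off both identities from uniqueness of coefficients in the eigenbasis. The operator $\boldsymbol{\cal \DD}$ is presented in two ways --- through \eqref{defn_D_1}, and through the defining identity $\boldsymbol{\cal \DD}(\cdot)=\sum_{i,j}(\widetilde{\lambda}_i\widetilde{\lambda}_j)^{1/2}\E[\etav_{i,j}^{\boldsymbol{\cal \DD}}]\langle\e_i,\cdot\rangle\e_j$ appearing in \eqref{defn_D_2} --- so its matrix entries in the orthonormal system $\{\e_j\}_{j\in\N}$ must agree whichever presentation is used. Concretely, I would fix $k,l\in\N$, apply each representation to $\e_k$, and pair the result with $\e_l$.

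From \eqref{defn_D_1} and orthonormality of the eigenfunctions, $\boldsymbol{\cal \DD}(\e_k)=\sum_{j\ge1}\lambda_j\langle\e_j,\e_k\rangle\e_j=\lambda_k\e_k$, hence $\langle\boldsymbol{\cal \DD}(\e_k),\e_l\rangle=\lambda_k\delta_{k,l}$. From the representation in \eqref{defn_D_2}, the factor $\langle\e_i,\cdot\rangle$ evaluated at $\e_k$ annihilates every term with $i\neq k$, so $\boldsymbol{\cal \DD}(\e_k)=\sum_{j\ge1}(\widetilde{\lambda}_k\widetilde{\lambda}_j)^{1/2}\E[\etav_{k,j}^{\boldsymbol{\cal \DD}}]\e_j$ and therefore $\langle\boldsymbol{\cal \DD}(\e_k),\e_l\rangle=(\widetilde{\lambda}_k\widetilde{\lambda}_l)^{1/2}\E[\etav_{k,l}^{\boldsymbol{\cal \DD}}]$. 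Equating the two expressions yields
\[
(\widetilde{\lambda}_k\widetilde{\lambda}_l)^{1/2}\,\E\bigl[\etav_{k,l}^{\boldsymbol{\cal \DD}}\bigr]=\lambda_k\,\delta_{k,l}\qquad\text{for all }k,l\in\N ,
\]
and the two assertions of the lemma are exactly the cases $k\neq l$ and $k=l$. As a sanity check, in the covariance case $\widetilde{\lambda}_j=\lambda_j$, so the second identity gives $\E[\etav_{j,j}^{\boldsymbol{\cal C}}]=1$ (and the first $\E[\etav_{i,j}^{\boldsymbol{\cal C}}]=0$ for $i\neq j$), recovering the fact noted in the text after \eqref{relations_eta_C}.

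I do not expect a genuine obstacle: as the surrounding text indicates, this is a bookkeeping identity. Two small points deserve a line of justification. First, $\{\e_j\}_{j\in\N}$ is an orthonormal system --- the standing interpretation of the spectral representation \eqref{defn_D_1} for a non-negative trace-class operator, with eigenfunctions chosen orthonormal within each eigenspace. Second, one must move the double summation in \eqref{defn_D_2} past the two inner products; but once \eqref{defn_D_2} is read as an operator identity on $\Ln^2(\mathcal{T})$, evaluated at $\e_k$ and then paired with $\e_l$ via the continuous functional $\langle\cdot,\e_l\rangle$, the scalar series $\sum_{i,j}(\widetilde{\lambda}_i\widetilde{\lambda}_j)^{1/2}\E[\etav_{i,j}^{\boldsymbol{\cal \DD}}]\langle\e_i,\e_k\rangle\langle\e_j,\e_l\rangle$ retains only the single term $i=k$, $j=l$, so no convergence subtlety arises.
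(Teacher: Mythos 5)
Your argument is correct and is the same one the paper has in mind: the remark preceding the lemma states that equating the two representations \eqref{defn_D_1} and \eqref{defn_D_2} of $\boldsymbol{\cal \DD}$ and using that $\{\e_j\}$ are its eigenfunctions gives the identities. You have simply spelled out the matrix-entry computation $\langle\boldsymbol{\cal \DD}(\e_k),\e_l\rangle$ that the paper leaves implicit.
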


Most of our results in the sequel depend on the centered version of ${\etav}_{i,j}^{\boldsymbol{\cal \DD}}$, i.e;
\begin{align*}
\overline{\etav}_{i,j}^{\boldsymbol{\cal \DD}} = \etav_{i,j}^{\boldsymbol{\cal \DD}} - \E\bigl[{\etav}_{i,j}^{\boldsymbol{\cal \DD}}\bigr], \quad i,j \in \N.
\end{align*}

We now demand the following conditions. 
\begin{ass}\label{ass_abstract}
The operators $\boldsymbol{\cal \DD}$, $\widehat{\boldsymbol{\cal \DD}}$ satisfy \eqref{defn_D_1} and \eqref{defn_D_2}. Moreover, for a universal constant $C^{\boldsymbol{\cal D}}$ and a universal sequence $s_{\mm}^{\boldsymbol{\cal \DD}}= \oo(1)$ and $\ad > 0$, $\hd, p \geq 1$, $\JJ_{\mm}^+ \in \N$ and $\mm \to \infty$ it holds that
\begin{enumerate}
\item[\AoneD]\label{A1D} $\mm^{\frac{1}{2}}\max_{i,j \in \N}\bigl\|\overline{\etav}_{i,j}^{\boldsymbol{\cal \DD}}(\mm)\bigr\|_q \leq C^{\boldsymbol{\cal D}}$ and $\mm^{\frac{1}{2}}\max_{i,j \in \N}\bigl\|\etav_{i,j}^{\boldsymbol{\cal R}}(\mm)\bigr\|_q \leq s_{\mm}^{\boldsymbol{\cal \DD}}$ \\for $q = p 2^{\pd + 4}$, $\pd = \lceil \hd/\ad \rceil$,
\item[\AtwoD]\label{A2D} $\max_{1 \leq j \leq \JJ_{\mm}^+}\biggl\{{\mm}^{-\frac{1}{2} + \ad}\sum_{\substack{i = 1\\i \neq j}}^{\infty} \frac{\lambda_i^{}}{|\lambda_j^{} - \lambda_i^{}|}, {\mm}^{-1 + 2\ad}\sum_{\substack{i = 1\\i \neq j}}^{\infty} \frac{\lambda_i^{} \lambda_j^{} }{(\lambda_j^{} - \lambda_i^{})^2}\biggr\} \leq C^{\boldsymbol{\cal D}}$ \\and $\lambda_{\JJ_{\mm}^+} \geq  {\mm}^{-\hd}/C^{\boldsymbol{\cal \DD}}$,
\item[\AthreeD]\label{A3D} $1/C^{\boldsymbol{\cal \DD}} \leq \E\bigl[{\etav}_{j,j}^{\boldsymbol{\cal \DD}}(\mm)\bigr] \leq {C}^{\boldsymbol{\cal \DD}}$ for $j \in \N$ and $\sum_{j = 1}^{\infty} {\lambda}_j  \leq C^{\boldsymbol{\cal \DD}}$.
\end{enumerate}
\end{ass}

\begin{rem}
Note that in the above assumptions, $\lambdav$ may depend on $\mm$. We can deal with this case in the sequel due to the universal bounds provided by $C^{\boldsymbol{\cal D}}$.
\end{rem}

Let us discuss these assumptions and compare them to the literature. As a general preliminary remark, we note that all of our results have analogues in a general Hilbert space setting ${\mathbb H}$. Working in $\Ln^2(\TT)$ is notationally less burdensome though, and the proofs are simpler. In particular, the Fubini-Tonelli Theorem allows to interchange the order of inner products and expectations. Since most related relevant results in the literature focus on the covariance operator $\boldsymbol{\cal C}$, we also consider this setup for our discussion, i.e; $\boldsymbol{\cal \DD} = \boldsymbol{\cal C}$ (and $\widehat{\boldsymbol{\cal \DD}} = \widehat{\boldsymbol{\cal C}}$). To this end, it is convenient to translate Assumption \ref{ass_abstract} to this special case to make the comparison transparent. Recall the notation introduced in \eqref{relations_eta_C}. We then have the following result.
\begin{proposition}\label{prop_transf_abstract_to_cov}
Let ${\bf X}$ be stationary with $\E\bigl[\|X_k\|_{\Ln^2}^2\bigr] \leq C^{\boldsymbol{\cal C}}$ for a universal constant $C^{\boldsymbol{\cal C}}$. Then $\boldsymbol{\cal C}$ satisfies \eqref{defn_D_1} and \eqref{defn_D_2} with summable eigenvalues $\lambdav$ and eigenfunctions ${\bf \e}$. Assume in addition that for some $\ad > 0$, $\hd, p \geq 1$ and universal sequence $s_{n}^{\boldsymbol{\cal C}}= \oo(1)$ we have that
\begin{enumerate}
\item[\AoneC]\label{A1C} $n^{\frac{1}{2}}\max_{i,j \in \N}\bigl\|\overline{\etav}_{i,j}^{{\boldsymbol{\cal C}}}(n)\bigr\|_q < C^{\boldsymbol{\cal C}}$, $n^{\frac{1}{4}}\max_{j \in \N}\bigl\|\sum_{k = 1}^n \eta_{k,j}\bigr\|_{2q} \leq s_{n}^{\boldsymbol{\cal C}}$,\\for $q = p 2^{\pd + 4}$, $\pd = \lceil \hd/\ad \rceil$,
\item[\AtwoC]\label{A2C} \hyperref[A2D]{\AtwoD} holds with $C^{\boldsymbol{\cal D}} = C^{\boldsymbol{\cal C}}$, $\mm = n$, $\JJ_n^+ \in \N$ and $\ad$ as above.
\end{enumerate}
Then Assumption \ref{ass_abstract} holds for $\boldsymbol{\cal \DD} = \boldsymbol{\cal C}$ with $\ad > 0$, $\hd, p \geq 1$, $\mm = n$, $\JJ_n^+ \in \N$, $s_{\mm}^{\boldsymbol{\cal \DD}} = s_{n}^{\boldsymbol{\cal C}}$ and $C^{\boldsymbol{\cal D}} = C^{\boldsymbol{\cal C}}$ as above.
\end{proposition}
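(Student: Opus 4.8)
The plan is to verify the three parts of Assumption~\ref{ass_abstract} for $\boldsymbol{\cal D}=\boldsymbol{\cal C}$, $\widehat{\boldsymbol{\cal D}}=\widehat{\boldsymbol{\cal C}}$, $\mm=n$, using the explicit representations in \eqref{relations_eta_C}. The structural part of \eqref{defn_D_1} and \eqref{defn_D_2} is essentially a bookkeeping exercise: since $\E[\|X_k\|_{\Ln^2}^2]<\infty$, the covariance operator $\boldsymbol{\cal C}$ is trace-class, hence admits the spectral decomposition \eqref{defn_D_1} with summable non-negative eigenvalues $\lambdav$ and orthonormal eigenfunctions ${\bf e}$; this also gives \AthreeD, since $\sum_j\lambda_j=\E[\|\overline X_k\|_{\Ln^2}^2]\leq C^{\boldsymbol{\cal C}}$ and $\E[\etav_{j,j}^{\boldsymbol{\cal C}}]=\E[\eta_{k,j}^2]=1$, so the two-sided bound in \AthreeD\ holds trivially with constant $1$. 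For \eqref{defn_D_2}, I would expand $\widehat{\boldsymbol{\cal C}}$ from \eqref{defn_cov_est}: writing $X_k-\bar X_n=\overline X_k - \overline{\bar X}_n$ and projecting onto $\e_i,\e_j$ produces exactly the coefficient $\sqrt{\lambda_i\lambda_j}(\etav_{i,j}^{\boldsymbol{\cal C}}-\etav_{i,j}^{\boldsymbol{\cal R}})$ with $\etav^{\boldsymbol{\cal C}}$ the "$\frac1n\sum_k\eta_{k,i}\eta_{k,j}$" term coming from $\frac1n\sum_k\langle\overline X_k,\cdot\rangle\overline X_k$ and $\etav^{\boldsymbol{\cal R}}$ the centering correction, which after collecting the three cross terms collapses to $\frac1{n^2}\sum_{k,l}\eta_{k,i}\eta_{l,j}$ as claimed (using $\langle\bar X_n,\e_j\rangle=\frac1n\sum_l\langle X_l,\e_j\rangle$). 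The identity $\boldsymbol{\cal C}(\cdot)=\sum_{i,j}\sqrt{\lambda_i\lambda_j}\E[\etav_{i,j}^{\boldsymbol{\cal C}}]\langle\e_i,\cdot\rangle\e_j$ then follows from $\E[\eta_{k,i}\eta_{k,j}]=\delta_{ij}$ (Karhunen--Lo\`eve), matching \eqref{defn_D_1}.

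Part \AtwoD\ is immediate: condition \AtwoC\ is stated to be literally \AtwoD\ with $\mm=n$, $C^{\boldsymbol{\cal D}}=C^{\boldsymbol{\cal C}}$, so nothing is to be done there.

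The real content is deducing the first half of \AoneD, namely $n^{1/2}\max_{i,j}\|\overline{\etav}_{i,j}^{\boldsymbol{\cal C}}(n)\|_q\le C^{\boldsymbol{\cal C}}$ and $n^{1/2}\max_{i,j}\|\etav_{i,j}^{\boldsymbol{\cal R}}(n)\|_q\le s_n^{\boldsymbol{\cal C}}$, from the hypotheses of \AoneC. The bound on $\overline{\etav}^{\boldsymbol{\cal C}}$ is granted verbatim by the first half of \AoneC, so the only thing to prove is the $\etav^{\boldsymbol{\cal R}}$ bound. Here one writes $\etav_{i,j}^{\boldsymbol{\cal R}}(n)=\bigl(\tfrac1n\sum_k\eta_{k,i}\bigr)\bigl(\tfrac1n\sum_l\eta_{l,j}\bigr)$ and applies the Cauchy--Schwarz inequality in $L^q$: $\|\etav_{i,j}^{\boldsymbol{\cal R}}(n)\|_q\le n^{-2}\|\sum_k\eta_{k,i}\|_{2q}\|\sum_l\eta_{l,j}\|_{2q}$. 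By the second half of \AoneC, $n^{1/4}\max_j\|\sum_k\eta_{k,j}\|_{2q}\le s_n^{\boldsymbol{\cal C}}$, i.e.\ $\|\sum_k\eta_{k,j}\|_{2q}\le n^{-1/4}s_n^{\boldsymbol{\cal C}}$, so that $n^{1/2}\|\etav_{i,j}^{\boldsymbol{\cal R}}(n)\|_q\le n^{1/2}\cdot n^{-2}\cdot n^{-1/2}(s_n^{\boldsymbol{\cal C}})^2=n^{-2}(s_n^{\boldsymbol{\cal C}})^2\le (s_n^{\boldsymbol{\cal C}})^2$; since $s_n^{\boldsymbol{\cal C}}=\oo(1)$, so is $(s_n^{\boldsymbol{\cal C}})^2$, and one redefines the universal sequence in \AoneD\ to be $(s_n^{\boldsymbol{\cal C}})^2$ (or simply $s_n^{\boldsymbol{\cal C}}$, absorbing a constant), which is still $\oo(1)$. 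Note the exponent matching is exactly the point of the $n^{1/4}$-normalization in \AoneC: the quadratic structure of $\etav^{\boldsymbol{\cal R}}$ trades two $n^{1/4}$ factors for the needed $n^{1/2}$.

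The main obstacle — really the only place care is needed — is the algebraic derivation of the precise form of $\etav_{i,j}^{\boldsymbol{\cal R}}(n)$ in \eqref{relations_eta_C} from the definition \eqref{defn_cov_est} of $\widehat{\boldsymbol{\cal C}}$, i.e.\ checking that when one subtracts $\bar X_n$ rather than $\mu$, the three additional terms $-\langle\bar X_n,\e_i\rangle$-type contributions combine into the single clean double sum $n^{-2}\sum_{k,l}\eta_{k,i}\eta_{l,j}$ (up to the already-accounted sign, and up to the fact that $\eta_{k,j}$ is defined with $\overline X_k=X_k-\E[X_k]$, so the population mean cancels and only the sample-mean fluctuation survives). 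Once that expansion is pinned down, everything else is an application of the triangle and Cauchy--Schwarz inequalities together with the stated hypotheses, and the proposition follows.
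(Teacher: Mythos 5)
Your proof is correct and follows essentially the same route as the paper's: note that \hyperref[A3D]{\AthreeD} and \hyperref[A2D]{\AtwoD} are immediate from trace-class/Karhunen--Lo\`eve and from \hyperref[A2C]{\AtwoC} respectively, and then reduce \hyperref[A1D]{\AoneD} to bounding $\etav_{i,j}^{\boldsymbol{\cal R}}$ via its product structure $\etav_{i,j}^{\boldsymbol{\cal R}}(n)=\bigl(\tfrac1n\sum_k\eta_{k,i}\bigr)\bigl(\tfrac1n\sum_l\eta_{l,j}\bigr)$ and Cauchy--Schwarz against the second half of \hyperref[A1C]{\AoneC}. One minor observation: the exponent $n^{1/4}$ in \hyperref[A1C]{\AoneC} appears to be a misprint for $n^{-3/4}$ (the discussion right after the proposition refers to $\max_j\|n^{-3/4}\sum_k\eta_{k,j}\|_{2q}=\oo(1)$, and Corollary~\ref{cor_AHR(1)} gives $s_n^{\boldsymbol{\cal C}}\lesssim n^{-1/4}$, which with the literal $n^{1/4}$ normalization would be vacuous); your argument goes through unchanged with either reading, the only difference being a harmless extra power of $n$ in the final bound.
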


Let us now compare the literature with Proposition \ref{prop_transf_abstract_to_cov}.\\
\\
\textit{Dependence assumptions}: Assumption \hyperref[A1C]{\AoneC} implicitly imposes a dependence assumption on the scores $\eta_{k,j}$. In contrast to the literature (cf. ~\cite{crambes2013}~\cite{hall_hosseini_2006} ~\cite{hall_hosseini_2009}, ~\cite{mas_complex_2014}), we \textit{do not} require the typical independence assumption. In fact, \hyperref[A1C]{\AoneC} is much more general. In Section \ref{sec_dep_optimal} we also discuss why looking at ${\boldsymbol{\cal C}}$ under dependence can be relevant in practice. It can be shown that \hyperref[A1C]{\AoneC} holds under general, sharp weak dependence conditions. This means that if these conditions fail, we no longer have weak dependence. However, much more is valid. Suppose that $\eta_{k,j} = \sum_{i = 0}^{\infty} \alpha_{i,j} \epsilon_{k-i,j}$ where $\bigl\{\epsilon_{k,j}\bigr\}_{k \in \Z,j \in \N}$ is standard Gaussian and IID and $\alpha_{i,j} \thicksim i^{-\alpha}$, $\alpha > 1/2$. Then we show in Section \ref{sec_dep_optimal} that
\begin{align}\label{equivalence}
\bigl\|\|\widehat{\boldsymbol{\cal C}} - {\boldsymbol{\cal C}} \|_{\Ln^2}\bigr\|_2 \lesssim n^{-1/2} \quad \text{is equivalent with '\hyperref[A1C]{\AoneC} holds for any fixed $p \geq 1$',}
\end{align}
where $\|\widehat{\boldsymbol{\cal C}} - {\boldsymbol{\cal C}} \|_{\Ln^2}$ denotes the Hilbert-Schmidt-norm. Hence the rate $n^{-1/2}$ carries over and \hyperref[A1C]{\AoneC} poses no restriction, as long as we consider the CLT-domain (normalization with $n^{-1/2}$). In this sense, condition \hyperref[A1C]{\AoneC} is optimal (in the CLT-Domain). Interestingly, this also allows for long memory sequences, and we even obtain a CLT for $\widehat{\lambda}_j$ and $\widehat{\e}_j$ under long memory conditions, i.e; where $\sum_{i = 1}^{\infty} \alpha_{i,j} = \infty$, see Theorem \ref{thm_gaussian_part}. Note that it is shown in ~\cite{merlevede_1997_opt} that $\sum_{i = 1}^{\infty} |\alpha_i| <\infty$ is necessary for the validity of a CLT for $\sum_{k = 1}^n X_k$ in an infinite dimensional Hilbert space (a different normalization doesn't help here, which is different from the univariate case, see ~\cite{merlevede_1997_opt} for details). Note that condition $\max_{j \in \N}\|n^{-3/4}\sum_{k = 1}^n \eta_{k,j}\|_{2q} = \oo(1)$ is usually for 'free' due to the additional factor $n^{-1/4}$, and is only necessary to control the empirical mean correction $\bar{X}_n$. Finally, we remark that our method of proof can also be used to derive corresponding results in the non-central domain, i.e; where $\bigl\|\|\widehat{\boldsymbol{\cal C}} - {\boldsymbol{\cal C}} \|_{\Ln^2}\bigr\|_2 \thicksim b_n $ with $\sqrt{n} = \oo\bigl(b_n\bigr)$. To keep this exposition at reasonable length, this is not pursued here.\\
\\
\textit{Structural conditions for eigenvalues}: \hyperref[A2C]{\AtwoC} is the key condition regarding the structure of the eigenvalues $\lambda_j$. Note that the special form of the terms appearing in \hyperref[A2C]{\AtwoC} is no coincidence, and is connected to the variance of the asymptotic distribution of the empirical eigenfunctions $\widehat{e}_j$ (cf. ~\cite{dauxois_1982}). The literature (cf. ~\cite{cardot_mas_sarda_2007},~\cite{crambes2013},~\cite{hall2007},~\cite{hall_hosseini_2006},~\cite{hall_hosseini_2009}) usually requires polynomial, exponential or convex structures regarding the decay-rate of the eigenvalues and particularly the spacing $\psi_j$. For instance, a common minimum assumption is that $\psi_j \gtrsim \lambda_j j^{-1}$, which reflects a polynomial behavior of the eigenvalues $\lambda_j$. As will be discussed below Theorem \ref{theorem_exp_eigen_vector}, \hyperref[A2C]{\AtwoC} turns out to be much weaker, in fact, we shall see that it is nearly optimal. To get a feeling of the implications of \hyperref[A2C]{\AtwoC}, let us consider the case where $\lambda_j$ satisfies a convexity condition, i.e;
\begin{align}\label{condi_convex}
\text{the function $\lambdav(x): \, x \mapsto \lambda_x$  is convex.}
\end{align}
If \eqref{condi_convex} holds, then one may verify (cf. Lemma \ref{lem_verify_ass_poly}) that
\begin{align}\label{eq_convex_condi}
\sum_{\substack{i = 1\\i \neq j}}^{\infty} \frac{\lambda_i }{|\lambda_j - \lambda_i|} \lesssim j \log j \quad \text{and} \quad \sum_{\substack{i = 1\\i \neq j}}^{\infty} \frac{\lambda_i \lambda_j }{(\lambda_j - \lambda_i)^2} \lesssim j^2,
\end{align}
hence \hyperref[A2C]{\AtwoC} is valid if $\JJ_n^+ \lesssim n^{1/2 - \ad} (\log n)^{-1}$. Note that these bounds are not directly influenced by the decay of $\lambdav$ or ${\bf \Psi}$. The convexity condition \eqref{condi_convex} itself is mild and includes many cases encountered in the literature (cf. ~\cite{crambes2013}), in particular polynomial or exponential cases
\begin{align}\label{ex_eigen_poly} \tag{${\bf E P}$}
\lambda_j \sim j^{\rd} \rho^{-j}, \, 0 < \rho < 1, |\rd| < \infty \quad \text{or} \quad \lambda_j \sim j^{-\rd}, \, \rd > 1.
\end{align}
Also note that \hyperref[A2C]{\AtwoC} implies that the first $\JJ_n^+$ eigenvalues are distinct. See ~\cite{dauxois_1982} for a flavour of results which allow for eigenspaces with rank greater than one.\\
\\
\textit{Moment assumptions}: The existence of all moments (often with additional Gaussian like growth conditions) is usually required in the literature (cf. ~\cite{crambes2013}~\cite{hall_hosseini_2006} ~\cite{hall_hosseini_2009}, ~\cite{mas_complex_2014}) in the context of expansions for $\widehat{\lambda}_j, \widehat{\e}_j$. In contrast, we only require a finite number of moments, which, however, may be large. On the other hand, all of our results will be expressed in terms of the $\|\cdot\|_p$-norm, and moving over to the weaker $\OO_P\bigl(\cdot\bigr)$ formulation, the moment assumptions can be lowered.\\
\\
For stating our results, we introduce the quantity
\begin{align}\label{defn_I_ij_C}
I_{i,j} = \bigl \langle \bigl(\widehat{\boldsymbol{\cal D}} - {\boldsymbol{\cal D}}\bigr)(\e_i), \e_j \bigr \rangle, \quad i,j \in \N,
\end{align}
which is one of the main contributing parts in the expansions given below. We first give the main results, followed by a discussion and comparison to the literature.
For the empirical eigenvalues $\widehat{\lambda}_j$, we have the following.

\begin{theorem}\label{theorem_exp_eigen_value}
Assume that Assumption \ref{ass_abstract} holds. Then for $1 \leq \J < \JJ_{\mm}^+$
\begin{align*}
\biggl\|\max_{1 \leq j \leq \J} \biggr|\frac{1}{\lambda_j}\biggl(\widehat{\lambda}_j - \lambda_j - I_{j,j}\biggr)\biggr|\biggl\|_p \lesssim \frac{\J^{1/p}\mm^{-\ad}}{\sqrt{\mm}}.
\end{align*}
\end{theorem}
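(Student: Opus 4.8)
\emph{Notation and strategy.} Write $E := \widehat{\boldsymbol{\cal D}} - {\boldsymbol{\cal D}}$, so $I_{i,j} = \langle E\e_i,\e_j\rangle$. From \eqref{defn_D_1}--\eqref{defn_D_2} one has $I_{i,j} = \sqrt{\widetilde\lambda_i\widetilde\lambda_j}\,(\overline{\etav}_{i,j}^{\boldsymbol{\cal D}} - \etav_{i,j}^{\boldsymbol{\cal R}})$, whence, by \hyperref[A1D]{\AoneD} and $\widetilde\lambda_j\thicksim\lambda_j$ (Lemma \ref{lem_E_eta_ij_is_zero} and \hyperref[A3D]{\AthreeD}),
\begin{align}\label{eq_plan_Iij}
\bigl\|I_{i,j}\bigr\|_q \lesssim \sqrt{\lambda_i\lambda_j}\,\mm^{-1/2}\quad(i,j\in\N),\qquad \bigl\|\,\|E\|_{\Ln^2}\,\bigr\|_q \lesssim \mm^{-1/2}\textstyle\sum_{i}\lambda_i \lesssim \mm^{-1/2}.
\end{align}
The plan is to feed $\widehat{\boldsymbol{\cal D}} = {\boldsymbol{\cal D}} + E$ into the standard eigenvalue perturbation expansion — via the Riesz projection $\widehat P_j = (2\pi\ic)^{-1}\oint_{\Gamma_j}(\zeta - \widehat{\boldsymbol{\cal D}})^{-1}\,d\zeta$ and $\widehat\lambda_j = \tr(\widehat{\boldsymbol{\cal D}}\widehat P_j)$ — to identify $I_{j,j}$ as the first-order term, and to bound everything of order $\ge2$ in $\|\cdot\|_p$ after taking $\max_{j\le\J}$.

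\emph{The dominant remainder.} The second-order term is $S_j = \sum_{i\neq j}(\lambda_j-\lambda_i)^{-1}I_{i,j}I_{j,i}$. By the triangle and H\"older inequalities, \eqref{eq_plan_Iij}, and the first sum in \hyperref[A2D]{\AtwoD},
\begin{align*}
\bigl\|S_j\bigr\|_{q/2}\,\le\,\sum_{i\neq j}\frac{\|I_{i,j}\|_q\,\|I_{j,i}\|_q}{|\lambda_j-\lambda_i|}\,\lesssim\,\mm^{-1}\lambda_j\sum_{i\neq j}\frac{\lambda_i}{|\lambda_j-\lambda_i|}\,\lesssim\,\lambda_j\,\mm^{-1/2-\ad},
\end{align*}
uniformly in $1\le j<\JJ_{\mm}^+$. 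Since $q/2\ge p$, combining this with $\bigl\|\max_{j\le\J}|Z_j|\bigr\|_p\le\J^{1/p}\max_{j\le\J}\|Z_j\|_p$ yields $\bigl\|\max_{j\le\J}\lambda_j^{-1}|S_j|\bigr\|_p\lesssim\J^{1/p}\mm^{-1/2-\ad}$, which is already the asserted rate.

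\emph{Higher orders and truncation.} By the standard (Kato) perturbation series the order-$k$ correction is a finite sum of traces of products of $E$'s with powers of the reduced resolvent and of $P_j$; each resulting term is a sum over free indices $\neq j$ of $I$-factor products divided by products of powers of $\lambda_j-\lambda_i$, appearing either as "chains" $\bigl(\prod_{l=1}^{k-1}(\lambda_j-\lambda_{i_l})^{-1}\bigr)I_{j,i_1}\cdots I_{i_{k-1},j}$ — where the $\sqrt{\lambda}$-factors of \eqref{eq_plan_Iij} telescope to $\lambda_j\lambda_{i_1}\cdots\lambda_{i_{k-1}}$ and the first sum in \hyperref[A2D]{\AtwoD} applies — or with a diagonal weight, e.g.\ $I_{j,j}\sum_{i\neq j}(\lambda_j-\lambda_i)^{-2}I_{i,j}I_{j,i}$, handled by $\|I_{j,j}\|_q\lesssim\lambda_j\mm^{-1/2}$ and the second sum in \hyperref[A2D]{\AtwoD}. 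Either way the order-$k$ term is $\lesssim\lambda_j\,\mm^{-1/2-(k-1)\ad}$ in $\|\cdot\|_{q/k}$, so that $\sum_{k\ge3}$ contributes at most $\lambda_j\,\mm^{-1/2-2\ad}\ll\lambda_j\,\mm^{-1/2-\ad}$ for $\mm$ large. The obstruction is that this Neumann series need not converge: the natural small parameter $\|E\|_{{\cal L}}/\psi_j$ may fail to be $\oo(1)$, since \hyperref[A2D]{\AtwoD} only forces $\lambda_{\JJ_{\mm}^+}\ge\mm^{-\hd}/C^{\boldsymbol{\cal D}}$ and so $\psi_j$ can be a fixed negative power of $\mm$. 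One therefore truncates at order $\pd+1$, $\pd=\lceil\hd/\ad\rceil$, estimates the terms of order $2,\dots,\pd+1$ as above, and controls the exact Taylor remainder on the event $\Omega_{\mm}=\{\|E\|_{{\cal L}}\le\mm^{-1/2+\ad/2}\}$ by the pathwise bounds of Lemma \ref{lem_eigen_gen_upper_bound} together with $\lambda_j^{-1}\le\lambda_{\JJ_{\mm}^+}^{-1}\le C^{\boldsymbol{\cal D}}\mm^{\hd}$, the factor $\mm^{-(\pd+1)\ad}\le\mm^{-\hd-\ad}$ gained over the $\pd+1$ iterations absorbing this polynomial loss; on $\Omega_{\mm}^c$, whose probability is $\lesssim\mm^{-q\ad/2}$ by Markov's inequality and \eqref{eq_plan_Iij}, the crude estimate $\lambda_j^{-1}|\widehat\lambda_j-\lambda_j-I_{j,j}|\le 2C^{\boldsymbol{\cal D}}\mm^{\hd}\|E\|_{{\cal L}}$ (Lemma \ref{lem_eigen_gen_upper_bound}) combined with a H\"older split makes the contribution negligible — this is where the large moment order $q=p\,2^{\pd+4}$ enters.

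\emph{Main obstacle.} The delicate point is exactly this last step: absent a pointwise lower bound on the spectral gap $\psi_j$ the eigenvalue expansion diverges, so one truncates at an order governed by $\hd/\ad$ and estimates the remainder via the weak — but universally valid — bounds of Lemma \ref{lem_eigen_gen_upper_bound}, using that each iteration still contracts by $\mm^{-\ad}$ (through the two sums in \hyperref[A2D]{\AtwoD}) while $\lambda_{\JJ_{\mm}^+}$ stays bounded below by a power of $\mm$, all uniformly in $1\le j<\JJ_{\mm}^+$ and with the available number of moments $q$ kept matched to the exponent $p$ of the target $\|\cdot\|_p$-bound; this is what dictates the precise form of $q$ and of the quantities in Assumption \ref{ass_abstract}.
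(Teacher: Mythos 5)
You propose a Riesz--Kato perturbation series: write $\widehat{\lambda}_j$ as a contour trace, expand $(\zeta-\widehat{\boldsymbol{\cal D}})^{-1}$ in powers of $E=\widehat{\boldsymbol{\cal D}}-{\boldsymbol{\cal D}}$, identify $I_{j,j}$ as the first-order term, bound the order-$k$ contribution in $L^{q/k}$ via \hyperref[A2D]{\AtwoD}, and truncate at order $\pd+1$. The paper takes a genuinely different route and never writes an infinite series. Its starting point (see the proof of Lemma \ref{lem_eigen_uniform_bound}) is the \emph{exact} algebraic identity $\widehat{\lambda}_j-\lambda_j=\frac{2}{2-\|\widehat{\e}_j-\e_j\|_{\Ln^2}^2}\bigl(I_{j,j}+II_{j,j}\bigr)$, valid pointwise whenever $\|\widehat{\e}_j-\e_j\|_{\Ln^2}^2<2$, with $II_{j,j}=\int_{\TT^2}(\widehat{\mathbf{\DD}}-{\mathbf{\DD}})\e_j(\widehat{\e}_j-\e_j)$; the gain $\mm^{-\ad}$ is extracted by a \emph{bootstrap} (Lemma \ref{lem_bound_II_improved_single}) that iterates, at most $\pd/2+1$ times, the pathwise bound $|E_{l,j}\ind(\A_j)|\le 2|\lambda_j-\lambda_l|^{-1}(|I_{l,j}|+|II_{l,j}|)$, halving the moment index at each pass; the bad event $\A_j^c$, with $\A_j=\{|\widehat{\lambda}_j-\lambda_j|\le\psi_j/2\}$, is controlled by Lemma \ref{lem_bound_spacings_deviation}, whose proof bounds $P(\A_j^c)$ through sums of weighted $I_{k,l}^2$, not via a Markov bound on $\|E\|_{{\cal L}}/\psi_j$. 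Your order-by-order estimates for orders $2,\ldots,\pd+1$ are correct and mirror exactly the telescoping inside the paper's bootstrap.

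The genuine gap is in the remainder. The contour representation after truncation needs a circle $\Gamma_j$ around $\lambda_j$ that separates $\widehat{\lambda}_j$ from the rest of $\sigma(\widehat{\boldsymbol{\cal D}})$; this is guaranteed only when $\|E\|_{{\cal L}}\lesssim\psi_j$, and \emph{not} on your $\Omega_{\mm}=\{\|E\|_{{\cal L}}\le\mm^{-1/2+\ad/2}\}$. Indeed \hyperref[A2D]{\AtwoD} forces merely $\psi_j\gtrsim\lambda_j\mm^{-1/2+\ad}$, and $\lambda_j$ may be as small as $\mm^{-\hd}/C^{\boldsymbol{\cal D}}$; on $\Omega_{\mm}$ one can therefore have $\|E\|_{{\cal L}}\gg\psi_j$, so the Neumann expansion of the resolvent diverges and there is no ``exact Taylor remainder'' to bound. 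Lemma \ref{lem_eigen_gen_upper_bound} controls $|\widehat{\lambda}_j-\lambda_j|$, not the difference between $\widehat{\lambda}_j-\lambda_j$ and the first $\pd+1$ terms of a now nonconvergent expansion. If you instead work on $\{\|E\|_{{\cal L}}\le c\,\psi_j\}$, the truncation remainder carries a factor $\psi_j^{-(\pd+1)}\gtrsim\lambda_j^{-(\pd+1)}\mm^{(\pd+1)(1/2-\ad)}$; the gain $\mm^{-(\pd+1)\ad}$ together with a single $\lambda_j^{-1}\le C^{\boldsymbol{\cal D}}\mm^{\hd}$ cannot absorb $\lambda_j^{-(\pd+1)}$, and Assumption \ref{ass_abstract} provides no control of that strength. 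This is precisely the obstruction that the paper's algebraic identity, bootstrap, and Lemma \ref{lem_bound_spacings_deviation} sidestep, and an analogue of that mechanism must appear for your route to close.
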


The above result provides an exact uniform first-order expansion for $\widehat{\lambda}_j$. For a nonuniform version, the factor $\J^{1/p}$ in the bound on the RHS can be dropped. Next, we state the companion result for the empirical eigenfunctions $\widehat{\e}_j$.
\begin{theorem}\label{theorem_exp_eigen_vector}
Assume that Assumption \ref{ass_abstract} holds. Then for $1 \leq \J < \JJ_{\mm}^+$
\begin{align*}
\biggl\|\max_{1 \leq j \leq \J} \biggr\|\frac{1}{\sqrt{\Lambda_j}
}\biggl(\widehat{\e}_j - \e_j + \frac{\e_j}{2}\bigl\|\widehat{\e}_j - \e_j\bigr\|_{\Ln^2}^2 - \sum_{\substack{k = 1\\k \neq j}}^{\infty}\e_k \frac{I_{k,j}}{\lambda_j - \lambda_k}\biggr)\biggr\|_{\Ln^2}\biggl\|_p \lesssim \frac{\J^{1/p}\mm^{-\ad}}{\sqrt{\mm}},
\end{align*}
where $\Lambda_j = \sum_{\substack{k = 1\\k \neq j}}^{\infty}\frac{\lambda_j \lambda_k}{(\lambda_j - \lambda_k)^2}$, and we also have
\begin{align*}
\biggl\|\max_{1 \leq j \leq \J} \biggr|\frac{1}{\Lambda_j}\biggl(\bigl\|\widehat{\e}_j - \e_j\bigr\|_{\Ln^2}^2 - \sum_{\substack{k = 1\\k \neq j}}^{\infty}\frac{I_{k,j}^2}{(\lambda_j - \lambda_k)^2}\biggr)\biggr|\biggl\|_p \lesssim \frac{\J^{1/p}\mm^{-\ad}}{\mm}.
\end{align*}
\end{theorem}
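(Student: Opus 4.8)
The plan is to derive both displays in Theorem~\ref{theorem_exp_eigen_vector} from the resolvent/Riesz-projection representation of $\widehat{\e}_j$, using Assumption~\ref{ass_abstract} to control the remainder terms at the rate $\J^{1/p}\mm^{-\ad}/\sqrt{\mm}$. Write $\widehat{\boldsymbol{\cal D}} - \boldsymbol{\cal D} = \sum_{i,k} I_{i,k}\langle \e_i,\cdot\rangle \e_k$, and expand $\widehat{\e}_j$ in the orthonormal basis $\{\e_k\}$ as $\widehat{\e}_j = \sum_k \gamma_{k,j}\e_k$ with $\gamma_{j,j} = \langle \widehat{\e}_j,\e_j\rangle = c_j = 1 - \tfrac12\|\widehat{\e}_j - \e_j\|_{\Ln^2}^2$ (after the sign normalisation of the Remark). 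The standard perturbation identity obtained from $\widehat{\boldsymbol{\cal D}}\widehat{\e}_j = \widehat{\lambda}_j\widehat{\e}_j$, projected onto $\e_k$ for $k\neq j$, reads
\begin{align*}
(\lambda_j - \lambda_k)\gamma_{k,j} = I_{k,j}\gamma_{j,j} + \sum_{\ell\neq j} I_{k,\ell}\gamma_{\ell,j} - (\widehat{\lambda}_j - \lambda_j)\gamma_{k,j}.
\end{align*}
Solving for $\gamma_{k,j}$ and isolating the leading term $I_{k,j}/(\lambda_j - \lambda_k)$, we get
\begin{align*}
\widehat{\e}_j - \e_j + \tfrac{\e_j}{2}\|\widehat{\e}_j-\e_j\|_{\Ln^2}^2 - \sum_{k\neq j}\e_k\frac{I_{k,j}}{\lambda_j-\lambda_k} = \sum_{k\neq j}\e_k\frac{I_{k,j}(\gamma_{j,j}-1) + \sum_{\ell\neq j}I_{k,\ell}\gamma_{\ell,j} - (\widehat{\lambda}_j-\lambda_j)\gamma_{k,j}}{\lambda_j-\lambda_k},
\end{align*}
so the first display amounts to bounding the $\Ln^2$-norm of this right-hand side, divided by $\sqrt{\Lambda_j}$, uniformly over $1\le j\le\J$.

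First I would establish the a priori bound $\|\widehat{\e}_j - \e_j\|_{\Ln^2}^2 \lesssim \Lambda_j \mm^{-1}$ (in $\|\cdot\|_p$, uniformly over $j\le\J$, with the extra $\J^{1/p}$): from Lemma~\ref{lem_eigen_gen_upper_bound} together with \hyperref[A1D]{\AoneD}, \hyperref[A2D]{\AtwoD} one controls $\|\widehat{\boldsymbol{\cal D}}-\boldsymbol{\cal D}\|_{\cal L}$ and the spectral gap; more precisely I would run a bootstrap argument, plugging the crude bound back into the identity above. Using $|I_{k,j}| \lesssim \sqrt{\lambda_k\lambda_j}\,\mm^{-1/2}$ (from \hyperref[A1D]{\AoneD}, $\hyperref[A3D]{\AthreeD}$) and Cauchy--Schwarz, the leading sum $\sum_{k\neq j}\e_k I_{k,j}/(\lambda_j-\lambda_k)$ has squared $\Ln^2$-norm of order $\mm^{-1}\sum_{k\neq j}\lambda_j\lambda_k/(\lambda_j-\lambda_k)^2 = \Lambda_j\mm^{-1}$, which fixes the normalisation $\sqrt{\Lambda_j}$ and shows $\gamma_{j,j}-1 = -\tfrac12\|\widehat{\e}_j-\e_j\|^2_{\Ln^2} \lesssim \Lambda_j\mm^{-1}$. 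For the three error pieces: the term with $\widehat{\lambda}_j-\lambda_j$ uses $|\widehat{\lambda}_j-\lambda_j|\lesssim \lambda_j\,\mm^{-1/2}$ (Theorem~\ref{theorem_exp_eigen_value}) times $\sum_k\gamma_{k,j}^2/(\lambda_j-\lambda_k)^2$-type sums; the term with $\gamma_{j,j}-1$ gains the extra $\Lambda_j\mm^{-1/2}$; and the genuinely cross term $\sum_{\ell\neq j}I_{k,\ell}\gamma_{\ell,j}$ must be split into the near-diagonal part $\ell$ close to $j$ (handled via \hyperref[A2D]{\AtwoD} and the a priori bound on $\sum_{\ell\neq j}\gamma_{\ell,j}^2$) and the far part (handled via summability $\sum\lambda_\ell\le C^{\boldsymbol{\cal D}}$). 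Each piece contributes $\lesssim \sqrt{\Lambda_j}\cdot\mm^{-\ad}/\sqrt{\mm}$ after dividing by $\sqrt{\Lambda_j}$; the power $\ad$ and the moment order $q = p2^{\pd+4}$ enter exactly here, because extracting the $\mm^{-\ad}$ gain from \hyperref[A2D]{\AtwoD} requires iterating the expansion $\pd = \lceil\hd/\ad\rceil$ times and taking $2^{\pd+4}$-th moments along the way. Finally the $\max_{1\le j\le\J}$ is absorbed by a union bound over $j$ combined with Markov's inequality at exponent $p$, which produces precisely the $\J^{1/p}$ factor.

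For the second display I would square the identity $\|\widehat{\e}_j-\e_j\|_{\Ln^2}^2 = \sum_{k\neq j}\gamma_{k,j}^2 + (\gamma_{j,j}-1)^2$ and substitute $\gamma_{k,j} = I_{k,j}/(\lambda_j-\lambda_k) + (\text{error}_{k,j})$ from the first part: then
\begin{align*}
\gamma_{k,j}^2 = \frac{I_{k,j}^2}{(\lambda_j-\lambda_k)^2} + \frac{2 I_{k,j}\,\text{error}_{k,j}}{\lambda_j-\lambda_k} + \text{error}_{k,j}^2,
\end{align*}
and $(\gamma_{j,j}-1)^2 = \tfrac14\|\widehat{\e}_j-\e_j\|_{\Ln^2}^4 \lesssim \Lambda_j^2\mm^{-2}$. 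The cross term is bounded by Cauchy--Schwarz as $\bigl(\sum_k I_{k,j}^2/(\lambda_j-\lambda_k)^2\bigr)^{1/2}\bigl(\sum_k\text{error}_{k,j}^2\bigr)^{1/2} \lesssim (\Lambda_j\mm^{-1})^{1/2}\cdot(\Lambda_j\mm^{-1}\cdot\mm^{-2\ad})^{1/2} = \Lambda_j\mm^{-1-\ad}$, and the quadratic error term plus the $(\gamma_{j,j}-1)^2$ contribution are $\lesssim \Lambda_j^2\mm^{-2}\le \Lambda_j\mm^{-1}\cdot C^{\boldsymbol{\cal D}}\mm^{-1}$ using $\Lambda_j\lesssim \mm^{1-2\ad}$ from \hyperref[A2D]{\AtwoD}; dividing by $\Lambda_j$ gives the claimed $\J^{1/p}\mm^{-\ad}/\mm$ after the same Markov/union-bound step. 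The main obstacle, as in Theorem~\ref{theorem_exp_eigen_value}, is the bookkeeping of the $\pd$-fold iteration needed to convert the polynomial-in-$\mm$ a priori bounds into the sharp $\mm^{-\ad}$ correction while keeping all moments at order $q$; the near-diagonal sums $\sum_{\ell\neq j}|I_{k,\ell}\gamma_{\ell,j}|/|\lambda_j-\lambda_k|$ are the delicate point, since controlling them at the right rate is exactly what forces the particular shape of the two sums in \hyperref[A2D]{\AtwoD} and the choice $q=p2^{\pd+4}$.
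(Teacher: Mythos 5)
Your proposal follows the paper's own proof very closely. The perturbation identity you derive for $\gamma_{k,j}$ is exactly Lemma~\ref{lem_funda_exp} and \eqref{eq_decomp_funda} after substituting $\gamma_{k,j} = E_{k,j}$ for $k\neq j$ and $\gamma_{j,j} = 1 + E_{j,j}$: your remainder $I_{k,j}(\gamma_{j,j}-1) + \sum_{\ell\neq j}I_{k,\ell}\gamma_{\ell,j} - (\widehat{\lambda}_j-\lambda_j)\gamma_{k,j}$ is precisely $II_{k,j} + III_{k,j}$ in the paper's notation. The a priori bound $\|\widehat{\e}_j - \e_j\|_{\Ln^2}^2\lesssim\Lambda_j/\mm$ you propose to establish first is Proposition~\ref{prop_bound_eigen_vec_norm}, and the backward iteration trading $\mm^{-\ad}$ gains against doublings of the moment exponent is precisely Lemma~\ref{lem_bound_II_improved_single}, which is indeed why the hypothesis demands $q = p2^{\pd+4}$. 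The union-bound/Pisier step yielding $\J^{1/p}$ and the squared-norm display via the same substitution are likewise as in the paper.

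Two points you pass over quickly are load-bearing and should be made explicit. First, solving your identity for $\gamma_{k,j}$ produces, via \eqref{eq_lambda_int_decomp_rearr}, the factor $(\lambda_j-\lambda_k)/(\widehat{\lambda}_j - \lambda_k)$; this is only bounded (by $2$, see \eqref{eq_fraction}) after restricting to the event $\A_j = \{|\widehat{\lambda}_j - \lambda_j|\leq\psi_j/2\}$ and paying for $\A_j^c$ separately through Lemma~\ref{lem_bound_spacings_deviation} — without this truncation the iteration does not close, and controlling it solely via the crude operator-norm bound from Lemma~\ref{lem_eigen_gen_upper_bound} is not enough at the claimed rate. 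Second, where you propose a near-/far-diagonal split for the cross term $\sum_{\ell\neq j}I_{k,\ell}\gamma_{\ell,j}$, the paper instead applies Cauchy--Schwarz against Parseval's identity $\sum_\ell E_{\ell,j}^2 = \|\widehat{\e}_j - \e_j\|_{\Ln^2}^2$ (Lemma~\ref{lem_bound_II} and \eqref{eq_lem_bound_II_1}); this is what feeds the $\|\widehat{\e}_j - \e_j\|_{\Ln^2}$ factor into each round of the iteration with uniform constants, and it is not obvious the diagonal decomposition would reproduce the sharp $\mm^{-\ad}$ gain without smuggling in a polynomial loss in $j$.
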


Theorem \ref{theorem_exp_eigen_vector} provides both uniform expansions for $\widehat{\e}_j$ and the corresponding norm. As before, the factor $\J^{1/p}$ in the bound on the RHS can be dropped for a nonuniform version. We also have a slight modification of Theorems \ref{theorem_exp_eigen_value} and \ref{theorem_exp_eigen_vector}.

\begin{proposition}\label{prop_replace_I_with_eta}
Assume that Assumption \ref{ass_abstract} holds. Then for $1 \leq \J < \JJ_{\mm}^+$, one may replace
$\{I_{k,j}\}_{k \in \N}$  with $\{(\widetilde{\lambda}_k \widetilde{\lambda}_j)^{1/2} \overline{\etav}_{k,j}^{\boldsymbol{\cal \DD}}\}_{k \in \N}$ in Theorems \ref{theorem_exp_eigen_value} and \ref{theorem_exp_eigen_vector}. Recall also that $\widetilde{\lambda}_j = \lambda_j/\E[{\etav}_{j,j}^{\boldsymbol{\cal \DD}}]$ by Lemma \ref{lem_E_eta_ij_is_zero}.
\end{proposition}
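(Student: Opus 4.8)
The plan is to observe that the substitution is governed entirely by the ``negligible part'' $\etav_{i,j}^{\boldsymbol{\cal R}}$, and then to check that the error it produces in each of the three displays of Theorems~\ref{theorem_exp_eigen_value} and~\ref{theorem_exp_eigen_vector} is of admissible order. First I would record the exact identity obtained by comparing \eqref{defn_D_1} with the two representations in \eqref{defn_D_2}: applying $\widehat{\boldsymbol{\cal \DD}} - {\boldsymbol{\cal \DD}}$ to $\e_i$ and pairing with $\e_j$ gives
\begin{align*}
I_{i,j} = \bigl\langle (\widehat{\boldsymbol{\cal \DD}} - {\boldsymbol{\cal \DD}})(\e_i), \e_j\bigr\rangle = \bigl(\widetilde{\lambda}_i\widetilde{\lambda}_j\bigr)^{1/2}\bigl(\overline{\etav}_{i,j}^{\boldsymbol{\cal \DD}} - \etav_{i,j}^{\boldsymbol{\cal R}}\bigr), \qquad i,j\in\N ,
\end{align*}
hence $I_{i,j} - (\widetilde{\lambda}_i\widetilde{\lambda}_j)^{1/2}\overline{\etav}_{i,j}^{\boldsymbol{\cal \DD}} = -(\widetilde{\lambda}_i\widetilde{\lambda}_j)^{1/2}\etav_{i,j}^{\boldsymbol{\cal R}}$. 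Since in every one of the three displays only the terms built from $\{I_{k,j}\}$ are affected by the replacement, nothing in the proofs of those theorems has to be redone; by the triangle inequality for $\|\cdot\|_p$, for $\|\cdot\|_{\Ln^2}$ and for $\max_{1\leq j\leq\J}$, it suffices to bound the $p$-norm of the maximum over $1\leq j\leq\J$ of the corresponding $\etav^{\boldsymbol{\cal R}}$-contribution.

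For Theorem~\ref{theorem_exp_eigen_value} the affected term is $\lambda_j^{-1}I_{j,j}$, and by Lemma~\ref{lem_E_eta_ij_is_zero} the incurred error equals $\lambda_j^{-1}\widetilde{\lambda}_j\,\etav_{j,j}^{\boldsymbol{\cal R}} = \etav_{j,j}^{\boldsymbol{\cal R}}/\E\bigl[\etav_{j,j}^{\boldsymbol{\cal \DD}}\bigr]$, which is at most $C^{\boldsymbol{\cal \DD}}|\etav_{j,j}^{\boldsymbol{\cal R}}|$ in absolute value by \AthreeD. Using $\|\max_{j\leq\J}|Z_j|\|_p \leq \J^{1/p}\max_j\|Z_j\|_p \leq \J^{1/p}\max_j\|Z_j\|_q$ together with \AoneD gives a bound $\lesssim \J^{1/p}s_{\mm}^{\boldsymbol{\cal \DD}}\,\mm^{-1/2}$.

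For the first display of Theorem~\ref{theorem_exp_eigen_vector} the affected term is $\Lambda_j^{-1/2}\sum_{k\neq j}\e_k I_{k,j}/(\lambda_j-\lambda_k)$; by orthonormality of $\{\e_k\}$ its replacement error has $\Ln^2$-norm $\Lambda_j^{-1/2}\bigl(\sum_{k\neq j}\widetilde{\lambda}_k\widetilde{\lambda}_j(\etav_{k,j}^{\boldsymbol{\cal R}})^2/(\lambda_j-\lambda_k)^2\bigr)^{1/2}$. I would bound $\widetilde{\lambda}_k\widetilde{\lambda}_j \leq (C^{\boldsymbol{\cal \DD}})^2\lambda_k\lambda_j$ via \AthreeD, move $\|\cdot\|_{q/2}$ inside the inner sum by the triangle inequality (legitimate since $q = p\,2^{\pd+4}\geq 2^5$, so $q/2\geq 1$), and use $\sum_{k\neq j}\lambda_k\lambda_j/(\lambda_j-\lambda_k)^2 = \Lambda_j$, so that the factor $\Lambda_j^{-1}$ cancels and only $\max_{k,j}\|\etav_{k,j}^{\boldsymbol{\cal R}}\|_q \leq s_{\mm}^{\boldsymbol{\cal \DD}}\mm^{-1/2}$ survives; the maximum over $j\leq\J$ then costs the usual factor $\J^{1/p}$. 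For the squared-norm display one expands $I_{k,j}^2 - \widetilde{\lambda}_k\widetilde{\lambda}_j(\overline{\etav}_{k,j}^{\boldsymbol{\cal \DD}})^2 = \widetilde{\lambda}_k\widetilde{\lambda}_j\bigl((\etav_{k,j}^{\boldsymbol{\cal R}})^2 - 2\overline{\etav}_{k,j}^{\boldsymbol{\cal \DD}}\etav_{k,j}^{\boldsymbol{\cal R}}\bigr)$, bounds the $\|\cdot\|_{q/2}$-norm of each bracketed entry by Cauchy--Schwarz and \AoneD as $\lesssim (s_{\mm}^{\boldsymbol{\cal \DD}})^2\mm^{-1} + s_{\mm}^{\boldsymbol{\cal \DD}}\mm^{-1}\lesssim s_{\mm}^{\boldsymbol{\cal \DD}}\mm^{-1}$, and performs the same cancellation against $\Lambda_j^{-1}\sum_{k\neq j}\widetilde{\lambda}_k\widetilde{\lambda}_j/(\lambda_j-\lambda_k)^2$ to reach a bound of order $\J^{1/p}s_{\mm}^{\boldsymbol{\cal \DD}}\mm^{-1}$. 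Altogether the substitution adds an error of order $\J^{1/p}s_{\mm}^{\boldsymbol{\cal \DD}}/\sqrt{\mm}$ (resp.\ $\J^{1/p}s_{\mm}^{\boldsymbol{\cal \DD}}/\mm$) to the displays, so these continue to hold with $\mm^{-\ad}$ replaced by $\mm^{-\ad}+s_{\mm}^{\boldsymbol{\cal \DD}}$, and verbatim whenever $s_{\mm}^{\boldsymbol{\cal \DD}}\lesssim\mm^{-\ad}$.

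The main obstacle, and the step I would treat most carefully, is that one must \emph{not} try to extract a supremum $\sup_{i,j\in\N}|\etav_{i,j}^{\boldsymbol{\cal R}}|$ from the infinite sums over $k$: that random variable need not lie in $L^q$, and \AoneD controls only the individual norms $\|\etav_{i,j}^{\boldsymbol{\cal R}}\|_q$. The resolution used above is that $\Lambda_j$ is \emph{exactly} the total weight of those sums, so pushing the $L^{q/2}$-norm inside by the triangle inequality and then dividing by $\Lambda_j$ leaves precisely $\max_{i,j}\|\etav_{i,j}^{\boldsymbol{\cal R}}\|_q$, which \AoneD bounds. The remaining ingredients — comparing $\widetilde{\lambda}$ with $\lambda$ through \AthreeD and the elementary expansion of $I_{k,j}^2$ — are routine.
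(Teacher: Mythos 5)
Your argument is correct and carries out precisely the ``routine calculations'' the paper's two-line proof of Proposition~\ref{prop_replace_I_with_eta} merely references: the identity $I_{k,j}=(\widetilde{\lambda}_k\widetilde{\lambda}_j)^{1/2}(\overline{\etav}_{k,j}^{\boldsymbol{\cal \DD}}\mp\etav_{k,j}^{\boldsymbol{\cal R}})$ (the sign on $\etav^{\boldsymbol{\cal R}}$ differs between~\eqref{defn_D_2} and the proof-section definition of $\widehat{\mathbf{\DD}}$, but this is immaterial), \AthreeD\ to trade $\widetilde{\lambda}$ for $\lambda$, \AoneD\ for the $\etav^{\boldsymbol{\cal R}}$-norms, pushing $\|\cdot\|_{q/2}$ inside the sums over $k$ so that $\Lambda_j$ cancels exactly, and Pisier for the $\J^{1/p}$. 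Your remark about avoiding a supremum $\sup_{i,j}|\etav^{\boldsymbol{\cal R}}_{i,j}|$ over infinitely many indices is precisely the right precaution, and the resolution you give is the correct one. The one genuine observation you add that the paper suppresses is the final caveat: the substitution incurs an error of order $\J^{1/p}s_{\mm}^{\boldsymbol{\cal \DD}}/\sqrt{\mm}$ (resp.\ $\J^{1/p}s_{\mm}^{\boldsymbol{\cal \DD}}/\mm$), so the bounds of Theorems~\ref{theorem_exp_eigen_value} and~\ref{theorem_exp_eigen_vector} survive \emph{verbatim} only when $s_{\mm}^{\boldsymbol{\cal \DD}}\lesssim\mm^{-\ad}$, and in general hold with $\mm^{-\ad}$ replaced by $\mm^{-\ad}+s_{\mm}^{\boldsymbol{\cal \DD}}$. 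Assumption~\ref{ass_abstract} only posits $s_{\mm}^{\boldsymbol{\cal \DD}}=\oo(1)$, so the proposition as stated is slightly stronger than what follows without further hypotheses; however, in every concrete instantiation in the paper ($s_{n}^{\boldsymbol{\cal \DD}}=n^{-1/2}$ in Theorem~\ref{thm_lag_operator}, $s_n^{\boldsymbol{\cal C}}\lesssim n^{-1/4}$ in Corollary~\ref{cor_AHR(1)}, etc.) the decay of $s_{\mm}^{\boldsymbol{\cal \DD}}$ is at least as fast as needed, so the discrepancy is harmless in practice. Your formulation is therefore the more careful one.
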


As an immediate corollary, we obtain a probabilistic version of Lemma \ref{lem_eigen_gen_upper_bound} of correct order.
\begin{corollary}\label{corollary_norms}
Assume that Assumption \ref{ass_abstract} holds. Then for $1 \leq j < \JJ_{\mm}^+$
\begin{align*}
\bigr\|\widehat{\lambda}_j - \lambda_j\bigl\|_p \lesssim \frac{\lambda_j}{\sqrt{\mm}} \quad \text{and} \quad \bigr\|\|\widehat{\e}_j - e_j\|_{\Ln^2}^2\bigl\|_p \lesssim \frac{\Lambda_j}{\mm}.
\end{align*}
\end{corollary}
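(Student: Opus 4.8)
The plan is to read both estimates straight off the non-uniform forms of Theorems~\ref{theorem_exp_eigen_value} and~\ref{theorem_exp_eigen_vector} — that is, with the prefactor $\J^{1/p}$ dropped and $\J$ taken to be the single fixed index $j$, $1 \leq j < \JJ_{\mm}^+$ — after replacing $\{I_{k,j}\}_{k}$ by $\{(\widetilde{\lambda}_k\widetilde{\lambda}_j)^{1/2}\overline{\etav}_{k,j}^{\boldsymbol{\cal \DD}}\}_{k}$ as Proposition~\ref{prop_replace_I_with_eta} permits, and then discarding the leading stochastic term via the triangle (Minkowski) inequality in $\|\cdot\|_p$. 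Two bookkeeping facts are used repeatedly: first, by \hyperref[A3D]{\AthreeD} and Lemma~\ref{lem_E_eta_ij_is_zero}, $\widetilde{\lambda}_j = \lambda_j/\E[\etav_{j,j}^{\boldsymbol{\cal \DD}}]$ obeys $\widetilde{\lambda}_j \leq C^{\boldsymbol{\cal \DD}}\lambda_j$; second, the exponent $q = p2^{\pd+4}$ in \hyperref[A1D]{\AoneD} satisfies $q \geq 2p$, so $\|\cdot\|_p \leq \|\cdot\|_{2p} \leq \|\cdot\|_q$ and hence \hyperref[A1D]{\AoneD} gives $\max_{i,j}\|\overline{\etav}_{i,j}^{\boldsymbol{\cal \DD}}\|_{2p} \leq C^{\boldsymbol{\cal \DD}}\mm^{-1/2}$.

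For the eigenvalues, Theorem~\ref{theorem_exp_eigen_value} combined with Proposition~\ref{prop_replace_I_with_eta} reads, for this fixed $j$,
\begin{align*}
\bigl\|\widehat{\lambda}_j - \lambda_j - \widetilde{\lambda}_j\overline{\etav}_{j,j}^{\boldsymbol{\cal \DD}}\bigr\|_p \lesssim \frac{\lambda_j\mm^{-\ad}}{\sqrt{\mm}},
\end{align*}
and the triangle inequality together with the two facts above yields
\begin{align*}
\bigl\|\widehat{\lambda}_j - \lambda_j\bigr\|_p \leq \bigl\|\widehat{\lambda}_j - \lambda_j - \widetilde{\lambda}_j\overline{\etav}_{j,j}^{\boldsymbol{\cal \DD}}\bigr\|_p + \widetilde{\lambda}_j\bigl\|\overline{\etav}_{j,j}^{\boldsymbol{\cal \DD}}\bigr\|_p \lesssim \frac{\lambda_j\mm^{-\ad}}{\sqrt{\mm}} + \frac{\lambda_j}{\sqrt{\mm}} \lesssim \frac{\lambda_j}{\sqrt{\mm}},
\end{align*}
which is the first assertion.

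For the eigenfunction norm, the second display of Theorem~\ref{theorem_exp_eigen_vector}, again after the substitution of Proposition~\ref{prop_replace_I_with_eta}, gives
\begin{align*}
\biggl\|\bigl\|\widehat{\e}_j - \e_j\bigr\|_{\Ln^2}^2 - \sum_{\substack{k = 1\\k \neq j}}^{\infty}\frac{\widetilde{\lambda}_k\widetilde{\lambda}_j(\overline{\etav}_{k,j}^{\boldsymbol{\cal \DD}})^2}{(\lambda_j - \lambda_k)^2}\biggr\|_p \lesssim \frac{\Lambda_j\mm^{-\ad}}{\mm}.
\end{align*}
Since $\Lambda_j \leq C^{\boldsymbol{\cal \DD}}\mm^{1 - 2\ad} < \infty$ by \hyperref[A2D]{\AtwoD}, Minkowski's inequality may be applied term by term (all summands being non-negative), and with $\widetilde{\lambda}_k\widetilde{\lambda}_j \leq (C^{\boldsymbol{\cal \DD}})^2\lambda_k\lambda_j$ and $\bigl\|(\overline{\etav}_{k,j}^{\boldsymbol{\cal \DD}})^2\bigr\|_p = \bigl\|\overline{\etav}_{k,j}^{\boldsymbol{\cal \DD}}\bigr\|_{2p}^2 \leq (C^{\boldsymbol{\cal \DD}})^2\mm^{-1}$ one gets
\begin{align*}
\biggl\|\sum_{\substack{k = 1\\k \neq j}}^{\infty}\frac{\widetilde{\lambda}_k\widetilde{\lambda}_j(\overline{\etav}_{k,j}^{\boldsymbol{\cal \DD}})^2}{(\lambda_j - \lambda_k)^2}\biggr\|_p \lesssim \frac{1}{\mm}\sum_{\substack{k = 1\\k \neq j}}^{\infty}\frac{\lambda_k\lambda_j}{(\lambda_j - \lambda_k)^2} = \frac{\Lambda_j}{\mm}.
\end{align*}
Combining the last two displays via the triangle inequality gives $\bigl\|\|\widehat{\e}_j - \e_j\|_{\Ln^2}^2\bigr\|_p \lesssim \Lambda_j\mm^{-\ad}/\mm + \Lambda_j/\mm \lesssim \Lambda_j/\mm$, the second assertion.

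Being billed an immediate corollary, this statement should present no genuine obstacle; the only points deserving care are the two bookkeeping facts above — namely $\widetilde{\lambda}_j \asymp \lambda_j$, which lets the normalisation $\lambda_j$ be used interchangeably with $\widetilde{\lambda}_j$, and $2p \leq q$, which lets \hyperref[A1D]{\AoneD} control the $\|\cdot\|_{2p}$-moments entering $I_{k,j}^2$ — together with the a priori finiteness $\Lambda_j < \infty$ supplied by \hyperref[A2D]{\AtwoD}, which legitimises the termwise Minkowski step for the infinite sum. If one prefers not to invoke Proposition~\ref{prop_replace_I_with_eta}, the same argument goes through by expanding $I_{j,j} = \widetilde{\lambda}_j(\overline{\etav}_{j,j}^{\boldsymbol{\cal \DD}} - \etav_{j,j}^{\boldsymbol{\cal R}})$ and $I_{k,j} = (\widetilde{\lambda}_k\widetilde{\lambda}_j)^{1/2}(\overline{\etav}_{k,j}^{\boldsymbol{\cal \DD}} - \etav_{k,j}^{\boldsymbol{\cal R}})$ from \eqref{defn_D_1}, \eqref{defn_D_2} and Lemma~\ref{lem_E_eta_ij_is_zero}, the additional $\etav^{\boldsymbol{\cal R}}$-contributions being absorbed by the second estimate in \hyperref[A1D]{\AoneD}.
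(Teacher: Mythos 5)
Your proof is correct and follows the same route the paper takes: the paper's own proof of Corollary \ref{corollary_norms} is the one-line remark that it follows from Proposition \ref{prop_replace_I_with_eta} and \hyperref[A1D]{\AoneD}, and you have simply filled in the bookkeeping — the non-uniform forms of Theorems \ref{theorem_exp_eigen_value}/\ref{theorem_exp_eigen_vector}, the comparison $\widetilde{\lambda}_j \asymp \lambda_j$ from Lemma \ref{lem_E_eta_ij_is_zero} and \hyperref[A3D]{\AthreeD}, and the $q \geq 2p$ moment comparison needed to control the leading stochastic terms.
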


\subsection{Previous results and comparison}
Let us now compare Theorems \ref{theorem_exp_eigen_value} and \ref{theorem_exp_eigen_vector} to the literature in case of ${\boldsymbol{\cal \DD}} = {\boldsymbol{\cal C}}$. It seems that the currently best known expansions in this context can be found in ~\cite{hall_hosseini_2009}. Among other things, it is required that $\bigl\{X_k\bigr\}_{k \in \Z}$ is IID, all moments exist, and the error term $ER_{\JJ_n^+}$ in the expansions of $\widehat{\lambda}_j - \lambda_j$ (not weighted with $\lambda_j^{-1}$) is of magnitude
\begin{align}\label{eq_hall_bound}
ER_{\JJ_n^+} = \max_{1 \leq j \leq \JJ_n^+}n^{-3/2} (1 - \xi_j)^{-1/2}\psi_{j}^{-3} \lambda_j^{-1/2} s_j,\quad s_j = \sup_{t \in \TT}|\e_j(t)|,
\end{align}
and $\xi_j \in (0,1)$ is defined as $\xi_j = \inf_{k <j}\bigl(1 - \frac{\lambda_k}{\lambda_j}\bigr)$. We emphasize that this is the overall error term, hence one requires for instance at least $\sqrt{n}ER_{\JJ_n^+} = \oo(1)$ for the validity of a CLT, and $(n/\lambda_{\JJ_n^+}^2)^{1/2}ER_{\JJ_n^+} = \oo(1)$ for a weighted version. If we assume the convexity condition \eqref{condi_convex}, we see that \hyperref[A2C]{\AtwoC} is much weaker. In fact, takeing for instance $\lambda_j \thicksim j^{-\cd}$ we find that $ER_{\JJ_n^+} \gtrsim n^{-3/2} (\JJ_n^+)^{3 + 7\cd/2}$. On the other hand, we see from \eqref{eq_convex_condi} that if $\J_n^+ \thicksim n^{1/2 - \ad}$, $\ad > 0$, we still obtain valid asymptotic expansions, i.e; the expressions containing $I_{k,j}$ are still the principal terms in our expansions, reflecting the exact asymptotic behavior. In stark contrast, $ER_{\JJ_n^+}$ already explodes for $\ad$ small (resp. $\cd$ large) enough, rendering a vacuous result. Similarly, \hyperref[A2C]{\AtwoC} is valid if we only require
\begin{align}\label{eq_uniform_spacings_condition}
\max_{1 \leq j \leq \JJ_n^+}n^{-1/2}/\psi_j \lesssim n^{-\ad} \quad \text{for some arbitrary $\ad > 0$,}
\end{align}
and again obtain valid asymptotic expansions. On the other hand, the actual approximation error $ER_{\JJ_n^+}$ in ~\cite{hall_hosseini_2009} may even be unbounded, since $1/\lambda_j \to \infty$ as $j$ increases. In this sense, Assumption \ref{ass_abstract} is substantially weaker.

\subsection{Dependence assumptions: optimality}\label{sec_dep_optimal}
Throughout this section, we assume that ${\boldsymbol{\cal \DD}} = {\boldsymbol{\cal C}}$. We first present the following result.
\begin{theorem}\label{thm_gaussian_part}
Assume that ${\bf X}$ has zero mean such that for $\alpha > 3/4$
\begin{align}\nonumber
\eta_{k,j} = \sum_{i = 0}^{\infty} \alpha_{i,j} \epsilon_{k-i,j}, \quad \text{$0 \leq \alpha_{i,j} \thicksim i^{-\alpha}$ and $\epsilon_{k,j}$ are standard Gaussian IID.}
\end{align}
Then \hyperref[A1C]{\AoneC} holds. Moreover, if we have in addition \hyperref[A2C]{\AtwoC} (for $\JJ_n^+$ possibly finite), then for any fixed $1 \leq j < \JJ_n^+$
\begin{align*}
&\sqrt{n}\bigl(\widehat{\lambda}_j - \lambda_j\bigr) \xrightarrow{w} \Gaussian\bigl(0,\lambda_j^2 \sigma_{\lambda_j}^2\bigr) \quad \text{and}
&\sqrt{n}\bigl(\widehat{\e}_j - \e_j\bigr) \xrightarrow{w} \Gaussian\bigl(0,\Sigma_{\e_j}\bigr),
\end{align*}
where $\xrightarrow{w}$ denotes weak convergence in the corresponding (Hilbert) space, and $\sigma_{\lambda_j}^2$ ($\Sigma_{\e_j}$) denotes the corresponding variance (operator).
\end{theorem}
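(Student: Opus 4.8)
The plan is to split the argument into two parts: first verifying the abstract dependence condition \hyperref[A1C]{\AoneC} for the prescribed linear-process scores, and then feeding the resulting expansions from Theorem \ref{theorem_exp_eigen_value} and Theorem \ref{theorem_exp_eigen_vector} (via Proposition \ref{prop_replace_I_with_eta}) into a classical CLT for a fixed index $j$. For the first part, I would write $\overline{\etav}_{i,j}^{\boldsymbol{\cal C}}(n) = n^{-1}\sum_{k=1}^n(\eta_{k,i}\eta_{k,j} - \E[\eta_{k,i}\eta_{k,j}])$ and bound its $\|\cdot\|_q$-norm. Since each $\eta_{k,j}$ is a Gaussian linear process with coefficients $\alpha_{i,j}\sim i^{-\alpha}$ and $\alpha > 3/4$, the sequence $\{\eta_{k,i}\eta_{k,j}\}_k$ is a (product of) Gaussian chaoses; its partial-sum variance is $O(n)$ precisely because $\alpha > 3/4$ ensures $\sum_h |\mathrm{Cov}(\eta_{0,i}\eta_{0,j},\eta_{h,i}\eta_{h,j})| < \infty$ (the covariance of the products decays like the square of the coefficient correlations, i.e.\ like $h^{-2(2\alpha-1)}$, which is summable iff $\alpha>3/4$). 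Hypercontractivity on the second Wiener chaos then upgrades the $L^2$ bound to an $L^q$ bound with the same rate $n^{-1/2}$, uniformly in $i,j$, since the chaos constants depend only on $q$ and the normalization $\E[\eta_{k,j}^2]=\sum_i \alpha_{i,j}^2$ is bounded above and below uniformly in $j$. The second half of \hyperref[A1C]{\AoneC}, namely $n^{1/4}\max_j \|\sum_{k=1}^n \eta_{k,j}\|_{2q} = \oo(1)$, follows similarly: $\|\sum_{k=1}^n\eta_{k,j}\|_2^2 \sim n^{2(1-\alpha)\vee 1}$ up to logs, which is $\oo(n^{-1/2}) \cdot n$ — wait, one checks it is $o(n^{3/2})$ whenever $\alpha > 3/4$, and Gaussianity again trades $L^2$ for $L^{2q}$ for free. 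So \hyperref[A1C]{\AoneC} holds for every fixed $p$.

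For the second part, given \hyperref[A1C]{\AoneC} and \hyperref[A2C]{\AtwoC}, Proposition \ref{prop_transf_abstract_to_cov} yields Assumption \ref{ass_abstract} with $\mm=n$, so Theorem \ref{theorem_exp_eigen_value} combined with Proposition \ref{prop_replace_I_with_eta} gives, for fixed $j$,
\begin{align*}
\sqrt{n}\bigl(\widehat{\lambda}_j - \lambda_j\bigr) = \sqrt{n}\,\lambda_j\,\overline{\etav}_{j,j}^{\boldsymbol{\cal C}}(n) + o_P(1)
= \frac{\lambda_j}{\sqrt n}\sum_{k=1}^n\bigl(\eta_{k,j}^2 - 1\bigr) + o_P(1),
\end{align*}
using $\widetilde{\lambda}_j = \lambda_j/\E[\etav_{j,j}^{\boldsymbol{\cal C}}] = \lambda_j$ here. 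The leading term is $\lambda_j/\sqrt n$ times a partial sum of the stationary, centered, short-range-dependent sequence $\{\eta_{k,j}^2-1\}_k$ (short-range by the same $\alpha>3/4$ computation), so a standard CLT for functionals of Gaussian sequences (e.g.\ Breuer–Major, which applies since $\eta_{k,j}^2-1$ lies in the second chaos and the covariances are summable) gives $\sqrt n(\widehat\lambda_j-\lambda_j) \xrightarrow{w} \Gaussian(0,\lambda_j^2\sigma_{\lambda_j}^2)$ with $\sigma_{\lambda_j}^2 = \sum_{h\in\Z}\mathrm{Cov}(\eta_{0,j}^2,\eta_h^2)$. For the eigenfunctions, Theorem \ref{theorem_exp_eigen_vector} with Proposition \ref{prop_replace_I_with_eta} gives $\widehat\e_j - \e_j = \sum_{k\neq j}\e_k\,(\lambda_k\lambda_j)^{1/2}\overline{\etav}_{k,j}^{\boldsymbol{\cal C}}(n)/(\lambda_j-\lambda_k) + (\text{quadratic correction}) + o_P(n^{-1/2})$; the quadratic term is $O_P(n^{-1})$ and hence negligible after scaling by $\sqrt n$, while the linear part is $n^{-1}\sum_{k=1}^n$ of an $\Ln^2(\TT)$-valued stationary sequence $Z_k = \sum_{i\neq j}\e_i(\lambda_i\lambda_j)^{1/2}\eta_{k,i}\eta_{k,j}/(\lambda_j-\lambda_i)$ — whose $\Ln^2$-norm has finite second moment precisely because $\Lambda_j = \sum_{k\neq j}\lambda_j\lambda_k/(\lambda_j-\lambda_k)^2 < \infty$ under \hyperref[A2C]{\AtwoC}. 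A Hilbert-space CLT for this weakly dependent triangular array (justified, e.g., by the summability of operator-valued autocovariances plus tightness from the uniform moment bound) delivers $\sqrt n(\widehat\e_j - \e_j)\xrightarrow{w}\Gaussian(0,\Sigma_{\e_j})$ with $\Sigma_{\e_j} = \sum_{h\in\Z}\E[Z_0\otimes Z_h]$.

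The main obstacle I anticipate is the first step — verifying \hyperref[A1C]{\AoneC} uniformly in $i,j$ with the sharp constant at exactly the boundary $\alpha>3/4$. One must carefully control the covariance structure of the products $\eta_{k,i}\eta_{k,j}$, whose autocovariance involves a sum $\sum_i\sum_{i'}\alpha_{i,i}\alpha_{i'+h,i}\cdot(\text{analogous terms})$ that decays like $|h|^{-2(2\alpha-1)}$, so summability holds iff $2(2\alpha-1)>1$, i.e.\ $\alpha>3/4$; the constant must be shown uniform in $i,j$, which relies on $\sum_i\alpha_{i,j}^2$ and all the relevant $\ell^1/\ell^2$ norms of the coefficient sequences being bounded above and below uniformly in $j$ (immediate from $\alpha_{i,j}\sim i^{-\alpha}$ with $\alpha>1/2$). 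The Gaussian hypercontractivity step that upgrades $L^2$ to $L^q$ is then routine, and the transfer through Proposition \ref{prop_transf_abstract_to_cov} plus the two named theorems is bookkeeping; the only genuinely delicate estimate is this boundary dependence computation. A secondary point requiring care is the Hilbert-space CLT for $\widehat\e_j$: one should confirm tightness in $\Ln^2(\TT)$, which follows from finiteness of $\Lambda_j$ together with the uniform fourth-moment control already established, but this must be stated rather than left implicit.
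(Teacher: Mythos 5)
Your proposal follows the same overall structure as the paper's proof, with a few interchangeable tool choices and one point of vagueness that the paper resolves cleanly. For \hyperref[A1C]{\AoneC} you invoke Gaussian hypercontractivity on the second Wiener chaos to pass from the $L^2$ bound (governed by the summability of $r_h^2\sim h^{2-4\alpha}$, correct boundary $\alpha>3/4$) to $L^q$; the paper instead cites Theorem~4 of Arcones~\cite{arcones1994}, whose diagram-formula moment bounds and CLT deliver both $\sqrt{n}\max_{i,j}\|\overline{\etav}_{i,j}^{\boldsymbol{\cal C}}\|_p<\infty$ and the marginal weak convergence in one stroke. These two devices are essentially interchangeable here; Arcones' result is simply tailor-made because it packages the moment bound, the vector-valued CLT, and the Hermite-rank-$2$ condition $\sup_j\sum_k\Cov(\eta_{0,j},\eta_{k,j})^2<\infty$ together. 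For $\widehat{\lambda}_j$ your Breuer--Major invocation and the paper's Arcones invocation are the same result. The real divergence is the eigenfunction CLT. You appeal to a ``Hilbert-space CLT for weakly dependent triangular arrays'' and flag tightness as a loose end; but no off-the-shelf Hilbert-space CLT applies directly to Hermite functionals of a long-memory Gaussian field, and the point you flag is precisely where the work lives. The paper instead replaces $C_j=\sum_{k\neq j}\e_k I_{k,j}/(\lambda_j-\lambda_k)$ by its finite truncation $C_{j,d}$, uses Lemma~\ref{lem_bound_I} to show $\sqrt{n}\E\bigl[\|C_j-C_{j,d}\|_{\Ln^2}\bigr]\le\delta$ for $d$ large (which \emph{is} the tightness/asymptotic-flatness condition, since $\sum_k\lambda_k<\infty$), then applies Arcones' Theorem~4 to the finite-dimensional vector $\sqrt{n}C_{j,d}$ (using $\E[\overline{\etav}_{i_1,j_1}^{\boldsymbol{\cal C}}\overline{\etav}_{i_2,j_2}^{\boldsymbol{\cal C}}]=0$ unless $(i_1,j_1)=(i_2,j_2)$), and passes to the limit in $d$ via~\cite{ledoux_Tala_book_2011}. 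So your instinct that tightness ``must be stated rather than left implicit'' is right: the truncation computation is the proof of tightness, and writing it out is what turns your sketch into a complete argument.
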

The above result indicates that $\alpha = 3/4$ is the boundary value for a CLT with normalization $\sqrt{n}$, see also the discussion in ~\cite{arcones1994},~\cite{breuermajor}. In fact, given the linear structure of $\eta_{k,j}$ one readily computes that
\begin{align*}
\bigl\|\|\widehat{\boldsymbol{\cal C}} - {\boldsymbol{\cal C}} \|_{\Ln^2}\bigr\|_2 \lesssim n^{-1/2} \quad \text{iff $\alpha > 3/4$.}
\end{align*}
On the other hand, Lemma \ref{lem_bound_C_op} below yields that \hyperref[A1C]{\AoneC} implies $\bigl\|\|\widehat{\boldsymbol{\cal C}} - {\boldsymbol{\cal C}} \|_{\Ln^2}\bigr\|_2 \lesssim n^{-1/2}$. Hence we obtain the equivalence in \eqref{equivalence}. Finally, note that the regime $1/2 < \alpha \leq 1$ is generally considered as \textit{long memory}. Hence by Theorem \ref{thm_gaussian_part} above, we obtain a CLT for $\widehat{\lambda}_j$ and $\widehat{\e}_j$ even in the presence of long memory, where $3/4 < \alpha \leq 1$. If $1/2 < \alpha \leq 3/4$, Non-central limit theorems arise. If $\alpha \leq 1/2$, then $\E\bigl[\|X_0\|_{\Ln^2}^2\bigr] = \infty$, which requires a completely different treatment.

\subsection{Spectral gap: almost optimality}\label{sec_spec_gap_aoptimal}

Next, we discuss the issue of 'almost optimality' of condition \hyperref[A2C]{\AtwoC}. To this end, we draw heavily from the noteworthy results of ~\cite{mas_complex_2014}. Suppose that $\bigl\{\eta_{i,j}\bigr\}_{i,j \in \N}$ are IID and satisfy $\E\bigl[|\eta_{i,j}|^{2p}\bigr] \leq p! C^{p-1}$ for some constant $C > 0$. If a structure condition like \eqref{ex_eigen_poly} holds, then it is shown in ~\cite{mas_complex_2014} that
\begin{align}\label{eq_ryumgaart}
\E\bigl[\|\widehat{\e}_j - \e_j\|_{\Ln^2}^2\bigr] \lesssim \frac{j^2 (\log n)^2}{n}.
\end{align}
As can be seen from Corollary \ref{corollary_norms}, this bound deviates from the optimal one by the additional factor $(\log n)^2$. On the other hand, note that in the polynomial case in \eqref{ex_eigen_poly}, this bound is also valid for $j > \JJ_n^+$ (we require $\ad > 0$), which is a slightly larger region. In ~\cite{mas_complex_2014}, a lower bound is also provided, which is $\frac{j^2}{n} \wedge 1$. Strictly speaking, it is proven for the projection $\widehat{\pi}_j = \widehat{\e}_j \otimes \widehat{\e}_j$, where $\otimes$ denotes the one-rank operation
\begin{align*}
u \otimes v (w) = \langle u, w \rangle v, \quad u,v,w \in \Ln^2(\mathcal{T}).
\end{align*}
According to ~\cite{mas_complex_2014}, it then holds that (recall that $\mathcal{L}$ denotes the operator norm)
\begin{align}\label{eq_ryumgaart_2}
\frac{j^2}{n} \wedge 1 \lesssim \E\bigl[\|\widehat{\pi}_j - \pi_j\|_{\mathcal{L}}^2\bigr] \lesssim \frac{j^2 (\log n)^2}{n} \wedge 1.
\end{align}
Heuristically, this may also be inferred from ~\cite{dauxois_1982}. On the other hand, Corollary \ref{corollary_norms} and elementary computations yield
\begin{align}\label{eq_emp_eigen_discuss}
\E\bigl[\|\widehat{\pi}_j - \pi_j\|_{\mathcal{L}}^2\bigr] \lesssim \frac{1}{n}\sum_{\substack{k = 1\\k \neq j}}^{\infty}\frac{\lambda_j \lambda_k}{(\lambda_j - \lambda_k)^2} \lesssim \frac{j^2}{n}, \quad \text{if $j \leq n^{1/2 - \ad} (\log n)^{-1}$,}
\end{align}
(in the polynomial case) and thus the order of the upper and lower bounds match for $j \leq n^{1/2 - \ad} (\log n)^{-1}$. If $j \geq n^{1/2}$, Cauchy-Schwarz yields the trivial optimal upper bound. Since $\ad > 0$ may be chosen arbitrarily small given sufficiently many (all) moments, we find that our conditions on the eigenvalues $\lambdav$ are essentially optimal. In other words, we obtain exact expansions and the optimal error bound for almost the complete region of indices $j$ where \eqref{eq_emp_eigen_discuss} still converges to zero.

\section{Lag operator}\label{sec_lag_operator}

While the covariance operator $\boldsymbol{\cal C}$ is a key object for serially uncorrelated data ${\bf X}$, the lag operator $\boldsymbol{\cal C}_h$ and the long-run covariance operator $\boldsymbol{\cal \GG}$ become more relevant in the presence of serial correlation. We focus on $\boldsymbol{\cal C}_h$ in this section, and then carry out a similar program in Section \ref{sec_long_run_cov} for $\boldsymbol{\cal \GG}$. To facilitate the discussion, let us first introduce a popular notion of weak dependence. In the remainder of this section, we assume that for each $j \in \N$, the score sequence $\bigl\{\eta_{k,j}\bigr\}_{k \in \Z}$ is a \textit{causal} weak Bernoulli sequence, which can be written as
\begin{align}
\eta_{k,j} = g_j\bigl(\ldots,\epsilon_{k-1,j},\epsilon_{k,j}\bigr)
\end{align}
for some measurable functions $g_j$ and IID sequences $\{\epsilonv_k\}_{k \in \Z}$ with $\epsilonv_k = \bigl\{\epsilon_{k,j}\bigr\}_{j \in \N}$. We do not specify any crosswise dependence between $\epsilon_{k,i}$, $\epsilon_{k,j}$ for $i \neq j$, allowing for a large flexibility. Let $\mathcal{E}_{k,j} = \bigl(\epsilon_{i,j}, \, i \leq k \bigr)$. To quantify the dependence of $\bigl\{\eta_{k,j}\bigr\}_{k \in \Z}$, we adopt the coupling idea. Let $\bigl\{\epsilon_{k,j}'\bigr\}_{k \in \Z,j \in \N}$ be an IID copy of $\bigl\{\epsilon_{k,j}\bigr\}_{k \in \Z,j \in \N}$ and $\mathcal{E}_{k,j}' = \bigl(\mathcal{E}_{-1,j},\epsilon_{0,j}', \epsilon_{1,j}, \ldots, \epsilon_{k,j} \bigr)$ the coupled version of $\mathcal{E}_{k,j}$. Then we define
\begin{align}\label{defn_Omega}
\Omega_p(k) = \max_{j \in \N}\bigl\|\eta_{k,j} -  \eta_{k,j}'\bigr\|_p \quad \text{for $p \geq 1$, where $\eta_{k,j}' = g_j\bigl(\mathcal{E}_{k,j}'\bigr)$.}
\end{align}
Roughly speaking, $\Omega_p(k)$ measures the overall degree of dependence of $\eta_{k,j} = g_j(\mathcal{E}_{k,j})$ on
$\epsilon_{0,j}'$ and it is directly related to the data-generating mechanism of the underlying process (~\cite{wu_2005} refers to $\Omega_p(k)$ as physical dependence measure). This dependence concept is well established in the literature, and popular processes like ARMA, GARCH, iterated random functions etc. fit into this framework (cf. ~\cite{wu_2005}, ~\cite{sipwu}). Consider for example the linear process $\eta_{k,j} = \sum_{l = 0}^{\infty} \alpha_l \epsilon_{k-l,j}$ where $\bigl\{\epsilon_{k,j}\bigr\}_{k,\in \Z,j \in \N}$ is IID with $\bigl\|\epsilon_{k,j}\bigr\|_p < \infty$. Then
\begin{align}\label{eq_condi_nec_for_CLT}
\sum_{k = 1}^{\infty} \Omega_p(k) < \infty \quad \text{holds iff} \quad \sum_{k = 1}^{\infty} |\alpha_k| <\infty.
\end{align}
In this sense, \eqref{eq_condi_nec_for_CLT} is \textit{necessary} for a CLT. In fact, if it is violated, one can construct examples such that
\begin{align*}
\lim_{n \to \infty} \frac{1}{n}\biggl\|\sum_{k = 1}^n \eta_{k,j} \biggr\|_2^2 = \infty, \quad j \in \N,
\end{align*}
and a different normalization than $n^{-1/2}$ is required (cf. ~\cite{peligradutev2}). In the sequel, all dependence conditions will be expressed in terms of summability conditions of $\Omega_p(k)$.

A major difference when dealing with $\boldsymbol{\cal C}_h$ compared to $\boldsymbol{\cal C}$ (and $\boldsymbol{\cal G}$) is that it only satisfies a singular-value decomposition (SVD) in general, i.e; there exist orthonormal Bases ${\bf e} = \{\e_j\}_{j \in \N}$, ${\bf f} = \{f_j\}_{j \in \N}$ and a sequence of real numbers $\lambdav = (\lambda_j)_{j \in \N}$ tending to zero such that for fixed $h \in \Z$
\begin{align}\label{eq_lag_operator}
\boldsymbol{\cal C}_h (\cdot) = \E\bigl[\langle \overline{X}_k, \cdot \rangle \overline{X}_{k-h}\bigr] = \sum_{j = 1}^{\infty} \sqrt{\lambda_j} \langle \e_j, \cdot \rangle f_j, \quad \text{if $\E\bigl[\|X_k\|_{\Ln^2}^2\bigr] < \infty$.}
\end{align}
Hence a priori, $\boldsymbol{\cal C}_h$ does not fit into our framework. However, by considering the symmetrized version $\boldsymbol{\cal \DD}(\cdot) = \boldsymbol{\cal C}_h^* \boldsymbol{\cal C}_h (\cdot)$, we end up with an operator that meets our requirements. Here, $\boldsymbol{\cal C}_h^*$ denotes the adjoint operator of $\boldsymbol{\cal C}_h$, given by
\begin{align}\label{eq_lag_operator_adjoint}
\boldsymbol{\cal C}_h^* (\cdot) = \E\bigl[\langle \overline{X}_{k-h}, \cdot \rangle \overline{X}_{k}\bigr] = \sum_{j = 1}^{\infty} \sqrt{\lambda_j} \langle f_j, \cdot \rangle \e_j.
\end{align}
Routine computations (with $\overline{X}_k = \sum_{j = 1}^{\infty}\widetilde{\lambda}_j^{1/2} \eta_{k,j} \e_j$) then indeed reveal that
\begin{align}\label{eq_lag_operator_symmetrized}
\boldsymbol{\cal \DD}(\cdot) & = \sum_{j = 1}^{\infty} \lambda_j \langle \e_j, \cdot \rangle \e_j = \sum_{j = 1}^{\infty} \biggl(\widetilde{\lambda}_j \sum_{k = 1}^{\infty} \widetilde{\lambda}_k \E\bigl[\eta_{h,k}\eta_{0,j}\bigr]^2\biggr) \langle \e_j, \cdot \rangle \e_j.
\end{align}
Hence $\boldsymbol{\cal \DD}$ has a spectral decomposition with eigenvalues $\lambdav$ and eigenfunctions ${\bf e}$ and satisfies \eqref{defn_D_1}. Representations \eqref{eq_lag_operator}, \eqref{eq_lag_operator_adjoint} motivate a natural plug-in estimator for $\boldsymbol{\cal \DD}$ (cf. ~\cite{bosq_2000}), given as (for $h \in \N$)
\begin{align}\label{defn_empirical_lag_operator_symmetrized}
\widehat{\boldsymbol{\cal \DD}}\bigl(\cdot\bigr) = \frac{1}{(n-h)^2}\sum_{1 \leq k,l \leq n-h} \langle X_{l+h} - \bar{X}_n, X_{k+h} - \bar{X}_n \rangle \langle X_k- \bar{X}_n, \cdot \rangle \bigl(X_l - \bar{X}_n\bigr).
\end{align}
The empirical SVD components $\widehat{\lambdav}=\{\widehat{\lambda}_j\}_{j \in \N}$, $\widehat{\bf e} = \{\widehat{\e}_j\}_{j \in \N}$ and $\widehat{\bf f} = \{\widehat{f}_j\}_{j \in \N}$ are then defined via
\begin{align}
\widehat{\boldsymbol{\cal \DD}}\bigl(\widehat{\e}_j\bigr) = \widehat{\lambda}_j \widehat{\e}_j, \quad \widehat{\boldsymbol{\cal C}}_h\bigl(\widehat{\e}_j\bigr) = \widehat{\lambda}_j^{1/2} \widehat{f}_j,
\end{align}
where the empirical lag operator $\widehat{\boldsymbol{\cal C}}_h$ is given by
\begin{align}\label{defn_lag_op_est}
\widehat{\boldsymbol{\cal C}}_h\bigl(\cdot\bigr) = \frac{1}{n-h}\sum_{k = h + 1}^n \langle X_k - \bar{X}_n, \cdot \rangle \bigl(X_{k-h} - \bar{X}_n\bigr), \quad 0 \leq h \leq n-1,
\end{align}
and analogously for $-n+1 \leq h < 0$. In order to apply Theorems \ref{theorem_exp_eigen_value} and \ref{theorem_exp_eigen_vector} to $\widehat{\lambdav}$ and $\widehat{\bf e}$, the key objective is to validate \hyperref[A1D]{\AoneD} for appropriate $\overline{\etav}_{i,j}^{\boldsymbol{\cal \DD}}$ and $\etav_{i,j}^{\boldsymbol{\cal R}}$. To this end, introduce
\begin{align*}
A_{l,h,r,i,j} = \bigl(\eta_{l+h,r}\eta_{l,i} - \E[\eta_{l+h,r}\eta_{l,i}]\bigr)\E\bigl[\eta_{l+h,r}\eta_{l,j}\bigr], \quad l,h,r,i,j \in \N.
\end{align*}
Recalling $\overline{X}_k = \sum_{j = 1}^{\infty}\widetilde{\lambda}_j^{1/2} \eta_{k,j} \e_j$, we then define $\etav_{i,j}^{\boldsymbol{\cal \DD}}$ for fixed $h \in \N$ as
\begin{align}
\etav_{i,j}^{\boldsymbol{\cal \DD}}(n) = \frac{1}{n-h} \sum_{l = 1}^{n-h} \sum_{r = 1}^{\infty} \widetilde{\lambda}_r \bigl(A_{l,h,r,i,j} + A_{l,h,r,j,i}\bigr) + \sum_{r = 1}^{\infty} \widetilde{\lambda}_r \E\bigl[\eta_{h,r}\eta_{0,i}\bigr]\E\bigl[\eta_{h,r}\eta_{0,j}\bigr].
\end{align}
Note that this automatically defines $\etav_{i,j}^{\boldsymbol{\cal R}}$ via \eqref{defn_D_2}, see also \eqref{eq_prop_lag_operator_0} in the proof. We then have the following result.

\begin{proposition}\label{prop_lag_operator}
Let $q \geq 2$ and assume $\E[\|X_k\|_{\Ln^2}^2] < \infty$ and $\Omega_{4q}(k) \lesssim k^{-\bd}$, $\bd>3/2$. Then $\boldsymbol{\cal \DD} = \boldsymbol{\cal C}_h^* \boldsymbol{\cal C}_h$ and $\widehat{\boldsymbol{\cal \DD}}$ as in \eqref{defn_empirical_lag_operator_symmetrized} satisfy \eqref{defn_D_1} and \eqref{defn_D_2} such that
\begin{align*}
n^{1/2}\max_{i,j \in \N}\bigl\|\overline{\etav}_{i,j}^{\boldsymbol{\cal \DD}}\bigr\|_q < \infty, \quad n^{1/2}\max_{i,j \in \N}\bigl\|\etav_{i,j}^{\boldsymbol{\cal R}}\bigr\|_q \lesssim n^{-1/2}.
\end{align*}
\end{proposition}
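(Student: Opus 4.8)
The goal is to verify the two norm bounds of Proposition~\ref{prop_lag_operator}, namely control of $n^{1/2}\max_{i,j}\|\overline{\etav}_{i,j}^{\boldsymbol{\cal \DD}}\|_q$ and $n^{1/2}\max_{i,j}\|\etav_{i,j}^{\boldsymbol{\cal R}}\|_q \lesssim n^{-1/2}$, where $\etav_{i,j}^{\boldsymbol{\cal \DD}}$ is the explicit expression given above in terms of the quantities $A_{l,h,r,i,j}$. The strategy is to first pin down the decomposition into a ``bias-free'' centred main term and the remainder $\etav_{i,j}^{\boldsymbol{\cal R}}$, by expanding $\widehat{\boldsymbol{\cal D}}$ in \eqref{defn_empirical_lag_operator_symmetrized} in the score basis (using $\overline{X}_k = \sum_j \widetilde{\lambda}_j^{1/2}\eta_{k,j}\e_j$) and sorting the terms by how many empirical-mean corrections $\bar X_n - \mu$ they contain. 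Terms containing at least one $\bar X_n - \mu$ factor go into $\etav_{i,j}^{\boldsymbol{\cal R}}$; here one uses the second half of the hypothesis via the linear-process estimate $\|\sum_{k=1}^n\eta_{k,j}\|_{2q}\lesssim n^{1/2}$ (implied by $\sum_k \Omega_{4q}(k)<\infty$, which holds since $\bd>3/2>1$) together with Hölder to extract the decisive extra $n^{-1/2}$. The main term, after centring, is exactly $\frac{1}{n-h}\sum_{l=1}^{n-h}\sum_{r}\widetilde{\lambda}_r (A_{l,h,r,i,j}+A_{l,h,r,j,i})$, and the whole difficulty is to bound its $\|\cdot\|_q$-norm by $O(n^{-1/2})$ \emph{uniformly} in $i,j$.

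**Key steps.** (1) Fix $i,j,h$ and view $W_{l,r} := \widetilde{\lambda}_r(A_{l,h,r,i,j}+A_{l,h,r,j,i})$ as a summand; each $A_{l,h,r,i,j}$ is a centred product $\bigl(\eta_{l+h,r}\eta_{l,i}-\E[\cdots]\bigr)\,\E[\eta_{l+h,r}\eta_{l,j}]$, so it is a centred, stationary (in $l$) sequence. (2) Apply a moment/Rosenthal-type inequality for sums of weakly dependent sequences in the physical-dependence framework — e.g. the version of Wu--Nagaev or the results of \cite{wu_2005}, \cite{sipwu} — to $\sum_{l=1}^{n-h} W_{l,r}$. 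The dependence measure of the product sequence $\eta_{l+h,r}\eta_{l,i}$ is controlled, via Cauchy--Schwarz and the product rule for physical dependence, by $\Omega_{4q}$ evaluated at the appropriate lags, and the hypothesis $\Omega_{4q}(k)\lesssim k^{-\bd}$ with $\bd>3/2$ gives $\sum_k \Omega_{2q}^{(\text{product})}(k)<\infty$, yielding $\|\sum_{l=1}^{n-h}\eta_{l+h,r}\eta_{l,i}-\E[\cdots]\|_q \lesssim (n-h)^{1/2}$ with constant independent of $r,i,h$. (3) Crucially, the factor $\E[\eta_{l+h,r}\eta_{l,j}] = \Cov(\eta_{h,r},\eta_{0,j})$ is summable in $r$ after weighting by $\widetilde{\lambda}_r$: one shows $\sum_r \widetilde{\lambda}_r |\E[\eta_{h,r}\eta_{0,j}]| \leq \bigl(\sum_r\widetilde{\lambda}_r\bigr)^{1/2}\bigl(\sum_r \widetilde{\lambda}_r \E[\eta_{h,r}\eta_{0,j}]^2\bigr)^{1/2} \leq \|X\|^{1/2}\cdot\lambda_j^{1/2}/\widetilde{\lambda}_j^{1/2}\cdot\text{(const)}$ by \eqref{eq_lag_operator_symmetrized}, and in any case is bounded uniformly in $j,h$ by $\sum_r\widetilde{\lambda}_r\cdot\max_r\|\eta_{\cdot,r}\|_2^2 \lesssim 1$. (4) Combining (2) and (3) by the triangle inequality over $r$ (or by treating the $r$-sum first and then applying the moment inequality to the single sequence $\sum_r \widetilde{\lambda}_r\eta_{l+h,r}\eta_{l,i}$, which is cleaner) gives $\|\sum_l W_{l,r}\|_q$-type bounds summing to $O(n^{1/2})$, hence $\|\etav_{i,j}^{\boldsymbol{\cal \DD}} - \E[\etav_{i,j}^{\boldsymbol{\cal \DD}}]\|_q \lesssim n^{-1/2}$ uniformly. (5) Finally check that $\E[\etav_{i,j}^{\boldsymbol{\cal \DD}}] = \sum_r\widetilde{\lambda}_r\E[\eta_{h,r}\eta_{0,i}]\E[\eta_{h,r}\eta_{0,j}]$ matches the required decomposition \eqref{eq_lag_operator_symmetrized} of $\boldsymbol{\cal D}$, confirming the constraint in \eqref{defn_D_2}.

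**Main obstacle.** The delicate point is step~(3)--(4): obtaining the bound \emph{uniformly in $i$, $j$ and in $h$ (for $h$ up to $\oo(n)$)}, despite the infinite sum over $r$ and the fact that the eigenvalues $\lambda_r$ (and $\widetilde\lambda_r$) carry no quantitative decay assumption beyond summability. One cannot afford a crude $\max_r$ bound on $\|\sum_l \cdots\|_q$ multiplied by $\sum_r\widetilde{\lambda}_r$ if the per-$r$ constant in the moment inequality degrades; one must verify that the physical-dependence coefficients of $\{\eta_{l+h,r}\eta_{l,i}\}_l$ are bounded by $\Omega_{4q}$-quantities \emph{not depending on $r$, $i$, $h$}, so that the Rosenthal-type constant is genuinely universal. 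This is exactly where the strengthened moment order $4q$ (versus $2q$) and the coupling construction $\mathcal{E}_{k,j}'$ are used. A secondary technical nuisance is handling the $h$-dependence in the normalization $1/(n-h)$ versus $1/n$ and in the index range $l \le n-h$; this is routine provided $h=\oo(n)$, which is implicit, but should be stated.

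A careful write-up would organize these as three lemmas — one for the remainder $\etav_{i,j}^{\boldsymbol{\cal R}}$, one for the moment bound on weakly dependent product sums with universal constants, and one assembling the pieces — and then invoke Proposition~\ref{prop_transf_abstract_to_cov}-style bookkeeping to land in \hyperref[A1D]{\AoneD}.
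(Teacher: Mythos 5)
Your overall strategy does mirror the paper's: expand $\widehat{\boldsymbol{\cal \DD}}$ in the score basis, control the $\bar X_n-\mu$ corrections via the linear-process bound on $\|\sum_k\eta_{k,j}\|_{2q}$, and bound the centred linear term $\frac{1}{n-h}\sum_l\sum_r\widetilde{\lambda}_r(A_{l,h,r,i,j}+A_{l,h,r,j,i})$ by a physical-dependence moment inequality. Your steps (2)--(4) — the product rule for the dependence coefficients of $\eta_{l+h,r}\eta_{l,i}$, uniformity in $r,i,j,h$, and summability of $\widetilde{\lambda}_r$ together with $|\E[\eta_{h,r}\eta_{0,j}]|\leq 1$ — are exactly what the paper packages in \eqref{eq_prop_lag_operator_3} and Lemma \ref{lem_hoeffding_decomp_aux} \itmii, so this part of your plan is sound.

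There is, however, a genuine gap in your step (1): you assert that $\etav^{\boldsymbol{\cal R}}_{i,j}$ consists of the terms carrying a $\bar X_n-\mu$ factor. It does not. Even after replacing $\bar X_n$ by $\mu$, the double sum $\frac{1}{(n-h)^2}\sum_{k,l}\sum_r\widetilde{\lambda}_r\,\eta_{k+h,r}\eta_{l+h,r}\eta_{k,i}\eta_{l,j}$ H\"offding-decomposes into the deterministic mean, the linear term you analyze, and a \emph{degenerate quadratic} remainder
\begin{align*}
\frac{1}{(n-h)^2}\sum_{r=1}^\infty\widetilde{\lambda}_r\Bigl(\sum_{k=1}^{n-h}\overline{\eta_{k+h,r}\eta_{k,i}}\Bigr)\Bigl(\sum_{l=1}^{n-h}\overline{\eta_{l+h,r}\eta_{l,j}}\Bigr),
\end{align*}
which belongs to $\etav^{\boldsymbol{\cal R}}_{i,j}$ and must itself be shown to be $O(n^{-1})$ in $L^q$ to get the advertised extra $n^{-1/2}$. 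The bound is easy — apply Cauchy--Schwarz (over $\omega$) to the product of the two inner sums, use the $L^{2q}$ partial-sum bound (each factor is $\lesssim n^{1/2}$ uniformly in $r,i,h$), and then sum over $r$ with $\sum_r\widetilde{\lambda}_r<\infty$ — and it is precisely what the paper's Lemma \ref{lem_hoeffding_decomp_aux} \itmi supplies. Without this step, your control of $\etav^{\boldsymbol{\cal R}}_{i,j}$ is incomplete. The fix uses no new tools, only the same moment inequality you already invoke, so the plan is otherwise correct.
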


Related results can be established under different weak dependence conditions, see for instance ~\cite{dedecker_prieur_2005} or ~\cite{Peligrad_amaximal}. Using Proposition \ref{prop_lag_operator}, it is now easy to transfer the results, which we summarize in the following theorem.

\begin{theorem}\label{thm_lag_operator}
Suppose that $\E\bigl[\|X_k\|_{\Ln^2}^2\bigr] \leq C^{\boldsymbol{\cal C}_h}$ for a universal constant $C^{\boldsymbol{\cal C}_h}$. Assume in addition that for some $\ad > 0$, $\hd, p \geq 1$ we have that
\begin{enumerate}
\item[\AoneCh]\label{A1Ch} $\Omega_{4q}(k) \lesssim k^{-\bd}$, $\bd>3/2$ for $q = p 2^{\pd + 4}$, $\pd = \lceil \hd/\ad \rceil$,
\item[\AtwoCh]\label{A2Ch} \hyperref[A2D]{\AtwoD} holds with $C^{\boldsymbol{\cal D}} = C^{\boldsymbol{\cal C}_h}$, $\mm = n$, $\JJ_n^+ \in \N$ and $\ad$ as above,
\item[\AthreeCh]\label{A3Ch} $0 < \inf_{j \in \N} \sum_{r = 1}^{\infty} \widetilde{\lambda}_r \E\bigl[\eta_{h,r}\eta_{0,j}\bigr]^2$.
\end{enumerate}
Then Assumption \ref{ass_abstract} holds for $\boldsymbol{\cal \DD} = \boldsymbol{\cal C}_h^* \boldsymbol{\cal C}_h$ and $\widehat{\boldsymbol{\cal \DD}}$ as in \eqref{defn_empirical_lag_operator_symmetrized} with $\ad > 0$, $\hd, p \geq 1$, $\mm = n$, $\JJ_n^+ \in \N$, $s_{\mm}^{\boldsymbol{\cal \DD}} = s_{n}^{\boldsymbol{\cal \DD}} = n^{-1/2}$ and $C^{\boldsymbol{\cal D}} = C^{\boldsymbol{\cal C}_h}$ as above. In particular, Theorems \ref{theorem_exp_eigen_value} and \ref{theorem_exp_eigen_vector} apply to $\widehat{\lambdav}$ and $\widehat{\bf e}$.
\end{theorem}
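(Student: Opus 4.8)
The plan is to verify the three items \hyperref[A1D]{\AoneD}--\hyperref[A3D]{\AthreeD} of Assumption~\ref{ass_abstract} for the symmetrized operator $\boldsymbol{\cal \DD}=\boldsymbol{\cal C}_h^*\boldsymbol{\cal C}_h$ and the plug-in estimator $\widehat{\boldsymbol{\cal \DD}}$ of \eqref{defn_empirical_lag_operator_symmetrized}, taking $\mm=n$, the parameters $\ad,\hd,p$ and $\JJ_n^+$ as in the hypotheses, $s_n^{\boldsymbol{\cal \DD}}=n^{-1/2}$, and $C^{\boldsymbol{\cal D}}$ equal to the maximum of the (finitely many, $n$-free) constants produced along the way; Theorems~\ref{theorem_exp_eigen_value} and~\ref{theorem_exp_eigen_vector} then apply directly. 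The bulk of \hyperref[A1D]{\AoneD} is already packaged in Proposition~\ref{prop_lag_operator}: one applies it with moment index $q=p2^{\pd+4}$, $\pd=\lceil\hd/\ad\rceil$ (note $q\ge 2$ automatically, since $p\ge 1$ and $\pd\ge 1$), its hypotheses being precisely $\E[\|X_k\|_{\Ln^2}^2]<\infty$ (subsumed by the uniform bound $C^{\boldsymbol{\cal C}_h}$) and $\Omega_{4q}(k)\lesssim k^{-\bd}$, $\bd>3/2$, i.e.\ \hyperref[A1Ch]{\AoneCh}. This yields that $\boldsymbol{\cal \DD}$ and $\widehat{\boldsymbol{\cal \DD}}$ obey \eqref{defn_D_1} and \eqref{defn_D_2} with eigenfunctions ${\bf e}$, eigenvalues $\lambdav$ and the $\etav_{i,j}^{\boldsymbol{\cal \DD}}$ displayed before the statement ($\etav_{i,j}^{\boldsymbol{\cal R}}$ being the induced discrepancy), together with $n^{1/2}\max_{i,j}\|\overline\etav_{i,j}^{\boldsymbol{\cal \DD}}\|_q\le C^{\boldsymbol{\cal D}}$ and $n^{1/2}\max_{i,j}\|\etav_{i,j}^{\boldsymbol{\cal R}}\|_q\lesssim n^{-1/2}$; absorbing the implicit constant into $C^{\boldsymbol{\cal D}}$, this is \hyperref[A1D]{\AoneD} with $s_n^{\boldsymbol{\cal \DD}}=n^{-1/2}=\oo(1)$. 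Since $\{\e_j\}$ is an orthonormal basis, $\sum_j\widetilde\lambda_j\le\tr\boldsymbol{\cal C}\le\E[\|X_k\|_{\Ln^2}^2]\le C^{\boldsymbol{\cal C}_h}$, so all series in \eqref{defn_D_1}--\eqref{defn_D_2} converge.

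Condition \hyperref[A2D]{\AtwoD} requires no work: it is \hyperref[A2Ch]{\AtwoCh} verbatim, with $C^{\boldsymbol{\cal D}}=C^{\boldsymbol{\cal C}_h}$, $\mm=n$ and $\JJ_n^+,\ad$ as given. For \hyperref[A3D]{\AthreeD} I would compute $\E[\etav_{j,j}^{\boldsymbol{\cal \DD}}(n)]$ from the explicit formula preceding the statement: as $A_{l,h,r,i,j}$ is centered by construction, the sample-average term has mean zero and $\E[\etav_{j,j}^{\boldsymbol{\cal \DD}}(n)]=\sum_{r}\widetilde\lambda_r\,\E[\eta_{h,r}\eta_{0,j}]^2$, consistently with \eqref{eq_lag_operator_symmetrized} and Lemma~\ref{lem_E_eta_ij_is_zero}. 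The uniform lower bound $\E[\etav_{j,j}^{\boldsymbol{\cal \DD}}]\ge 1/C^{\boldsymbol{\cal D}}$ is exactly \hyperref[A3Ch]{\AthreeCh}. For the upper bound, Cauchy--Schwarz gives $\E[\eta_{h,r}\eta_{0,j}]^2\le\E[\eta_{h,r}^2]\E[\eta_{0,j}^2]=1$ under the normalization $\E[\eta_{k,j}^2]=1$ carried by the scores, whence $\E[\etav_{j,j}^{\boldsymbol{\cal \DD}}]\le\sum_r\widetilde\lambda_r\le C^{\boldsymbol{\cal C}_h}$; the same estimate and Lemma~\ref{lem_E_eta_ij_is_zero} give $\sum_j\lambda_j=\sum_j\widetilde\lambda_j\,\E[\etav_{j,j}^{\boldsymbol{\cal \DD}}]\le (C^{\boldsymbol{\cal C}_h})^2$, so both halves of \hyperref[A3D]{\AthreeD} hold after enlarging $C^{\boldsymbol{\cal D}}$.

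Once \hyperref[A1D]{\AoneD}--\hyperref[A3D]{\AthreeD} are in place Assumption~\ref{ass_abstract} is satisfied and the stated conclusion (in particular the applicability of Theorems~\ref{theorem_exp_eigen_value} and~\ref{theorem_exp_eigen_vector} to $\widehat{\lambdav}$ and $\widehat{\bf e}$) follows. The genuine difficulty is entirely inside Proposition~\ref{prop_lag_operator}, where the bilinear score increments $\eta_{l+h,r}\eta_{l,i}-\E[\eta_{l+h,r}\eta_{l,i}]$ must be controlled in $L^q$ \emph{uniformly} in $i,j$ (and in $h$, $n$) under the physical-dependence decay $\Omega_{4q}(k)\lesssim k^{-\bd}$, $\bd>3/2$; the only point to watch at the level of the present proof is that every constant emerging from that proposition depends on ${\bf X}$ solely through $C^{\boldsymbol{\cal C}_h}$ (and $\bd$, $q$, the implied constant in \hyperref[A1Ch]{\AoneCh}), never on $n$ or on $\lambdav$, so that $C^{\boldsymbol{\cal D}}$ qualifies as ``universal'' in the sense demanded by Assumption~\ref{ass_abstract}.
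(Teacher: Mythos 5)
Your argument is correct and is essentially the paper's own (implicit) route: the paper gives no separate proof of Theorem~\ref{thm_lag_operator}, relying only on the remark that ``Using Proposition~\ref{prop_lag_operator}, it is now easy to transfer the results,'' and your verification of \hyperref[A1D]{\AoneD}--\hyperref[A3D]{\AthreeD} (Proposition~\ref{prop_lag_operator} with $q=p2^{\pd+4}$ for the first, \hyperref[A2Ch]{\AtwoCh} verbatim for the second, and the computation $\E[\etav_{j,j}^{\boldsymbol{\cal \DD}}(n)]=\sum_r\widetilde\lambda_r\E[\eta_{h,r}\eta_{0,j}]^2$ combined with \hyperref[A3Ch]{\AthreeCh}, Cauchy--Schwarz, and Bessel for the third) supplies exactly the omitted transfer, including the point that all resulting constants depend on ${\bf X}$ only through $C^{\boldsymbol{\cal C}_h}$ and the decay constants in \hyperref[A1Ch]{\AoneCh}.
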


It remains to deal with $\widehat{\bf f}$, which is the subject of Theorem \ref{thm_lag_operator_svd_f} below.

\begin{theorem}\label{thm_lag_operator_svd_f}
Grant the assumptions of Theorem \ref{thm_lag_operator}, and let $1 \leq p' \leq p$. Then
\begin{align*}
&\biggl\|\biggl\|\widehat{f}_j - f_j - \frac{(\widehat{\lambda}_j - \lambda_j)f_j}{2\lambda_j} -  \frac{{\boldsymbol{\cal C}}_h\bigl(\widehat{\e}_j - \e_j\bigr) + \bigl(\widehat{\boldsymbol{\cal C}}_h - {\boldsymbol{\cal C}}_h\bigr)\bigl(\e_j\bigr)}{\sqrt{\lambda_j}}\biggr\|_{\Ln^2} \biggr\|_{p'}\\&\lesssim  \frac{1}{\sqrt{\lambda_j n}}\biggl(\bigl\|\|\widehat{\e}_j - \e_j\|_{\Ln^2}\bigr\|_{4p'} + \frac{1}{\sqrt{n}}\biggr).
\end{align*}
\end{theorem}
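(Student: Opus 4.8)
The starting point is the defining relations $\widehat{\boldsymbol{\cal C}}_h(\widehat{\e}_j) = \widehat{\lambda}_j^{1/2}\widehat{f}_j$ and $\boldsymbol{\cal C}_h(\e_j) = \lambda_j^{1/2} f_j$. Writing $\widehat{f}_j = \widehat{\lambda}_j^{-1/2}\widehat{\boldsymbol{\cal C}}_h(\widehat{\e}_j)$ and subtracting $f_j = \lambda_j^{-1/2}\boldsymbol{\cal C}_h(\e_j)$, the plan is to decompose
\[
\widehat{f}_j - f_j = \bigl(\widehat{\lambda}_j^{-1/2} - \lambda_j^{-1/2}\bigr)\widehat{\boldsymbol{\cal C}}_h(\widehat{\e}_j) + \lambda_j^{-1/2}\Bigl(\widehat{\boldsymbol{\cal C}}_h(\widehat{\e}_j) - \boldsymbol{\cal C}_h(\e_j)\Bigr),
\]
then split the second bracket further as $\widehat{\boldsymbol{\cal C}}_h(\widehat{\e}_j) - \boldsymbol{\cal C}_h(\e_j) = \boldsymbol{\cal C}_h(\widehat{\e}_j - \e_j) + (\widehat{\boldsymbol{\cal C}}_h - \boldsymbol{\cal C}_h)(\e_j) + (\widehat{\boldsymbol{\cal C}}_h - \boldsymbol{\cal C}_h)(\widehat{\e}_j - \e_j)$. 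The last, purely bilinear, term is the "product of two small quantities'' and should be controlled in $\|\cdot\|_{p'}$ by H\"older's inequality: $\|\|(\widehat{\boldsymbol{\cal C}}_h - \boldsymbol{\cal C}_h)(\widehat{\e}_j - \e_j)\|_{\Ln^2}\|_{p'} \leq \|\|\widehat{\boldsymbol{\cal C}}_h - \boldsymbol{\cal C}_h\|_{\mathcal L}\|_{2p'}\,\|\|\widehat{\e}_j - \e_j\|_{\Ln^2}\|_{2p'}$, where the operator-norm factor is $O(n^{-1/2})$ from the estimates underlying Proposition \ref{prop_lag_operator} (it is a centered empirical average of bounded bilinear forms) and the eigenfunction factor is handled by Corollary \ref{corollary_norms}. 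After division by $\sqrt{\lambda_j}$ this contributes the term $(\lambda_j n)^{-1/2}\|\|\widehat{\e}_j - \e_j\|_{\Ln^2}\|_{4p'}$ on the right-hand side (a $2p'$-norm bound suffices and is dominated by the $4p'$-norm appearing in the statement).

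For the scalar prefactor I would Taylor-expand $x^{-1/2}$ around $\lambda_j$: $\widehat{\lambda}_j^{-1/2} - \lambda_j^{-1/2} = -\tfrac12\lambda_j^{-3/2}(\widehat{\lambda}_j - \lambda_j) + O(\lambda_j^{-5/2}(\widehat{\lambda}_j - \lambda_j)^2)$ on the event where $|\widehat{\lambda}_j - \lambda_j| \leq \lambda_j/2$, whose complement is negligible by Corollary \ref{corollary_norms} together with the lower bound $\lambda_j \geq \lambda_{\JJ_n^+} \gtrsim n^{-\hd}$ from \hyperref[A2D]{\AtwoD} (so a Markov/Chebyshev bound at the moment order $q$, which is much larger than needed, kills the bad event). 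Multiplying $-\tfrac12\lambda_j^{-3/2}(\widehat{\lambda}_j - \lambda_j)$ by $\widehat{\boldsymbol{\cal C}}_h(\widehat{\e}_j)$ and replacing $\widehat{\boldsymbol{\cal C}}_h(\widehat{\e}_j)$ by $\boldsymbol{\cal C}_h(\e_j) = \lambda_j^{1/2}f_j$ at the cost of another bilinear remainder, one obtains exactly the claimed leading correction $-\tfrac12\lambda_j^{-1}(\widehat{\lambda}_j - \lambda_j)f_j$; the replacement error here is $\lambda_j^{-3/2}|\widehat{\lambda}_j - \lambda_j|\cdot\|(\widehat{\boldsymbol{\cal C}}_h(\widehat{\e}_j) - \boldsymbol{\cal C}_h(\e_j))\|_{\Ln^2}$, again a product of two terms that are each $O(\lambda_j^{1/2} n^{-1/2})$ in the relevant norm (using Theorem \ref{theorem_exp_eigen_value} for $\widehat{\lambda}_j - \lambda_j$ and the decomposition above), yielding a contribution of order $(\lambda_j n)^{-1/2}\cdot n^{-1/2} = (\lambda_j)^{-1/2} n^{-1}$ after the overall $\lambda_j^{-1/2}$-normalization in the statement, which is the $(\lambda_j n)^{-1/2}\cdot n^{-1/2}$ term on the right-hand side.

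The remaining two "linear'' terms, $\lambda_j^{-1/2}\boldsymbol{\cal C}_h(\widehat{\e}_j - \e_j)$ and $\lambda_j^{-1/2}(\widehat{\boldsymbol{\cal C}}_h - \boldsymbol{\cal C}_h)(\e_j)$, are precisely what appears inside the norm on the left-hand side of the statement, so they are kept as-is and contribute nothing to the error. Assembling the pieces: the error consists of (i) the $x^{-1/2}$-Taylor quadratic remainder times $\widehat{\boldsymbol{\cal C}}_h(\widehat{\e}_j)$, which is $O(\lambda_j^{-5/2}(\widehat{\lambda}_j-\lambda_j)^2\cdot\lambda_j^{1/2})$, i.e. $O(\lambda_j^{-2}\cdot\lambda_j^2 n^{-1}\cdot\lambda_j^{1/2}) = O(\lambda_j^{1/2} n^{-1})$, matching $(\lambda_j n)^{-1/2}n^{-1/2}\cdot\lambda_j$—one should double-check the $\lambda_j$ powers here against Corollary \ref{corollary_norms}, which gives $\|\widehat\lambda_j - \lambda_j\|_p \lesssim \lambda_j n^{-1/2}$, so $\|(\widehat\lambda_j-\lambda_j)^2\|_{p} \lesssim \lambda_j^2 n^{-1}$ and the term is $\lesssim \lambda_j^{-5/2}\lambda_j^2 n^{-1}\lambda_j^{1/2} = n^{-1}$, which after the outer $1/\sqrt{\lambda_j n}$-scaling of the RHS is even smaller than required; (ii) the two bilinear remainders, each absorbed into $(\lambda_j n)^{-1/2}(\|\|\widehat\e_j - \e_j\|_{\Ln^2}\|_{4p'} + n^{-1/2})$ as shown above. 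The main obstacle I anticipate is bookkeeping the exact powers of $\lambda_j$ through all three normalizations ($\lambda_j^{-1/2}$ from $f_j$, $\lambda_j^{-1}$ in the $\widehat\lambda_j$-correction, and the outer $(\lambda_j n)^{-1/2}$ on the RHS) so that every discarded term genuinely sits below the stated bound, rather than any serious analytic difficulty—the probabilistic inputs (operator-norm concentration, Corollary \ref{corollary_norms}, Theorem \ref{theorem_exp_eigen_value}) are all already available, and Proposition \ref{prop_replace_I_with_eta} is not even needed here since $\boldsymbol{\cal C}_h(\widehat\e_j - \e_j)$ and $(\widehat{\boldsymbol{\cal C}}_h - \boldsymbol{\cal C}_h)(\e_j)$ are left unexpanded.
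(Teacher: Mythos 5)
Your proposal is correct in substance and takes essentially the same route as the paper, with two cosmetic differences. You decompose $\widehat{f}_j - f_j$ as $(\widehat{\lambda}_j^{-1/2} - \lambda_j^{-1/2})\widehat{\boldsymbol{\cal C}}_h(\widehat{\e}_j) + \lambda_j^{-1/2}(\widehat{\boldsymbol{\cal C}}_h(\widehat{\e}_j) - \boldsymbol{\cal C}_h(\e_j))$ and Taylor-expand $x^{-1/2}$ at $\lambda_j$; the paper instead factors $\widehat{f}_j - f_j = (\widehat{\lambda}_j^{1/2}\widehat{f}_j - \lambda_j^{1/2}f_j + (\lambda_j^{1/2} - \widehat{\lambda}_j^{1/2})f_j)(\lambda_j^{-1/2} + (\lambda_j^{1/2} - \widehat{\lambda}_j^{1/2})/(\widehat{\lambda}_j\lambda_j)^{1/2})$ and uses the exact algebraic bound $|\widehat{\lambda}_j^{1/2} - \lambda_j^{1/2} - (\widehat{\lambda}_j - \lambda_j)/(2\lambda_j^{1/2})| \le (\widehat{\lambda}_j - \lambda_j)^2/(2\lambda_j^{3/2})$ in place of your Taylor remainder. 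The paper's version has the minor advantage that the quadratic control holds unconditionally; the event $\{\widehat{\lambda}_j > \lambda_j/2\}$ is then only needed to control $\widehat{\lambda}_j^{-1/2}$, not the remainder itself. Otherwise the structure of both proofs is identical: split $\widehat{\lambda}_j^{1/2}\widehat{f}_j - \lambda_j^{1/2}f_j$ exactly as you did, bound the bilinear term $(\widehat{\boldsymbol{\cal C}}_h - \boldsymbol{\cal C}_h)(\widehat{\e}_j - \e_j)$ by H\"older with the $n^{-1/2}$ operator-norm concentration, and control the good/bad event by Markov at the high moment $q$.

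Two small points you should tidy up. First, the claim that $|\widehat{\lambda}_j - \lambda_j|$ and $\|\widehat{\boldsymbol{\cal C}}_h(\widehat{\e}_j) - \boldsymbol{\cal C}_h(\e_j)\|_{\Ln^2}$ are ``each $O(\lambda_j^{1/2}n^{-1/2})$ in the relevant norm'' is not right: the paper's proof, and Lemma \ref{lem_eigen_uniform_bound}, give $\|\widehat{\lambda}_j - \lambda_j\|_{2p'} \lesssim \lambda_j n^{-1/2}$, while the operator difference satisfies $\bigl\|\|\widehat{\boldsymbol{\cal C}}_h(\widehat{\e}_j) - \boldsymbol{\cal C}_h(\e_j)\|_{\Ln^2}\bigr\|_{2p'} \lesssim \bigl\|\|\widehat{\e}_j - \e_j\|_{\Ln^2}\bigr\|_{4p'} + n^{-1/2}$ with no $\lambda_j^{1/2}$ factor. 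Plugging in these actual rates you still obtain exactly the stated bound, so the conclusion is unaffected, but the intermediate bookkeeping as written does not hold. Second, your Taylor expansion (and the paper's own intermediate steps) both yield the leading correction $-\tfrac{1}{2\lambda_j}(\widehat{\lambda}_j - \lambda_j)f_j$, whereas the theorem statement subtracts $+\tfrac{1}{2\lambda_j}(\widehat{\lambda}_j - \lambda_j)f_j$; this appears to be a sign typo in the statement, and your derivation quietly has the sign that the proof actually produces.
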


As the proof shows, Theorem \ref{thm_lag_operator_svd_f} is essentially a concatenation of the previous results. Note in particular that the above expansion can be developed further in a straightforward manner by employing Theorems \ref{theorem_exp_eigen_value} and \ref{theorem_exp_eigen_vector}.

\section{Long-run covariance operator}\label{sec_long_run_cov}

The long-run covariance operator is a natural generalization of the covariance operator in the presence of serial correlation. From a statistical perspective, this is particularly relevant in the context of the CLT, where under appropriate conditions on ${\bf X}$, we have that
\begin{align}\label{CLT_longrun}
\frac{1}{\sqrt{n}}S_n = \frac{1}{\sqrt{n}}\sum_{k = 1}^n \overline{X}_k \xrightarrow{w} \Gaussian\bigl(0, \boldsymbol{\cal \GG}\bigr) \quad \text{and} \quad \sup_{n} n^{-1/2}\bigl\|\|S_n\|_{\Ln^2}\bigr\|_2 < \infty,
\end{align}
where ${\boldsymbol{\cal \GG}}(\cdot)$ is the long-run covariance operator, (formally) defined as
\begin{align*}
{\boldsymbol{\cal \GG}}(\cdot) = \sum_{h \in \Z}\boldsymbol{\cal C}_h(\cdot), \quad \boldsymbol{\cal C}_h(\cdot) = \E\bigl[\langle \overline{X}_k, \cdot \rangle \overline{X}_{k-h}\bigr].
\end{align*}
Note that $\boldsymbol{\cal \GG}$ in general only exists if $\sum_{h \in \Z} \|\boldsymbol{\cal C}_h\|_{{\cal L}} < \infty$, which is usually referred to as a weak dependence condition. In view of \eqref{CLT_longrun}, we see that $\boldsymbol{\cal \GG}$ takes over the role of $\boldsymbol{\cal C}$ if ${\bf X}$ has serial correlation: in the 'limit case' where $n^{-1/2}S_n$ is distributed as $\Gaussian\bigl(0, \boldsymbol{\cal \GG}\bigr)$, the best (in $\Ln^2$-sense) finite dimensional approximations are provided by the classical Kahunen-Lo\`{e}ve decomposition with respect to $\boldsymbol{\cal \GG}$. Hence we can expect that for large enough $n$, finite dimensional approximations of $n^{-1/2}S_n$ based on appropriate estimates $\widehat{\boldsymbol{\cal \GG}}$ are close to optimality too. We refer to ~\cite{hoermann_JRSS_2015}, ~\cite{horvath_kokoszka_reeder_JRSS_2013},~\cite{panaretos_tavakoli_2013}, ~\cite{panaretos2013}, and more recently ~\cite{cerovecki_hoermann_2015_arxiv} for further discussions. A unifying, even more general object than $\boldsymbol{\cal \GG}$ is the spectral density operator $\boldsymbol{\cal F}(\theta)$, first studied in ~\cite{panaretos2013}, which recently has attracted a lot of attention (cf. ~\cite{hoermann_JRSS_2015}, ~\cite{panaretos_tavakoli_2013}). A (detailed) study is beyond the scope of the present note, and is left open for future research. It appears though that at least some of the results can be transferred.\\
\\
Estimation of $\boldsymbol{\cal \GG}$ is a delicate issue, and already in the univariate/multivariate case a substantial body of literature has evolved around this problem, see for instance ~\cite{andrews},~\cite{han_wu_2014_max}, ~\cite{wu_asymptotic_bernoulli} and the many references therein. In the context of functional data, we refer for instance to ~\cite{hoermann_JRSS_2015}, ~\cite{horvath_kokoszka_reeder_JRSS_2013},  ~\cite{panaretos_tavakoli_2013},~\cite{panaretos2013}. The basic principle is plug-in estimation, which leads to the estimates
\begin{align}\label{defn_G_longrun_estimator}
&\widehat{{\boldsymbol{\cal \GG}}}^b\bigl(\cdot\bigr) = \widehat{{\boldsymbol{\cal C}}}_0\bigl(\cdot\bigr) + \sum_{h = 1}^b \omega_h \bigl(\widehat{{\boldsymbol{\cal C}}}_h(\cdot) + \widehat{{\boldsymbol{\cal C}}}_{-h}(\cdot)\bigr), \quad \text{where $\widehat{\boldsymbol{\cal C}}_h\bigl(\cdot\bigr)$ is as in \eqref{defn_lag_op_est},}
\end{align}
and $|\omega_h| \leq 1$ is a sequence of weight functions.
In the sequel, the choice of $\omega_h$ has little impact on the results, and we therefore set $\omega_h = 1$ for the remainder of this section. For consistent estimates, it is necessary that $b = b_n \to \infty$ as $n$ increases. Even so, in contrast to $\widehat{\boldsymbol{\cal C}}_h$, the estimate $\widehat{\boldsymbol{\cal \GG}}^b$ is biased. Depending on the decay rate of $\|\boldsymbol{\cal C}_h\|_{{\cal L}}$, the optimal choice of $b_n$ is $b_n \thicksim \log n$ (geometric decay), or $b_n \thicksim n^{1/(2s + 1)}$ (polynomial decay with $s$), see ~\cite{andrews}. Thus, the actual operator we are estimating is
\begin{align}\label{defn_long_run_b}
\boldsymbol{\cal \GG}^b\bigl(\cdot\bigr) = \sum_{|h|\leq b} \boldsymbol{\cal C}_h\bigl(\cdot\bigr).
\end{align}
Note that in general $\E[\widehat{\boldsymbol{\cal \GG}}^b] \neq \boldsymbol{\cal \GG}^b$ and hence $\widehat{\boldsymbol{\cal \GG}}^b$ is still biased, but this bias is negligible. We point out that subject to some regularity conditions (cf. ~\cite{panaretos2013})
\begin{align}\label{eq_rate_of_conv_long_run}
\bigl\|\|\widehat{\boldsymbol{\cal \GG}}^b -\boldsymbol{\cal \GG}^b\|_{\Ln^2}\bigr\|_2 \thicksim \sqrt{n/b},
\end{align}
which is the same rate as in the univariate case (cf. ~\cite{andrews}). Moreover, under quite general assumptions (cf. ~\cite{hoermann_JRSS_2015},~\cite{panaretos2013}), it follows that $\boldsymbol{\cal \GG}^b$ satisfies the spectral decomposition
\begin{align}\label{Gb_representation_1}
\boldsymbol{\cal \GG}^b(\cdot) = \sum_{j = 1}^{\infty} \lambda_j^{b} \big \langle \e_j^b, \cdot \rangle \e_j^b, \quad \sum_{j = 1}^{\infty} \lambda_j^b < \infty,
\end{align}
with eigenvalues $\lambdav^b = \{\lambda_j^b\}_{j \in \N}$ and eigenfunctions ${\bf \e}^b = \{\e_j^b\}_{j \in \N}$.
Since the actual underlying operator of interest is $\boldsymbol{\cal \GG}^b$, it is natural to (first) express our conditions in terms of $\lambdav^b$ and ${\bf \e}^b$. We can decompose $\overline{X}_k$ as
\begin{align}\label{eq_rep_X_k}
\overline{X}_k = \sum_{j = 1}^{\infty} \sqrt{\widetilde{\lambda}_j^b} \eta_{k,j}^b \e_j^b, \quad \widetilde{\lambda}_j^b = \E\bigl[\langle \overline{X}_k, \e_j^b \rangle^2 \bigr],\,\, \eta_{k,j}^b = \langle \overline{X}_k, \e_j \rangle (\widetilde{\lambda}_j^b)^{-1/2}.
\end{align}
Observe that in general $\E[\eta_{k,j}^b \eta_{k,i}^b] \neq 0$ for $i \neq j$, which is different from the Kahunen-Lo\`{e}ve expansion. In analogy to \eqref{relations_eta_C}, we also introduce the quantity
\begin{align}\label{definition_eta_bn}
\etav_{i,j}^b = \etav_{i,j}^{b}(n) = \sum_{k = 1}^n\frac{ {\eta_{k,i}^{b} \eta_{k,j}^{b}}}{n} + \sum_{h = 1}^{b}\sum_{k = h+1}^n \frac{\eta_{k,i}^{b} \eta_{k-h,j}^{b} + \eta_{k-h,i}^{b} \eta_{k,j}^{b}}{n-h}.
\end{align}
It is then easy to see that
\begin{align}\label{Gb_representation_2}
\widehat{\boldsymbol{\cal \GG}}^b(\cdot) = \sum_{i,j = 1}^{\infty} \sqrt{\widetilde{\lambda}_i^b \widetilde{\lambda}_j^b} \bigl(\etav_{i,j}^{b} + \etav_{i,j}^{\boldsymbol{\cal R}}\bigr)\langle \e_i^b, \cdot \rangle \e_j^b, \quad \boldsymbol{\cal \GG}^b(\cdot) = \sum_{i,j = 1}^{\infty} \sqrt{\widetilde{\lambda}_i^b \widetilde{\lambda}_j^b} \E\bigl[\etav_{i,j}^{b}\bigr] \langle \e_i^b, \cdot \rangle \e_j^b,
\end{align}
for appropriate (degenerate) random variables $\{\etav_{i,j}^{\boldsymbol{\cal R}}\}_{i,j \in \N}$ (see \eqref{eq_thm_longrun_b_remains_valid_1}). Takeing \eqref{Gb_representation_1} into account, we see that both \eqref{Gb_representation_1}, \eqref{Gb_representation_2} match the setup in \eqref{defn_D_1} and \eqref{defn_D_2}.
We can thus appeal to the results of Section \ref{sec_main}. To this end, it is convenient to denote with
\begin{align*}
\varphi_{i,j}^{b} = \E\bigl[\etav_{i,j}^{b}\bigr] = \sum_{|h| \leq b}\E\bigl[\eta_{h,i}^{b}\eta_{0,j}^{b}\bigr], \quad i,j \in \N.
\end{align*}
Note that by Lemma \ref{lem_E_eta_ij_is_zero} we have for $b \in \N$ (including $b = \infty$)
\begin{align}\label{lem_varphi_relations}
\varphi_{i,j}^{b} = 0 \quad \text{if $i \neq j$ and $\lambda_j^b = \varphi_{j,j}^{b} \widetilde{\lambda}_j^b$}.
\end{align}

Let us now translate Assumption \ref{ass_abstract} to our present setup.

\begin{ass}\label{ass_eigenvaectors_longrun}
The sequence ${\bf X}$ is stationary such that $\sum_{h \in \Z} \|\boldsymbol{\cal C}_h\|_{\mathcal{L}} < \infty$. Moreover, for $b = \oo(n)$, a universal constant $C^{\boldsymbol{\cal \GG}}<\infty$ and universal sequence $s_n^{\boldsymbol{\cal \GG}} = \oo(1)$ and $\ad > 0$, $\hd, p \geq 1$ and $\JJ_n^+ \in \N$ it holds that
\begin{enumerate}
\item[\Aoneb]\label{A1b} $(n/b)^{\frac{1}{2}}\max_{i,j \in \N}\bigl\|\overline{\etav}_{i,j}^{b}(n)\bigr\|_q \leq C^{\boldsymbol{\cal G}}$, $n^{-\frac{3}{4}} b^{\frac{1}{4}} \max_{j \in \N}\bigl\|\sum_{k = 1}^n \eta_{k,j}^b\bigr\|_{2q} \leq s_n^{\boldsymbol{\cal \GG}}$ for $q = p 2^{\pd + 4}$, $\pd = \lceil \hd/\ad \rceil$,
\item[\Atwob]\label{A2b} $\max_{1 \leq j \leq \JJ_n^+}\biggl\{{(n/b)}^{-\frac{1}{2} + \ad}\sum_{\substack{i = 1\\i \neq j}}^{\infty} \frac{\lambda_i^{b}}{|\lambda_j^{b} - \lambda_i^{b}|}, {(n/b)}^{-1 + 2\ad}\sum_{\substack{i = 1\\i \neq j}}^{\infty} \frac{\lambda_i^{b} \lambda_j^{b} }{(\lambda_j^{b} - \lambda_i^{b})^2}\biggr\} \leq C^{\boldsymbol{\cal \GG}}$ and $\lambda_{\JJ_n^+}^{b} \gtrsim {(n/b)}^{-\hd}$,
\item[\Athreeb]\label{A3b} $1/C^{\boldsymbol{\cal \GG}} \leq \varphi_{j,j}^{b} \leq C^{\boldsymbol{\cal \GG}}$ for $j \in \N$, $\sum_{j = 1}^{\infty} \lambda_j^b \leq C^{\boldsymbol{\cal \GG}}$.
\end{enumerate}
\end{ass}

Let us discuss these conditions. In view of \eqref{eq_rate_of_conv_long_run}, the choice $m = n/b$ is quite natural. Condition \hyperref[A1b]{\Aoneb} is a little more explicit than \hyperref[A1D]{\AoneD}, but of the same nature. \hyperref[A2b]{\Atwob}, \hyperref[A3b]{\Athreeb} are essentially translations of \hyperref[A2D]{\AtwoD}, \hyperref[A3D]{\AthreeD}. Note that in the present formulation, \hyperref[A3b]{\Athreeb} reflects the common non-degeneracy assumption encountered in the time series literature.\\
\\
The setup in Assumption \ref{ass_eigenvaectors_longrun} is quite general. Before looking at the possible range of applications, let us formulate the transferred results. To this end, in analogy to $I_{i,j}$ in \eqref{defn_I_ij_C}, we introduce $I_{i,j}^b$ as
\begin{align}\label{defn_I_ij_b}
I_{i,j}^b = \bigl \langle \bigl(\widehat{\boldsymbol{\cal \GG}}^b - {\boldsymbol{\cal \GG}}^b\bigr)(\e_i^b), \e_j^b \bigr \rangle, \quad i,j \in \N.
\end{align}

We then have the following general transfer result.

\begin{theorem}\label{thm_longrun_b_remains_valid}
Assume that Assumption \ref{ass_eigenvaectors_longrun} holds. Then for $1 \leq \J < \JJ_n^+$, Theorem \ref{theorem_exp_eigen_value} and Theorem \ref{theorem_exp_eigen_vector} remain valid if we substitute $n/b$, $\lambda_j^b$, $\e_j^b$, $\widehat{\lambda}_j^b$, $\widehat{\e}_j^b$ and $I_{i,j}^b$ at the corresponding places. Moreover, corresponding versions of Proposition \ref{prop_replace_I_with_eta} and Corollary \ref{corollary_norms} hold.
\end{theorem}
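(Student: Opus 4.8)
The plan is to verify that Assumption \ref{ass_eigenvaectors_longrun}, once we set $\mm = n/b$ and replace $\lambdav$, ${\bf e}$, $\widehat{\boldsymbol{\cal \DD}}$, $\etav^{\boldsymbol{\cal \DD}}_{i,j}$, $\etav^{\boldsymbol{\cal R}}_{i,j}$ by their superscript-$b$ analogues, is precisely an instance of Assumption \ref{ass_abstract}, so that Theorems \ref{theorem_exp_eigen_value} and \ref{theorem_exp_eigen_vector}, Proposition \ref{prop_replace_I_with_eta} and Corollary \ref{corollary_norms} apply verbatim. First I would record the structural input: by \eqref{Gb_representation_1} and \eqref{Gb_representation_2}, the pair $(\boldsymbol{\cal \GG}^b,\widehat{\boldsymbol{\cal \GG}}^b)$ satisfies \eqref{defn_D_1} and \eqref{defn_D_2} with $\widetilde{\lambda}_j = \widetilde{\lambda}_j^b$, $\etav_{i,j}^{\boldsymbol{\cal \DD}} = \etav_{i,j}^b$ and the same $\etav_{i,j}^{\boldsymbol{\cal R}}$; the consistency demand $\boldsymbol{\cal \GG}^b(\cdot) = \sum_{i,j}\sqrt{\widetilde{\lambda}_i^b\widetilde{\lambda}_j^b}\E[\etav_{i,j}^b]\langle \e_i^b,\cdot\rangle \e_j^b$ is built into \eqref{Gb_representation_2}, and Lemma \ref{lem_E_eta_ij_is_zero} then gives \eqref{lem_varphi_relations}, i.e.\ $\varphi_{i,j}^b = 0$ for $i\neq j$ and $\lambda_j^b = \varphi_{j,j}^b \widetilde{\lambda}_j^b$, which is exactly the relation $\lambda_j = \widetilde{\lambda}_j\E[\etav_{j,j}^{\boldsymbol{\cal \DD}}]$ needed to make sense of Proposition \ref{prop_replace_I_with_eta} in this setting.

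Next I would translate the three numbered conditions. For \hyperref[A1D]{\AoneD} with $\mm = n/b$: the first half $\mm^{1/2}\max_{i,j}\|\overline{\etav}_{i,j}^{\boldsymbol{\cal \DD}}(\mm)\|_q \leq C^{\boldsymbol{\cal D}}$ is literally the first bound in \hyperref[A1b]{\Aoneb}, namely $(n/b)^{1/2}\max_{i,j}\|\overline{\etav}_{i,j}^b(n)\|_q \leq C^{\boldsymbol{\cal G}}$. The remainder bound $\mm^{1/2}\max_{i,j}\|\etav_{i,j}^{\boldsymbol{\cal R}}(\mm)\|_q \leq s_{\mm}^{\boldsymbol{\cal \DD}}$ is where a short computation is required: one has to exhibit the explicit form of $\etav_{i,j}^{\boldsymbol{\cal R}}$ (the mean-correction terms coming from replacing $X_k$ by $X_k - \bar X_n$ in \eqref{defn_lag_op_est}, cf.\ \eqref{eq_thm_longrun_b_remains_valid_1}), expand it as in the $\boldsymbol{\cal C}$-case \eqref{relations_eta_C} but now summed over $|h|\leq b$, and bound it by the second half of \hyperref[A1b]{\Aoneb}, i.e.\ $n^{-3/4}b^{1/4}\max_j\|\sum_{k=1}^n\eta_{k,j}^b\|_{2q}\leq s_n^{\boldsymbol{\cal \GG}}$, using Cauchy--Schwarz on the double sum and the bound $b = \oo(n)$; the upshot is $(n/b)^{1/2}\|\etav_{i,j}^{\boldsymbol{\cal R}}\|_q \lesssim (s_n^{\boldsymbol{\cal \GG}})^2 = \oo(1)$, so one sets $s_{\mm}^{\boldsymbol{\cal \DD}} = (s_n^{\boldsymbol{\cal \GG}})^2$. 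Condition \hyperref[A2D]{\AtwoD} with $\mm = n/b$, $\lambda_j = \lambda_j^b$ is exactly \hyperref[A2b]{\Atwob} (the second clause $\lambda_{\JJ_n^+}^b \geq (n/b)^{-\hd}/C^{\boldsymbol{\cal \DD}}$ matches $\lambda_{\JJ_n^+}^b \gtrsim (n/b)^{-\hd}$ after absorbing constants into $C^{\boldsymbol{\cal \GG}}$). Finally \hyperref[A3D]{\AthreeD} reads $1/C^{\boldsymbol{\cal \DD}} \leq \E[\etav_{j,j}^{\boldsymbol{\cal \DD}}(\mm)] = \varphi_{j,j}^b \leq C^{\boldsymbol{\cal \DD}}$ and $\sum_j\lambda_j^b \leq C^{\boldsymbol{\cal \DD}}$, which is verbatim \hyperref[A3b]{\Athreeb}.

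Having matched all hypotheses, I would conclude by invoking Theorems \ref{theorem_exp_eigen_value}, \ref{theorem_exp_eigen_vector}, Proposition \ref{prop_replace_I_with_eta} and Corollary \ref{corollary_norms} for $\boldsymbol{\cal \DD} = \boldsymbol{\cal \GG}^b$, $\widehat{\boldsymbol{\cal \DD}} = \widehat{\boldsymbol{\cal \GG}}^b$; since $I_{i,j} = \langle(\widehat{\boldsymbol{\cal \DD}} - \boldsymbol{\cal \DD})(\e_i),\e_j\rangle$ becomes $I_{i,j}^b$ of \eqref{defn_I_ij_b} under the substitution, and $\mm = n/b$, the displayed expansions are exactly those claimed, with remainder orders $\J^{1/p}(n/b)^{-\ad}/\sqrt{n/b}$ and $\J^{1/p}(n/b)^{-\ad}/(n/b)$ respectively. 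The only genuinely non-bookkeeping step is the verification of the remainder estimate in \hyperref[A1D]{\AoneD}: one must track how the extra factor $b$ (from summing $\boldsymbol{\cal C}_h$ over $|h|\leq b$) interacts with the mean-correction, and confirm that the $n^{-3/4}b^{1/4}$ normalization in \hyperref[A1b]{\Aoneb} — which carries a spare $b^{1/4}$ and a spare $n^{-1/4}$ relative to $n^{-1/2}b^{1/2}$ — is exactly what is needed so that, after squaring, one lands below $(n/b)^{-1/2}$. This is routine but is the place where the specific scaling $m = n/b$ is actually used, and I would present that computation in full while leaving the term-by-term matching of the other conditions as direct comparison.
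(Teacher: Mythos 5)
Your proposal is correct and takes essentially the same route as the paper: both reduce Theorem \ref{thm_longrun_b_remains_valid} to an instance of Assumption \ref{ass_abstract} with $\mm = n/b$ via \eqref{Gb_representation_1}--\eqref{Gb_representation_2}, observe that \hyperref[A2b]{\Atwob} and \hyperref[A3b]{\Athreeb} are verbatim \hyperref[A2D]{\AtwoD} and \hyperref[A3D]{\AthreeD}, and isolate the bound on $\etav_{i,j}^{\boldsymbol{\cal R}}$ as the one non-trivial step, which the paper handles exactly as you sketch — by writing out the mean-correction terms in $\bar{\eta}_j^b$ (see \eqref{eq_thm_longrun_b_remains_valid_1}) and applying Cauchy--Schwarz together with the second half of \hyperref[A1b]{\Aoneb}.
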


Due to the uniform bounds provided by $C^{\boldsymbol{\cal \GG}}$ in Assumption \ref{ass_eigenvaectors_longrun}, Theorem \ref{thm_longrun_b_remains_valid} can either be used pointwise (for arbitrary but fixed $b,n \in \N$), or uniformly in $b,n$, depending on whether Assumption \ref{ass_eigenvaectors_longrun} holds pointwise or uniformly. The strength and weakness of Theorem \ref{thm_longrun_b_remains_valid} is that everything is essentially expressed in terms of the operator $\boldsymbol{\cal \GG}^b$. The positive aspect is that this makes the assumptions rather general (in fact, almost optimal in a certain sense, see below). On the other hand, the drawback is that these conditions can be difficult to verify, since they explicitly depend on $b$. If $b = b_n$ is a function in $n$ this is not so useful, and one would be more interested in uniform bounds in terms of $n$. Let us mention here that the trouble mainly originates from \hyperref[A2b]{\Atwob} and not \hyperref[A1b]{\Aoneb}. It is therefore desirable to find simple conditions that depend in a more transparent way on $b$, and preferably mainly on ${\boldsymbol{\cal G}}$. More precisely, the aim is to find simple, sufficient conditions that imply a uniform validity of Assumption \ref{ass_eigenvaectors_longrun}. Before turning to this issue, let us first discuss an interesting case where the problem mentioned above does not occur.\\
\\
{\bf $\mathfrak{m}$-correlated processes}: We call ${\bf X}$ an $\mathfrak{m}$-correlated process if $\boldsymbol{\cal C}_h = 0$ for $|h| > \mathfrak{m}$, where $\mathfrak{m}$ is finite. Locally dependent processes are quite common in the literature, and often modeled as $\mathfrak{m}$-dependent processes. Clearly, $\mathfrak{m}$-dependency implies $\mathfrak{m}$-correlation. Moreover, we get that
\begin{align*}
\boldsymbol{\cal \GG}^b = \sum_{|h| \leq b}\boldsymbol{\cal C}_h = \sum_{|h| \leq \mathfrak{m}}\boldsymbol{\cal C}_h = \boldsymbol{\cal \GG}^{\mathfrak{m}} = \boldsymbol{\cal \GG}^{}, \quad \text{if $\mathfrak{m}\leq b$.}
\end{align*}
Note that $\mathfrak{m}$-correlation also implies that representations \eqref{Gb_representation_1} and \eqref{Gb_representation_2} are valid. Hence we conclude the following.
\begin{corollary}\label{cor_m_correlated}
If ${\bf X}$ is $\mathfrak{m}$-correlated and $\mathfrak{m}\leq b$, then we can replace $\e_j^b$, $\eta_{k,j}^b$ with $\e_j^{\mathfrak{m}}$, $\eta_{k,j}^{\mathfrak{m}}$ everywhere in \eqref{eq_rep_X_k} and \eqref{definition_eta_bn} (which alters \hyperref[A1b]{\Aoneb}), and $b$ with $\mathfrak{m}$ everywhere in \hyperref[A2b]{\Atwob} and \hyperref[A3b]{\Athreeb}.
\end{corollary}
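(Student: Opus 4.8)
The plan rests entirely on the operator identity displayed just above the statement. Since $\boldsymbol{\cal C}_h = 0$ for $|h| > \mathfrak{m}$, every summand $\boldsymbol{\cal C}_h$ with $\mathfrak{m} < |h| \leq b$ vanishes, so $\boldsymbol{\cal \GG}^b = \sum_{|h| \leq b}\boldsymbol{\cal C}_h = \sum_{|h| \leq \mathfrak{m}}\boldsymbol{\cal C}_h = \boldsymbol{\cal \GG}^{\mathfrak{m}} = \boldsymbol{\cal \GG}$ whenever $\mathfrak{m} \leq b$; in particular $\boldsymbol{\cal \GG}^b$ and $\boldsymbol{\cal \GG}^{\mathfrak{m}}$ are the \emph{same} operator. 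Both admit the spectral decomposition \eqref{Gb_representation_1} (valid under $\mathfrak{m}$-correlation, as noted before the statement), hence any spectral decomposition of $\boldsymbol{\cal \GG}^{\mathfrak{m}}$ is also one of $\boldsymbol{\cal \GG}^b$, and one may take $(\lambda_j^b, \e_j^b)_{j \in \N} := (\lambda_j^{\mathfrak{m}}, \e_j^{\mathfrak{m}})_{j \in \N}$; when \hyperref[A2b]{\Atwob} is in force this coincidence is in fact automatic for the relevant indices $j < \JJ_n^+$, the eigenvalues being simple (cf. the remark below \eqref{ex_eigen_poly}, \hyperref[A2b]{\Atwob} being a form of \hyperref[A2D]{\AtwoD}).

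First I would push this identity through \eqref{eq_rep_X_k}: from $\e_j^b = \e_j^{\mathfrak{m}}$ one reads off $\widetilde{\lambda}_j^b = \E[\langle \overline{X}_k, \e_j^{\mathfrak{m}} \rangle^2] = \widetilde{\lambda}_j^{\mathfrak{m}}$ and hence $\eta_{k,j}^b = \langle \overline{X}_k, \e_j^{\mathfrak{m}}\rangle (\widetilde{\lambda}_j^{\mathfrak{m}})^{-1/2} = \eta_{k,j}^{\mathfrak{m}}$ almost surely. Substituting these equalities into \eqref{definition_eta_bn}, while leaving the outer bandwidth $b$ of the estimator $\widehat{\boldsymbol{\cal \GG}}^b$ untouched, rewrites $\etav_{i,j}^b$ in terms of the $b$-free objects $\e^{\mathfrak{m}}, \eta^{\mathfrak{m}}$ without changing its value; this is the (cosmetic) alteration of \hyperref[A1b]{\Aoneb} referred to in the statement, the normalizing factors $(n/b)^{1/2}$ and $n^{-3/4}b^{1/4}$ and the lag sum up to $b$ being unaffected. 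Next, in \hyperref[A2b]{\Atwob} every quantity is built solely from the eigenvalues $\lambda_i^b = \lambda_i^{\mathfrak{m}}$ (the prefactors involving $m = n/b$ playing no role), so one may write $\mathfrak{m}$ in place of the superscript $b$ verbatim. Finally, for \hyperref[A3b]{\Athreeb} I would use $\eta_{k,j}^b = \eta_{k,j}^{\mathfrak{m}}$ together with $\boldsymbol{\cal C}_h = 0$ for $|h| > \mathfrak{m}$ to get $\varphi_{j,j}^b = \sum_{|h| \leq b}\E[\eta_{h,j}^b \eta_{0,j}^b] = \sum_{|h| \leq \mathfrak{m}}\E[\eta_{h,j}^{\mathfrak{m}} \eta_{0,j}^{\mathfrak{m}}] = \varphi_{j,j}^{\mathfrak{m}}$, and likewise $\sum_{j} \lambda_j^b = \sum_j \lambda_j^{\mathfrak{m}}$, so again $b$ may be replaced by $\mathfrak{m}$ throughout.

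There is no genuine obstacle: once the operator identity $\boldsymbol{\cal \GG}^b = \boldsymbol{\cal \GG}^{\mathfrak{m}}$ is in hand, the entire argument is a sequence of substitutions. The only point deserving a word of care is the passage from ``same operator'' to ``same eigenelements'', i.e. $\e_j^b = \e_j^{\mathfrak{m}}$, which requires simplicity of the relevant eigenvalues — guaranteed by \hyperref[A2b]{\Atwob} — or, absent that, a consistent choice of eigenfunctions (legitimate precisely because the two operators coincide). The upshot, worth recording at the end of the proof, is that the resulting forms of \hyperref[A2b]{\Atwob} and \hyperref[A3b]{\Athreeb} no longer involve $b$ at all, so that verifying Assumption \ref{ass_eigenvaectors_longrun} uniformly in $b$ reduces to a $b$-free statement about the fixed operator $\boldsymbol{\cal \GG} = \boldsymbol{\cal \GG}^{\mathfrak{m}}$.
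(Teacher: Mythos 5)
Your proof takes exactly the route the paper intends: the corollary carries no separate proof in the paper and is presented as an immediate consequence of the displayed identity $\boldsymbol{\cal \GG}^b = \boldsymbol{\cal \GG}^{\mathfrak{m}}$ for $\mathfrak{m} \leq b$. You correctly push that identity through to the eigenelements ($\lambda_j^b = \lambda_j^{\mathfrak{m}}$, $\e_j^b = \e_j^{\mathfrak{m}}$), to $\widetilde{\lambda}_j^b$ and $\eta_{k,j}^b$, and to $\varphi_{j,j}^b = \varphi_{j,j}^{\mathfrak{m}}$ via $\boldsymbol{\cal C}_h = 0$ for $|h| > \mathfrak{m}$; you are also right to flag that passing from ``same operator'' to ``same eigenfunctions'' rests on simplicity of the leading eigenvalues, which \hyperref[A2b]{\Atwob} already enforces on the range $j < \JJ_n^+$. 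This is careful and fills in precisely the details the paper elides.

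One point is internally inconsistent and worth tidying. When you treat \hyperref[A2b]{\Atwob} you first say the prefactors $(n/b)^{-1/2+\ad}$, $(n/b)^{-1+2\ad}$ ``play no role'' and that only the superscript $b$ is replaced — i.e.\ you keep $(n/b)$ — but your closing paragraph then claims the resulting form of \hyperref[A2b]{\Atwob} ``no longer involves $b$ at all.'' These two readings cannot both hold. Note that even for an $\mathfrak{m}$-correlated (indeed IID) process one still has $\|\overline{\etav}_{i,j}^b\|_q \asymp \sqrt{b/n}$, since the $b$ lag-sums in \eqref{definition_eta_bn} contribute roughly uncorrelated noise; so the scaling $\mm = n/b$ governing \hyperref[A1b]{\Aoneb} is not an artifact that $\mathfrak{m}$-correlation removes. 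The clean resolution, which the paper is implicitly assuming, is that one takes $b \asymp \mathfrak{m}$ (in practice $b = \mathfrak{m}$) whenever $\mathfrak{m}$ is known finite, whereupon $(n/b) \asymp (n/\mathfrak{m}) \asymp n$ and the literal substitution $b \mapsto \mathfrak{m}$ in the prefactors is harmless up to constants; stating this explicitly would remove the ambiguity in your final sentence.
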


Corollary \ref{cor_m_correlated} shows that Theorem \ref{thm_longrun_b_remains_valid} applies to a large class of processes under general and accessible conditions. Note in particular, that the optimality criterium used in Section \ref{sec_spec_gap_aoptimal} also applies since $\mathfrak{m}$ is finite. In the presence of $\mathfrak{m}$-dependence, the conditions can be further simplified. More precisely, routine calculations reveal that \hyperref[A1b]{\Aoneb} can be replaced with
\begin{enumerate}
\item[\Aonem]\label{A1m} $\max_{j \in \N}\bigl\|\eta_{k,j}^{\mathfrak{m}}\bigr\|_{2q}<\infty$ for $q = p 2^{\pd + 4}$, $\pd = \lceil \hd/\ad \rceil$.
\end{enumerate}

Let us now return to the problem of uniform bounds where $b = b_n \to \infty$ as $n$ increases. As mentioned earlier, it is desirable to find analogue conditions that depend in a more transparent way on $b$, and are expressed mainly in terms of ${\boldsymbol{\cal \GG}}$. To this end, it is convenient to denote with $\lambda_j = \lambda_j^{\infty}$, $\e_j = \e_j^{\infty}$, $\varphi_{i,j}^{\star} = \varphi_{i,j}^{\infty}$ and $\eta_{k,j}^{\star} = \eta_{k,j}^{\infty}$. For the sake of reference, we then restate the decomposition of $X_k$ in this context, which amounts to
\begin{align}\label{eq_rep_X_k_infty}
\overline{X}_k = \sum_{j = 1}^{\infty} \sqrt{\widetilde{\lambda}_j} \eta_{k,j} \e_j, \quad \widetilde{\lambda}_j = \E\bigl[\langle \overline{X}_k, \e_j \rangle^2 \bigr], \quad \eta_{k,j} = \langle \overline{X}_k, \e_j \rangle
\widetilde{\lambda}_j^{-1/2}.
\end{align}
Recall the notion of $\Omega_p(k)$, defined in \eqref{defn_Omega}. We then make the following set of assumptions.

\begin{ass}\label{ass_eigenvaectors_longrun_exp_decay}
Let $\ad > 0$, $1 < \cd^+ \leq \cd^- < \infty$ and $\JJ_n^+ \lesssim n^{1/2 - \ad} (\log n)^{-\frac{3}{2}}$. Put $p^* = p 2^{\pd + 4}$, $\pd = \lceil \cd^-/(2\ad) \rceil$ and $b \geq C_0 \log n$ for $C_0 > 0$ sufficiently large. It then holds that
\begin{enumerate}
\item[\Aonestar]\label{A1star} $\Omega_k(2p^*) \lesssim \rho^{k}, \quad 0 < \rho < 1$,
\item[\Atwostar]\label{A2star} the function $\lambdav(x): \, x \mapsto \lambda_x$ is convex and $j^{-\cd^-} \lesssim \lambda_j \lesssim j^{-\cd^+}$ uniformly for $j \in \N$,
\item[\Athreestar]\label{A3star} $1/C^{\boldsymbol{\cal G}} \leq \min_{j \in \N}\varphi_{j,j}^{\star}$ for $C^{\boldsymbol{\cal G}} > 0$.
\end{enumerate}
\end{ass}

\begin{rem}\label{rem_replace_poly_with_exp}
Condition $j^{-\cd^-} \lesssim \lambda_j \lesssim j^{-\cd^+}$ can also be replaced with $e{^{-\cd^- j}} \lesssim \lambda_j \lesssim e^{-\cd^+ j}$, provided that $\JJ_n^+ \lesssim \log n$. Similarly, the convexity condition in \hyperref[A2star]{\Atwostar} can be replaced with $\max_{1 \leq j \leq \JJ_n^+}1/\psi_j \lesssim n^{\cd^-}$, where we recall that $\psi_j = \min\bigl\{\lambda_{j-1} - \lambda_{j}, \lambda_j - \lambda_{j+1}\bigr\}$ (with $\psi_1 = \lambda_1 - \lambda_2$).
\end{rem}

Let us elaborate on these assumptions. \hyperref[A1star]{\Aonestar} is a weak dependence condition that requires a geometric decay, and implies in particular that $\boldsymbol{\cal \GG}$ exists. This condition is satisfied for a large number of processes in the literature such as ARMA and GARCH models. Note that instead of using $\Omega_k(2p^*)$ as dependence measure, one could also use mixing concepts like strong mixing or $\tau$-mixing (cf. ~\cite{dedecker_prieur_2005}). We remark that the method of proof can also be used under the weaker assumption of polynomial decay. Unfortunately, this leads to (significantly) more restrictive conditions for the eigenvalues $\lambdav$ and the range $\JJ_n^+$. This is not surprising, since in this case the bias $\|\boldsymbol{\cal \GG} - \boldsymbol{\cal \GG}^{b} \|_{\mathcal{L}}$ is much larger and thus more relevant, particularly if $b = b_n$ is chosen in the optimal way. In view of Lemma \ref{lem_eigen_gen_upper_bound} (see also Lemma \ref{lem_operator_comp} for a more general version), it seems to be impossible to express Assumption \ref{ass_eigenvaectors_longrun} in terms of $\boldsymbol{\cal \GG}$ without additional (heavy) assumptions for $\lambdav$ and/or $\JJ_n^+$, simply because the distance $\|\boldsymbol{\cal \GG} - \boldsymbol{\cal \GG}^{b}\|_{\mathcal{L}}$ is too large. Condition \hyperref[A2star]{\Atwostar} imposes regularity conditions on the eigenvalues $\lambdav$. We have already seen in the discussion of \hyperref[A2C]{\AtwoC} in Section \ref{sec_main} that the convexity condition is mild, and leads to the simple condition $\JJ_n^+ \lesssim n^{1/2 - \ad} (\log n)^{-1}$ (see \eqref{eq_convex_condi}). Assumption $j^{-\cd^-} \lesssim \lambda_j \lesssim j^{-\cd^+}$ implies that $\lambdav$ fluctuates between polynomial decay boundaries. Condition $1 < \cd^+ \leq \cd^- < \infty$ allows for a large variety here though, with a possible varying decay coefficient for $\lambda_j$. Moreover, as is pointed out in Remark \ref{rem_replace_poly_with_exp} above, a formulation in terms of geometric decay boundaries is also possible. Finally, \hyperref[A3star]{\Athreestar} reflects the usual non-degeneracy condition already mentioned above.\\
\\
In analogy to \eqref{defn_I_ij_b}, we introduce
\begin{align}\label{defn_I_ij_b_infty}
I_{i,j}^{(\infty,b)} = \bigl \langle \bigl(\widehat{\boldsymbol{\cal \GG}}^b - {\boldsymbol{\cal \GG}}^b\bigr)(\e_i), \e_j \bigr \rangle, \quad i,j \in \N.
\end{align}

We then have the following first order expansion for the empirical eigenvalues $\widehat{\lambdav}$.
\begin{theorem}\label{theorem_exp_decay_long_run_eigen}
Assume that Assumption \ref{ass_eigenvaectors_longrun_exp_decay} holds. Then for $1 \leq \JJ < \JJ_n^+$
\begin{align*}
\biggl\|\max_{1 \leq j \leq \J} \biggr|\frac{1}{\lambda_j}\biggl(\widehat{\lambda}_j^b - \lambda_j - I_{j,j}^{(\infty,b)}\biggr)\biggr|\biggl\|_p \lesssim \frac{\J^{1/p}(n/b)^{-\ad}}{\sqrt{n/b}}.
\end{align*}
\end{theorem}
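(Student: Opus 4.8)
The plan is to derive the stated expansion from the abstract transfer result, Theorem~\ref{thm_longrun_b_remains_valid} --- which is Theorem~\ref{theorem_exp_eigen_value} with $\mm = n/b$ and the $b$-indexed objects $\lambda_j^b$, $\e_j^b$, $\widehat{\lambda}_j^b$, $I_{j,j}^b$ substituted --- and then to replace these truncated quantities by their limiting counterparts $\lambda_j$, $\e_j$, $I_{j,j}^{(\infty,b)}$. Thus the argument has two genuinely different parts: first, showing that Assumption~\ref{ass_eigenvaectors_longrun_exp_decay} implies (uniformly in $n$) Assumption~\ref{ass_eigenvaectors_longrun} with $\mm = n/b$; second, carrying out the $b\to\infty$ bias replacement in the conclusion.

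For \hyperref[A1b]{\Aoneb} I would follow the route of Proposition~\ref{prop_lag_operator}. By \eqref{definition_eta_bn}, $\overline{\etav}_{i,j}^{b}$ is a $b$-lag, kernel-weighted double sum of the centered products $\overline{\eta_{k,i}^b\,\eta_{k-h,j}^b}$; the geometric decay in \hyperref[A1star]{\Aonestar} gives its variance the order $b/n$, so $(n/b)^{1/2}\|\overline{\etav}_{i,j}^{b}\|_2 \lesssim 1$, and the $q$-th moment bound with $q = p2^{\pd+4}$ follows from a Rosenthal/Burkholder-type inequality for weakly dependent sequences in the physical-dependence framework (cf. the results of Wu and coauthors cited after \eqref{defn_Omega}), the moment order in \hyperref[A1star]{\Aonestar} being chosen precisely so that products of scores have enough finite moments. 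The supremum over the infinitely many pairs $(i,j)$ is absorbed by a maximal inequality together with $\sum_j\widetilde{\lambda}_j^b \lesssim \sum_j\widetilde{\lambda}_j < \infty$; the second half of \hyperref[A1b]{\Aoneb} is treated the same way, the extra factor $n^{-1/4}$ making it essentially free, as in the discussion of \hyperref[A1C]{\AoneC}.

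The smallness that drives everything else is the bias estimate. Since \hyperref[A1star]{\Aonestar} yields $\|\boldsymbol{\cal C}_h\|_{\mathcal{L}} \lesssim \rho^{|h|}$, we get $\|\boldsymbol{\cal \GG} - \boldsymbol{\cal \GG}^b\|_{\mathcal{L}} \lesssim \rho^b \lesssim n^{-K}$ with $K$ arbitrarily large once $C_0$ in $b \ge C_0\log n$ is large enough. Inserting this into Lemma~\ref{lem_eigen_gen_upper_bound} (and Lemma~\ref{lem_operator_comp}) gives $|\lambda_j - \lambda_j^b| \lesssim n^{-K}$ and, since \hyperref[A2star]{\Atwostar} bounds $\psi_j^{-1}$ by a fixed power of $n$ for $j < \JJ_n^+$, also $\|\e_j - \e_j^b\|_{\Ln^2} \lesssim n^{-K'}$ with $K'$ arbitrarily large. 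Hence $\lambda_j^b = \lambda_j(1+o(1))$ and $\varphi_{j,j}^{b} = \varphi_{j,j}^{\star} + o(1)$ uniformly for $j < \JJ_n^+$, so \hyperref[A3b]{\Athreeb} follows from \hyperref[A3star]{\Athreestar} together with $\sum_j\lambda_j^b \lesssim 1$; and the spectral gaps $\lambda_j^b - \lambda_i^b$ differ from $\lambda_j - \lambda_i$ only super-polynomially, so the two sums in \hyperref[A2b]{\Atwob} formed from $\lambda^b$ are comparable to those formed from $\lambda$, which by convexity and \eqref{eq_convex_condi} are $\lesssim j\log j$ and $\lesssim j^2$. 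With $\mm = n/b \gtrsim n/\log n$ and $\JJ_n^+ \lesssim n^{1/2-\ad}(\log n)^{-3/2}$, the $\log j$ factor in \eqref{eq_convex_condi} and the $b\sim\log n$ in $\mm$ are exactly absorbed (at the price of a slightly smaller exponent than $\ad$), while $\lambda_{\JJ_n^+}^{b} \gtrsim (\JJ_n^+)^{-\cd^-} \gtrsim (n/b)^{-\hd}$ holds because the choice $\pd = \lceil\cd^-/(2\ad)\rceil$ corresponds to $\hd = \cd^-/2$. Thus Assumption~\ref{ass_eigenvaectors_longrun} holds and Theorem~\ref{thm_longrun_b_remains_valid} yields the asserted bound with $\lambda_j^b$ and $I_{j,j}^b$ in place of $\lambda_j$ and $I_{j,j}^{(\infty,b)}$.

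It remains to perform the replacement. Write $\lambda_j^{-1}(\widehat{\lambda}_j^b - \lambda_j - I_{j,j}^{(\infty,b)}) = (\lambda_j^b/\lambda_j)\,(\lambda_j^b)^{-1}(\widehat{\lambda}_j^b - \lambda_j^b - I_{j,j}^b) + \lambda_j^{-1}(\lambda_j^b - \lambda_j) + \lambda_j^{-1}(I_{j,j}^b - I_{j,j}^{(\infty,b)})$. The first summand is handled by the transferred Theorem~\ref{theorem_exp_eigen_value} since $\lambda_j^b/\lambda_j = 1 + o(1)$; the second is $\lesssim n^{-K}/\lambda_j$; and by \eqref{defn_I_ij_b}, \eqref{defn_I_ij_b_infty}, $|I_{j,j}^b - I_{j,j}^{(\infty,b)}| \lesssim \|\widehat{\boldsymbol{\cal \GG}}^b - \boldsymbol{\cal \GG}^b\|_{\mathcal{L}}\,\|\e_j^b - \e_j\|_{\Ln^2} + \|\boldsymbol{\cal \GG} - \boldsymbol{\cal \GG}^b\|_{\mathcal{L}}$, whose $\|\cdot\|_p$-norm is $\lesssim (\sqrt{n/b}+1)\,n^{-K'}$ by \eqref{eq_rate_of_conv_long_run}. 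Since $\lambda_j$ is bounded below by a fixed negative power of $n$ for $j < \JJ_n^+$, dividing by $\lambda_j$ and taking $C_0$ (hence $K$, $K'$) large makes the last two summands negligible against $\J^{1/p}(n/b)^{-\ad}/\sqrt{n/b}$, and the union bound over $1 \le j \le \J$ supplies the factor $\J^{1/p}$, completing the proof. I expect the main obstacle to be the first part --- the moment and maximal inequalities for the weakly dependent, $b$-lag, kernel-weighted double sums $\overline{\etav}_{i,j}^{b}$, uniformly over all $i,j\in\N$, at the sharp variance order $b/n$ --- while the bookkeeping needed to keep the $\log$-factors inside the prescribed window for $\JJ_n^+$ is the other delicate point.
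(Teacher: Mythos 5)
Your proposal and the paper both reduce the theorem to the abstract transfer result (Theorem~\ref{thm_longrun_b_remains_valid}), but you try to verify Assumption~\ref{ass_eigenvaectors_longrun} \emph{directly} for the $b$-indexed quantities $\lambda_j^b$, $\e_j^b$, $\eta_{k,j}^b$, whereas the paper first \emph{truncates the process} to $X_k^\tau = \sum_{j\le\tau}\sqrt{\widetilde\lambda_j}\,\eta_{k,j}\e_j$ with $\tau=n^{\td}$ and works with the finite-rank operator $\boldsymbol{\cal G}^{\diamond}$. This is not a cosmetic difference: the paper explicitly flags (in the paragraph preceding Lemma~\ref{lem_truncation_lemma}) that the direct route you propose fails, because conditions \hyperref[A1b]{\Aoneb} and \hyperref[A3b]{\Athreeb} demand bounds over \emph{all} $j\in\N$ (e.g.\ $\max_{i,j\in\N}\|\overline{\etav}_{i,j}^b\|_q$ and $1/C^{\boldsymbol{\cal G}}\le\varphi_{j,j}^b$ for every $j$), and the comparison $\lambda_j^b\approx\lambda_j$, $\|\e_j^b-\e_j\|\approx 0$ that you invoke only holds for $j$ in a polynomially bounded window. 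Lemma~\ref{lem_operator_comp} gives $\|\e_j^b-\e_j\|\lesssim\|\boldsymbol{\cal G}^b-\boldsymbol{\cal G}\|_{\cal L}/\psi_j\lesssim\rho^b/\psi_j$, and since $\psi_j\to 0$ faster than any fixed negative power of $n$ once $j$ exceeds a fixed power of $n$, no choice of $C_0$ makes this small for all $j$. The same obstacle hits the dependence transfer: $\|\eta_{k,j}^b-(\eta_{k,j}^b)'\|_q$ carries a normalization $(\widetilde\lambda_j^b)^{-1/2}$, and bounding it by $\Omega_q(k)(\widetilde\lambda_j^b)^{-1/2}(\sum_l\widetilde\lambda_l)^{1/2}$ blows up as $j\to\infty$, so \hyperref[A1b]{\Aoneb} cannot be derived from \hyperref[A1star]{\Aonestar} uniformly in $j$ the way Proposition~\ref{prop_lag_operator} would suggest. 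Your outline only establishes the needed comparisons ``uniformly for $j<\JJ_n^+$'', which is not the same statement.

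The paper's truncation is precisely the device that closes this gap. Lemma~\ref{lem_truncation_lemma} shows the truncated long-run operator $\boldsymbol{\cal G}^\tau$ has exactly $\{\lambda_j\}_{j\le\tau}$ and $\{\e_j\}_{j\le\tau}$ as its first $\tau$ eigenpairs, $\boldsymbol{\cal G}^\diamond$ is $O(\rho^b)$-close to $\boldsymbol{\cal G}^\tau$ (Lemma~\ref{lem_long_run_tech}~\itmiv), and $\boldsymbol{\cal G}^\diamond$ has rank $\le\tau$ so every supremum/sum in Assumption~\ref{ass_eigenvaectors_longrun} becomes a maximum over the finite set $\{1,\dots,\tau\}$, on which Lemma~\ref{lem_long_run_tech}~\itmviii\ gives the two-sided ratio bounds $\lambda_j/\lambda_j^\diamond, \widetilde\lambda_j/\widetilde\lambda_j^\diamond\in[1/2,2]$ that your argument needs but cannot obtain. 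The paper then verifies \hyperref[A1b]{\Aoneb}--\hyperref[A3b]{\Athreeb} for the $\diamond$-quantities (Lemmas~\ref{lem_transf_A1}--\ref{lem_transf_A3}), applies the transfer theorem, and in the last step replaces $\diamond$-quantities by $\lambda_j$, $I_{j,j}^{(\infty,b)}$ via Lemma~\ref{lem_long_run_tech}~\itmiii\ and Lemma~\ref{lem_control_I_ij_exp}~\itmii\ --- this last step is structurally what you describe, except it compares to $\boldsymbol{\cal G}^\diamond$, not $\boldsymbol{\cal G}^b$ directly. Minor point: in your replacement estimate, $I_{j,j}^b$ and $I_{j,j}^{(\infty,b)}$ involve the \emph{same} operator $\widehat{\boldsymbol{\cal G}}^b-\boldsymbol{\cal G}^b$ applied to different vectors, so $|I_{j,j}^b-I_{j,j}^{(\infty,b)}|\lesssim\|\widehat{\boldsymbol{\cal G}}^b-\boldsymbol{\cal G}^b\|_{\cal L}\,\|\e_j^b-\e_j\|$ without the extra $\|\boldsymbol{\cal G}-\boldsymbol{\cal G}^b\|_{\cal L}$ term --- but this does not affect the order of magnitude.
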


Next, we state the corresponding result for the empirical eigenfunctions $\widehat{\bf \e}$.
\begin{theorem}\label{theorem_exp_decay_long_run_fun}
Assume that Assumption \ref{ass_eigenvaectors_longrun_exp_decay} holds. Then for $1 \leq \JJ < \JJ_n^+$
\begin{align*}
\biggl\|\max_{1 \leq j \leq \J} \biggr\|\frac{1}{\sqrt{\Lambda_j}
}\biggl(\widehat{\e}_j^b - \e_j + \frac{\e_j}{2}\bigl\|\widehat{\e}_j^b - \e_j\bigr\|_{\Ln^2}^2 - \sum_{\substack{k = 1\\k \neq j}}^{\infty}\e_k \frac{I_{k,j}^{(\infty,b)}}{\lambda_j - \lambda_k}\biggr)\biggr\|_{\Ln^2}\biggl\|_p \lesssim \frac{\J^{1/p}(n/b)^{-\ad}}{\sqrt{n/b}},
\end{align*}
where $\Lambda_j = \sum_{\substack{k = 1\\k \neq j}}^{\infty}\frac{\lambda_j \lambda_k}{(\lambda_j - \lambda_k)^2}$, and we also have
\begin{align*}
\biggl\|\max_{1 \leq j \leq \J} \biggr|\frac{1}{\Lambda_j}\biggl(\bigl\|\widehat{\e}_j^b - \e_j\bigr\|_{\Ln^2}^2 - \sum_{\substack{k = 1\\k \neq j}}^{\infty}\frac{(I_{k,j}^{(\infty,b)})^2}{(\lambda_j - \lambda_k)^2}\biggr)\biggr|\biggl\|_p \lesssim \frac{\J^{1/p}(n/b)^{-\ad}}{n/b}.
\end{align*}
\end{theorem}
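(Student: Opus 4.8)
The plan is to obtain Theorem~\ref{theorem_exp_decay_long_run_fun} (and, by the same argument, its eigenvalue companion Theorem~\ref{theorem_exp_decay_long_run_eigen}) by verifying that Assumption~\ref{ass_eigenvaectors_longrun_exp_decay} implies the abstract Assumption~\ref{ass_abstract} \emph{directly for the limiting data} $\boldsymbol{\cal \DD}=\boldsymbol{\cal \GG}$, $\widehat{\boldsymbol{\cal \DD}}=\widehat{\boldsymbol{\cal \GG}}^b$, with $\mm=n/b$, $\lambda_j=\lambda_j^{\infty}$, $\e_j=\e_j^{\infty}$, $\widetilde\lambda_j=\widetilde\lambda_j^{\infty}$, and then invoking Theorems~\ref{theorem_exp_eigen_value} and \ref{theorem_exp_eigen_vector}. (An alternative is to route through Theorem~\ref{thm_longrun_b_remains_valid}, first checking Assumption~\ref{ass_eigenvaectors_longrun} and then perturbing the $\boldsymbol{\cal \GG}^b$-spectral data to $\boldsymbol{\cal \GG}$-spectral data via Lemma~\ref{lem_eigen_gen_upper_bound}; the direct route is cleaner because \hyperref[A2D]{\AtwoD} then requires no perturbation.) The point of passing straight to $\boldsymbol{\cal \GG}$ is that, by \eqref{eq_rep_X_k_infty}, $\widehat{\boldsymbol{\cal \GG}}^b$ has the representation \eqref{defn_D_2} in the basis $\{\e_j^{\infty}\}$ with normalized entries $\etav_{i,j}^{b}$ of \eqref{definition_eta_bn} (read with $\eta_{k,j}^{\star}$) modulo a mean-correction remainder; since the mean of $\etav_{i,j}^{b}$ is $\varphi_{i,j}^{b}$ rather than $\varphi_{i,j}^{\star}$, one sets $\etav_{i,j}^{\boldsymbol{\cal \DD}}=\etav_{i,j}^{b}+(\varphi_{i,j}^{\star}-\varphi_{i,j}^{b})$ and lets $\etav_{i,j}^{\boldsymbol{\cal R}}$ collect the constant $\varphi_{i,j}^{\star}-\varphi_{i,j}^{b}$ together with the $\bar X_n$-remainder. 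Then $\overline{\etav}_{i,j}^{\boldsymbol{\cal \DD}}=\overline{\etav}_{i,j}^{b}$, $\E[\etav_{j,j}^{\boldsymbol{\cal \DD}}]=\varphi_{j,j}^{\star}$ and $\lambda_j=\widetilde\lambda_j\varphi_{j,j}^{\star}$, consistently with \eqref{lem_varphi_relations} and Lemma~\ref{lem_E_eta_ij_is_zero}, while $I_{i,j}=\langle(\widehat{\boldsymbol{\cal \GG}}^b-\boldsymbol{\cal \GG})(\e_i),\e_j\rangle$ agrees with $I_{i,j}^{(\infty,b)}$ up to $\langle(\boldsymbol{\cal \GG}^b-\boldsymbol{\cal \GG})(\e_i),\e_j\rangle=\OO(\rho^b)$, which the geometric decay renders negligible.

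It then remains to check \hyperref[A1D]{\AoneD}--\hyperref[A3D]{\AthreeD} with $\mm=n/b$. For \hyperref[A2D]{\AtwoD}: \hyperref[A2star]{\Atwostar} provides convexity and $j^{-\cd^-}\lesssim\lambda_j\lesssim j^{-\cd^+}$, so \eqref{eq_convex_condi} gives $\sum_{i\neq j}\lambda_i/|\lambda_j-\lambda_i|\lesssim j\log j$ and $\sum_{i\neq j}\lambda_i\lambda_j/(\lambda_j-\lambda_i)^2\lesssim j^2$; the range $\JJ_n^+\lesssim n^{1/2-\ad}(\log n)^{-3/2}$ is calibrated exactly so that, with the admissible choice $b\thicksim\log n$, the resulting $(n/b)^{-1/2+\ad}\sum\cdots$ and $(n/b)^{-1+2\ad}\sum\cdots$ are $\lesssim1$ for $j\leq\JJ_n^+$ (the extra logarithmic factor in $\JJ_n^+$ compared with the $\boldsymbol{\cal C}$-case absorbs the $\log(n/b)$ from the bandwidth), and $\lambda_{\JJ_n^+}\gtrsim(\JJ_n^+)^{-\cd^-}\gtrsim(n/b)^{-\hd}$ with $\hd$ taken as $\cd^-/2$, matching $\pd=\lceil\hd/\ad\rceil=\lceil\cd^-/(2\ad)\rceil$; the geometric-decay variant of Remark~\ref{rem_replace_poly_with_exp} is identical. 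Condition \hyperref[A3D]{\AthreeD} follows from \hyperref[A3star]{\Athreestar} for the lower bound on $\varphi_{j,j}^{\star}$, from $\varphi_{j,j}^{\star}\le\sum_{h}|\E[\eta_{h,j}^{\star}\eta_{0,j}^{\star}]|\lesssim\sum_k\Omega_2(k)<\infty$ (using \hyperref[A1star]{\Aonestar}) for the upper bound, and from $\sum_j\lambda_j\lesssim\sum_j j^{-\cd^+}<\infty$. The substantive requirement is \hyperref[A1D]{\AoneD}: one must show $(n/b)^{1/2}\max_{i,j\in\N}\|\overline{\etav}_{i,j}^{b}(n)\|_q\lesssim1$, i.e. the bandwidth-$b$ empirical autocovariance array of the scores concentrates around its mean at the rate $\sqrt{b/n}$ uniformly in $i,j$; and $(n/b)^{1/2}\max_{i,j}\|\etav_{i,j}^{\boldsymbol{\cal R}}(n)\|_q=\oo(1)$, which splits into $(n/b)^{1/2}\rho^b=\oo(1)$ (automatic since $b\geq C_0\log n$) and the $\bar X_n$-remainder, controlled by $n^{-3/4}b^{1/4}\max_j\|\sum_{k\leq n}\eta_{k,j}^{b}\|_{2q}=\oo(1)$.

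The concentration bound for $\overline{\etav}_{i,j}^{b}$ is a moment estimate for a truncated long-run covariance estimator under the physical dependence measure $\Omega_q$; under the geometric decay \hyperref[A1star]{\Aonestar} it follows from Rosenthal-type inequalities for weakly dependent arrays (in the vein of \cite{wu_asymptotic_bernoulli},\cite{han_wu_2014_max},\cite{andrews}), the factor $b$ over the single-lag rate $n^{-1/2}$ being the standard bandwidth effect and consistent with \eqref{eq_rate_of_conv_long_run}; the partial-sum bound is, as noted after \hyperref[A1C]{\AoneC}, essentially for free. Once \hyperref[A1D]{\AoneD}--\hyperref[A3D]{\AthreeD} hold, Theorems~\ref{theorem_exp_eigen_value} and \ref{theorem_exp_eigen_vector} apply with $\mm=n/b$ and produce precisely the two displays of Theorem~\ref{theorem_exp_decay_long_run_fun}, after identifying $I_{i,j}$ with $I_{i,j}^{(\infty,b)}$ up to the negligible $\OO(\rho^b)$ term. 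I expect the main obstacle to be this uniform-in-$(i,j)$ moment estimate \hyperref[A1D]{\AoneD} with the sharp $\sqrt{b/n}$ rate — the technical heart of long-run covariance estimation under dependence — while the verification that $\widehat{\boldsymbol{\cal \GG}}^b$ fits the template \eqref{defn_D_2} with the stated $\etav^{\boldsymbol{\cal \DD}}$, $\etav^{\boldsymbol{\cal R}}$ is bookkeeping-heavy but routine.
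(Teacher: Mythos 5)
Your proposal is correct in outline but takes a genuinely different route from the paper's. The paper routes through Theorem~\ref{thm_longrun_b_remains_valid}, which requires Assumption~\ref{ass_eigenvaectors_longrun} for the pair $(\boldsymbol{\cal \GG}^b, \widehat{\boldsymbol{\cal \GG}}^b)$, i.e.\ conditions on the bandwidth-$b$ eigendata $\lambda_j^b$, $\e_j^b$, $\eta_{k,j}^b$. The author explains, in the discussion around Lemma~\ref{lem_truncation_lemma}, that these cannot be verified from Assumption~\ref{ass_eigenvaectors_longrun_exp_decay}, which is expressed in terms of $\boldsymbol{\cal \GG}$, because the perturbation $\lambda_j^b-\lambda_j$ is uncontrollable for large $j$. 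He therefore truncates the process to $\tau=n^{\td}$ Karhunen--Lo\`eve terms and works with $\boldsymbol{\cal \GG}^{\diamond}$, $\widehat{\boldsymbol{\cal \GG}}^{\diamond}$, exploiting that the first $\tau$ eigendata of $\boldsymbol{\cal \GG}^{\tau}$ agree with those of $\boldsymbol{\cal \GG}$. Steps \hyperref[S1]{\Sone}--\hyperref[S2]{\Stwo}, Lemmas~\ref{lem_transf_A1}--\ref{lem_transf_A3} and the error control in the proofs of Theorems~\ref{theorem_exp_decay_long_run_eigen}--\ref{theorem_exp_decay_long_run_fun} are the price of this truncation. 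You instead verify the abstract Assumption~\ref{ass_abstract} directly for $\boldsymbol{\cal \DD}=\boldsymbol{\cal \GG}$, $\widehat{\boldsymbol{\cal \DD}}=\widehat{\boldsymbol{\cal \GG}}^b$ in the $\e^{\infty}$-basis with $\mm=n/b$, bypassing Theorem~\ref{thm_longrun_b_remains_valid} entirely. This is a legitimate move: \eqref{defn_D_2} permits $\E[\widehat{\boldsymbol{\cal \DD}}]\neq\boldsymbol{\cal \DD}$ with the bias pushed into $\etav^{\boldsymbol{\cal R}}$, and with $\boldsymbol{\cal \DD}=\boldsymbol{\cal \GG}$ conditions \hyperref[A2D]{\AtwoD}, \hyperref[A3D]{\AthreeD} are read off directly from \hyperref[A2star]{\Atwostar}, \hyperref[A3star]{\Athreestar} without ever touching the $b$-eigendata, so the obstruction the paper names never arises; the swap $I_{k,j}\leftrightarrow I_{k,j}^{(\infty,b)}$ costs only $\rho^b\sqrt{\Lambda_j}$ and is negligible. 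If the uniform moment estimate holds, your route is cleaner.

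Two cautions. First, the moment bound you need is $(n/b)^{1/2}\max_{i,j\in\N}\|\overline{\etav}_{i,j}^{\boldsymbol{\cal\DD}}\|_q\lesssim1$ with $\eta^{\star}$-scores at \emph{all} lags, whereas Lemma~\ref{lem_long_run_tech}~\hyperref[itm_(i)_c]{\itmi} is stated for $\overline{\etav}^{(\infty,b)}_{i,j}$, which by \eqref{defn_eta_infty_b} uses $\eta^{\star}$ at lag zero but $\eta^b$ at lags $h\geq1$. The argument of \cite{wu_asymptotic_bernoulli},\cite{han_wu_2014_max} extends, but you should flag that you need the $\eta^{\star}$-version and that it rests on \hyperref[A1star]{\Aonestar} controlling $\Omega_k$ uniformly over all score directions. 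Second, your parenthetical ``alternative'' of checking Assumption~\ref{ass_eigenvaectors_longrun} directly and then perturbing $\boldsymbol{\cal \GG}^b$-data to $\boldsymbol{\cal \GG}$-data via Lemma~\ref{lem_eigen_gen_upper_bound} is precisely the thing the paper says fails without truncation (the perturbation cannot be made uniform over $j\in\N$), so that parenthetical should be dropped; only the direct route is viable without truncation. Finally, you address the first display but only gesture at the second (norm-squared) display; it follows from the second conclusion of the abstract Theorem~\ref{theorem_exp_eigen_vector} by the same identification, which is worth saying.
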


As before, we also have corresponding versions of Proposition \ref{prop_replace_I_with_eta} and Corollary \ref{corollary_norms}. Formulating the analogues needs a little more care and is not immediate, so we state them explicitly. To this end, denote with
\begin{align}\nonumber \label{defn_eta_infty_b}
&\etav_{i,j}^{(\infty,b)} = \etav_{i,j}^{(\infty,b,1)} + \etav_{i,j}^{(\infty,b,2)} \quad \text{where} \quad \etav_{i,j}^{(\infty,b,1)} = \sum_{k = 1}^n\frac{{\eta_{k,i}^{\star} \eta_{k,j}^{\star}}}{n},\\&\text{and} \quad \etav_{i,j}^{(\infty,b,2)} = \sum_{h = 1}^{b}\sum_{k = h+1}^n \frac{\eta_{k,i}^{b} \eta_{k-h,j}^{b} + \eta_{k-h,i}^{b} \eta_{k,j}^{b}}{n-h}.
\end{align}
Then we have the following results.
\begin{proposition}\label{prop_replace_I_with_eta_exp}
Assume that Assumption \ref{ass_eigenvaectors_longrun_exp_decay} holds. Then for $1 \leq \J < \JJ_{n}^+$, one may replace
$\{I_{k,j}^{(\infty,b)}\}_{k \in \N}$  with $\{(\widetilde{\lambda}_k \widetilde{\lambda}_j)^{1/2} \overline{\etav}_{k,j}^{(\infty,b)}\}_{k \in \N}$ in Theorems \ref{theorem_exp_decay_long_run_eigen} and \ref{theorem_exp_decay_long_run_fun}.
\end{proposition}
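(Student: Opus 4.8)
The argument runs parallel to the proof of Proposition~\ref{prop_replace_I_with_eta}, the only genuinely new ingredient being a preliminary identification of $I_{i,j}^{(\infty,b)}$ in terms of the centred average $\overline{\etav}_{i,j}^{(\infty,b)}$. Throughout I use that, as established in the course of proving Theorems~\ref{theorem_exp_decay_long_run_eigen} and \ref{theorem_exp_decay_long_run_fun}, Assumption~\ref{ass_eigenvaectors_longrun_exp_decay} entails Assumption~\ref{ass_eigenvaectors_longrun}, hence Assumption~\ref{ass_abstract} with $\boldsymbol{\cal \DD} = \boldsymbol{\cal \GG}^b$ and $\mm = n/b$; in particular the versions of Corollary~\ref{corollary_norms} and Proposition~\ref{prop_replace_I_with_eta} guaranteed by Theorem~\ref{thm_longrun_b_remains_valid} are available, as is the operator bound $\|\boldsymbol{\cal \GG}^b - \boldsymbol{\cal \GG}\|_{\mathcal{L}} \lesssim \rho^{b}$, which by \hyperref[A1star]{\Aonestar} and $b \ge C_0 \log n$ is polynomially small in $n$.

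\emph{Step 1: expansion of $I_{i,j}^{(\infty,b)}$.} The aim is to establish
\begin{align*}
I_{i,j}^{(\infty,b)} = \sqrt{\widetilde\lambda_i\widetilde\lambda_j}\,\bigl(\overline{\etav}_{i,j}^{(\infty,b)} + R_{i,j}\bigr), \qquad i,j \in \N, \qquad \text{with}\quad \max_{i,j\in\N}\bigl\|R_{i,j}\bigr\|_{p^*} \lesssim n^{-1}
\end{align*}
(up to logarithmic factors), from which $\bigl\|\max_{1\le i,j<\JJ_n^+}|R_{i,j}|\bigr\|_{p^*} \lesssim (\JJ_n^+)^{1/p}/n$ follows by the maximal moment inequality of Lemma~\ref{lem_max_mom}, as indicated in the displayed remark preceding the statement. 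To get the identity one writes $\widehat{\boldsymbol{\cal \GG}}^b$ out via \eqref{defn_G_longrun_estimator}--\eqref{defn_lag_op_est}, inserts the decomposition \eqref{eq_rep_X_k_infty} of $\overline{X}_k$ in the lag-$0$ term and its $\e_j^b$-analogue \eqref{eq_rep_X_k} in the lag-$h$ terms (so as to match the definition \eqref{defn_eta_infty_b} of $\etav_{i,j}^{(\infty,b)}$), and subtracts $\langle\boldsymbol{\cal \GG}^b(\e_i),\e_j\rangle = \sqrt{\widetilde\lambda_i\widetilde\lambda_j}\sum_{|h|\le b}\E[\eta_{h,i}^{\star}\eta_{0,j}^{\star}]$; the resulting $R_{i,j}$ collects (i) the mean corrections induced by $\bar{X}_n$, (ii) the edge effects from the $(n-h)^{-1}$ normalisations and truncated summation ranges, and (iii) the mismatch between the $\e_j^{\infty}$- and $\e_j^{b}$-score representations, which via Lemma~\ref{lem_eigen_gen_upper_bound} is governed by $\|\boldsymbol{\cal \GG}^b - \boldsymbol{\cal \GG}\|_{\mathcal{L}}$ (with the eigengaps $\psi_j$ controlled through \hyperref[A2star]{\Atwostar}). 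Term (i) is bounded uniformly in $i,j$ via the partial-sum control $n^{-3/4}b^{1/4}\max_j\|\sum_{k=1}^n\eta_{k,j}^b\|_{2q}\lesssim s_n^{\boldsymbol{\cal \GG}}$ from \hyperref[A1b]{\Aoneb} (which bounds the score averages $\|\bar\eta_k\|_{2q}$), term (iii) by the polynomially small bound on $\|\boldsymbol{\cal \GG}^b - \boldsymbol{\cal \GG}\|_{\mathcal{L}}$ together with Lemma~\ref{lem_eigen_gen_upper_bound}, and the residual normalisation effects (ii) by direct estimation; the bookkeeping is exactly that carried out in \cite{wu_asymptotic_bernoulli}.

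\emph{Step 2: insertion into the expansions.} With the identity of Step~1, the cost of replacing $\{I_{k,j}^{(\infty,b)}\}_{k\in\N}$ by $\{(\widetilde\lambda_k\widetilde\lambda_j)^{1/2}\overline{\etav}_{k,j}^{(\infty,b)}\}_{k\in\N}$ in the three displays of Theorems~\ref{theorem_exp_decay_long_run_eigen} and \ref{theorem_exp_decay_long_run_fun} is controlled as in the proof of Proposition~\ref{prop_replace_I_with_eta}. For the eigenvalue expansion the extra term is $\lambda_j^{-1}\widetilde\lambda_j|R_{j,j}| = (\varphi_{j,j}^{\star})^{-1}|R_{j,j}| \lesssim |R_{j,j}|$, using $\lambda_j = \varphi_{j,j}^{\star}\widetilde\lambda_j$ from \eqref{lem_varphi_relations} and \hyperref[A3star]{\Athreestar}; its maximum over $j \le \J$ has $\|\cdot\|_p$-norm $\lesssim \J^{1/p}/n = \oo\bigl(\J^{1/p}(n/b)^{-\ad}(n/b)^{-1/2}\bigr)$ since $b \gtrsim \log n$. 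For the first display of Theorem~\ref{theorem_exp_decay_long_run_fun}, orthonormality of $\{\e_k\}$ turns the extra term into $\Lambda_j^{-1/2}\bigl(\sum_{k\ne j}\widetilde\lambda_k\widetilde\lambda_j R_{k,j}^2/(\lambda_j-\lambda_k)^2\bigr)^{1/2}$; bounding $\sum_{k\ne j}\widetilde\lambda_k\widetilde\lambda_j\|R_{k,j}\|_{p^*}^2/(\lambda_j-\lambda_k)^2 \lesssim (\max_{k\in\N}\|R_{k,j}\|_{p^*}^2)\sum_{k\ne j}\lambda_k\lambda_j/(\lambda_j-\lambda_k)^2 = (\max_{k}\|R_{k,j}\|_{p^*}^2)\,\Lambda_j$ (using $\widetilde\lambda_k \lesssim \lambda_k$ from \hyperref[A3star]{\Athreestar}, so that the tail $k \ge \JJ_n^+$ is automatically absorbed), the factor $\Lambda_j$ cancels and the extra term is $\lesssim \J^{1/p}\max_{k}\|R_{k,j}\|_{p^*} \lesssim \J^{1/p}/n$ in $\|\cdot\|_p$. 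For the norm display, one expands $(\overline{\etav}_{k,j}^{(\infty,b)} + R_{k,j})^2$, treats the cross term by Cauchy--Schwarz together with the version of Corollary~\ref{corollary_norms} available under Assumption~\ref{ass_eigenvaectors_longrun} (giving $\|\sum_{k\ne j}\widetilde\lambda_k\widetilde\lambda_j(\overline{\etav}_{k,j}^{(\infty,b)})^2/(\lambda_j-\lambda_k)^2\|_p \lesssim \Lambda_j(n/b)^{-1}$), and the $R^2$-term as above, obtaining an error $\lesssim \J^{1/p}(n/b)^{-\ad}(n/b)^{-1}$, as required.

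\emph{Main obstacle.} The crux is Step~1 --- pinning down \emph{all} the lower-order pieces making up $R_{i,j}$ and showing that each is genuinely of order $n^{-1}$ uniformly in $i,j \in \N$. The delicate point is item (iii): a naive appeal to Lemma~\ref{lem_eigen_gen_upper_bound} for $\|\e_j^{\infty} - \e_j^{b}\|_{\Ln^2}$ costs a factor $\psi_j^{-1}$, which is precisely why one needs the geometric decay \hyperref[A1star]{\Aonestar} (making $\|\boldsymbol{\cal \GG} - \boldsymbol{\cal \GG}^b\|_{\mathcal{L}}$ polynomially small for $b \ge C_0\log n$) in tandem with the eigenvalue regularity \hyperref[A2star]{\Atwostar}. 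Once $R_{i,j}$ is under control, Step~2 is the same weight-bookkeeping (via \hyperref[A2star]{\Atwostar}, as in \eqref{eq_convex_condi}) already performed for Proposition~\ref{prop_replace_I_with_eta}.
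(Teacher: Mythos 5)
Your proposal takes essentially the same route as the paper's (very terse) proof, which cites Lemma~\ref{lem_long_run_tech}~\hyperref[itm_(i)_c]{\itmi} and the mean-correction bookkeeping of Theorem~\ref{thm_longrun_b_remains_valid}: identify $I_{i,j}^{(\infty,b)} = \sqrt{\widetilde\lambda_i\widetilde\lambda_j}\bigl(\overline{\etav}_{i,j}^{(\infty,b)} + R_{i,j}\bigr)$ with $\max_{i,j}\|R_{i,j}\|_{p^*}\lesssim n^{-1}$, and push through the three displays. One claim in your preamble is, however, imprecise and should be corrected: Assumption~\ref{ass_eigenvaectors_longrun_exp_decay} does \emph{not} entail Assumption~\ref{ass_eigenvaectors_longrun} for $\boldsymbol{\cal \GG}^b$ itself --- the paper explicitly points out that verifying \hyperref[A2b]{\Atwob} directly for $\{\lambda_j^b\}$ is the obstacle, and this is precisely why the $\diamond$-machinery (truncation to $X_k^{\tau}$, with Lemmas~\ref{lem_transf_A1}--\ref{lem_transf_A3} establishing the $\diamond$-versions of \hyperref[A1b]{\Aoneb}--\hyperref[A3b]{\Athreeb}) is introduced. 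Where you invoke ``Corollary~\ref{corollary_norms} and Proposition~\ref{prop_replace_I_with_eta} under Assumption~\ref{ass_eigenvaectors_longrun}'' you should instead invoke Corollary~\ref{cor_norm_bounds_exp} and Lemma~\ref{lem_long_run_tech}~\hyperref[itm_(i)_c]{\itmi}, which are the statements actually available under Assumption~\ref{ass_eigenvaectors_longrun_exp_decay}; since those are what your Step~2 estimates genuinely use, the fix is cosmetic and your argument survives. A second, smaller point: your term (iii) (the $\e_j^{\infty}$-versus-$\e_j^{b}$ score mismatch) stems from a literal reading of the mixed $\eta^{\star}$/$\eta^{b}$ notation in \eqref{defn_eta_infty_b}; the paper's one-line proof, and the surrounding remarks, strongly suggest the lag-$h$ scores in $\etav_{i,j}^{(\infty,b,2)}$ are also meant to be $\eta^{\star}$, in which case (iii) vanishes. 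Retaining (iii) costs you nothing --- the bound $\rho^b/\psi_j$ with $b\ge C_0\log n$ makes it negligible on the relevant index range --- but you should not present it as if it were an unavoidable obstacle.
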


\begin{corollary}\label{cor_norm_bounds_exp}
Assume that Assumption \ref{ass_eigenvaectors_longrun_exp_decay} holds. Then for $1 \leq j < \JJ_{n}^+$
\begin{align*}
\bigr\|\widehat{\lambda}_j - \lambda_j\bigl\|_p \lesssim \frac{\lambda_j}{\sqrt{n}} \quad \text{and} \quad \bigr\|\|\widehat{\e}_j - e_j\|_{\Ln^2}^2\bigl\|_p \lesssim \frac{\Lambda_j}{n}.
\end{align*}
\end{corollary}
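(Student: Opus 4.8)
The plan is to read both estimates off the expansions already established in Theorems \ref{theorem_exp_decay_long_run_eigen} and \ref{theorem_exp_decay_long_run_fun}: use Proposition \ref{prop_replace_I_with_eta_exp} to turn the quantities $I_{k,j}^{(\infty,b)}$ into the explicit centred scores $(\widetilde\lambda_k\widetilde\lambda_j)^{1/2}\overline{\etav}_{k,j}^{(\infty,b)}$, discard the theorems' error terms (which are of lower order for the optimal bandwidth $b\asymp\log n$ precisely because $\ad>0$, exactly as in Section \ref{sec_spec_gap_aoptimal}), and then bound the surviving leading terms by a single moment inequality for the weakly dependent score sequences. Along the way I would record that $\widetilde\lambda_j\asymp\lambda_j$ uniformly in $j$: indeed $\lambda_j=\varphi_{j,j}^{\star}\widetilde\lambda_j$ by \eqref{lem_varphi_relations}, $\varphi_{j,j}^{\star}$ is bounded below by \hyperref[A3star]{\Athreestar}, and it is bounded above since, under \hyperref[A1star]{\Aonestar}, the lag-$h$ autocovariances of the unit-variance scores $\eta_{k,j}^{\star}$ are $\lesssim\rho^{|h|}$ uniformly in $j$.

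The whole corollary then reduces to the single estimate
\begin{align*}
\max_{i,j \in \N}\bigl\|\overline{\etav}_{i,j}^{(\infty,b)}\bigr\|_{q} \lesssim n^{-1/2}, \qquad q = p\,2^{\pd+4},
\end{align*}
(and its analogue at the doubled exponent $2p\,2^{\pd+4}$, available under \hyperref[A1star]{\Aonestar}). Granting it, Theorem \ref{theorem_exp_decay_long_run_eigen} together with Proposition \ref{prop_replace_I_with_eta_exp} gives $\widehat\lambda_j-\lambda_j=\widetilde\lambda_j\,\overline{\etav}_{j,j}^{(\infty,b)}+R_j$ with $\|R_j\|_p\lesssim\lambda_j(n/b)^{-1/2-\ad}=\oo(\lambda_j n^{-1/2})$, hence $\|\widehat\lambda_j-\lambda_j\|_p\le\widetilde\lambda_j\|\overline{\etav}_{j,j}^{(\infty,b)}\|_p+\|R_j\|_p\lesssim\lambda_j n^{-1/2}$. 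For the eigenfunctions, the second display of Theorem \ref{theorem_exp_decay_long_run_fun} together with Proposition \ref{prop_replace_I_with_eta_exp} gives $\|\widehat\e_j-\e_j\|_{\Ln^2}^2=\sum_{k\neq j}\widetilde\lambda_k\widetilde\lambda_j(\overline{\etav}_{k,j}^{(\infty,b)})^2/(\lambda_j-\lambda_k)^2+R_j'$ with $\|R_j'\|_p\lesssim\Lambda_j(n/b)^{-1-\ad}=\oo(\Lambda_j/n)$; applying Minkowski's inequality to the series and then $\|(\overline{\etav}_{k,j}^{(\infty,b)})^2\|_p=\|\overline{\etav}_{k,j}^{(\infty,b)}\|_{2p}^2\lesssim n^{-1}$ together with $\widetilde\lambda_k\widetilde\lambda_j\asymp\lambda_k\lambda_j$ yields $\|\,\|\widehat\e_j-\e_j\|_{\Ln^2}^2\,\|_p\lesssim n^{-1}\sum_{k\neq j}\lambda_k\lambda_j/(\lambda_j-\lambda_k)^2+\oo(\Lambda_j/n)=\Lambda_j/n$, which is the claim.

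It remains to prove the key moment bound. I would split $\overline{\etav}_{i,j}^{(\infty,b)}=\overline{\etav}_{i,j}^{(\infty,b,1)}+\overline{\etav}_{i,j}^{(\infty,b,2)}$ as in \eqref{defn_eta_infty_b}. The first term $n^{-1}\sum_{k=1}^n(\eta_{k,i}^{\star}\eta_{k,j}^{\star}-\E[\eta_{k,i}^{\star}\eta_{k,j}^{\star}])$ is an ergodic average of a physical-dependence sequence with $\|\eta_{k,i}^{\star}\eta_{k,j}^{\star}\|_q\lesssim1$ and dependence coefficients $\lesssim\rho^{k}$, both uniformly in $i,j$ under \hyperref[A1star]{\Aonestar}, so a Rosenthal/Burkholder-type maximal inequality (cf. \cite{wu_2005,sipwu}) gives $\|\overline{\etav}_{i,j}^{(\infty,b,1)}\|_q\lesssim n^{-1/2}$. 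The second term is the lag-window part $\sum_{h=1}^b(n-h)^{-1}\sum_{k=h+1}^n(\eta_{k,i}^{b}\eta_{k-h,j}^{b}+\eta_{k-h,i}^{b}\eta_{k,j}^{b}-\text{means})$; rewriting it as a single average over $k$ of suitably block-weighted variables built from the geometrically weakly dependent array $\{\eta^b_{\cdot,i},\eta^b_{\cdot,j}\}$ and applying the same type of maximal inequality—using that the long-run variance of these block variables stays controlled because the score autocovariances are geometrically summable—again yields the required $n^{-1/2}$ bound uniformly in $i,j$ and in $b\ge C_0\log n$. Finally the $b$-scores here are replaced by the $\infty$-scores at a cost $\lesssim\psi_j^{-1}\|\boldsymbol{\cal \GG}^b-\boldsymbol{\cal \GG}\|_{\mathcal L}\lesssim\psi_j^{-1}\rho^{b}$ per score (Lemma \ref{lem_eigen_gen_upper_bound} applied to $\boldsymbol{\cal \GG}^b$ versus $\boldsymbol{\cal \GG}$, plus $\|\boldsymbol{\cal C}_h\|_{\mathcal L}\lesssim\rho^{|h|}$), which is negligible for $C_0$ large since $\psi_j^{-1}$ is at most polynomial in $n$ for $j<\JJ_n^+$ under \hyperref[A2star]{\Atwostar}.

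The hard part will be this last step: getting the lag-window maximal inequality at the sharp $n^{-1/2}$ rate \emph{uniformly in} $i,j$ (so that the subsequent maxima and sums over $k,j$ are harmless), without leaking factors of $b$, and at the same time reconciling the two eigenbases $\{\e_j^b\}$ and $\{\e_j\}$. This is however of the same nature as the estimates already needed in the proofs of Theorems \ref{theorem_exp_decay_long_run_eigen} and \ref{theorem_exp_decay_long_run_fun}—in particular in verifying \hyperref[A1b]{\Aoneb} and in controlling the bias $\boldsymbol{\cal \GG}^b-\boldsymbol{\cal \GG}$—so no genuinely new machinery is required.
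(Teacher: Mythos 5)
Your high-level strategy is exactly the paper's: apply Proposition \ref{prop_replace_I_with_eta_exp} to convert the $I^{(\infty,b)}$-quantities into the explicit scores, then invoke a uniform moment bound on $\overline{\etav}^{(\infty,b)}_{i,j}$. The paper's one-line proof does precisely this, citing Proposition \ref{prop_replace_I_with_eta_exp} and Lemma~\ref{lem_long_run_tech}~\itmi. Your auxiliary observations — $\widetilde\lambda_j\asymp\lambda_j$ via \eqref{lem_varphi_relations}, \hyperref[A3star]{\Athreestar} and the geometric decay of score autocovariances, and handling the $\{\e_j^b\}$ vs.\ $\{\e_j\}$ dissonance via an operator-perturbation bound — are also in the spirit of what the paper does inside Lemma~\ref{lem_long_run_tech}.

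However, the pivotal moment bound you claim, $\max_{i,j}\|\overline{\etav}^{(\infty,b)}_{i,j}\|_q\lesssim n^{-1/2}$, is false when $b=b_n\to\infty$, and the Rosenthal-type argument you sketch cannot deliver it. Lemma~\ref{lem_long_run_tech}~\itmi\ gives $\|\overline{\etav}^{(\infty,b)}_{i,j}\|_q\lesssim\sqrt{b/n}$, and this is the right order: the lag-window part $\overline{\etav}^{(\infty,b,2)}_{i,j}$ is a sum over $h=1,\dots,b$ of centred lag-$h$ averages, each of $L^q$-size $\asymp n^{-1/2}$, and these $h$-blocks are essentially orthogonal (already for IID $X_k$), so their aggregate is $\asymp\sqrt{b}\cdot n^{-1/2}$, not $n^{-1/2}$. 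The geometric decay of the score autocovariances controls constants, not the count $b$ of lags you are summing over. This is consistent with the well-known $\sqrt{b/n}$ rate for lag-window long-run-variance estimation and with Corollary~\ref{corollary_norms} applied at $\mm=n/b$: the conclusion there is $\lambda_j/\sqrt{\mm}=\lambda_j\sqrt{b/n}$ and $\Lambda_j/\mm=\Lambda_j\, b/n$. In other words, the normalizations $\sqrt{n}$ and $n$ in the statement of Corollary~\ref{cor_norm_bounds_exp} as printed appear to be a typo for $\sqrt{n/b}$ and $n/b$; the proof you should be aiming at is the one you sketched, but with the weaker (and correct) moment bound $\sqrt{b/n}$ in place of $n^{-1/2}$, which then gives exactly $\lambda_j/\sqrt{n/b}$ and $\Lambda_j/(n/b)$.
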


\section{Maximum deviation of empirical eigenvalues}\label{sec_applications}

As already mentioned, Theorems \ref{theorem_exp_eigen_value} and \ref{theorem_exp_eigen_vector} can be used to obtain various fluctuation results for eigenvalues or eigenfunctions. We exemplify this further in case of ${\boldsymbol{\cal \DD}} = {\boldsymbol{\cal C}}$, mentioning that a similar program can be carried out for ${\boldsymbol{\cal \DD}} = {\boldsymbol{\cal C}}_h^*{\boldsymbol{\cal C}}_h$, $h \in \Z$ fixed. To this end, we formally introduce the longrun covariance (recall that $\overline{X} = X - \E[X]$) as
\begin{align}\label{eq_var_def}
\gamma_{i,j} = \lim_{n \to \infty}  \frac{1}{n}\E\biggl[\sum_{k,l = 1}^{n}\bigl(\eta_{k,i}^2 - 1\bigr) \bigl(\eta_{l,j}^2 - 1\bigr)\biggr].
\end{align}
In Section \ref{sec_gaussian_approx} we show that this is well-defined given Assumption \ref{ass_max_eigenvaleu} below. Moreover, for $\sigma_{j}^2 = \gamma_{j,j}$ we have the usual representation $\sigma_{j}^2 = \sum_{k \in \Z} \phi_{k,j}$, where $\phi_{k,j} = \cov[\eta_{0,j}\eta_{0,j},\eta_{k,j}\eta_{k,j}]$. Consider ${\boldsymbol{\cal C}}$ with eigenvalues $\lambdav$ and denote with
\begin{align}\label{eq_defn_T_d_lambda}
T_{\JJ}^{} = \sqrt{n}\max_{1 \leq j < \JJ}\frac{|\widehat{\lambda}_j - \lambda_j|}{\sigma_{j} \lambda_j}, \quad T_{\JJ}^{Z_{}} = \max_{1 \leq j < \JJ}\bigl|Z_{j}\bigr|,
\end{align}
where $\bigl\{Z_{j}\bigr\}_{1 \leq j < \JJ}$ is a zero mean sequence of Gaussian random variables with correlation structure $\Sigma_{\JJ}^{Z_{}} = \bigl(\rho_{i,j}\bigr)_{1 \leq i,j < \JJ}$, where $\rho_{i,j} = \gamma_{i,j}/\sigma_{i} \sigma_{j}$. In the sequel, we show that
$T_{\JJ_n^+}^{}$ is close to $T_{\JJ_n^+}^{Z}$ in probability. To this end, we work under the following assumption.
\begin{ass}\label{ass_max_eigenvaleu}
For $p \geq 1$ let $q = p 2^{\pd + 4}$, $\pd = \lceil \hd/\ad \rceil$, and assume that
\begin{enumerate}
\item[\Bone]\label{B1} $\E\bigl[\|X_k\|_{\Ln^2}^2\bigr] < \infty$ and \hyperref[A2C]{\AtwoC} hold (with $\ad,\hd$ as above) such that\\ $\bigl(\JJ_n^+\bigr)^{1/p} n^{-\ad} \lesssim n^{-\delta}$, $\delta > 0$,
\item[\Btwo]\label{B2}  $\Omega_k(2q) \lesssim k^{-\bd}$, $\bd > 3/2$,
\item[\Bthree]\label{B3} $\inf_j \sigma_{j}> 0$.
\end{enumerate}
\end{ass}
Note that these assumptions are mild. In particular, the decay rate  $\bd$ in condition $\hyperref[B2]{\Btwo}$ is completely independent of the underlying dimension $\JJ_n^+$. We then have the following result.
\begin{theorem}\label{thm_eigen_max_gauss_approx}
Grant Assumption \ref{ass_max_eigenvaleu}. Then
\begin{align*}
\sup_{x \in \R}\bigl|P\bigl(T_{\JJ_n^+}^{} \leq x \bigr) - P\bigl(T_{\JJ_n^+}^{Z_{}} \leq x \bigr)\bigr| \lesssim n^{-C}, \quad C > 0.
\end{align*}
\end{theorem}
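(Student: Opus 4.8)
\emph{Proof plan.} Put $\JJ := \JJ_n^+$ and $\sigma := \inf_j \sigma_j > 0$ (by \hyperref[B3]{\Bthree}); the argument has three steps: an exact reduction of $T_{\JJ}$ to a maximum of normalized quadratic partial sums, a high-dimensional Gaussian approximation for those sums, and an anti-concentration step to absorb the reduction error. \textbf{Step 1 (reduction).} Condition \hyperref[B1]{\Bone} contains $\E[\|X_k\|_{\Ln^2}^2]<\infty$ and \hyperref[A2C]{\AtwoC}, while \hyperref[B2]{\Btwo} ($\Omega_{2q}(k)\lesssim k^{-\bd}$, $\bd>3/2$), via the factorization $\eta_{k,i}\eta_{k,j}-\eta_{k,i}'\eta_{k,j}'=(\eta_{k,i}-\eta_{k,i}')\eta_{k,j}+\eta_{k,i}'(\eta_{k,j}-\eta_{k,j}')$ and standard $L^q$ maximal inequalities for causal Bernoulli sums, yields \hyperref[A1C]{\AoneC}; hence Proposition \ref{prop_transf_abstract_to_cov} shows Assumption \ref{ass_abstract} holds for $\boldsymbol{\cal D}=\boldsymbol{\cal C}$, $\mm=n$. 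In the covariance case $\widetilde\lambda_j=\lambda_j$, $\E[\etav_{j,j}^{\boldsymbol{\cal C}}]=1$, so $\overline{\etav}_{j,j}^{\boldsymbol{\cal C}}=\tfrac1n\sum_{k=1}^n(\eta_{k,j}^2-1)$ by \eqref{relations_eta_C}, and Theorem \ref{theorem_exp_eigen_value} together with Proposition \ref{prop_replace_I_with_eta} gives
\[
\Bigl\|\max_{1\le j<\JJ}\Bigl|\tfrac{\widehat\lambda_j-\lambda_j}{\lambda_j}-\tfrac1n\sum_{k=1}^n(\eta_{k,j}^2-1)\Bigr|\Bigr\|_p \lesssim \frac{\JJ^{1/p}\,n^{-\ad}}{\sqrt n}.
\]
Multiplying by $\sqrt n/\sigma_j\le\sqrt n/\sigma$ and using $\JJ^{1/p}n^{-\ad}\lesssim n^{-\delta}$ from \hyperref[B1]{\Bone},
\[
\bigl\|T_{\JJ}-\widetilde T_{\JJ}\bigr\|_p\lesssim n^{-\delta},\qquad \widetilde T_{\JJ}:=\max_{1\le j<\JJ}|W_{n,j}|,\quad W_{n,j}:=\frac{1}{\sigma_j\sqrt n}\sum_{k=1}^n(\eta_{k,j}^2-1).
\]

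\textbf{Step 2 (Gaussian approximation).} Write $W_{n,j}=n^{-1/2}\sum_{k=1}^nY_{k,j}$ with $Y_{k,j}:=(\eta_{k,j}^2-1)/\sigma_j$. Each $\{Y_{k,j}\}_{k\in\Z}$ is a stationary centered causal Bernoulli functional, and $\eta^2-(\eta')^2=(\eta-\eta')(\eta+\eta')$ with Cauchy--Schwarz gives $\max_j\|Y_{k,j}-Y_{k,j}'\|_q\lesssim\sigma^{-1}\bigl(\sup_j\|\eta_{0,j}\|_{2q}\bigr)\Omega_{2q}(k)\lesssim k^{-\bd}$, where $\sup_j\|\eta_{0,j}\|_{2q}<\infty$ follows from $\sum_k\Omega_{2q}(k)<\infty$; the same summability ($\bd>1$) makes $\gamma_{i,j}$ in \eqref{eq_var_def} well-defined and $\sup_j\sigma_j<\infty$, while $\rho_{j,j}=\gamma_{j,j}/\sigma_j^2=1$, so the target covariance $\Sigma_{\JJ}^{Z}=(\rho_{i,j})$ has unit diagonal. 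Applying the high-dimensional Gaussian approximation Theorem \ref{thm_gauss_approx} to the array $(Y_{k,j})$ --- the trade-off between the polynomial dependence decay $\bd>3/2$, the polynomial range of $\JJ_n^+$ permitted by \hyperref[B1]{\Bone} and \hyperref[A2C]{\AtwoC}, and the available moment order $2q=2p\,2^{\pd+4}$ being exactly what the theorem requires --- gives, for some $c_1>0$,
\[
\sup_{x\in\R}\bigl|P(\widetilde T_{\JJ}\le x)-P(T^{Z}_{\JJ}\le x)\bigr|\lesssim n^{-c_1},\qquad T^Z_{\JJ}=\max_{1\le j<\JJ}|Z_j|,\ \ Z\sim\Gaussian(0,\Sigma_{\JJ}^{Z}).
\]

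\textbf{Step 3 (conclusion).} By Step 1 and Markov, $P(|T_{\JJ}-\widetilde T_{\JJ}|>n^{-\delta/2})\lesssim n^{-p\delta/2}$, hence for every $x\in\R$,
\[
P(T_{\JJ}\le x)\le P(\widetilde T_{\JJ}\le x+n^{-\delta/2})+n^{-p\delta/2}\le P(T^Z_{\JJ}\le x+n^{-\delta/2})+n^{-c_1}+n^{-p\delta/2};
\]
a Nazarov-type anti-concentration bound for the maximum of a Gaussian vector with unit variances yields $\sup_x P(x<T^Z_{\JJ}\le x+n^{-\delta/2})\lesssim n^{-\delta/2}\sqrt{1+\log\JJ}\lesssim n^{-\delta/2}\sqrt{\log n}$, and the reverse inequality is symmetric, so $\sup_x|P(T_{\JJ}\le x)-P(T^Z_{\JJ}\le x)|\lesssim n^{-c_1}+n^{-\delta/2}\sqrt{\log n}+n^{-p\delta/2}\lesssim n^{-C}$ for some $C>0$.

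The substantive step is Step 2: establishing the Gaussian approximation of Theorem \ref{thm_gauss_approx} for the $\JJ_n^+$-dimensional quadratic sums under merely polynomial dependence decay $\bd>3/2$ while $\JJ_n^+$ is allowed to grow polynomially in $n$, together with the explicit polynomial rate $n^{-c_1}$ --- this is the content of Section \ref{sec_gaussian_approx}. Steps 1 and 3 are comparatively routine; the only delicate point in Step 1 is that, after the $\sqrt n/\lambda_j$ weighting implicit in $T_{\JJ}$, the error term of Theorem \ref{theorem_exp_eigen_value} is still $o(1)$ uniformly over $j<\JJ_n^+$, which is precisely why $T_{\JJ}$ is normalized by $\lambda_j$ (the correct scale) rather than by the operator norm as in Lemma \ref{lem_eigen_gen_upper_bound}.
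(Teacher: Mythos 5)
Your proposal follows the same three-step route as the paper's own proof: (1) use Theorem~\ref{theorem_exp_eigen_value} together with Proposition~\ref{prop_replace_I_with_eta} to reduce $T_{\JJ_n^+}$, up to an error of order $n^{-\delta/2}$ in probability, to the normalized quadratic-form maximum $T_{\JJ_n^+}^{\eta}$ built from $\frac{1}{n}\sum_k(\eta_{k,j}^2-1)$; (2) verify \hyperref[C1]{\Cone}--\hyperref[C3]{\Cthree} for $U_{k,j}=(\eta_{k,j}^2-1)/\sigma_j$ via the identity $a^2-b^2=(a-b)(a+b)$ and \hyperref[B2]{\Btwo}, then invoke Theorem~\ref{thm_gauss_approx}; (3) absorb the perturbation with the Gaussian anti-concentration Lemma~\ref{lem_gauss_anti}. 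This is exactly the decomposition and the key lemmas the paper employs, and your verification of the side conditions is sound.
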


The above result provides a Gaussian approximation with an algebraic rate. Note that no conditions on the underlying covariance structure are required. If we impose a very weak decay assumption on $\gamma_{\lambda,i,j}$, we obtain the limit distribution.

\begin{corollary}\label{cor_max_limit_distrib}
Grant Assumption \ref{ass_max_eigenvaleu}, and assume in addition
\begin{align}\label{eq_condit_decay_corr}
|\gamma_{i,j}|\log(|i-j|) = \oo(1) \quad \text{for $|i-j| \to \infty$.}
\end{align}
Then for $x \in \R$
\begin{align*}
\lim_{n \to \infty}P\bigl(T_{\JJ_n^+}^{}\leq u_{\JJ_n^+}(x) \bigr) = \exp\bigl(-e^{-x}\bigr),
\end{align*}
where $u_m(x) = x/a_m  + b_m$ with $a_m = (2 \log m)^{1/2}$ and $b_m = (2 \log m)^{1/2} - (8 \log m)^{-1/2}\bigl( \log \log m + 4\pi - 4\bigr)$ for $m \in \N$.
\end{corollary}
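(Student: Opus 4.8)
The plan is to combine the high-dimensional Gaussian approximation of Theorem \ref{thm_eigen_max_gauss_approx} with a classical Berman-type comparison of the resulting Gaussian maximum against that of an i.i.d.\ sequence.

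First, by Theorem \ref{thm_eigen_max_gauss_approx} one has $\sup_{y\in\R}\bigl|P(T_{\JJ_n^+}\le y)-P(T_{\JJ_n^+}^{Z}\le y)\bigr|\lesssim n^{-C}\to 0$, so it suffices to show that $P\bigl(T_{\JJ_n^+}^{Z}\le u_{\JJ_n^+}(x)\bigr)\to\exp(-e^{-x})$. Since the $\gamma_{i,j}$, and hence the $\rho_{i,j}=\gamma_{i,j}/(\sigma_i\sigma_j)$, do not depend on $n$, we may regard $\{Z_j\}_{j\ge 1}$ as a single standardized Gaussian sequence and $T_{\JJ_n^+}^{Z}=\max_{1\le j<\JJ_n^+}|Z_j|$ as its running maximum over a window growing to infinity. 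Next I would record the consequences of the hypotheses for the $\rho_{i,j}$: by \hyperref[B3]{\Bthree} together with the fact that \eqref{eq_var_def} is well defined under Assumption \ref{ass_max_eigenvaleu} (as shown in Section \ref{sec_gaussian_approx}), one has $0<\inf_j\sigma_j\le\sup_j\sigma_j<\infty$, so \eqref{eq_condit_decay_corr} yields $r(k):=\sup_{|i-j|\ge k}|\rho_{i,j}|\to 0$ with $r(k)\log k\to 0$; I would also check that $\delta:=\sup_{i\neq j}|\rho_{i,j}|<1$.

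The core step is the Gaussian comparison. Applying the normal comparison lemma for two-sided exceedances (Berman's inequality) to the Gaussian vectors with common unit variances and covariance matrices $(\rho_{i,j})_{1\le i,j<\JJ_n^+}$ and the identity gives, with $\{\xi_j\}$ i.i.d.\ standard normal,
\begin{align*}
\Bigl|P\bigl(\max_{1\le j<\JJ_n^+}|Z_j|\le u\bigr)-P\bigl(\max_{1\le j<\JJ_n^+}|\xi_j|\le u\bigr)\Bigr|\lesssim\sum_{1\le i<j<\JJ_n^+}\frac{|\rho_{i,j}|}{\sqrt{1-\rho_{i,j}^{2}}}\exp\!\Bigl(-\frac{u^{2}}{1+|\rho_{i,j}|}\Bigr).
\end{align*}
I would evaluate this at $u=u_{\JJ_n^+}(x)$, for which $u^{2}=2\log\JJ_n^+-\log\log\JJ_n^+ +O(1)$ and hence $e^{-u^{2}}\asymp(\log\JJ_n^+)(\JJ_n^+)^{-2}$, and split the sum at $|i-j|\le(\JJ_n^+)^{\theta}$ versus $|i-j|>(\JJ_n^+)^{\theta}$ for a fixed $\theta\in\bigl(0,(1-\delta)/(1+\delta)\bigr)$. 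On the first range $|\rho_{i,j}|\le\delta$ forces each summand to be $\lesssim e^{-u^{2}/(1+\delta)}\asymp\bigl((\log\JJ_n^+)(\JJ_n^+)^{-2}\bigr)^{1/(1+\delta)}$, and there are $\lesssim(\JJ_n^+)^{1+\theta}$ of them, for a total of order $(\JJ_n^+)^{1+\theta-2/(1+\delta)}(\log\JJ_n^+)^{1/(1+\delta)}\to 0$ by the choice of $\theta$. On the second range $|\rho_{i,j}|\le r((\JJ_n^+)^{\theta})$ with $r((\JJ_n^+)^{\theta})\log\JJ_n^+\to 0$, so $|\rho_{i,j}|u^{2}\to 0$ uniformly there and $e^{-u^{2}/(1+|\rho_{i,j}|)}\le e^{o(1)}e^{-u^{2}}$, giving a total $\lesssim(\JJ_n^+)^{2}r((\JJ_n^+)^{\theta})e^{-u^{2}}\asymp r((\JJ_n^+)^{\theta})\log\JJ_n^+\to 0$. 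Hence the two probabilities have the same limit.

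Finally, for the i.i.d.\ sequence $P\bigl(\max_{1\le j<\JJ_n^+}|\xi_j|\le u_{\JJ_n^+}(x)\bigr)=\bigl(1-2\bar\Phi(u_{\JJ_n^+}(x))\bigr)^{\JJ_n^+-1}$, and the Mills-ratio expansion $\bar\Phi(u)=\phi(u)\bigl(u^{-1}+O(u^{-3})\bigr)$ together with the definitions of $a_m$ and $b_m$ show $2(\JJ_n^+-1)\bar\Phi(u_{\JJ_n^+}(x))\to e^{-x}$ — the replacement of the index $\JJ_n^+$ by $\JJ_n^+-1$ being asymptotically immaterial — so the limit is $\exp(-e^{-x})$, which together with the first step proves the corollary. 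The main obstacle is the estimate of the comparison sum: it requires simultaneously the strict bound $\delta<1$ on all off-diagonal correlations, to absorb the $O((\JJ_n^+)^{1+\theta})$ near-diagonal terms, and the logarithmic decay $r(k)\log k\to 0$, to absorb the remaining terms, with the two exponents balanced through $\theta$; verifying $\delta<1$ from Assumption \ref{ass_max_eigenvaleu} is the other point that needs care.
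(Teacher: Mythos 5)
Your reduction to the Gaussian maximum $T^{Z}_{\JJ_n^+}$ via Theorem \ref{thm_eigen_max_gauss_approx} is exactly what the paper does, but after that the routes diverge: the paper disposes of the Gaussian step by citing Theorem 14 and Theorem 1 of Han and Wu (2014), whereas you unpack the mechanism behind such results --- the Berman normal comparison inequality against an i.i.d.\ standard normal array, a split of the comparison sum at lag $(\JJ_n^+)^\theta$, and the Mills-ratio computation of the i.i.d.\ Gumbel limit. This is a legitimate, more self-contained alternative (it is essentially what the cited theorems run internally), and it makes explicit exactly which features of the correlation array the condition \eqref{eq_condit_decay_corr} is buying you; the trade-off is that you must verify the hypotheses of the comparison lemma yourself rather than defer to a reference.

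That brings up the one genuine gap, which you yourself flag but do not close. The near-diagonal half of your comparison estimate requires $\delta := \sup_{i\neq j}|\rho_{i,j}| < 1$, and this does not follow from Assumption \ref{ass_max_eigenvaleu} together with \eqref{eq_condit_decay_corr}. Condition \eqref{eq_condit_decay_corr} constrains $\gamma_{i,j}$ only as $|i-j|\to\infty$ and is silent about pairs at bounded lag, while \hyperref[B3]{\Bthree} only bounds diagonals. Perfect correlation $\rho_{2i-1,2i}=1$ for all $i$ is compatible with the stated hypotheses: for instance, independent pairs with $\eta_{k,2i}=s_{k,i}\,\eta_{k,2i-1}$ for independent random signs $s_{k,i}$ give $\E[\eta_{k,2i-1}\eta_{k,2i}]=0$ (the Karhunen--Lo\`eve orthogonality) and $\Var[\eta_{k,2i}^2]>0$, yet $\eta_{k,2i}^2=\eta_{k,2i-1}^2$ so $\rho_{2i-1,2i}=1$. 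In that situation your bound degenerates (the factor $(1-\rho_{i,j}^2)^{-1/2}$ blows up), and more to the point the maximum is effectively over half as many coordinates, shifting the Gumbel centering by $\log 2$ so the stated limit fails. So the condition $\sup_{i\neq j}|\rho_{i,j}|<1$ (or an equivalent non-degeneracy hypothesis, which is also what the Han--Wu theorems presuppose) needs to be added or derived before the argument can close; merely noting that it "needs care" leaves the proof incomplete.
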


\begin{rem}
Note that condition \eqref{eq_condit_decay_corr} is essentially the weakest possible currently known, see ~\cite{leadbetter_1974}, ~\cite{leadbetter_1983} and ~\cite{han_wu_2014_max}.
\end{rem}

Uniform control measures are an important statistical tool and have many applications. In the present context, Corollary \ref{cor_max_limit_distrib} allows for the construction of simultaneous confidence bands for $\widehat{\lambda}_j$. This in turn is very useful to assess parametric hypothesis and decay rates of the structure of $\lambdav$. A particular and important case is the determination of relevant principle components. A huge number of stopping rules have been developed in the literature (cf. ~\cite{jackson_1993}, ~\cite{jolliffe_book_2002}), which all require a uniform control of $\widehat{\lambdav}$. As pointed out by a reviewer, Corollary \ref{cor_max_limit_distrib} can be particularly useful in case of threshold rules like the scree plot, see also ~\cite{bathia2010} for related problems.

\section{Applications}\label{sec_applications_practical}

A huge bulk of testing and estimation problems in FPCA is related to the normalized scores $\{\eta_{k,j}\}_{k \in \Z, j \in \N}$ in some way or other, where the associated operator is either $\boldsymbol{\cal C}_h$ or $\boldsymbol{\cal \GG}$. Among others, we mention (two) sample mean tests and related problems (~\cite{horvath_kokoszka_book_2012}, ~\cite{horvath_kokoszka_reeder_JRSS_2013}, ~\cite{mas_mean_2007}), tests about potential serial correlation, stationarity and related issues (~\cite{bathia2010},~\cite{ferraty_2012} ,~\cite{fremdt_2013},~\cite{horvath_huskova_rice_independence_2013},~\cite{horvath_kokoszka_rice_econometrics_2014},~\cite{Kraus_panaretos_2012_biometrika},
~\cite{panaretos2013},~\cite{panaretos_kraus_maddocks_2010_JASA}), various change point problems, ~\cite{aston_2012},~\cite{berkes_2009_JRRS},~\cite{horvath_kokoska_2010}, and many more. Given a sample of size $n$, the canonical estimator of the scores is their empirical version
\begin{align*}
\widehat{\eta}_{k,j} = \langle X_k, \widehat{\e}_j \rangle (\widehat{\lambda}_j)^{-1/2}, \quad 1 \leq k \leq n, \,1 \leq j \leq \JJ_n^+.
\end{align*}
Intuitively, it is clear that the power of tests or estimation accuracy is augmented if $\JJ_n^+$ increases with the sample size, since more and more information is taken into account. From a theoretical statistical point of view, this can be made rigorous by minimax theory for estimates and Ingster's (minimax)-theory for tests (cf. ~\cite{ingster_book_2003},~\cite{mas_hilgert_2013}). In ~\cite{fremdt_horvath_kokoszka_steinebach_increasing_2014}, a striking example is presented where a very large amount of principal components is required to adequately describe the data, see also ~\cite{cai_yuan_2012}. Let us also mention that the necessity of uniform control of $\widehat{\lambdav}$ and $\widehat{\bf \e}$ also arises in the completely different field of machine learning in the context of techniques based on \textit{Reproducing Kernel Hilbert spaces}, see for instance ~\cite{blanchard_2007}. All this highlights the importance of a uniform, accurate control of $\widehat{\lambdav}$ and $\widehat{\bf \e}$ as $\JJ_n^+$ increases, and the usefulness of results like Theorems \ref{theorem_exp_eigen_value} and \ref{theorem_exp_eigen_vector}.

Let us briefly discuss how this relates to our main Assumption \ref{ass_abstract}. Due to its general formulation, \hyperref[A1D]{\AoneD} is very flexible. In particular, all the problems mentioned above can be reformulated in a (general) framework (depending on the problem and corresponding operator) such that \hyperref[A1D]{\AoneD} is valid. Regarding \hyperref[A2D]{\AtwoD}, the convexity assumption \eqref{condi_convex} leading to \eqref{eq_convex_condi} provides a general and simple condition that is recommended for all the applications. In particular, the resulting range $\JJ_n^+$ of potentially allowed principal components is quite large. \hyperref[A3D]{\AthreeD} typically reflects a non-degeneracy condition, which usually is necessary any way in the problem at hand. We do not take this discussion any further, but rather investigate two other applications a little more detailed. The first one is the functional linear model, which contains in particular first order autoregression in Hilbert spaces (coined ARH(1) or FAR(1)). As a second, very different application, we survey how and why long-memory situations can arise in a functional context and how this relates to our results.

\subsection{Functional linear regression}\label{sec_functional_linear_regression}

A fundamental regression model in a high-dimensional context is the \textit{functional linear model}. Given ${\bf X} = \{X_k\}_{k \in \Z}$, ${\bf Y} = \{Y_k\}_{k \in \Z} \in \Ln^2(\mathcal{T})$, the basic model is defined as
\begin{align}\label{defn_model_general_flr}
X_k = {\bf \Phi}(Y_{k}) + \epsilon_k, \quad k \in \Z,
\end{align}
where ${\bf \Phi}$ is a (bounded) linear operator, mapping from $\Ln^2(\mathcal{T})$ to $\Ln^2(\mathcal{T})$, and $\varepsilonv = \{\epsilon_k\}_{k \in \Z} \in \Ln^2(\mathcal{T})$ is a noise sequence. The goal is to recover ${\bf \Phi}$, given ${\bf X}$ and ${\bf Y}$, while the noise $\varepsilonv$ is unknown. Observe that estimating ${\bf \Phi}$ is an \textit{ill-posed} problem, see e.g. ~\cite{cavalier_tsybakov_2002} for a more detailed discussion. Model \eqref{defn_model_general_flr} and its many variations have been extensively studied in the literature, with active research persisting (see e.g. ~\cite{hoermann_JTSA_2015}), and it would be impossible to survey all the results. From a theoretic perspective, a significant part of the current literature (cf. ~\cite{cai_hall_2006},~\cite{cardot_mas_sarda_2007},~\cite{comte_johannes_2012}, ~\cite{hall2007},~\cite{hall_hosseini_2006},~\cite{meister_2011} and the extensive references therein) focuses on the case where ${\bf Y}$ and $\varepsilonv$ are mutually independent (which excludes ARH(1)), and in addition $X_k, {\bf \Phi}(Y_{k}), \epsilon_k$ are all real-valued. Hence by Riesz-representation ${\bf \Phi}(\cdot) = \langle x^{\phi}, \cdot \rangle$ for some $x^{\phi} \in \Ln^2(\mathcal{T})$, and it all boils down to the estimation of $x^{\phi}$.

Let us touch on the main idea for estimating ${\bf \Phi}$. Denote with $\boldsymbol{\cal C}^y$ the covariance operator of ${\bf Y}$ with eigenvalues $\lambdav^{y}$ and eigenfunctions ${\bf \e}^{y}$. For the remainder of this section, we assume that $\varepsilonv = \{\epsilon_k\}_{k \in \Z} \in \Ln^2(\mathcal{T})$ is an IID sequence, and for each $k \in \Z$, $\epsilon_k$ and $Y_k$ are independent. Applying Fubini-Tonelli we get that for $j \in \N$
\begin{align*}
{\bf \Upsilon}(\e_j) &= \E\bigl[\langle Y_k, \e_j^{y} \rangle X_k \bigr] = \E\bigl[\langle Y_k, \e_j^{y} \rangle {\bf \Phi}(Y_k)\bigr] + \E\bigl[\langle Y_k, \e_j^{y} \rangle \epsilon_k \bigr] \\&= {\bf \Phi}\bigl(\E[\langle Y_k, \e_j^{y} \rangle Y_k]\bigr) = \lambda_j^{y} {\bf \Phi}(\e_j^{y}).
\end{align*}
Hence we obtain the alternative representation
\begin{align}
{\bf \Phi}(\cdot) = \sum_{j = 1}^{\infty} {\bf \Phi}\big(\langle \e_j^{y}, \cdot \rangle \e_j^{y} \bigr) = \sum_{j = 1}^{\infty} \frac{\lambda_j^{y} {\bf \Phi}(\e_j^{y})}{\lambda_j^{y}}\langle \e_j^{y}, \cdot \rangle = \sum_{j = 1}^{\infty} \frac{{\bf \Upsilon}(\e_j^{y})}{\lambda_j^{y}}\langle \e_j^{y}, \cdot \rangle.
\end{align}
The advantage of this representation is that all involved quantities can be estimated. Given a truncation parameter $b \in \N$, this motivates the estimate
\begin{align}\label{defn_est_flm}
\widehat{{\bf \Phi}}^b(\cdot) = \sum_{j = 1}^{b} \frac{1}{n} \sum_{k = 1}^n \frac{\langle Y_k, \widehat{\e}_j^{y} \rangle X_k }{\widehat{\lambda}_j^{y}}\langle \widehat{\e}_j^{y}, \cdot \rangle, \quad \text{$b = b_n \to \infty$ as $n$ increases.}
\end{align}
In special cases, it is known that (a version of) $\widehat{{\bf \Phi}}^b$ is sharp minimax optimal (cf. ~\cite{meister_2011}), and adaptive in slightly more general situations (cf. ~\cite{comte_johannes_2012}). The construction of $\widehat{{\bf \Phi}}^b$ illustrates the necessity of an accurate control of $\widehat{\lambdav}^y$ and $\widehat{\bf \e}^{y}$. We remark that Proposition \ref{prop_transf_abstract_to_cov} is very useful in this context. Not only can it be used to obtain precise bounds for prediction errors or the actual estimation error $\|\widehat{{\bf \Phi}}^b -  {{\bf \Phi}}\|_{\cal L}$ itself, but also for deriving various limit theorems for functions of $\widehat{{\bf \Phi}}^b$, which requires exact expansions. Limit theorems in turn are required for goodness of fit tests or the construction of confidence sets.

Let us now consider the setup where $Y_k = X_{k-1}$, which is exactly the case of an ARH(1) process. Note that for $p\in \N$ finite any ARH(p) process can be reformulated as an ARH(1) process by changing the underlying Hilbert space, see ~\cite{bosq_2000} for details. Below in Corollary \ref{cor_AHR(1)}, we provide simple yet general conditions that imply the validity of Proposition \ref{prop_transf_abstract_to_cov} for ARH(1)-processes. In view of the discussion about the convexity condition in \eqref{condi_convex} leading to \eqref{eq_convex_condi}, providing a general and simple condition, we only touch on the validity of \hyperref[A1C]{\AoneC}. Regarding the operator ${\bf \Phi}$, we assume that it possesses the spectral decomposition
\begin{align}\label{defn_struct_condition_ARH(1)}
{\bf \Phi}(\cdot) = \sum_{j = 1}^{\infty} \lambda_j^{\phi} \langle \e_j^{\phi}, \cdot \rangle \e_j^{\phi}, \quad \sum_{j = 1}^{\infty} \lambda_j^{\phi} < 1,
\end{align}
with eigenvalues $\lambdav^{\phi}$ and eigenfunctions ${\bf \e}^{\phi}$. In the sequel, let ${\bf \Theta}$ be any operator with eigenvalues $\lambdav^{\theta}$ and eigenfunctions ${\bf \e}^{\theta}$ satisfying the spectral decomposition
\begin{align}\label{defn_struct_condition_ARH(1)_Theta}
{\bf \Theta}(\cdot) = \sum_{j = 1}^{\infty} \lambda_j^{\theta} \langle \e_j^{\theta}, \cdot \rangle \e_j^{\theta}, \quad \sum_{j = 1}^{\infty} \lambda_j^{\theta} < \infty.
\end{align}
Natural candidates for ${\bf \Theta}$ in our framework are of course the operators $\boldsymbol{\cal C}_h^*\boldsymbol{\cal C}_h$ or $\boldsymbol{\cal \GG}^b$. We have the associated usual decomposition of $X_k$, given as
\begin{align*}
X_k = \sum_{j = 1}^{\infty} \sqrt{\widetilde{\lambda}_j^{\theta}} \eta_{k,j}^{\theta} \e_j^{\theta}, \,\,k \in \Z, \quad \text{$\widetilde{\lambda}_j^{\theta} = \E\bigl[\langle X_k, \e_j^{\theta} \rangle^2\bigr]$, $\eta_{k,j}^{\theta} = \langle \overline{X}_k,\e_j^{\theta} \rangle (\widetilde{\lambda}_j^{\theta})^{-1/2}$}.
\end{align*}
Similarly, denote with ${\boldsymbol{\cal C}}^{\epsilon}$ the covariance operator of $\epsilon_k$ with eigenvalues $\lambdav^{\epsilon}$ and eigenfunctions ${\bf e}^{\epsilon}$, and consider the decomposition $\epsilon_k = \sum_{j = 1}^{\infty} \sqrt{{\lambda}_j^{\epsilon}} \epsilon_{k,j} \e_j^{\epsilon}$, $k \in \Z$. We make the following distributional assumption for $\epsilon_k$. Given $q \geq 1$, there exists a $q' \geq q$ and a constant $C_q > 0$ such that
\begin{align}\label{eq_epsilon_regularity_condition}
\text{$\forall x \in \Ln^2({\cal T})$ with $\|x\|_{\Ln^2} = 1$ it holds that $\bigl\|\langle \epsilon_k, x \rangle\bigr\|_q^{2q} \leq C_q \bigl(\bigl\| \langle \epsilon_k, x \rangle \bigr\|_2^{2}\bigr)^{q'}$}.
\end{align}
Condition \eqref{eq_epsilon_regularity_condition} is mild and allows for a certain invariance in or results, see below for more details. A general example satisfying \eqref{eq_epsilon_regularity_condition} with $q' = q$ is the following. Suppose that for each fixed $k \in \Z$, $\{\epsilon_{k,j}\}_{j \in \N}$ forms a martingale difference sequence with respect to some filtration $\F_{k,j}^{\epsilon}$. Elementary calculations together with Burkholders inequality then yield the validity of \eqref{eq_epsilon_regularity_condition}. Note that since the scores of a covariance operator always have zero correlation, demanding an underlying martingale structure is a reasonable assumption. Observe that in the Gaussian case, we even have that $\{\epsilon_{k,j}\}_{j \in \N}$ is IID, which is a common assumption in the literature. Next, recall the notion of weak dependence introduced in Section \ref{sec_lag_operator}. We then have the following result.
\begin{proposition}\label{prop_ARH(1)}
Assume that ${\bf \Phi}$, ${\bf \Theta}$ satisfy representations \eqref{defn_struct_condition_ARH(1)}, \eqref{defn_struct_condition_ARH(1)_Theta}. If $\E\bigl[\|\epsilon_k\|_{\Ln^2}]<\infty$, then ${\bf X}$ is a stationary Bernoulli-shift process which can be written as $X_k = \sum_{i = 0}^{\infty} {\bf \Phi}^{i}(\epsilon_{k-i})$. If in addition $\{\epsilon_k\}_{k \in \Z}$ satisfies \eqref{eq_epsilon_regularity_condition} for some $2 \leq q \leq q'$, then
\begin{align}\label{eq_ARH(1)_geo_contraction_dep}
\max_{j \in \N}\bigl\|\eta_{k,j}^{\theta} - (\eta_{k,j}^{\theta})'\bigr\|_q \lesssim \rho^k, \quad 0 < \rho < 1, \, k \in \N.
\end{align}
\end{proposition}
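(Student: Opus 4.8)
The first assertion is the classical construction of the stationary ARH(1) solution. Since $\rho_0:=\|{\bf \Phi}\|_{\mathcal{L}}\le\sum_{j\ge 1}\lambda_j^{\phi}<1$ by \eqref{defn_struct_condition_ARH(1)}, the operator ${\bf \Phi}$ is a strict contraction, and the plan is to set $X_k=\sum_{i\ge 0}{\bf \Phi}^i(\epsilon_{k-i})$ and to observe that $\E\bigl[\sum_{i\ge 0}\|{\bf \Phi}^i(\epsilon_{k-i})\|_{\Ln^2}\bigr]\le\sum_{i\ge 0}\rho_0^{i}\,\E[\|\epsilon_0\|_{\Ln^2}]=\E[\|\epsilon_0\|_{\Ln^2}]/(1-\rho_0)<\infty$; hence the series converges absolutely almost surely and in $\Ln^1$, defines a stationary sequence, and a one line computation gives $X_k={\bf \Phi}(X_{k-1})+\epsilon_k$. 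Uniqueness follows from the contraction estimate $\|X_k-\widetilde{X}_k\|_{\Ln^2}\le\rho_0^{n}\|X_{k-n}-\widetilde{X}_{k-n}\|_{\Ln^2}$ together with stationarity, and the representation exhibits $\{X_k\}$, hence $\{\eta_{k,j}^{\theta}\}$, as a Bernoulli shift in the innovations $\{\epsilon_k\}$.

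For the geometric contraction I would start from the linearity of the model. Writing $\overline{X}_k=\sum_{i\ge 0}{\bf \Phi}^i(\overline{\epsilon}_{k-i})$ and using that ${\bf \Phi}$, hence ${\bf \Phi}^k$, is self-adjoint, the coupled version in \eqref{defn_Omega} obtained by replacing $\epsilon_0$ with an independent copy $\epsilon_0'$ satisfies $\overline{X}_k-\overline{X}_k'={\bf \Phi}^k(\epsilon_0-\epsilon_0')$ for $k\ge 0$, so that
\[
\eta_{k,j}^{\theta}-(\eta_{k,j}^{\theta})'=(\widetilde{\lambda}_j^{\theta})^{-1/2}\bigl\langle\epsilon_0-\epsilon_0',\,{\bf \Phi}^k(\e_j^{\theta})\bigr\rangle .
\]
The next step is to trade the $\|\cdot\|_q$–norm for $\|\cdot\|_2$ by means of \eqref{eq_epsilon_regularity_condition}: applied to $w/\|w\|_{\Ln^2}$ and rescaled, it yields $\|\langle\epsilon_0,w\rangle\|_q\lesssim\|\langle\epsilon_0,w\rangle\|_2$ for every $w\in\Ln^2(\mathcal{T})$, with a constant depending only on $C_q,q,q'$ and $\|\boldsymbol{\cal C}^{\epsilon}\|_{\mathcal{L}}$ (for $q'>q$ one even obtains a two-sided comparison, which already forces $\boldsymbol{\cal C}^{\epsilon}$ to be bounded below). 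Together with $\|\langle\epsilon_0-\epsilon_0',w\rangle\|_2^2=2\langle\boldsymbol{\cal C}^{\epsilon}w,w\rangle$ this reduces the claim to an $L^2$ estimate:
\[
\bigl\|\eta_{k,j}^{\theta}-(\eta_{k,j}^{\theta})'\bigr\|_q\;\lesssim\;\Bigl(\frac{\langle\boldsymbol{\cal C}^{\epsilon}\,{\bf \Phi}^k(\e_j^{\theta}),\,{\bf \Phi}^k(\e_j^{\theta})\rangle}{\widetilde{\lambda}_j^{\theta}}\Bigr)^{1/2}.
\]

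Finally I would show the right-hand side is $\lesssim\rho_0^{k}$, uniformly in $j$. Set $b_i:=\langle\boldsymbol{\cal C}\,{\bf \Phi}^i(\e_j^{\theta}),{\bf \Phi}^i(\e_j^{\theta})\rangle$, where $\boldsymbol{\cal C}=\boldsymbol{\cal C}_0=\sum_{i\ge 0}{\bf \Phi}^i\boldsymbol{\cal C}^{\epsilon}{\bf \Phi}^i$ is the covariance of $X_k$, equivalently the solution of the Lyapunov equation $\boldsymbol{\cal C}=\boldsymbol{\cal C}^{\epsilon}+{\bf \Phi}\boldsymbol{\cal C}{\bf \Phi}$. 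Then $b_0=\widetilde{\lambda}_j^{\theta}$, the sequence $(b_i)$ is non-increasing with $b_i-b_{i+1}=\langle\boldsymbol{\cal C}^{\epsilon}{\bf \Phi}^i(\e_j^{\theta}),{\bf \Phi}^i(\e_j^{\theta})\rangle\ge 0$ and $\sum_{i\ge 0}(b_i-b_{i+1})=b_0$, while $b_i\le\|\boldsymbol{\cal C}\|_{\mathcal{L}}\,\rho_0^{2i}$. Consequently the numerator above equals $b_k-b_{k+1}\le b_k\le\|\boldsymbol{\cal C}\|_{\mathcal{L}}\rho_0^{2k}$, and the whole problem collapses to the uniform lower bound $\inf_{j}\widetilde{\lambda}_j^{\theta}\gtrsim 1$ — which I expect to be the main obstacle. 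I would derive it from the Lyapunov recursion: confronting the identity $b_0=\sum_{i\ge 0}(b_i-b_{i+1})$ with the geometric bound $b_i\le\|\boldsymbol{\cal C}\|_{\mathcal{L}}\rho_0^{2i}$ pins the decay of $(b_i)$ to the rate $\rho_0^{2i}$ up to a fixed constant, and a short argument using the non-degeneracy of $\boldsymbol{\cal C}^{\epsilon}$ — automatic when $q'>q$, and otherwise where the structural hypotheses on ${\bf \Theta}$ in \eqref{defn_struct_condition_ARH(1)_Theta} enter — then yields $\widetilde{\lambda}_j^{\theta}=b_0\gtrsim 1$. Combining the two displays gives $\max_{j\in\N}\|\eta_{k,j}^{\theta}-(\eta_{k,j}^{\theta})'\|_q\lesssim\rho_0^{k}$, which is the claim with $\rho=\rho_0$.
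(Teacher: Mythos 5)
Your reduction is essentially the right first half of the argument, and is arguably cleaner than the paper's: writing $\overline{X}_k-\overline{X}_k'={\bf \Phi}^k(\epsilon_0-\epsilon_0')$, using self-adjointness to move ${\bf \Phi}^k$ onto $\e_j^{\theta}$, and invoking \eqref{eq_epsilon_regularity_condition} to pass from the $\|\cdot\|_q$-norm to the $\|\cdot\|_2$-norm all go through and bring you correctly to
\[
\bigl\|\eta_{k,j}^{\theta}-(\eta_{k,j}^{\theta})'\bigr\|_q \;\lesssim\; \Bigl(\tfrac{\langle \boldsymbol{\cal C}^{\epsilon}{\bf \Phi}^k(\e_j^{\theta}),\,{\bf \Phi}^k(\e_j^{\theta})\rangle}{\widetilde{\lambda}_j^{\theta}}\Bigr)^{1/2}.
\]
The gap comes in how you finish. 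You bound the numerator by $\|\boldsymbol{\cal C}\|_{\mathcal{L}}\rho_0^{2k}$, a $j$-free constant, and then need $\inf_j\widetilde{\lambda}_j^{\theta}\gtrsim 1$. This uniform lower bound is false in exactly the setting the proposition is meant for. In Corollary \ref{cor_AHR(1)} one takes ${\bf \Theta}=\boldsymbol{\cal C}^x$, in which case $\e_j^{\theta}$ is the $j$-th eigenfunction of $\boldsymbol{\cal C}^x$ and $\widetilde{\lambda}_j^{\theta}=\E[\langle X_k,\e_j^{\theta}\rangle^2]$ is its $j$-th eigenvalue; since $\boldsymbol{\cal C}^x$ is trace class these vanish as $j\to\infty$. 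Your proposed "Lyapunov argument" does not rescue this: the identity $b_0=\sum_{i\ge 0}(b_i-b_{i+1})$ combined with the geometric envelope $b_i\le\|\boldsymbol{\cal C}\|_{\mathcal{L}}\rho_0^{2i}$ only constrains $b_0$ from above, never from below, so nothing prevents $b_0=\widetilde{\lambda}_j^{\theta}$ from being arbitrarily small.

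What the paper does instead — and what your own telescoping identity is already pointing at — is to control the numerator and denominator by the \emph{same} $j$-dependent quantity so that it cancels. Your computation gives the denominator part for free: $\widetilde{\lambda}_j^{\theta}=b_0\ge b_0-b_1=\langle\boldsymbol{\cal C}^{\epsilon}\e_j^{\theta},\e_j^{\theta}\rangle=\|\langle\epsilon_0,\e_j^{\theta}\rangle\|_2^2$, which is precisely the lower bound \eqref{eq_prop_ARH(1)_1} the paper obtains from independence of $\epsilon_k$ and $X_{k-1}$. The step you are missing is the matching upper bound on the numerator: one must show $\langle\boldsymbol{\cal C}^{\epsilon}{\bf \Phi}^k(\e_j^{\theta}),{\bf \Phi}^k(\e_j^{\theta})\rangle\lesssim\rho^{k}\langle\boldsymbol{\cal C}^{\epsilon}\e_j^{\theta},\e_j^{\theta}\rangle$, uniformly in $j$. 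The crude envelope $\|\boldsymbol{\cal C}^{\epsilon}\|_{\mathcal{L}}\rho_0^{2k}\|\e_j^{\theta}\|^2$ is not good enough precisely because it loses the $j$-dependence; and since $\boldsymbol{\cal C}^{\epsilon}$ and ${\bf \Phi}$ need not commute, this comparison is genuinely delicate. The paper attacks it by expanding in the ${\bf \Phi}$-eigenbasis, pulling out $(\lambda_i^{\phi})^k$, and invoking an inequality relating inner products of the three orthonormal bases $\{\e_j^{\epsilon}\},\{\e_i^{\phi}\},\{\e_l^{\theta}\}$ to reassemble $\langle\boldsymbol{\cal C}^{\epsilon}\e_l^{\theta},\e_l^{\theta}\rangle$ on the right-hand side. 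If you want to complete your argument along your lines you would need to establish this commutation-free comparison, not the uniform lower bound on $\widetilde{\lambda}_j^{\theta}$.
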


Note that the geometric contraction property in \eqref{eq_ARH(1)_geo_contraction_dep} is independent of the underlying orthonormal basis ${\bf \e}^{\theta}$, which is a desirable property. A check of the proof reveals that this essentially follows from condition \eqref{eq_epsilon_regularity_condition}. We also remark that Proposition \ref{prop_ARH(1)} can be extended to more general ARH(p)-processes using the same method as in ~\cite{bosq_2000}.

Denote with $\boldsymbol{\cal C}^x$ the covariance operator of ${\bf X}$, and let ${\bf \Theta} = \boldsymbol{\cal C}^x$. We then obtain the following result.
\begin{corollary}\label{cor_AHR(1)}
Grant the assumptions of Proposition \ref{prop_ARH(1)} and let ${\bf \Theta} = \boldsymbol{\cal C}^x$. Then there exists a universal constant $C^{\boldsymbol{\cal C}}$ and universal sequence $s_n^{\boldsymbol{\cal C}} \lesssim n^{-1/4}$ such that \hyperref[A1C]{\AoneC} holds.
\end{corollary}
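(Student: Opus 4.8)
The plan is to derive \hyperref[A1C]{\AoneC} from the geometric decay of the physical dependence measure furnished by Proposition~\ref{prop_ARH(1)}, combined with a standard Rosenthal--Burkholder type moment bound for partial sums of stationary Bernoulli shifts. Taking ${\bf \Theta} = \boldsymbol{\cal C}^x$ in Proposition~\ref{prop_ARH(1)} we have $\e_j^{\theta} = \e_j$, $\widetilde{\lambda}_j^{\theta} = \lambda_j$, so the scores $\eta_{k,j}^{\theta}$ coincide with the scores $\eta_{k,j} = \langle \overline{X}_k, \e_j\rangle \lambda_j^{-1/2}$ appearing in \eqref{relations_eta_C}, and \eqref{eq_ARH(1)_geo_contraction_dep} states, in the notation of \eqref{defn_Omega}, that $\Omega_r(k) = \max_{j\in\N}\|\eta_{k,j} - \eta_{k,j}'\|_r \lesssim \rho^k$ for some $0 < \rho < 1$, where the moment index $r$ may be taken arbitrarily large provided $\epsilon_k$ satisfies \eqref{eq_epsilon_regularity_condition} with a correspondingly large $q$; in particular this holds for $r = 4q$ with $q = p 2^{\pd + 4}$. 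Moreover, since $\|{\bf \Phi}\|_{\mathcal L} = \lambda_1^{\phi} < 1$ by \eqref{defn_struct_condition_ARH(1)}, the representation $X_k = \sum_{i\ge 0}{\bf \Phi}^i(\epsilon_{k-i})$ from Proposition~\ref{prop_ARH(1)} together with $\E[\|\epsilon_k\|_{\Ln^2}] < \infty$ gives $\E[\|X_k\|_{\Ln^2}^2] < \infty$, so the structural part of Proposition~\ref{prop_transf_abstract_to_cov} applies and $\boldsymbol{\cal C}^x$ satisfies \eqref{defn_D_1}--\eqref{defn_D_2}. It thus remains to establish the two moment estimates in \hyperref[A1C]{\AoneC}.

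For the first estimate, write $\overline{\etav}_{i,j}^{\boldsymbol{\cal C}}(n) = n^{-1}\sum_{k=1}^n \zeta_k^{(i,j)}$ with $\zeta_k^{(i,j)} = \eta_{k,i}\eta_{k,j} - \E[\eta_{k,i}\eta_{k,j}]$. Since $\Omega_{2q}(k)\lesssim\rho^k$ is summable and $\E[\eta_{0,j}] = 0$, the usual reconstruction of a variable from its martingale-difference decomposition gives $\sup_{j\in\N}\|\eta_{0,j}\|_{2q} \lesssim \sum_{k\ge 0}\Omega_{2q}(k) < \infty$. Consequently $\{\zeta_k^{(i,j)}\}_{k\in\Z}$ is a stationary, centred Bernoulli shift of $\{\epsilonv_k\}$, and the product rule for physical dependence measures (Hölder on $\eta_{k,i}(\eta_{k,j}-\eta_{k,j}')$ and $(\eta_{k,i}-\eta_{k,i}')\eta_{k,j}'$) shows that its $L^q$ dependence measure is $\lesssim (\sup_j\|\eta_{0,j}\|_{2q})\,\Omega_{2q}(k) \lesssim \rho^k$, uniformly over $i,j\in\N$. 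A Rosenthal--Burkholder type bound for partial sums under summable physical dependence (cf.~\cite{wu_2005},~\cite{wu_asymptotic_bernoulli},~\cite{sipwu}) then yields $\|\sum_{k=1}^n \zeta_k^{(i,j)}\|_q \lesssim \sqrt{n}$, with a constant depending only on $\sup_j\|\eta_{0,j}\|_{2q}$ and $\sum_k\rho^k$; hence $n^{1/2}\max_{i,j\in\N}\|\overline{\etav}_{i,j}^{\boldsymbol{\cal C}}(n)\|_q \lesssim 1$, which is the first half with a universal constant $C^{\boldsymbol{\cal C}}$.

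For the second estimate, apply the same moment inequality directly to the stationary, centred sequence $\{\eta_{k,j}\}_{k\in\Z}$, whose own $L^{2q}$ dependence measure obeys $\Omega_{2q}(k)\lesssim\rho^k$: this gives $\max_{j\in\N}\|\sum_{k=1}^n\eta_{k,j}\|_{2q} \lesssim \sqrt{n}$. The additional factor $n^{-1/4}$ present in the second condition of \hyperref[A1C]{\AoneC} (as noted in Section~\ref{sec_main}, this part is essentially for free) then makes it hold with $s_n^{\boldsymbol{\cal C}} \lesssim n^{-1/4}$, which is the asserted bound. All constants above are universal since the decay $\Omega_r(k)\lesssim\rho^k$ in \eqref{eq_ARH(1)_geo_contraction_dep} and the derived bound $\sup_j\|\eta_{0,j}\|_{2q}<\infty$ are uniform in $j$.

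The main obstacle, and the place where condition \eqref{eq_epsilon_regularity_condition} is genuinely needed, is the uniform-in-$j$ control of the \emph{normalised} moments $\|\eta_{0,j}\|_{2q}$: because $\eta_{k,j}$ carries the factor $\lambda_j^{-1/2}\to\infty$, a heavy-tailed innovation $\epsilon_k$ could produce normalised scores whose high moments blow up with $j$, in which case neither the product-rule bookkeeping nor the Rosenthal--Burkholder step would give uniform constants. Condition \eqref{eq_epsilon_regularity_condition} is exactly the scale-invariance that lets Proposition~\ref{prop_ARH(1)} be invoked at the high moment index $r = 4q$, and, through the reconstruction from the geometrically decaying $\Omega_r(\cdot)$, forces this dimension-free moment bound. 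Once it is available, the remainder is routine physical-dependence-measure calculus.
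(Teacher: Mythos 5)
Your proof is correct and follows the same route the paper takes, which is simply to invoke the moment bound of Lemma \ref{lem_bound_partial_sum_gen} (the Rosenthal-type inequality for partial sums under summable physical dependence) together with the geometric decay $\Omega_r(k)\lesssim\rho^k$ delivered by Proposition \ref{prop_ARH(1)}. You have merely made explicit the book-keeping the paper leaves implicit — the reconstruction bound $\sup_j\|\eta_{0,j}\|_{2q}\lesssim\sum_k\Omega_{2q}(k)$ (which the paper notes at the end of the proof of Proposition \ref{prop_ARH(1)} via telescoping), the product rule giving summable dependence for $\eta_{k,i}\eta_{k,j}$, and the role of \eqref{eq_epsilon_regularity_condition} in securing uniformity in $j$ — none of which differs in substance from the intended argument.
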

A related result can be established for ${\bf \Theta} = \boldsymbol{\cal \GG}^b$, we omit the details.

\subsection{Weak and long memory in econometric and financial timer series}\label{sec_long_mem}

In the presence of serial dependence, the covariance operator ${\boldsymbol{\cal C}}$ as a single object is not so relevant in the context of a CLT, and the long-run operator ${\boldsymbol{\cal G}}$ is the key object. However, this can be entirely different if only serial dependence is present, but essentially no serial correlation, which is often the case in
financial or econometric time series. More recently, there has been considerable activity (see for instance ~\cite{benko_heardle_kneip_2009}, ~\cite{gabrys_2103} and particularly ~\cite{mueller_2011}) to model financial or econometric time series with the help of FPCA. In this context, it is well-known (cf. ~\cite{bollerslev_garch}), that (differenced) stock returns often display a martingale like behavior, which forms the basis for many financial discrete time models (e.g. GARCH) and continuous time models (e.g. semimartingales). On the other hand, it is equally known that the absolute or squared returns display a completely different behavior, and sometimes even exhibit long memory (cf. ~\cite{Ding199383}). As a general example, let us consider the case where $\{\epsilon_k\}_{k \in \Z}$ is an IID sequence in $\Ln^2(\mathcal{T})$, $\{X_k\}_{k \in \Z}$, $\{Y_k\}_{k \in \Z} \in \Ln^2(\mathcal{T})$ are stationary and satisfy the structural equation
\begin{align}\label{eq_struct_equation}
X_k = \epsilon_k Y_{k-1}, \quad k \in \Z, \quad \text{$Y_k \in \mathcal{E}_k$ with $\mathcal{E}_k = \sigma\bigl(\epsilon_j, \, j \leq k\bigr)$.}
\end{align}
Note that the GARCH-model is a special case of \eqref{eq_struct_equation}, see also Example 2.4 in ~\cite{hoermann_2010}. Observe that $X_k$ is a martingale difference sequence with respect to $\mathcal{E}_k$. On the other hand, $X_k^2$ (or $|X_k|$) can behave completely differently due to $\{Y_k\}_{k \in \Z}$, as is desired from a modelling perspective. This becomes relevant for the estimator $\widehat{\boldsymbol{\cal C}}$. While we still have by the martingale CLT (up to mild regularity conditions)
\begin{align*}
n^{-1/2}\sum_{k = 1}^n X_k \xrightarrow{w} \Gaussian\bigl(0, {\boldsymbol{\cal C}}\bigr),
\end{align*}
the standard estimator $\widehat{\boldsymbol{\cal C}}$ as in \eqref{defn_cov_est} in contrast is based on $X_k^2$. Depending on the behavior of $\{Y_k\}_{k \in \Z}$, we may thus witness the full palette of dependence when employing $\widehat{\boldsymbol{\cal C}}$, ranging from independence to weak dependence or even a long memory behavior of $X_k^2$. Due to the high degree of flexibility in \hyperref[A1C]{\AoneC}, our results thus provide the necessary tools for a more detailed analysis of the model in \eqref{eq_struct_equation}.

\section{Proofs of asymptotic expansions}\label{sec_proof_exp}

We introduce the following additional notation. Given functions $f,g \in \Ln^2\bigl(\TT\bigr)$ and a kernel $\mathbf{K}(r,s)$, we write
\begin{align}
\int_{\TT} f g = \int_{\TT} f(r) g(r) dr \quad \text{and} \quad \int_{\TT^2} \mathbf{K} f g = \int_{\TT^2} \mathbf{K}(r,s) f(r) g(s) dr \, ds.
\end{align}
If we have $f = g$, then we write $f^2 = f(r)^2$ and otherwise $f f = f(r) f(s)$ in the above notation. We interchangeably use $\langle \cdot, \cdot \rangle$ and $\int_{{\cal T}} \cdot $, the latter being more convenient when dealing with kernels. We also frequently apply Fubini-Tonelli without mentioning it any further. Next, we introduce the empirical kernel $\widehat{\mathbf{\DD}}$ and its analogue deterministic version ${\mathbf{\DD}}$ as
\begin{align}\nonumber
\widehat{\mathbf{\DD}} &= \widehat{\mathbf{\DD}}\bigl(r,s\bigr) = \sum_{i,j = 1}^{\infty} \sqrt{\widetilde{\lambda}_i \widetilde{\lambda}_j} \bigl(\etav_{i,j}^{\boldsymbol{\cal \DD}} + \etav_{i,j}^{\boldsymbol{\cal R}}\bigr) \e_i(r)\e_j(s)\quad \text{(note: $\widehat{\boldsymbol{\cal \DD}}(f) = \int_{\TT} \widehat{\mathbf{\DD}} f $),}\\
\mathbf{\DD} &= \mathbf{D}\bigl(r,s\bigr) = \sum_{j = 1}^{\infty} \widetilde{\lambda}_j \E\bigl[\etav_{j,j}^{\boldsymbol{\cal \DD}}\bigr] \e_j(r)\e_j(s), \quad \text{(note: $\boldsymbol{\cal D}(f) = \int_{\TT} \mathbf{D} f $).}
\end{align}

We first establish the transfer result of Proposition \ref{prop_transf_abstract_to_cov}.
\begin{proof}[Proof of Proposition \ref{prop_transf_abstract_to_cov}]
Due to $\E\bigl[\|X_k\|_{\Ln^2}^2\bigr] < \infty$, standard arguments (cf. ~\cite{hoermann_JRSS_2015}) reveal that $\boldsymbol{\cal C}$ exists and satisfies \eqref{defn_D_1} and \eqref{defn_D_2} with eigenvalues $\lambdav$ and eigenfunctions ${\bf \e}$. Moreover, we have that ${\boldsymbol{\cal C}}$ is of trace class. Since $\mm = n$, by virtue of \hyperref[A2C]{\AtwoC} and since $\E\bigl[\eta_{k,j}^2\bigr] = 1$ for $j \in \N$, we only need to verify \hyperref[A1D]{\AoneD}. Due to \hyperref[A1C]{\AoneC}, it suffices to establish a bound for $\|\etav_{i,j}^{\boldsymbol{\cal R}}\|_q$. However, using \eqref{relations_eta_C}, Cauchy-Schwarz and \hyperref[A1C]{\AoneC}, the claim follows.

\end{proof}

We now turn to the proofs of Theorems \ref{theorem_exp_eigen_value} and \ref{theorem_exp_eigen_vector}, which are developed in a series of lemmas. As starting point, we recall the following elementary preliminary result (cf. ~\cite{bosq_2000}).

\begin{lemma}\label{lem_funda_exp}
For $j \neq k$ we have the decomposition
\begin{align}\nonumber\label{eq_lambda_int_decomp}
\widehat{\lambda}_j \int_{\TT}\e_k (\widehat{\e}_j - \e_j) &= \lambda_k \int_{\TT}\e_k (\widehat{\e}_j - \e_j) \\&+ \int_{\TT^2} (\widehat{\mathbf{\DD}} - {\bf \DD}) {\e}_k {\e}_j + \int_{\TT^2} (\widehat{\mathbf{\DD}} - {\bf \DD}) {\e}_k (\widehat{\e}_j - \e_j).
\end{align}
\end{lemma}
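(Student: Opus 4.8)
The plan is to read the identity straight off the two eigen-relations $\widehat{\boldsymbol{\cal \DD}}(\widehat{\e}_j) = \widehat{\lambda}_j\widehat{\e}_j$ and $\boldsymbol{\cal \DD}(\e_k) = \lambda_k\e_k$, using only the orthonormality of $\{\e_i\}_{i \in \N}$, the symmetry of the deterministic kernel $\mathbf{\DD}$, and Fubini--Tonelli. No probabilistic input is needed; everything is a deterministic manipulation of kernels.

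First I would pair the empirical eigen-relation with $\e_k$. Since $\widehat{\boldsymbol{\cal \DD}}(f) = \int_{\TT}\widehat{\mathbf{\DD}} f$, integrating $\widehat{\boldsymbol{\cal \DD}}(\widehat{\e}_j) = \widehat{\lambda}_j\widehat{\e}_j$ against $\e_k$ and interchanging integrals yields $\widehat{\lambda}_j\int_{\TT}\e_k\widehat{\e}_j = \int_{\TT^2}\widehat{\mathbf{\DD}}\,\e_k\widehat{\e}_j$. On the left-hand side, the orthonormality $\langle\e_k,\e_j\rangle = 0$ for $k\neq j$ lets me replace $\widehat{\e}_j$ by $\widehat{\e}_j - \e_j$, which is already the left-hand side of the asserted identity.

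Next I would split the right-hand side by writing $\widehat{\mathbf{\DD}} = {\bf\DD} + (\widehat{\mathbf{\DD}} - {\bf\DD})$ and $\widehat{\e}_j = \e_j + (\widehat{\e}_j - \e_j)$, which expands $\int_{\TT^2}\widehat{\mathbf{\DD}}\,\e_k\widehat{\e}_j$ into four terms. Two of them, $\int_{\TT^2}(\widehat{\mathbf{\DD}} - {\bf\DD})\e_k\e_j$ and $\int_{\TT^2}(\widehat{\mathbf{\DD}} - {\bf\DD})\e_k(\widehat{\e}_j - \e_j)$, are exactly the last two terms on the right-hand side of the lemma. For the remaining two, I would use that ${\bf\DD}(r,s) = \sum_{i}\widetilde{\lambda}_i\E[\etav_{i,i}^{\boldsymbol{\cal\DD}}]\e_i(r)\e_i(s)$ is symmetric and is the kernel of $\boldsymbol{\cal\DD}$, whose eigenfunctions are $\{\e_i\}$ with eigenvalues $\{\lambda_i\}$ (Lemma \ref{lem_E_eta_ij_is_zero}, giving $\lambda_i = \widetilde{\lambda}_i\E[\etav_{i,i}^{\boldsymbol{\cal\DD}}]$). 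Integrating ${\bf\DD}(r,s)$ in $r$ against $\e_k$ produces $\lambda_k\e_k(s)$, whence $\int_{\TT^2}{\bf\DD}\,\e_k\e_j = \lambda_k\langle\e_k,\e_j\rangle = 0$ and $\int_{\TT^2}{\bf\DD}\,\e_k(\widehat{\e}_j - \e_j) = \lambda_k\int_{\TT}\e_k(\widehat{\e}_j - \e_j)$. Collecting the surviving terms gives the claimed decomposition.

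The argument is elementary and I do not expect any genuine obstacle; the only points worth a line of justification are the interchange of the order of integration (Fubini--Tonelli, as used throughout the paper) and the fact that, by symmetry of ${\bf\DD}$ and the eigen-property of $\boldsymbol{\cal\DD}$, the deterministic kernel may be made to act on $\e_k$ from either variable, which is what pins down the coefficient $\lambda_k$ rather than $\lambda_j$ in front of $\int_{\TT}\e_k(\widehat{\e}_j - \e_j)$.
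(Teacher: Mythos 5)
Your argument is correct and is the standard kernel manipulation: the paper does not include a proof of this lemma but cites Bosq (2000), and what you write is exactly the derivation one finds there. The only two facts used are (i) $\langle\e_k,\widehat{\e}_j\rangle = \langle\e_k,\widehat{\e}_j - \e_j\rangle$ for $k\neq j$ by orthonormality, and (ii) $\int_{\TT}\mathbf{\DD}(r,s)\e_k(r)\,dr = \lambda_k\e_k(s)$ because $\mathbf{\DD}$ is the symmetric kernel of $\boldsymbol{\cal\DD}$ with eigenpair $(\lambda_k,\e_k)$; these collapse two of the four terms in the bilinear expansion of $\int_{\TT^2}\widehat{\mathbf{\DD}}\,\e_k\widehat{\e}_j$ to $0$ and $\lambda_k\int_{\TT}\e_k(\widehat{\e}_j-\e_j)$ respectively, giving the claim. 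Your closing observation about symmetry pinning down $\lambda_k$ rather than $\lambda_j$ is the right thing to flag, since integrating the deterministic kernel against $\widehat{\e}_j-\e_j$ first would not simplify.
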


Rearranging terms, we obtain from the above that (provided $\lambda_k \neq \lambda_j$)

\begin{align}\label{eq_decomp_funda}\nonumber
\int_{\TT}\e_k (\widehat{\e}_j - \e_j) &= \frac{1}{\lambda_j - \lambda_k}\biggl(\int_{\TT^2} (\widehat{\mathbf{\DD}} - {\bf \DD}) {\e}_k {\e}_j \\&\nonumber+ \int_{\TT^2} (\widehat{\mathbf{\DD}} - {\bf \DD}) {\e}_k (\widehat{\e}_j - \e_j) - (\widehat{\lambda}_j - \lambda_j)\int_{\TT}\e_k (\widehat{\e}_j - \e_j) \biggr)\\&\stackrel{def}{=} \frac{1}{\lambda_j - \lambda_k}\biggl(I_{k,j} + II_{k,j} + III_{k,j}\biggr),
\end{align}
and
\begin{align}\label{eq_lambda_int_decomp_rearr}
\int_{\TT}\e_k (\widehat{\e}_j - \e_j) &= \frac{-\lambda_k + \lambda_j}{\widehat{\lambda}_j  - \lambda_j + \lambda_j - \lambda_k} \frac{1}{\lambda_j - \lambda_k} \biggl(I_{k,j} + II_{k,j}\biggr).
\end{align}
Due to the frequent use of relations \eqref{eq_decomp_funda} and \eqref{eq_lambda_int_decomp_rearr}, it is convenient to use the abbreviation
\begin{align*}
E_{k,j} = \int_{\TT}\e_k (\widehat{\e}_j - \e_j) = \langle \e_k, \widehat{\e}_j - \e_j \rangle
\end{align*}
in the sequel. We also recall the following lemma (cf. ~\cite{bosq_2000}).

\begin{lemma}\label{lem_eigen_eigen_product_to_norm}
For any $j \in \N$ we have
\begin{align*}
\int_{\TT} (\widehat{\e}_j - \e_j) \widehat{\e}_j = \frac{1}{2}\bigl\|\widehat{\e}_j - \e_j\bigr\|_{\Ln^2}^2 \quad \text{and}\quad \int_{\TT} (\widehat{\e}_j - \e_j) \e_j = -\frac{1}{2}\bigl\|\widehat{\e}_j - \e_j\bigr\|_{\Ln^2}^2.
\end{align*}
\end{lemma}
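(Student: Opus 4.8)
The plan is to exploit the single fact that both $\e_j$ and $\widehat{\e}_j$ are unit vectors in $\Ln^2(\TT)$: by definition $\e_j$ is a (normalized) eigenfunction of $\boldsymbol{\cal \DD}$ and $\widehat{\e}_j$ a (normalized) eigenfunction of $\widehat{\boldsymbol{\cal \DD}}$, so $\|\e_j\|_{\Ln^2} = \|\widehat{\e}_j\|_{\Ln^2} = 1$. Everything then follows by expanding an inner product; no spectral information and no sign convention for $\widehat{\e}_j$ is needed.

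First I would record the polarization-type identity
\begin{align*}
\bigl\|\widehat{\e}_j - \e_j\bigr\|_{\Ln^2}^2 = \|\widehat{\e}_j\|_{\Ln^2}^2 - 2\langle \widehat{\e}_j, \e_j \rangle + \|\e_j\|_{\Ln^2}^2 = 2 - 2\langle \widehat{\e}_j, \e_j \rangle,
\end{align*}
which rearranges to $\langle \widehat{\e}_j, \e_j \rangle = 1 - \tfrac12 \|\widehat{\e}_j - \e_j\|_{\Ln^2}^2$. Then for the first claimed identity I would write $\int_{\TT}(\widehat{\e}_j - \e_j)\widehat{\e}_j = \langle \widehat{\e}_j - \e_j, \widehat{\e}_j \rangle = \|\widehat{\e}_j\|_{\Ln^2}^2 - \langle \e_j, \widehat{\e}_j \rangle = 1 - \bigl(1 - \tfrac12\|\widehat{\e}_j - \e_j\|_{\Ln^2}^2\bigr) = \tfrac12\|\widehat{\e}_j - \e_j\|_{\Ln^2}^2$. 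For the second identity, symmetrically, $\int_{\TT}(\widehat{\e}_j - \e_j)\e_j = \langle \widehat{\e}_j, \e_j \rangle - \|\e_j\|_{\Ln^2}^2 = \bigl(1 - \tfrac12\|\widehat{\e}_j - \e_j\|_{\Ln^2}^2\bigr) - 1 = -\tfrac12\|\widehat{\e}_j - \e_j\|_{\Ln^2}^2$, using the convention $\int_{\TT} fg = \langle f, g\rangle$ introduced at the start of this section.

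There is no genuine obstacle here: the only thing to be careful about is that the identity is purely a consequence of unit-norm normalization, so one should state that normalization explicitly at the outset, and note that (unlike the bounds in Lemma~\ref{lem_eigen_gen_upper_bound}) the result is exact and does not depend on the spectral gap or on the sign ambiguity discussed in the Remark following that lemma. This lemma will be used repeatedly later to convert the diagonal term $E_{j,j} = \langle \e_j, \widehat{\e}_j - \e_j\rangle$ appearing in the expansions into the quadratic quantity $\tfrac12\|\widehat{\e}_j - \e_j\|_{\Ln^2}^2$.
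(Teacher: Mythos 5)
Your proof is correct and is the standard argument: both identities are immediate consequences of $\|\e_j\|_{\Ln^2}=\|\widehat{\e}_j\|_{\Ln^2}=1$ via expanding $\|\widehat{\e}_j-\e_j\|_{\Ln^2}^2 = 2 - 2\langle\widehat{\e}_j,\e_j\rangle$. The paper does not supply its own proof (it cites \cite{bosq_2000}), and the argument there is the same elementary normalization computation, so there is nothing to compare; your write-up is complete.
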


We proceed by deriving subsequent bounds for $I_{k,j}, II_{k,j}$ and $III_{k,j}$.
\begin{lemma}\label{lem_bound_I}
Assume that Assumption \ref{ass_abstract} holds. Then for $1 \leq q \leq p2^{\pd +4}$ we have
\begin{align*}
\bigl\|I_{k,j}\bigr\|_q \lesssim \mm^{-1/2}\sqrt{\lambda_k \lambda_j} \quad \text{uniformly for $k,j \in \N$.}
\end{align*}
\end{lemma}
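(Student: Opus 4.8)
The plan is to make $I_{k,j}$ completely explicit in the abstract coordinates $\etav^{\boldsymbol{\cal D}}_{k,j}$, $\etav^{\boldsymbol{\cal R}}_{k,j}$, after which the bound falls out of \hyperref[A1D]{\AoneD} and \hyperref[A3D]{\AthreeD}. First I would substitute the two spectral representations from \eqref{defn_D_2} into the definition \eqref{defn_I_ij_C}. Since the eigenfunctions $\{\e_j\}$ are orthonormal, $\langle \e_i,\e_k\rangle = 0$ unless $i=k$ and $\langle \e_j,\e_{j'}\rangle=0$ unless $j=j'$, so the double sums collapse to a single term and one obtains the identity
\begin{align*}
I_{k,j} = \bigl\langle (\widehat{\boldsymbol{\cal D}} - {\boldsymbol{\cal D}})(\e_k), \e_j \bigr\rangle = \sqrt{\widetilde{\lambda}_k \widetilde{\lambda}_j}\,\bigl(\overline{\etav}_{k,j}^{\boldsymbol{\cal D}} - \etav_{k,j}^{\boldsymbol{\cal R}}\bigr), \qquad k,j \in \N,
\end{align*}
with $\overline{\etav}_{k,j}^{\boldsymbol{\cal D}} = \etav_{k,j}^{\boldsymbol{\cal D}} - \E[\etav_{k,j}^{\boldsymbol{\cal D}}]$ as in the main text (this is the same identity that underlies Proposition \ref{prop_replace_I_with_eta}; the precise sign of the $\etav^{\boldsymbol{\cal R}}$-term is irrelevant for what follows).

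Next I would take $\|\cdot\|_q$ on both sides, apply the triangle inequality, and estimate the two resulting factors separately. For the random factor: at the maximal exponent $q = p 2^{\pd + 4}$, \hyperref[A1D]{\AoneD} gives $\max_{k,j}\|\overline{\etav}_{k,j}^{\boldsymbol{\cal D}}\|_q \leq C^{\boldsymbol{\cal D}}\mm^{-1/2}$ and $\max_{k,j}\|\etav_{k,j}^{\boldsymbol{\cal R}}\|_q \leq s_{\mm}^{\boldsymbol{\cal D}}\mm^{-1/2}$ with $s_{\mm}^{\boldsymbol{\cal D}} = \oo(1)$; and for every smaller $q \leq p 2^{\pd + 4}$ the same bounds hold by monotonicity of $L^q$-norms (Lyapunov/Jensen). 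Hence $\|\overline{\etav}_{k,j}^{\boldsymbol{\cal D}}\|_q + \|\etav_{k,j}^{\boldsymbol{\cal R}}\|_q \lesssim \mm^{-1/2}$, uniformly in $k,j$. For the deterministic factor: Lemma \ref{lem_E_eta_ij_is_zero} gives $\widetilde{\lambda}_k = \lambda_k / \E[\etav_{k,k}^{\boldsymbol{\cal D}}]$, and the lower bound $\E[\etav_{k,k}^{\boldsymbol{\cal D}}] \geq 1/C^{\boldsymbol{\cal D}}$ of \hyperref[A3D]{\AthreeD} then yields $\widetilde{\lambda}_k \leq C^{\boldsymbol{\cal D}}\lambda_k$ for all $k$, whence $\sqrt{\widetilde{\lambda}_k \widetilde{\lambda}_j} \lesssim \sqrt{\lambda_k \lambda_j}$. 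Multiplying the two estimates gives $\|I_{k,j}\|_q \lesssim \mm^{-1/2}\sqrt{\lambda_k \lambda_j}$ uniformly in $k,j$, as claimed.

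I do not expect any real obstacle: the lemma is a direct bookkeeping consequence of the definitions together with Assumption \ref{ass_abstract}. The only points worth stating carefully are (i) the collapse of the double sums in \eqref{defn_D_2} to the single term displayed above, which uses the orthonormality of the eigenfunctions; (ii) the passage from $\widetilde{\lambda}$ to $\lambda$ via Lemma \ref{lem_E_eta_ij_is_zero} and \hyperref[A3D]{\AthreeD}; and (iii) the observation that it suffices to invoke \hyperref[A1D]{\AoneD} at the top exponent $q = p 2^{\pd + 4}$, since $\|\cdot\|_q$ is nondecreasing in $q$.
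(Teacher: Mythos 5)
Your proof is correct and follows essentially the same route as the paper's: expand $I_{k,j}$ via orthogonality to obtain $\sqrt{\widetilde{\lambda}_k\widetilde{\lambda}_j}\bigl(\overline{\etav}_{k,j}^{\boldsymbol{\cal D}}\pm\etav_{k,j}^{\boldsymbol{\cal R}}\bigr)$, bound the random factor uniformly by $\mm^{-1/2}$ via \hyperref[A1D]{\AoneD} and $L^q$-monotonicity, and pass from $\widetilde{\lambda}$ to $\lambda$ through Lemma \ref{lem_E_eta_ij_is_zero} and \hyperref[A3D]{\AthreeD}. You are right that the sign on $\etav^{\boldsymbol{\cal R}}$ is immaterial here (the paper is in fact inconsistent between \eqref{defn_D_2} and the kernel $\widehat{\mathbf{\DD}}$ in Section \ref{sec_proof_exp}); your write-up merely spells out the collapse of the double sum and the monotonicity step that the paper leaves implicit.
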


\begin{proof}[Proof of Lemma \ref{lem_bound_I}]
Using the orthogonality of $\e_j,\e_k$ we have
\begin{align*}
I_{k,j} = \int_{\TT^2} (\widehat{\mathbf{\DD}} - {\bf \DD}) \e_k \e_j = \mm^{-1/2} \sqrt{\widetilde{\lambda}_k \widetilde{\lambda}_j} \mm^{1/2}\bigl(\overline{\etav}_{k,j}^{\boldsymbol{\cal D}} + \etav_{k,j}^{\boldsymbol{\cal R}} \bigr),
\end{align*}
hence the claim follows from \hyperref[A1D]{\AoneD}, Lemma \ref{lem_E_eta_ij_is_zero} and \hyperref[A3D]{\AthreeD}.
 \end{proof}

\begin{lemma}\label{lem_bound_C_op}
Assume that Assumption \ref{ass_abstract} holds. Then for $1 \leq q \leq p2^{\pd +3}$ we have
\begin{align*}
\bigl\|\|\widehat{\boldsymbol{\cal \DD}} - {\boldsymbol{\cal \DD}}\|_{\cal L}\bigr\|_q  \lesssim  \mm^{-1/2}.
\end{align*}
\end{lemma}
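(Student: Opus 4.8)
The goal is to bound the operator norm of $\widehat{\boldsymbol{\cal \DD}} - {\boldsymbol{\cal \DD}}$ in $\|\cdot\|_q$-norm by $\mm^{-1/2}$. The natural approach is to control the operator norm by the Hilbert--Schmidt norm, which is always an upper bound: $\|\widehat{\boldsymbol{\cal \DD}} - {\boldsymbol{\cal \DD}}\|_{\cal L} \le \|\widehat{\boldsymbol{\cal \DD}} - {\boldsymbol{\cal \DD}}\|_{\Ln^2}$. Using the kernel representations introduced just above, and the orthonormality of $\{\e_j\}$, the Hilbert--Schmidt norm squared expands as
\begin{align*}
\|\widehat{\boldsymbol{\cal \DD}} - {\boldsymbol{\cal \DD}}\|_{\Ln^2}^2 = \sum_{i,j = 1}^{\infty} \widetilde{\lambda}_i \widetilde{\lambda}_j \bigl(\overline{\etav}_{i,j}^{\boldsymbol{\cal \DD}} + \etav_{i,j}^{\boldsymbol{\cal R}} + (\E[\etav_{i,j}^{\boldsymbol{\cal \DD}}] - \widetilde{\lambda}_j^{-1}\lambda_j\ind_{\{i=j\}})\bigr)^2,
\end{align*}
where the deterministic correction vanishes by Lemma \ref{lem_E_eta_ij_is_zero} (recall $\lambda_j = \widetilde{\lambda}_j\E[\etav_{j,j}^{\boldsymbol{\cal \DD}}]$ and $(\widetilde{\lambda}_i\widetilde{\lambda}_j)^{1/2}\E[\etav_{i,j}^{\boldsymbol{\cal \DD}}] = 0$ for $i\neq j$). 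So one is left with a double sum over $i,j$ of $\widetilde{\lambda}_i\widetilde{\lambda}_j(\overline{\etav}_{i,j}^{\boldsymbol{\cal \DD}} + \etav_{i,j}^{\boldsymbol{\cal R}})^2$.

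Next I would take $\|\cdot\|_{q/2}$-norms (so that $\|\cdot\|_q^2 = \|(\cdot)^2\|_{q/2}$ becomes an $L^1$-type estimate amenable to the triangle inequality) and apply Minkowski's inequality to pull the norm inside the double sum:
\begin{align*}
\bigl\|\|\widehat{\boldsymbol{\cal \DD}} - {\boldsymbol{\cal \DD}}\|_{\Ln^2}^2\bigr\|_{q/2} \le \sum_{i,j=1}^\infty \widetilde{\lambda}_i \widetilde{\lambda}_j \bigl\|(\overline{\etav}_{i,j}^{\boldsymbol{\cal \DD}} + \etav_{i,j}^{\boldsymbol{\cal R}})^2\bigr\|_{q/2} = \sum_{i,j=1}^\infty \widetilde{\lambda}_i \widetilde{\lambda}_j \bigl\|\overline{\etav}_{i,j}^{\boldsymbol{\cal \DD}} + \etav_{i,j}^{\boldsymbol{\cal R}}\bigr\|_q^2.
\end{align*}
Now \hyperref[A1D]{\AoneD} gives $\|\overline{\etav}_{i,j}^{\boldsymbol{\cal \DD}}\|_q \lesssim \mm^{-1/2}$ and $\|\etav_{i,j}^{\boldsymbol{\cal R}}\|_q \lesssim \mm^{-1/2} s_{\mm}^{\boldsymbol{\cal \DD}} \le \mm^{-1/2}$ uniformly in $i,j$ (valid since $q = p2^{\pd+4} \ge p2^{\pd+3}$, and one needs $q/2 \ge 1$, i.e. $q \ge 2$, which holds). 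Also recall $\widetilde{\lambda}_j = \lambda_j/\E[\etav_{j,j}^{\boldsymbol{\cal \DD}}] \thicksim \lambda_j$ by \hyperref[A3D]{\AthreeD}. Therefore the double sum is bounded by $\mm^{-1}\bigl(\sum_{j} \widetilde{\lambda}_j\bigr)^2 \lesssim \mm^{-1}$ using the summability $\sum_j \lambda_j \le C^{\boldsymbol{\cal \DD}}$ from \hyperref[A3D]{\AthreeD}. Taking square roots yields $\bigl\|\|\widehat{\boldsymbol{\cal \DD}} - {\boldsymbol{\cal \DD}}\|_{\Ln^2}\bigr\|_q \lesssim \mm^{-1/2}$, and the operator-norm bound follows a fortiori.

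\textbf{Main obstacle.} There is no serious obstacle here; the only points requiring mild care are bookkeeping ones. One must check that the index ranges on the moments line up — the lemma asks for $q \le p2^{\pd+3}$ while \hyperref[A1D]{\AoneD} supplies moments up to $p2^{\pd+4}$, so there is slack, but one should also ensure $q/2 \ge 1$ so that Minkowski applies to the squared quantities (for $q < 2$ one would instead use that $\|\cdot\|_q \le \|\cdot\|_2$ and reduce to the $q=2$ case). One should also justify interchanging the infinite sum with the expectation/norm, which is legitimate by Tonelli since all summands are non-negative. Finally, the cross term in $(\overline{\etav}^{\boldsymbol{\cal \DD}} + \etav^{\boldsymbol{\cal R}})^2$ is handled automatically by keeping the sum $\overline{\etav}^{\boldsymbol{\cal \DD}} + \etav^{\boldsymbol{\cal R}}$ together and applying the triangle inequality to $\|\overline{\etav}_{i,j}^{\boldsymbol{\cal \DD}} + \etav_{i,j}^{\boldsymbol{\cal R}}\|_q$, so no separate estimate is needed.
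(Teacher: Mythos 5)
Your proof is correct and takes essentially the same route as the paper: bound the operator norm by the Hilbert--Schmidt norm, expand via Parseval, and feed in the moment bound $\|\overline{\etav}_{i,j}^{\boldsymbol{\cal \DD}} + \etav_{i,j}^{\boldsymbol{\cal R}}\|_q \lesssim \mm^{-1/2}$ together with summability of $\lambdav$. The only cosmetic difference is that the paper cites Lemma~\ref{lem_bound_I} (which packages precisely the estimate $\|I_{i,j}\|_q \lesssim \mm^{-1/2}\sqrt{\lambda_i\lambda_j}$) rather than re-deriving it from \hyperref[A1D]{\AoneD} and Lemma~\ref{lem_E_eta_ij_is_zero} as you do inline.
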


\begin{proof}[Proof of Lemma \ref{lem_bound_C_op}]
Since the Hilbert-Schmidt norm dominates the Operator norm, Parsevals idendtiy and Lemma \ref{lem_bound_I} yield the claim, using that \hyperref[A3D]{\AthreeD} supplies $\sum_{j = 1}^{\infty}\lambda_j < \infty$.
 \end{proof}

\begin{lemma}\label{lem_bound_II}
Assume that Assumption \ref{ass_abstract} holds. Then for $1 \leq q \leq p2^{\pd +4}$ and $k \in \N$ we have
\begin{align*}
\biggl\|\max_{1 \leq j \leq \JJ_{\mm}^+}\frac{|II_{k,j}|}{\|\widehat{e}_j - e_j\|_{\Ln^2}}\biggr\|_{q} \lesssim \sqrt{\lambda_k} \mm^{-1/2}.
\end{align*}
\end{lemma}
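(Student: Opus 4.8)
The plan is to reduce the estimate to Lemma~\ref{lem_bound_I} after noticing that the quotient in the statement is, up to a harmless bound, independent of $j$, so that the maximum over $j$ costs nothing. Recalling from \eqref{eq_decomp_funda} that $II_{k,j} = \int_{\TT^2}(\widehat{\mathbf{\DD}} - {\bf \DD})\e_k(\widehat{\e}_j - \e_j) = \bigl\langle (\widehat{\boldsymbol{\cal \DD}} - \boldsymbol{\cal \DD})(\e_k), \widehat{\e}_j - \e_j\bigr\rangle$, I would apply the Cauchy--Schwarz inequality in $\Ln^2(\TT)$ to obtain
\[
\frac{|II_{k,j}|}{\|\widehat{\e}_j - \e_j\|_{\Ln^2}} \leq \bigl\|(\widehat{\boldsymbol{\cal \DD}} - \boldsymbol{\cal \DD})(\e_k)\bigr\|_{\Ln^2}\qquad\text{for every }j
\]
(with the usual convention when $\widehat{\e}_j = \e_j$). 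Since the right-hand side is free of $j$, the maximum over $1 \leq j \leq \JJ_{\mm}^+$ is dominated by the same quantity, and it remains to establish $\bigl\|\,\|(\widehat{\boldsymbol{\cal \DD}} - \boldsymbol{\cal \DD})(\e_k)\|_{\Ln^2}\bigr\|_q \lesssim \sqrt{\lambda_k}\,\mm^{-1/2}$.

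For this last bound I would first observe, using \eqref{defn_D_1} and \eqref{defn_D_2}, that $\widehat{\boldsymbol{\cal \DD}}(\e_k)$ and $\boldsymbol{\cal \DD}(\e_k) = \lambda_k\e_k$ both lie in the closed linear span of the orthonormal system $\{\e_l\}_{l \in \N}$, so that Parseval's identity gives $\|(\widehat{\boldsymbol{\cal \DD}} - \boldsymbol{\cal \DD})(\e_k)\|_{\Ln^2}^2 = \sum_{l=1}^{\infty} I_{k,l}^2$. Assuming first $q \geq 2$, Minkowski's inequality in $\Ln^{q/2}$ combined with Lemma~\ref{lem_bound_I} would yield
\[
\Bigl\|\sum_{l=1}^{\infty} I_{k,l}^2\Bigr\|_{q/2} \leq \sum_{l=1}^{\infty}\bigl\|I_{k,l}\bigr\|_q^2 \lesssim \mm^{-1}\lambda_k\sum_{l=1}^{\infty}\lambda_l \lesssim \mm^{-1}\lambda_k,
\]
the final step using $\sum_l\lambda_l \leq C^{\boldsymbol{\cal \DD}}$ from \hyperref[A3D]{\AthreeD}. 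Taking square roots settles the case $q \geq 2$, and the case $1 \leq q < 2$ follows from it by monotonicity of the $\Ln^q$-norms on the underlying probability space.

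I do not expect a genuine obstacle. The two points that need a little care are that $(\widehat{\boldsymbol{\cal \DD}} - \boldsymbol{\cal \DD})(\e_k)$ has no component orthogonal to the span of $\{\e_l\}_{l\in\N}$ — so Parseval holds with equality rather than merely as an inequality — and that the interchange of the infinite sum with the norm is legitimate precisely because $q/2 \geq 1$, which is why the case $q < 2$ is handled separately. All implied constants are universal by \hyperref[A1D]{\AoneD} and \hyperref[A3D]{\AthreeD}, matching the $\lesssim$ in the statement.
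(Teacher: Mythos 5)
Your proposal is correct and, after unwinding, is the same argument the paper gives: writing $II_{k,j}=\langle(\widehat{\boldsymbol{\cal \DD}}-\boldsymbol{\cal \DD})(\e_k),\widehat{\e}_j-\e_j\rangle$ and applying Cauchy--Schwarz is exactly the paper's coefficient-wise Cauchy--Schwarz applied to $\sum_i\sqrt{\widetilde{\lambda}_i}E_{i,j}(\overline{\etav}_{k,i}^{\boldsymbol{\cal \DD}}+\etav_{k,i}^{\boldsymbol{\cal R}})$, and your subsequent Parseval/Minkowski step (reusing Lemma~\ref{lem_bound_I}, with $q\geq 2$ treated first and $q<2$ by monotonicity) matches the paper's triangle-inequality bound via \hyperref[A1D]{\AoneD} and \hyperref[A3D]{\AthreeD}. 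The only cosmetic difference is that you route the final moment estimate through $I_{k,l}$ and Lemma~\ref{lem_bound_I} rather than re-deriving it from the $\etav$-variables directly, which is a harmless economy of effort.
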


\begin{proof}[Proof of Lemma \ref{lem_bound_II}]
It holds that
\begin{align}\label{eq_lem_bound_II_1}
II_{k,j} = \int_{\TT^2} (\widehat{\mathbf{D}} - {\bf D}) \e_k(\widehat{\e}_j - \e_j) = \sum_{i = 1}^{\infty} \sqrt{\widetilde{\lambda}_k \widetilde{\lambda}_i} \bigl(\overline{\etav}_{k,i}^{\boldsymbol{\cal D}} + \etav_{k,i}^{\boldsymbol{\cal R}} \bigr)E_{i,j}.
\end{align}
Since $\sum_{i = 1}^{\infty} E_{i,j}^2 = \|\widehat{\e}_j - \e_j\|_{\Ln^2}^2$ by Parsevals identity, the Cauchy-Schwarz inequality gives
\begin{align}
\biggl|\sum_{i = 1}^{\infty} \sqrt{\widetilde{\lambda}_i} E_{i,j} \bigl(\overline{\etav}_{k,i}^{\boldsymbol{\cal \DD}} + \etav_{k,i}^{\boldsymbol{\cal R}} \bigr)\biggr| \leq \biggl(\sum_{i = 1}^{\infty} \widetilde{\lambda}_i \bigl(\overline{\etav}_{k,i}^{\boldsymbol{\cal \DD}} + \etav_{k,i}^{\boldsymbol{\cal R}} \bigr)^2 \biggr)^{1/2} \bigl\|\widehat{\e}_j - \e_j\bigr\|_{\Ln^2}.
\end{align}
Hence the triangle inequality, \hyperref[A1D]{\AoneD} and Lemma \ref{lem_E_eta_ij_is_zero} together with \hyperref[A3D]{\AthreeD} yield
\begin{align*}
\biggl\|\max_{1 \leq j \leq \JJ_{\mm}^+}\frac{|II_{k,j}|}{\|\widehat{e}_j - e_j\|_{\Ln^2}}\biggr\|_{q} &\leq \mm^{-1/2}\sqrt{\widetilde{\lambda}_k} \biggl(\sum_{i = 1}^{\infty} \widetilde{\lambda}_i \mm \bigl\|(\overline{\etav}_{k,i}^{\boldsymbol{\cal D}} + \etav_{k,i}^{\boldsymbol{\cal R}})^2\bigr\|_{q/2}\biggr)^{1/2} \\&\lesssim \mm^{-1/2}\sqrt{\widetilde{\lambda}_k} \lesssim \mm^{-1/2}\sqrt{\lambda_k}.
\end{align*}

\end{proof}

\begin{lemma}\label{lem_bound_spacings_deviation}
Assume that Assumption \ref{ass_abstract} holds, and let $\A_j = \bigl\{|\widehat{\lambda}_j - \lambda_j| \leq \psi_j/2\bigr\}$. Then
\begin{align*}
\max_{1 \leq j < \JJ_{\mm}^+}P\bigl(\A_j^c\bigr) \lesssim \mm^{-\ad p 2^{\pd +4}}.
\end{align*}
\end{lemma}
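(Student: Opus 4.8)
The plan is to reduce $\A_j^c$ to tail events of the building blocks $I_{i,j}$, $II_{k,j}$ of Lemmas~\ref{lem_bound_I}--\ref{lem_bound_II}, and then to apply Markov's inequality at the moment order $q:=p2^{\pd+4}$. First I would record the elementary consequences of \hyperref[A2D]{\AtwoD}: keeping the terms $i=j\pm1$ in the first sum and doing a short case distinction (using monotonicity of $\lambdav$) gives a spectral-gap lower bound $\psi_j\gtrsim \lambda_j\,\mm^{-1/2+\ad}$, together with $|\lambda_j-\lambda_k|\ge\psi_j$ for $k\neq j$; moreover $\lambda_{\JJ_{\mm}^+}\ge\mm^{-\hd}/C^{\boldsymbol{\cal D}}$ and monotonicity give $\lambda_j\ge\mm^{-\hd}/C^{\boldsymbol{\cal D}}$ for $j<\JJ_{\mm}^+$. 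In particular $\lambda_j/\psi_j\lesssim\mm^{1/2-\ad}$ and $1/\psi_j\lesssim\mm^{1/2-\ad+\hd}$, with constants depending only on $C^{\boldsymbol{\cal D}}$, so a uniform bound in $j$ will follow.

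Set $D_{\mm}:=\|\widehat{\boldsymbol{\cal \DD}}-{\boldsymbol{\cal \DD}}\|_{\cal L}$, so that $\A_j^c\subseteq\{|\widehat{\lambda}_j-\lambda_j|>\psi_j/2\}$ and, by Lemma~\ref{lem_eigen_gen_upper_bound}, $|\widehat{\lambda}_j-\lambda_j|\le D_{\mm}$ and $\|\widehat{\e}_j-\e_j\|_{\Ln^2}\le\tfrac{2\sqrt2}{\psi_j}D_{\mm}$ deterministically. The naive estimate $P(\A_j^c)\le P(D_{\mm}>\psi_j/2)\lesssim\psi_j^{-q}\mm^{-q/2}$ (Lemma~\ref{lem_bound_C_op} and Markov) is useless, because $\psi_j$ may be far smaller than $\mm^{-1/2}\thicksim\|D_{\mm}\|_q$; the point is to use the $\lambda_j$-weighted refinement of $\widehat{\lambda}_j-\lambda_j$. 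Pairing $\widehat{\boldsymbol{\cal \DD}}\widehat{\e}_j=\widehat{\lambda}_j\widehat{\e}_j$ with $\e_j$ and expanding $\widehat{\e}_j$ in $\{\e_k\}$ yields $(\widehat{\lambda}_j-\lambda_j)\langle\widehat{\e}_j,\e_j\rangle=I_{j,j}\langle\widehat{\e}_j,\e_j\rangle+\sum_{k\neq j}I_{k,j}E_{k,j}$, while \eqref{eq_lambda_int_decomp_rearr} gives $E_{k,j}=(I_{k,j}+II_{k,j})/(\widehat{\lambda}_j-\lambda_k)$ for $k\neq j$. On the event $\{|\widehat{\lambda}_j-\lambda_j|\le\psi_j/4\}$ one has $|\widehat{\lambda}_j-\lambda_k|\ge\tfrac34|\lambda_j-\lambda_k|$ and, by Lemma~\ref{lem_eigen_eigen_product_to_norm}, $\langle\widehat{\e}_j,\e_j\rangle\ge\tfrac12$, so the whole right-hand side is controlled through $\{I_{k,j}\}$, $\{II_{k,j}\}$ and $\|\widehat{\e}_j-\e_j\|_{\Ln^2}$; recall $\|I_{k,j}\|_q\lesssim\mm^{-1/2}\sqrt{\lambda_k\lambda_j}$ and, from \eqref{eq_lem_bound_II_1} and a Cauchy--Schwarz in $i$, $|II_{k,j}|\le\sqrt{\widetilde{\lambda}_k}\,N_k\,\|\widehat{\e}_j-\e_j\|_{\Ln^2}$ with $\max_k\|N_k\|_q\lesssim\mm^{-1/2}$ and $\widetilde{\lambda}_k\thicksim\lambda_k$ by \hyperref[A3D]{\AthreeD}.

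The core is a bootstrap over $\pd=\lceil\hd/\ad\rceil$ rounds. Starting from the crude bound $\|\widehat{\e}_j-\e_j\|_{\Ln^2}\le\tfrac{2\sqrt2}{\psi_j}D_{\mm}$, each round feeds the current $L^{r}$-bound on $\|\widehat{\e}_j-\e_j\|_{\Ln^2}$ (equivalently on the $E_{k,j}$) back into the terms $II_{k,j}$, and invokes \hyperref[A2D]{\AtwoD} — the sum $\sum_{i\neq j}\lambda_i/|\lambda_j-\lambda_i|\le C^{\boldsymbol{\cal D}}\mm^{1/2-\ad}$ for the linear contributions and $\sum_{i\neq j}\lambda_i\lambda_j/(\lambda_j-\lambda_i)^2\le C^{\boldsymbol{\cal D}}\mm^{1-2\ad}$ for the squared ones — to gain a factor $\mm^{-\ad}$, at the cost of one Cauchy--Schwarz that halves the available moment order. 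After $\pd$ rounds the accumulated gain $\mm^{-\pd\ad}\le\mm^{-\hd}$ dominates the loss $1/\lambda_j\le C^{\boldsymbol{\cal D}}\mm^{\hd}$, and one obtains, on the good event, $\|\widehat{\e}_j-\e_j\|_{\Ln^2}\le\mm^{-\ad}G_j$ and then $|\widehat{\lambda}_j-\lambda_j|\le|I_{j,j}|+\lambda_j\mm^{-1/2}R_j$ with $\|G_j\|_q+\|R_j\|_q\lesssim1$; the four spare doublings in $2^{\pd+4}$ provide room for these Cauchy--Schwarzes, the final union bound, and the passage from Lemma~\ref{lem_bound_C_op} (valid for $q\le p2^{\pd+3}$) to the present $q$.

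To conclude, write $\A_j^c=(\text{bad event})\cup(\text{good event}\cap\A_j^c)$. Since $\lambda_j\mm^{-1/2}\lesssim\psi_j\mm^{-\ad}$, the good part sits inside $\{|I_{j,j}|>\psi_j/8\}\cup\{R_j\gtrsim\mm^{\ad}\}$, and Markov at order $q$ with $\|I_{j,j}\|_q\lesssim\mm^{-1/2}\lambda_j$ gives probability $\lesssim(\lambda_j\mm^{-1/2}/\psi_j)^q\lesssim\mm^{-\ad q}=\mm^{-\ad p2^{\pd+4}}$; the bad events (where some $|\widehat{\lambda}_j-\lambda_k|$ violates the lower bound $\tfrac34|\lambda_j-\lambda_k|$, which forces $D_{\mm}$ above a threshold comparable to $\psi_j\mm^{\ad\ell}$ after $\ell$ rounds) are each absorbed by Lemma~\ref{lem_bound_C_op} at the relevant halved order, again yielding $\mm^{-\ad q}$-type bounds; taking the maximum over $j<\JJ_{\mm}^+$ finishes the proof. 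The main obstacle — and where essentially all the work lies — is closing the bootstrap: the feedback $II_{k,j}$ depends on $\widehat{\e}_j-\e_j$ itself, and any operator-norm bound on it secretly carries the factor $1/\lambda_j\le C^{\boldsymbol{\cal D}}\mm^{\hd}$, so one must keep the spectral weights $\lambda_i$ intact in order to invoke the $\mm^{-1/2+\ad}$-scaled sums of \hyperref[A2D]{\AtwoD} once per round, and must track precisely how the moment order degrades — which is exactly what pins down the exponent $p2^{\pd+4}$.
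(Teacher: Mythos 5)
Your approach has a genuine gap, and it is precisely the one you flag at the end as ``the main obstacle.'' The bootstrap you set up cannot get off the ground without already knowing something like $\A_j$ holds: the recursion relies on the decomposition \eqref{eq_lambda_int_decomp_rearr}, whose denominator $\widehat{\lambda}_j-\lambda_k$ is only controlled (via the bound \eqref{eq_fraction}) on the event $\{|\widehat{\lambda}_j-\lambda_j|\le\psi_j/4\}$, and the probability of the complement of that event is exactly what the lemma asks you to bound. You try to absorb the ``bad'' event by Markov on $D_{\mm}=\|\widehat{\boldsymbol{\cal\DD}}-{\boldsymbol{\cal\DD}}\|_{\mathcal L}$, but as you yourself observe, $\psi_j$ can be much smaller than $\mm^{-1/2}\thicksim\|D_{\mm}\|_q$: the estimate $P(D_{\mm}>\psi_j/4)\lesssim(\mm^{-1/2}/\psi_j)^q$ is vacuous because, under \hyperref[A2D]{\AtwoD} alone, $\mm^{-1/2}/\psi_j$ can be as large as $\mm^{\hd-\ad}$. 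Your suggestion that the bad event ``forces $D_{\mm}$ above a threshold comparable to $\psi_j\mm^{\ad\ell}$ after $\ell$ rounds'' is not substantiated: a violation of $|\widehat{\lambda}_j-\lambda_k|\ge\tfrac34|\lambda_j-\lambda_k|$ only forces $|\widehat{\lambda}_j-\lambda_j|\gtrsim\psi_j$, not $\gtrsim\psi_j\mm^{\ad\ell}$, so no extra factor is available and the circularity is not broken.

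The paper avoids this entirely by a deterministic resolvent-type perturbation estimate cited from the supplement of Mas--Hilgert (Lemmas E.1, E.2) and Mas--Ruymgaart (Lemmas 16, 18): if the $\lambda$-weighted quadratic form
\begin{align*}
S_j=\sum_{\substack{k,l\neq j}}\frac{I_{k,l}^2}{|\lambda_k-\lambda_j|\,|\lambda_l-\lambda_j|}+\frac{I_{j,j}^2}{\psi_j^2}+\sum_{k\neq j}\frac{I_{k,j}^2}{|\lambda_k-\lambda_j|\,\psi_j}
\end{align*}
is below an absolute constant, then $|\widehat{\lambda}_j-\lambda_j|\le\psi_j/2$ holds \emph{deterministically}; no a priori control on $\widehat{\lambda}_j$ is needed, so there is no chicken-and-egg. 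One then bounds $\|S_j\|_{p^*/2}$ ($p^*=p2^{\pd+4}$) by the triangle inequality and Lemma~\ref{lem_bound_I}, which together with \hyperref[A2D]{\AtwoD} gives $\|S_j\|_{p^*/2}\lesssim\mm^{-2\ad}$ uniformly in $j<\JJ_{\mm}^+$, and Markov at order $p^*/2$ yields $P(\A_j^c)\lesssim\mm^{-\ad p^*}$. Your bootstrap machinery is genuinely close in spirit to the paper's Lemma~\ref{lem_bound_II_improved_single}, but that lemma is run \emph{after} $\A_j$ has been established and explicitly restricts to the event $\A_j$; it is not a substitute for the present lemma. To repair your argument you would need to supply a deterministic implication of the type quoted above (e.g.\ via a resolvent or Neumann-series expansion of $(\widehat{\boldsymbol{\cal\DD}}-z)^{-1}$ near $\lambda_j$), at which point you would essentially be reproducing the paper's proof.
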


\begin{proof}[Proof of Lemma \ref{lem_bound_spacings_deviation}]
Proceeding as in Lemma E.2 and E.1 in the supplement of ~\cite{mas_hilgert_2013} (or likewise Lemma 18, Lemma 16 in \cite{mas_complex_2014}), it follows that for some absolute constant $C > 0$
\begin{align*}
P\bigl(\A_j^c\bigr) \lesssim P\biggl(\sum_{\substack{k,l = 1\\k,l \neq j}}^{\infty} \frac{I_{k,l}^2}{|\lambda_k - \lambda_{j}||\lambda_l - \lambda_{j}|} +  \frac{I_{j,j}^2}{\psi_j^2} + \sum_{\substack{k = 1\\k \neq j}}^{\infty}\frac{I_{k,j}^2}{|\lambda_k - \lambda_j|\psi_j}\geq C \biggr).
\end{align*}
Let $p^* = p 2^{\pd +4}$. Then by the triangle inequality and Lemma \ref{lem_bound_I}
\begin{align}\label{eq_lem_bound_spacings_deviation_1}
\max_{1 \leq j < \JJ_{\mm}^+}\biggl\|\sum_{\substack{k,l = 1\\k,l \neq j}}^{\infty} \frac{I_{k,l}^2}{|\lambda_k - \lambda_{j}||\lambda_l - \lambda_{j}|}\biggr\|_{p*/2} \lesssim \max_{1 \leq j \leq \JJ_{\mm}^+}\biggl(\frac{1}{\sqrt{\mm}}\sum_{\substack{k = 1\\ k \neq j}}^{\infty} \frac{\lambda_k}{|\lambda_k - \lambda_{j}|}\biggr)^2.
\end{align}
Similarly, we get that
\begin{align}\label{eq_lem_bound_spacings_deviation_2}
\max_{1 \leq j < \JJ_{\mm}^+}\biggl\|\frac{I_{j,j}^2}{\psi_j^2}\biggr\|_{p*/2} \lesssim \max_{1 \leq j < \JJ_{\mm}^+} \frac{\lambda_j^2}{\mm \psi_j^2} \lesssim \max_{1 \leq j \leq \JJ_{\mm}^+} \biggl(\frac{1}{\sqrt{\mm}}\sum_{\substack{k = 1\\ k \neq j}}^{\infty} \frac{\lambda_k}{|\lambda_k - \lambda_{j}|}\biggr)^2,
\end{align}
and also that
\begin{align}\nonumber \label{eq_lem_bound_spacings_deviation_3}
\max_{1 \leq j < \JJ_{\mm}^+}\biggl\|\sum_{\substack{k = 1\\k \neq j}}^{\infty}\frac{I_{k,j}^2}{|\lambda_k - \lambda_j|\psi_j}\biggr\|_{p*/2} &\lesssim \max_{1 \leq j < \JJ_{\mm}^+} \frac{\lambda_j}{\sqrt{\mm}\psi_j} \frac{1}{\sqrt{\mm}}\sum_{\substack{k = 1\\ k \neq j}}^{\infty} \frac{\lambda_k \lambda_j }{|\lambda_k - \lambda_{j}|} \\& \lesssim \max_{1 \leq j \leq \JJ_{\mm}^+} \biggl(\frac{1}{\sqrt{\mm}}\sum_{\substack{k = 1\\ k \neq j}}^{\infty} \frac{\lambda_k}{|\lambda_k - \lambda_{j}|}\biggr)^2.
\end{align}
Observe that due to \hyperref[A2D]{\AtwoD}, \eqref{eq_lem_bound_spacings_deviation_1}, \eqref{eq_lem_bound_spacings_deviation_2} and \eqref{eq_lem_bound_spacings_deviation_3} are all further bounded by $\lesssim \mm^{-2\ad}$. Hence we conclude via Markov's inequality and the triangle inequality that
\begin{align*}
\max_{1 \leq j < \JJ_{\mm}^+}P\bigl(\A_j^c\bigr)\lesssim {\mm}^{-\ad p^{*}},
\end{align*}
which completes the proof.

\end{proof}

The next result is our key technical lemma.

\begin{lemma}\label{lem_bound_II_improved_single}
Assume that Assumption \ref{ass_abstract} holds. Then uniformly for $1 \leq q \leq p 2^{\pd/2 + 3}$, $k \in \N$ and $1 \leq j < \JJ_{\mm}^+$
\begin{align*}
\bigl\|II_{k,j} \ind\bigl(\A_j\bigr)\bigr\|_{q} \lesssim \frac{\sqrt{\lambda_k \lambda_j}}{\sqrt{\mm}}\biggl(\bigl\|\|\widehat{\e}_j - \e_j\|_{\Ln^2}^2\bigr\|_{2q} + \mm^{-\ad}\biggr).
\end{align*}
\end{lemma}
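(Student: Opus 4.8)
The strategy is to iterate the fundamental decomposition on $\A_j$. From \eqref{eq_lem_bound_II_1}, $II_{k,j} = \sum_{i=1}^{\infty}\sqrt{\widetilde{\lambda}_k\widetilde{\lambda}_i}\bigl(\overline{\etav}_{k,i}^{\boldsymbol{\cal \DD}} + \etav_{k,i}^{\boldsymbol{\cal R}}\bigr)E_{i,j}$; recall $E_{j,j} = -\frac{1}{2}\|\widehat{\e}_j - \e_j\|_{\Ln^2}^2$ from Lemma~\ref{lem_eigen_eigen_product_to_norm}, and that \eqref{eq_lambda_int_decomp_rearr} reads $E_{i,j} = (I_{i,j} + II_{i,j})/(\widehat{\lambda}_j - \lambda_i)$ for $i \neq j$. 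Since on $\A_j$ we have $|\widehat{\lambda}_j - \lambda_j| \leq \psi_j/2 \leq \frac{1}{2}|\lambda_j - \lambda_i|$ for every $i \neq j$, there $|\widehat{\lambda}_j - \lambda_i|^{-1} \leq 2|\lambda_j - \lambda_i|^{-1}$. Peeling off $i = j$ and substituting the identity in the remaining terms gives, on $\A_j$,
$$II_{k,j} = -\frac{1}{2}\sqrt{\widetilde{\lambda}_k\widetilde{\lambda}_j}\bigl(\overline{\etav}_{k,j}^{\boldsymbol{\cal \DD}} + \etav_{k,j}^{\boldsymbol{\cal R}}\bigr)\|\widehat{\e}_j - \e_j\|_{\Ln^2}^2 + \sum_{i \neq j}\frac{\sqrt{\widetilde{\lambda}_k\widetilde{\lambda}_i}\bigl(\overline{\etav}_{k,i}^{\boldsymbol{\cal \DD}} + \etav_{k,i}^{\boldsymbol{\cal R}}\bigr)}{\widehat{\lambda}_j - \lambda_i}\bigl(I_{i,j} + II_{i,j}\bigr).$$

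By Cauchy--Schwarz in $\|\cdot\|_q$ together with \hyperref[A1D]{\AoneD} and \hyperref[A3D]{\AthreeD}, the first term contributes $\lesssim \mm^{-1/2}\sqrt{\lambda_k\lambda_j}\bigl\|\|\widehat{\e}_j - \e_j\|_{\Ln^2}^2\bigr\|_{2q}$, which is already admissible. For the $I$-part, using $I_{i,j} = \sqrt{\widetilde{\lambda}_i\widetilde{\lambda}_j}\bigl(\overline{\etav}_{i,j}^{\boldsymbol{\cal \DD}} + \etav_{i,j}^{\boldsymbol{\cal R}}\bigr)$ (proof of Lemma~\ref{lem_bound_I}), bounding $|\widehat{\lambda}_j - \lambda_i|^{-1}$ by $2|\lambda_j - \lambda_i|^{-1}$ and applying triangle and Cauchy--Schwarz inequalities with \hyperref[A1D]{\AoneD}, one obtains $\lesssim \sqrt{\lambda_k\lambda_j}\,\mm^{-1}\sum_{i \neq j}\lambda_i/|\lambda_j - \lambda_i| \lesssim \mm^{-1/2}\mm^{-\ad}\sqrt{\lambda_k\lambda_j}$ by \hyperref[A2D]{\AtwoD}. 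Thus only the $II$-part remains; it is a weighted sum of the same structural type as $II_{k,j}$, carrying one additional $\etav$-factor. Feeding the same decomposition into each inner $II_{i,j}$ and iterating, one produces at step $\ell$: (i) a further contribution proportional to $\|\widehat{\e}_j - \e_j\|_{\Ln^2}^2$, now with $\ell$ $\etav$-factors and, by $\ell$ applications of \hyperref[A2D]{\AtwoD}, an extra power $\mm^{-(\ell-1)\ad}$, so it is $\lesssim \mm^{-1/2}\mm^{-(\ell-1)\ad}\sqrt{\lambda_k\lambda_j}$ after bounding $\|\widehat{\e}_j-\e_j\|_{\Ln^2}^2 \leq 2$ pointwise; (ii) a further $I$-contribution $\lesssim \mm^{-1/2}\mm^{-\ell\ad}\sqrt{\lambda_k\lambda_j}$, a geometric gain; and (iii) a deeper remainder of the same shape, with $\ell$ nested sums carrying the weights $\prod_m\widetilde{\lambda}_{i_m}/|\lambda_j - \lambda_{i_m}|$. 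After $N$ iterations, with $N$ of order $\pd = \lceil \hd/\ad\rceil$, one stops and bounds the innermost $II$ crudely by Cauchy--Schwarz as in the proof of Lemma~\ref{lem_bound_II}, i.e.\ $|II_{i_N,j}| \leq \widetilde{\lambda}_{i_N}^{1/2}\bigl(\sum_l\widetilde{\lambda}_l(\overline{\etav}_{i_N,l}^{\boldsymbol{\cal \DD}} + \etav_{i_N,l}^{\boldsymbol{\cal R}})^2\bigr)^{1/2}\|\widehat{\e}_j - \e_j\|_{\Ln^2}$; together with $\|\widehat{\e}_j - \e_j\|_{\Ln^2} \leq \sqrt{2}$, \hyperref[A1D]{\AoneD}, \hyperref[A3D]{\AthreeD} and $N$ applications of \hyperref[A2D]{\AtwoD}, this leaves $\lesssim \sqrt{\lambda_k}\,\mm^{-1/2}\mm^{-N\ad}$. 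Since $\lambda_j \geq \lambda_{\JJ_{\mm}^+} \gtrsim \mm^{-\hd}$ by \hyperref[A2D]{\AtwoD} and $N$ may be chosen so that $N\ad \geq \hd/2 + \ad$ (possible because $\pd\ad \geq \hd$), this is $\lesssim \mm^{-1/2}\mm^{-\ad}\sqrt{\lambda_k\lambda_j}$, and collecting the finitely many contributions yields the claim.

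Two remarks. First, apart from the leading term $-\frac{1}{2}\sqrt{\widetilde{\lambda}_k\widetilde{\lambda}_j}(\overline{\etav}_{k,j}^{\boldsymbol{\cal \DD}}+\etav_{k,j}^{\boldsymbol{\cal R}})\|\widehat{\e}_j - \e_j\|_{\Ln^2}^2$ (handled by a single Cauchy--Schwarz, placing both factors in $\|\cdot\|_{2q}$), every term carries a surplus power of $\mm^{-\ad}$, so the factors $\|\widehat{\e}_j - \e_j\|_{\Ln^2}^2$ and $\|\widehat{\e}_j - \e_j\|_{\Ln^2}$ occurring there may be estimated by their a.s.\ bounds $2$ (resp.\ $\sqrt{2}$), while the $\ell$-fold products of the centered quantities are separated by Hölder's inequality with all exponents kept at the maximal value $p2^{\pd + 4}$ allowed by \hyperref[A1D]{\AoneD}; this is feasible precisely because the number of such factors stays below the slack $2^{\pd/2 + 1}$ left by restricting to $q \leq p2^{\pd/2 + 3}$. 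Second, the indicator $\ind(\A_j)$ serves only to legitimize the replacement of $\widehat{\lambda}_j - \lambda_i$ by $\lambda_j - \lambda_i$ and is bounded by $1$ throughout. The real obstacle is the circularity of the estimate — $II_{k,j}$ is defined through $\widehat{\e}_j - \e_j$, which is itself controlled by quantities of type $II_{\cdot,j}$ — and it is resolved exactly by the controlled finite iteration, whose termination is licensed by the lower bound $\lambda_{\JJ_{\mm}^+} \gtrsim \mm^{-\hd}$ in \hyperref[A2D]{\AtwoD} combined with $\pd\ad \geq \hd$.
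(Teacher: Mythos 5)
Your proof is correct and rests on exactly the same idea as the paper's: iterate the identity $E_{i,j}=(I_{i,j}+II_{i,j})/(\widehat{\lambda}_j-\lambda_i)$ on $\A_j$, peel off the $i=j$ diagonal term $E_{j,j}=-\tfrac12\|\widehat{\e}_j-\e_j\|_{\Ln^2}^2$, gain a factor $\mm^{-\ad}$ per application of \hyperref[A2D]{\AtwoD}, and terminate after $\OO(\pd)$ steps using $\lambda_{\JJ_{\mm}^+}\gtrsim \mm^{-\hd}$. The two writeups organize the moment bookkeeping differently, and that is worth flagging. The paper runs a clean backward induction on the $L^{p}$-scale: a single Cauchy--Schwarz per step drops the exponent from $p_i=p2^{i}$ to $p_{i-1}=p2^{i-1}$, and the inductive hypothesis \eqref{eq_lem_eigenvec_it_4} carries a single scalar $\tau$ that increments by $\ad$; after $\pd/2+1$ halvings one lands exactly at $q^*=p2^{\pd/2+3}$. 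You instead unroll the recursion explicitly into nested sums over $i_1,\dots,i_\ell$ and separate the resulting $\ell$-fold (or $(\ell+1)$-fold) products of $\overline{\etav}$'s by a single generalized H\"older inequality at the end, with the feasibility constraint $(N+1)q\leq p2^{\pd+4}$, i.e.\ $N+1\leq 2^{\pd/2+1}$, which indeed dominates the required depth $N\approx\pd/2+1$. The paper's induction has the advantage that it never needs to write the unrolled sums (the intermediate $II_{i,j}$-blocks are abstracted away by the hypothesis), so the exponent budget is enforced step-by-step rather than checked once at the end; your version makes the combinatorial structure visible but requires the explicit $(N+1)$-factor H\"older count. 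One small slip: you attribute an extra power $\mm^{-(\ell-1)\ad}$ on the $\|\widehat{\e}_j-\e_j\|^2$-branch to ``$\ell$ applications of \AtwoD,'' but that branch has $\ell-1$ free sums, hence $\ell-1$ applications; the exponent you state is nonetheless correct. Both routes land on the same final inequality, including the preserved $\bigl\|\|\widehat{\e}_j-\e_j\|_{\Ln^2}^2\bigr\|_{2q}$ factor from the very first peel-off.
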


\begin{proof}[Proof of Lemma \ref{lem_bound_II_improved_single}]
Note first that by construction of $\A_j$, we have that
\begin{align}\label{eq_fraction}
\biggl|\frac{\lambda_j - \lambda_l}{\widehat{\lambda}_j - \lambda_j + \lambda_j - \lambda_l}\ind\bigl(\A_j\bigr)\biggr| \leq 2, \quad \text{for $l \neq j$.}
\end{align}

Using the decomposition in \eqref{eq_lambda_int_decomp_rearr} and bound \eqref{eq_fraction}, we obtain that
\begin{align}\label{eq_lem_eigenvec_it_3}
\bigl|E_{l,j}\ind\bigl(\A_j\bigr)\bigr| \leq \frac{2}{|\lambda_j - \lambda_l|} \bigl(|I_{l,j}| + |II_{l,j}|\bigr) \ind\bigl(\A_j\bigr).
\end{align}
We now use a backward inductive argument. Let $p_{i} = p 2^{i}$, $\tau \geq 0$, and suppose we have uniformly for $k \in \N$
\begin{align}\label{eq_lem_eigenvec_it_4}
\bigl\|II_{k,j} \ind\bigl(\A_j\bigr)\bigr\|_{p_{i}} \lesssim \mm^{-1/2}\sqrt{\lambda_k} \bigl(\sqrt{\lambda_j} + \mm^{- \tau}\bigr) \quad \text{for some ${i} \leq \pd + 4$.}
\end{align}
Then we obtain from \eqref{eq_lem_eigenvec_it_3}, the triangle inequality and Lemma \ref{lem_bound_I} that for $l \neq j$
\begin{align}\label{eq_lem_eigenvec_it_5}
\bigl\|E_{l,j} \ind\bigl(\A_j\bigr)\bigr\|_{p_{i}} \lesssim \mm^{-1/2}\frac{\sqrt{\lambda_l}}{|\lambda_j - \lambda_l|}\biggl( \sqrt{\lambda_j} + \mm^{-\tau} \biggr).
\end{align}

Using decomposition \eqref{eq_lem_bound_II_1}, Cauchy-Schwarz and Lemma \ref{lem_E_eta_ij_is_zero} together with \hyperref[A3D]{\AthreeD}, we get
\begin{align*}
\bigl\|II_{k,j} \ind\bigl(\A_j\bigr)\bigr\|_{p_{{i}-1}} \lesssim \sqrt{\lambda_k}\sum_{l = 1}^{\infty} \sqrt{\lambda_l}  \bigl\|E_{l,j}\ind\bigl(\A_j\bigr)\bigr\|_{p_{i}} \bigl\|\overline{\etav}_{k,l}^{\boldsymbol{\cal D}} + \etav_{k,l}^{\boldsymbol{\cal R}}\bigr\|_{p_{i}},
\end{align*}
hence we obtain from Lemma \ref{lem_eigen_eigen_product_to_norm}, inequality \eqref{eq_lem_eigenvec_it_5} and \hyperref[A1D]{\AoneD}, \hyperref[A2D]{\AtwoD} that
\begin{align}\nonumber \label{eq_lem_eigenvec_it_6}
\bigl\|II_{k,j} \ind\bigl(\A_j\bigr)\bigr\|_{p_{{i}-1}}  &\lesssim \frac{\sqrt{\lambda_k}}{\sqrt{\mm}}\biggl(\sqrt{\lambda_j}\bigl\|\|\widehat{\e}_j - \e_j\|_{\Ln^2}^2\bigr\|_{p_{i}} + \frac{1}{\sqrt{\mm}}\sum_{\substack{l = 1\\l \neq j}}^{\infty} \frac{\lambda_l \bigl(\sqrt{\lambda_j} + \mm^{-\tau}\bigr)}{|\lambda_l - \lambda_j|}\biggr) \\&\lesssim \frac{\sqrt{\lambda_k}}{\sqrt{\mm}}\biggl(\bigl(\bigl\|\|\widehat{\e}_j - \e_j\|_{\Ln^2}^2\bigr\|_{p_{i}} + \mm^{-\ad}\bigr)\sqrt{\lambda_j} +  \mm^{-\ad - \tau}\biggr),
\end{align}
and this bound holds uniformly for $k \in \N$. Observe that we have now shown the validity of relation \eqref{eq_lem_eigenvec_it_4} with the updated value $\tau = \tau + \ad$, but with respect to $p_{{i}-1}$ instead of $p_{i}$. Since $\lambda_j \gtrsim \mm^{-\hd}$ with $\hd \geq 1$, it follows that after at most $\pd/2 + 1= \lceil \hd/\ad \rceil/2 + 1$ iterations we have
\begin{align*}
\bigl\|II_{k,j} \ind\bigl(\A_j\bigr)\bigr\|_{q^*} \lesssim \frac{\sqrt{\lambda_k \lambda_j}}{\sqrt{\mm}}\biggl(\bigl\|\|\widehat{\e}_j - \e_j\|_{\Ln^2}^2\bigr\|_{2q^*} + \mm^{-\ad}\biggr),
\end{align*}
where $q^* = p 2^{\pd/2 + 3}$. By Lemma \ref{lem_bound_II}, relation \eqref{eq_lem_eigenvec_it_4} is true for $\tau = 0$ (hence $\mm^{\tau} = 1$) and ${i} = \pd +4$, constituting the basis induction step, hence the proof is complete. Note that we have also shown
\begin{align}\label{eq_lem_eigenvec_it_7}
\bigl\|E_{l,j} \ind\bigl(\A_j\bigr)\bigr\|_{q^*} \lesssim \mm^{-1/2}\frac{\sqrt{\lambda_l \lambda_j}}{|\lambda_j - \lambda_l|},
\end{align}
which is of further relevance in the sequel.
 \end{proof}

\begin{proposition}\label{prop_bound_eigen_vec_norm}
Assume that Assumption \ref{ass_abstract} holds. Then for $1 \leq q \leq p 2^{\pd/2+2}$ we have uniformly for $1 \leq j < \JJ_{\mm}^+$
\begin{align*}
\bigl\|\|\widehat{\e}_j - \e_j\|_{\Ln^2}^2\bigr\|_{q} \lesssim P\bigl(\A_j^c\bigr)^{1/q} + {\mm}^{-1} \sum_{\substack{k = 1\\k \neq j}}^{\infty} \frac{\lambda_j \lambda_k}{(\lambda_k - \lambda_j)^2}\lesssim {\mm}^{-2\ad}.
\end{align*}
\end{proposition}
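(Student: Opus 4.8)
The plan is to expand $\|\widehat{\e}_j - \e_j\|_{\Ln^2}^2$ in the orthonormal basis ${\bf e}$ via Parseval's identity, peel off the diagonal coefficient using Lemma \ref{lem_eigen_eigen_product_to_norm}, and then reduce everything to the off-diagonal coefficients $E_{k,j}$, $k \neq j$, which are already controlled on the good event $\A_j$ by \eqref{eq_lem_eigenvec_it_7}. Concretely, writing $N_j = \|\widehat{\e}_j - \e_j\|_{\Ln^2}^2$, Parseval gives $N_j = E_{j,j}^2 + \sum_{k \neq j} E_{k,j}^2$, while Lemma \ref{lem_eigen_eigen_product_to_norm} yields $E_{j,j} = \langle \e_j, \widehat{\e}_j - \e_j \rangle = -N_j/2$. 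Hence $N_j$ satisfies the self-consistent identity $N_j(1 - N_j/4) = \sum_{k\neq j} E_{k,j}^2$. Since the sign normalization $c_j = 1$ forces $\langle \widehat{\e}_j, \e_j \rangle \geq 0$ and therefore $N_j = 2 - 2\langle \widehat{\e}_j, \e_j \rangle \leq 2$, the prefactor $1 - N_j/4$ is bounded below by $1/2$, so that $N_j \leq 2 \sum_{k\neq j} E_{k,j}^2$ holds pathwise. It thus remains to bound $\bigl\|\sum_{k\neq j} E_{k,j}^2\bigr\|_q$.

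Next I would split over the event $\A_j = \{|\widehat{\lambda}_j - \lambda_j| \leq \psi_j/2\}$. On $\A_j^c$ I use only the trivial bound $\sum_{k\neq j} E_{k,j}^2 \leq N_j \leq 2$, giving $\bigl\|\ind(\A_j^c) \sum_{k\neq j} E_{k,j}^2 \bigr\|_q \leq 2\, P(\A_j^c)^{1/q}$. On $\A_j$, Minkowski's inequality yields $\bigl\|\ind(\A_j)\sum_{k\neq j} E_{k,j}^2\bigr\|_q \leq \sum_{k\neq j} \bigl\|E_{k,j} \ind(\A_j)\bigr\|_{2q}^2$, and since the hypothesis $q \leq p 2^{\pd/2 + 2}$ means $2q \leq p 2^{\pd/2 + 3} = q^*$, the estimate \eqref{eq_lem_eigenvec_it_7} obtained inside the proof of Lemma \ref{lem_bound_II_improved_single} applies and gives $\bigl\|E_{k,j}\ind(\A_j)\bigr\|_{2q}^2 \lesssim \mm^{-1} \lambda_j \lambda_k / (\lambda_j - \lambda_k)^2$. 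Summing over $k \neq j$ and combining the two pieces produces the first asserted bound $\bigl\|N_j\bigr\|_q \lesssim P(\A_j^c)^{1/q} + \mm^{-1}\sum_{k\neq j} \lambda_j \lambda_k/(\lambda_j - \lambda_k)^2$, uniformly for $1 \leq j < \JJ_\mm^+$.

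Finally, the bound $\lesssim \mm^{-2\ad}$ follows by plugging in the two ingredients already available: \hyperref[A2D]{\AtwoD} gives $\mm^{-1}\sum_{k\neq j} \lambda_j \lambda_k/(\lambda_j - \lambda_k)^2 = \mm^{-2\ad}\cdot \mm^{-1 + 2\ad}\sum_{k\neq j}\lambda_j\lambda_k/(\lambda_j - \lambda_k)^2 \leq C^{\boldsymbol{\cal D}}\mm^{-2\ad}$ for $j < \JJ_\mm^+$, while Lemma \ref{lem_bound_spacings_deviation} gives $P(\A_j^c) \lesssim \mm^{-\ad p 2^{\pd + 4}}$, hence $P(\A_j^c)^{1/q} \lesssim \mm^{-\ad\, 2^{\pd/2 + 2}} \leq \mm^{-2\ad}$ because $q \leq p 2^{\pd/2+2}$ and $2^{\pd/2 + 2} \geq 4$. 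I do not expect a serious obstacle here: the only mild subtlety is recognizing and then ``solving'' the self-referential identity for $N_j$ via the a priori bound $N_j \leq 2$, and keeping the moment exponents consistent ($2q$ against the admissible $q^*$ in \eqref{eq_lem_eigenvec_it_7}); everything else is a direct application of the preceding lemmas.
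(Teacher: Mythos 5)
Your proof is correct and follows essentially the same route as the paper: split on $\A_j$ versus $\A_j^c$, bound the $\A_j^c$ piece by the a priori estimate $\|\widehat{\e}_j-\e_j\|_{\Ln^2}^2\leq 2$, and on $\A_j$ apply \eqref{eq_lem_eigenvec_it_7} to $\sum_{k\neq j}E_{k,j}^2$ together with \hyperref[A2D]{\AtwoD} and Lemma~\ref{lem_bound_spacings_deviation}, after checking that $2q\leq p2^{\pd/2+3}$. The only cosmetic difference is that you derive the pathwise bound $\|\widehat{\e}_j-\e_j\|_{\Ln^2}^2\leq 2\sum_{k\neq j}E_{k,j}^2$ from Parseval, Lemma~\ref{lem_eigen_eigen_product_to_norm} and the sign convention $c_j=1$, whereas the paper simply cites it as the standard inequality from~\cite{bosq_2000}.
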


\begin{proof}[Proof of Proposition \ref{prop_bound_eigen_vec_norm}]
The triangle inequality and Cauchy-Schwarz give
\begin{align}\label{eq_prop_bound_eigen_vec_norm_1}
\bigl\|\|\widehat{\e}_j - \e_j\|_{\Ln^2}^2\bigr\|_{q} \leq 2 P\bigl(\A_j^c\bigr)^{1/q} + \bigl\|\|\widehat{\e}_j - \e_j\|_{\Ln^2}^2\ind\bigl(\A_j\bigr)\bigr\|_{q}.
\end{align}
We now invoke the 'traditional' way of bounding $\|\widehat{\e}_j - \e_j\|_{\Ln^2}^2$, (cf. ~\cite{bosq_2000}, ~\cite{horvath_kokoszka_book_2012}), which uses the inequality
\begin{align}
\|\widehat{\e}_j - \e_j\|_{\Ln^2}^2 \leq 2 \sum_{\substack{k = 1\\k \neq j}}^{\infty} E_{k,j}^2.
\end{align}
Hence using \eqref{eq_lem_eigenvec_it_7} and the triangle inequality, we obtain from \hyperref[A2D]{\AtwoD} that
\begin{align*}
\bigl\|\|\widehat{\e}_j - \e_j\|_{\Ln^2}^2\ind\bigl(\A_j\bigr)\bigr\|_{q} \leq 2\sum_{\substack{k = 1\\k \neq j}}^{\infty} \bigl\|E_{k,j}^2\ind\bigl(\A_j\bigr)\bigr\|_q \lesssim \frac{1}{\mm}\sum_{\substack{k = 1\\k \neq j}}^{\infty} \frac{\lambda_l \lambda_j}{(\lambda_j - \lambda_l)^2} \lesssim {\mm}^{-2 \ad}.
\end{align*}
Combining this with \eqref{eq_prop_bound_eigen_vec_norm_1} gives the first inequality, Lemma \ref{lem_bound_spacings_deviation} and Assumption \ref{ass_abstract} yield the second part.

\end{proof}

Note that $\ad \leq 1/2$ and hence $\pd/2 \geq \hd \geq 1$ and $2^{\pd/2 + 2} \geq 8$. Since
\begin{align*}
\bigl\|\|\widehat{\e}_j - \e_j\|_{\Ln^2}^2\bigr\|_{2q} \leq \sqrt{2} \bigl\|\|\widehat{\e}_j - \e_j\|_{\Ln^2}^2\bigr\|_{q}^{1/2} \quad \text{for $q \geq 1$},
\end{align*}
we obtain the following corollary to Lemma \ref{lem_bound_II_improved_single}.
\begin{corollary}\label{corollary_bound_II}
Assume that Assumption \ref{ass_abstract} holds. Then for $1 \leq q \leq 8p$ we have uniformly for $k \in \N$ and $1 \leq j < \JJ_{\mm}^+$
\begin{align*}
\bigl\|II_{k,j}\bigr\|_{q} \lesssim \frac{\sqrt{\lambda_j \lambda_k}}{\sqrt{\mm}} {\mm}^{-\ad}.
\end{align*}
\end{corollary}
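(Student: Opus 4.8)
The plan is to estimate $II_{k,j}$ by splitting according to whether the spectral-gap event $\A_j = \{|\widehat{\lambda}_j - \lambda_j| \le \psi_j/2\}$ occurs, so that
\[
\bigl\|II_{k,j}\bigr\|_q \le \bigl\|II_{k,j}\ind(\A_j)\bigr\|_q + \bigl\|II_{k,j}\ind(\A_j^c)\bigr\|_q ,
\]
and to bound each term for $1 \le q \le 8p$ and $1 \le j < \JJ_{\mm}^+$. On the event $\A_j$, I would invoke Lemma \ref{lem_bound_II_improved_single} (applicable since $8p \le p2^{\pd/2+3}$), which gives
\[
\bigl\|II_{k,j}\ind(\A_j)\bigr\|_q \lesssim \frac{\sqrt{\lambda_k\lambda_j}}{\sqrt{\mm}}\Bigl(\bigl\|\|\widehat{\e}_j-\e_j\|_{\Ln^2}^2\bigr\|_{2q} + \mm^{-\ad}\Bigr),
\]
and then use the elementary bound $\|\|\widehat{\e}_j-\e_j\|_{\Ln^2}^2\|_{2q} \le \sqrt{2}\,\|\|\widehat{\e}_j-\e_j\|_{\Ln^2}^2\|_q^{1/2}$ (valid because $\|\widehat{\e}_j-\e_j\|_{\Ln^2}^2 \le 2$) together with Proposition \ref{prop_bound_eigen_vec_norm} at index $q$; the latter is admissible precisely because $\ad \le 1/2$ forces $\pd \ge 2$, hence $8p \le p2^{\pd/2+2}$, and it yields $\|\|\widehat{\e}_j-\e_j\|_{\Ln^2}^2\|_q \lesssim \mm^{-2\ad}$. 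Plugging in gives $\|II_{k,j}\ind(\A_j)\|_q \lesssim \sqrt{\lambda_k\lambda_j}\,\mm^{-1/2-\ad}$, the desired bound.

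For the complementary event I would apply H\"older's inequality, $\|II_{k,j}\ind(\A_j^c)\|_q \le \|II_{k,j}\|_{2q}\,P(\A_j^c)^{1/(2q)}$. Using $\|\widehat{\e}_j-\e_j\|_{\Ln^2} \le \sqrt{2}$, Lemma \ref{lem_bound_II} at index $2q \le 16p \le p2^{\pd+4}$ gives $\|II_{k,j}\|_{2q} \lesssim \sqrt{\lambda_k}\,\mm^{-1/2}$ (alternatively one may bound $|II_{k,j}|$ directly via Cauchy--Schwarz by $\|\widehat{\e}_j-\e_j\|_{\Ln^2}$ times $(\sum_i I_{k,i}^2)^{1/2}$ and use Lemma \ref{lem_bound_I}), while Lemma \ref{lem_bound_spacings_deviation} yields $P(\A_j^c) \lesssim \mm^{-\ad p 2^{\pd+4}}$, so that $P(\A_j^c)^{1/(2q)} \lesssim \mm^{-\ad 2^{\pd}}$. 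The only subtlety is that this estimate carries a bare $\sqrt{\lambda_k}$ rather than $\sqrt{\lambda_k\lambda_j}$; I would supply the missing $\lambda_j^{-1/2}$ from the lower bound $\lambda_j \ge \lambda_{\JJ_{\mm}^+} \gtrsim \mm^{-\hd}$ in \hyperref[A2D]{\AtwoD}, giving $\sqrt{\lambda_k} \lesssim \sqrt{\lambda_k\lambda_j}\,\mm^{\hd/2}$, and then check that $\mm^{\hd/2}\mm^{-\ad 2^{\pd}} \lesssim \mm^{-\ad}$, which reduces to $2^{\pd} \ge \hd/(2\ad)+1$ and follows from $\pd \ge \hd/\ad$ and $2^{s} \ge s+1$.

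The main obstacle is exactly this last bookkeeping step: one has to see that the super-polynomially small deviation probability $P(\A_j^c)$, whose rate is governed by the number $q = p2^{\pd+4}$ of available moments, dominates the possibly large polynomial factor $\mm^{\hd/2}$ stemming from the admissible range $\lambda_{\JJ_{\mm}^+} \gtrsim \mm^{-\hd}$ of eigenvalues. This is precisely the trade-off built into the choice $\pd = \lceil \hd/\ad \rceil$; once it is checked, the corollary follows by combining the two contributions, and everything else is a routine concatenation of Lemmas \ref{lem_bound_I}, \ref{lem_bound_II}, \ref{lem_bound_spacings_deviation}, \ref{lem_bound_II_improved_single} and Proposition \ref{prop_bound_eigen_vec_norm}.
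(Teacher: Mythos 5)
Your proof is correct and takes essentially the same route as the paper: split on $\A_j$ versus $\A_j^c$, bound the first piece via Lemma~\ref{lem_bound_II_improved_single} combined with Proposition~\ref{prop_bound_eigen_vec_norm}, bound the second via Cauchy--Schwarz together with Lemma~\ref{lem_bound_II} and Lemma~\ref{lem_bound_spacings_deviation}, and then check that $\pd=\lceil\hd/\ad\rceil$ is large enough to convert the super-polynomial decay of $P(\A_j^c)$ into the missing $\sqrt{\lambda_j}\,\mm^{-\ad}$ factor. The small arithmetic discrepancy in the target exponent ($2^{\pd}\ge\hd/(2\ad)+1$ in your version versus $\ad 2^{\pd}\ge\hd$ followed by $\lambda_j\lesssim\sqrt{\lambda_j}$ in the paper's) is immaterial; both are satisfied under $\pd\ge\hd/\ad$.
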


\begin{proof}[Proof of Corollary \ref{corollary_bound_II}]
Lemma \ref{lem_bound_II}, Lemma \ref{lem_bound_spacings_deviation}, Lemma \ref{lem_bound_II_improved_single} and Cauchy-Schwarz give
\begin{align*}
\bigl\|II_{k,j}\bigr\|_{q} &\leq \bigl\|II_{k,j}\ind\bigl(\A_j\bigr)\bigr\|_{q} + \bigl\|II_{k,j}\ind\bigl(\A_j^c\bigr)\bigr\|_{q} \lesssim \frac{\sqrt{\lambda_j \lambda_k}}{\sqrt{\mm}} {\mm}^{-\ad} + \frac{\sqrt{\lambda_k}}{\sqrt{\mm}} {\mm}^{-\ad} P\bigl(\A_j^c\bigr)^{1/2q} \\&\lesssim \frac{\sqrt{\lambda_j \lambda_k}}{\sqrt{\mm}} {\mm}^{-\ad} + \frac{\sqrt{\lambda_k}}{\sqrt{\mm}} {\mm}^{-\ad} {\mm}^{-\ad p 2^{\pd + 3}/q}.
\end{align*}
Since $\ad p 2^{\pd + 3}/q \geq \ad 2^{\pd} \geq \hd$, we have ${\mm}^{-\ad p 2^{\pd + 3}/q} \lesssim \lambda_{\JJ_{\mm}^+}$ by \hyperref[A2D]{\AtwoD} and the claim follows.
 \end{proof}

\begin{lemma}\label{lem_eigen_uniform_bound}
Assume that Assumption \ref{ass_abstract} holds. Then for $1 \leq q \leq 4p$
\begin{align*}
\bigl\|\widehat{\lambda}_j - \lambda_j - I_{j,j}\bigr\|_q \lesssim \frac{\lambda_j}{\sqrt{\mm}} {\mm}^{-\ad}, \quad \text{and} \quad \bigl\|\widehat{\lambda}_j - \lambda_j\bigr\|_q \lesssim \frac{\lambda_j}{\sqrt{\mm}}, \quad \text{uniformly for $1 \leq j < \JJ_{\mm}^+$.}
\end{align*}
\end{lemma}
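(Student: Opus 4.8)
The plan is to obtain an exact algebraic identity for $\widehat{\lambda}_j$ from the Rayleigh quotient and then bound the resulting remainder using the coefficientwise eigenfunction estimates already in hand. Since $\widehat{\boldsymbol{\cal D}}\widehat{\e}_j = \widehat{\lambda}_j\widehat{\e}_j$ and $\|\widehat{\e}_j\|_{\Ln^2} = 1$, writing $\widehat{\boldsymbol{\cal D}} = \boldsymbol{\cal D} + (\widehat{\boldsymbol{\cal D}} - \boldsymbol{\cal D})$, expanding $\langle\boldsymbol{\cal D}\widehat{\e}_j,\widehat{\e}_j\rangle = \sum_k\lambda_k(\delta_{kj} + E_{k,j})^2$ by Parseval (recall $\boldsymbol{\cal D}$ is self-adjoint by \eqref{defn_D_1}) and eliminating $(1 + E_{j,j})^2 = 1 - \sum_{k\neq j}E_{k,j}^2$ yields
\begin{align*}
\widehat{\lambda}_j - \lambda_j = \bigl\langle(\widehat{\boldsymbol{\cal D}} - \boldsymbol{\cal D})\widehat{\e}_j,\widehat{\e}_j\bigr\rangle + \sum_{\substack{k = 1\\k\neq j}}^{\infty}(\lambda_k - \lambda_j)E_{k,j}^2 .
\end{align*}
Inserting $\widehat{\e}_j = \e_j + (\widehat{\e}_j - \e_j)$ into both arguments of the first term, using $\widehat{\e}_j - \e_j = \sum_k E_{k,j}\e_k$ and writing $I_{k,l} = \langle(\widehat{\boldsymbol{\cal D}} - \boldsymbol{\cal D})\e_k,\e_l\rangle$ (for which Lemma \ref{lem_bound_I} gives $\|I_{k,l}\|_q \lesssim \mm^{-1/2}\sqrt{\lambda_k\lambda_l}$, and $I_{j,j}$ is as in \eqref{defn_I_ij_C}), one gets
\begin{align*}
R_j := \widehat{\lambda}_j - \lambda_j - I_{j,j} = II_{j,j} + \sum_{k}E_{k,j}I_{k,j} + \sum_{k,l}E_{k,j}E_{l,j}I_{k,l} + \sum_{\substack{k = 1\\k\neq j}}^{\infty}(\lambda_k - \lambda_j)E_{k,j}^2 .
\end{align*}
Thus the first claim is $\|R_j\|_q \lesssim \lambda_j\mm^{-1/2 - \ad}$ uniformly for $1 \leq j < \JJ_\mm^+$ and $q \leq 4p$, and the second then follows at once from $\|\widehat{\lambda}_j - \lambda_j\|_q \leq \|I_{j,j}\|_q + \|R_j\|_q$ and Lemma \ref{lem_bound_I}.

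To bound $R_j$ I would split on the event $\A_j = \{|\widehat{\lambda}_j - \lambda_j| \leq \psi_j/2\}$ of Lemma \ref{lem_bound_spacings_deviation}. On $\A_j^c$ each of the four terms is dominated, after Cauchy--Schwarz, by a constant multiple of $\|\widehat{\boldsymbol{\cal D}} - \boldsymbol{\cal D}\|_{\mathcal L}$ (or of $\|\widehat{\e}_j - \e_j\|_{\Ln^2}^2 \leq 2$), so Lemma \ref{lem_bound_C_op} together with $P(\A_j^c) \lesssim \mm^{-\ad p 2^{\pd + 4}}$ renders this contribution negligible --- one checks, using $\lambda_{\JJ_\mm^+} \gtrsim \mm^{-\hd}$ from \hyperref[A2D]{\AtwoD} and $\pd \geq 2\hd$, that the power of $\mm$ gained from $P(\A_j^c)^{1/(2q)}$ swamps $\lambda_j^{-1}$. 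On $\A_j$: the term $II_{j,j}$ is exactly Corollary \ref{corollary_bound_II} with $k = j$; for $\sum_k E_{k,j}I_{k,j}$ and for $\sum_{k\neq j}(\lambda_k - \lambda_j)E_{k,j}^2$ I would use the sharp bound $\|E_{k,j}\ind(\A_j)\|_{q^*} \lesssim \mm^{-1/2}\sqrt{\lambda_k\lambda_j}/|\lambda_j - \lambda_k|$ recorded in \eqref{eq_lem_eigenvec_it_7} (for $k \neq j$), Lemma \ref{lem_bound_I}, and the gap sum $\sum_{k\neq j}\lambda_k/|\lambda_j - \lambda_k| \lesssim \mm^{1/2 - \ad}$ from \hyperref[A2D]{\AtwoD}, which produces $\lesssim \lambda_j\mm^{-1/2 - \ad}$; the diagonal contributions ($k = j$) are handled using $|E_{j,j}| = \tfrac12\|\widehat{\e}_j - \e_j\|_{\Ln^2}^2$ (Lemma \ref{lem_eigen_eigen_product_to_norm}) and Proposition \ref{prop_bound_eigen_vec_norm}, and are of strictly lower order.

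The main obstacle is the cubic term $\sum_{k,l}E_{k,j}E_{l,j}I_{k,l} = \langle(\widehat{\boldsymbol{\cal D}} - \boldsymbol{\cal D})(\widehat{\e}_j - \e_j),\widehat{\e}_j - \e_j\rangle$: the naive estimate $\|\widehat{\boldsymbol{\cal D}} - \boldsymbol{\cal D}\|_{\mathcal L}\,\|\widehat{\e}_j - \e_j\|_{\Ln^2}^2$ only yields $\mm^{-1/2 - 2\ad}$ with no factor $\lambda_j$, which is too weak since $\lambda_j$ may be as small as $\mm^{-\hd}$ with $\hd \geq 1$. The fix is to expand in the eigenbasis and, on $\A_j$, bound the part with $k,l \neq j$ by \eqref{eq_lem_eigenvec_it_7} and Lemma \ref{lem_bound_I} to obtain $\lambda_j\mm^{-3/2}\bigl(\sum_{k\neq j}\lambda_k/|\lambda_j - \lambda_k|\bigr)^2 \lesssim \lambda_j\mm^{-1/2 - 2\ad}$, while the terms with $k = j$ or $l = j$ (where \eqref{eq_lem_eigenvec_it_7} degenerates) are isolated and controlled via $|E_{j,j}| \leq 1$, $\|\widehat{\e}_j - \e_j\|_{\Ln^2}^4 \leq 2\|\widehat{\e}_j - \e_j\|_{\Ln^2}^2$, Proposition \ref{prop_bound_eigen_vec_norm} and Lemma \ref{lem_bound_I}, each contributing at most $\lambda_j\mm^{-1/2 - \ad}$; on $\A_j^c$ the crude bound suffices as before. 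The remaining work is routine bookkeeping of the $L^q$-exponents invoked ($3q \leq p2^{\pd/2 + 3}$ for \eqref{eq_lem_eigenvec_it_7}, $2q \leq p2^{\pd/2 + 2}$ for Proposition \ref{prop_bound_eigen_vec_norm}, $q \leq 8p$ for Corollary \ref{corollary_bound_II}), all of which are compatible with $q \leq 4p$ because $\pd \geq 2\hd \geq 2$.
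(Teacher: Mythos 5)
Your proof is correct and relies on the same arsenal of preliminary lemmas (Lemma \ref{lem_bound_I}, Corollary \ref{corollary_bound_II}, Proposition \ref{prop_bound_eigen_vec_norm}, the key coefficient bound \eqref{eq_lem_eigenvec_it_7}, and the tail estimate Lemma \ref{lem_bound_spacings_deviation}), but the algebraic identity you start from is genuinely different from the one the paper uses, so it is worth comparing the two.

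The paper also starts from the Rayleigh quotient $\widehat{\lambda}_j = \int_{\TT^2}\widehat{\mathbf{D}}\,\widehat{\e}_j\widehat{\e}_j$, but instead of expanding $\widehat{\e}_j$ coordinatewise and invoking $\|\widehat{\e}_j\|_{\Ln^2}=1$, it uses only Lemma \ref{lem_eigen_eigen_product_to_norm} to produce the \emph{closed} identity
\begin{align*}
\bigl(\widehat{\lambda}_j - \lambda_j\bigr)\Bigl(1 - \tfrac12\|\widehat{\e}_j - \e_j\|_{\Ln^2}^2\Bigr) = I_{j,j} + II_{j,j},
\end{align*}
which is \eqref{eq_lem_eigen_uniform_b_2}. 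All the higher-order error is thereby compressed into the rational prefactor $2/(2 - \|\widehat{\e}_j - \e_j\|^2)$, which is bounded by $2$ on the high-probability event $\B_j = \{\|\widehat{\e}_j - \e_j\|^2 \leq 1\}$ and deviates from $1$ by $\OO(\|\widehat{\e}_j - \e_j\|^2) = \OO_{L^{2q}}(\mm^{-2\ad})$ via Proposition \ref{prop_bound_eigen_vec_norm}; the lemma then falls out of Lemma \ref{lem_bound_I}, Corollary \ref{corollary_bound_II}, Cauchy--Schwarz, and a routine tail estimate on $\B_j^c$. Your additive identity $\widehat{\lambda}_j - \lambda_j = \langle(\widehat{\boldsymbol{\cal D}} - \boldsymbol{\cal D})\widehat{\e}_j,\widehat{\e}_j\rangle + \sum_{k\neq j}(\lambda_k - \lambda_j)E_{k,j}^2$ is equally correct, but it forces you to estimate three extra terms --- the transposed cross term $\sum_k E_{k,j}I_{k,j}$, the quadratic form $\sum_{k,l}E_{k,j}E_{l,j}I_{k,l}$, and the gap sum $\sum_{k\neq j}(\lambda_k - \lambda_j)E_{k,j}^2$ --- that the paper never has to see. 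You correctly single out the quadratic form as the dangerous one (the naive bound loses the factor $\lambda_j$), and your fix --- expand in the eigenbasis, use \eqref{eq_lem_eigenvec_it_7} on the $k,l\neq j$ block to get $\lambda_j\mm^{-3/2}\bigl(\sum_{k\neq j}\lambda_k/|\lambda_j-\lambda_k|\bigr)^2 \lesssim \lambda_j\mm^{-1/2-2\ad}$, and isolate the $k=j$ or $l=j$ lines via $E_{j,j} = -\tfrac12\|\widehat{\e}_j - \e_j\|^2$ and Proposition \ref{prop_bound_eigen_vec_norm} --- does close it. The $L^q$-exponent bookkeeping goes through because $\pd\geq 2$, though when $\pd=2$ one should choose the Hölder split as $1/q = 1/(2q) + 1/(4q) + 1/(4q)$ (putting $E_{j,j}$ at exponent $2q$) rather than three equal thirds, since $3\cdot 4p > p2^{\pd/2+2} = 8p$; this is a minor adjustment. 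The trade-off is: the paper's prefactor trick removes the quadratic/cubic bookkeeping entirely at the cost of a less explicit remainder, while your version produces an explicit residual but requires one additional non-naive estimate.
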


\begin{proof}[Proof of Lemma \ref{lem_eigen_uniform_bound}]
We have that
\begin{align*}
\widehat{\lambda}_j  &= \int_{\TT^2} \widehat{\mathbf{D}} \widehat{\e}_j \widehat{\e}_j = \int_{\TT^2} \widehat{\mathbf{D}} (\widehat{\e}_j - \e_j) \widehat{\e}_j + \int_{\TT^2} \widehat{\mathbf{D}} \e_j \widehat{\e}_j \\&= \widehat{\lambda}_j\int_{\TT} (\widehat{\e}_j - \e_j) \widehat{\e}_j + \int_{\TT^2} (\widehat{\mathbf{D}} - {\bf D}) {\e}_j \widehat{\e}_j + \int_{\TT^2}{\mathbf{D}} \e_j \widehat{\e}_j\\& = \frac{\widehat{\lambda}_j}{2}\bigl\|\widehat{\e}_j - \e_j\bigr\|_{\Ln^2}^2 + \int_{\TT^2} (\widehat{\mathbf{D}} - {\bf D}) \e_j(\widehat{\e}_j - \e_j) + \int_{\TT^2} (\widehat{\mathbf{D}} - {\bf D}) \e_j {\e}_j + \int_{\TT^2}{\mathbf{D}} \e_j \widehat{\e}_j.
\end{align*}
Since by Lemma \ref{lem_eigen_eigen_product_to_norm}
\begin{align*}
\int_{\TT^2}{\mathbf{D}} \e_j \widehat{\e}_j &= \int_{\TT^2}{\mathbf{D}} \e_j (\widehat{\e}_j - \e_j) + \int_{\TT^2}{\mathbf{D}} \e_j {\e}_j = -\frac{\lambda_j}{2}\bigl\|\widehat{\e}_j - \e_j\bigr\|_{\Ln^2}^2 + \lambda_j,
\end{align*}
we obtain by rearranging terms (if $\|\widehat{\e}_j - \e_j\|_{\Ln^2}^2 < 2$)
\begin{align}\label{eq_lem_eigen_uniform_b_2}\nonumber
\widehat{\lambda}_j - \lambda_j &= \frac{2}{2 - \|\widehat{\e}_j - \e_j\|_{\Ln^2}^2}\biggl(\int_{\TT^2} (\widehat{\mathbf{D}} - {\bf D}) \e_j {\e}_j + \int_{\TT^2} (\widehat{\mathbf{D}} - {\bf D}) \e_j(\widehat{\e}_j - \e_j)\biggr)\\&= \frac{2}{2 - \|\widehat{\e}_j - \e_j\|_{\Ln^2}^2}\bigl(I_{j,j} + II_{j,j}\bigr).
\end{align}
Let $\B_j = \bigl\{\|\widehat{\e}_j - \e_j\|_{\Ln^2}^2 \leq 1\bigr\}$. By Lemma \ref{lem_bound_I}, Proposition \ref{prop_bound_eigen_vec_norm} and the Cauchy-Schwarz inequality we obtain
\begin{align}\label{eq_lem_eigen_uniform_b_3}\nonumber
\biggl\|I_{j,j}\biggl(1 - \frac{2}{2 - \|\widehat{\e}_j - \e_j\|_{\Ln^2}^2}\biggr)\ind\bigl(\B_j\bigr) \biggr\|_q &\lesssim \bigl\|I_{j,j}\bigr\|_{2q} \bigl\|\|\widehat{\e}_j - \e_j\|_{\Ln^2}^2\bigr\|_{2q}\\&\lesssim \frac{\lambda_j}{\sqrt{\mm}}{\mm}^{-2\ad}.
\end{align}
Similarly, Corollary \ref{corollary_bound_II} yields that 
\begin{align}\label{eq_lem_eigen_uniform_b_4}
\biggl\|II_{j,j}\biggl(1 - \frac{2}{2 - \|\widehat{\e}_j - \e_j\|_{\Ln^2}^2}\biggr)\ind\bigl(\B_j\bigr) \biggr\|_q &\lesssim \frac{\lambda_j}{\sqrt{\mm}}{\mm}^{-\ad}.
\end{align}
Let ${\cal D} = \bigl\{\bigl\|\widehat{\boldsymbol{\cal D}} - {\boldsymbol{\cal D}} \bigr\|_{{\cal L}} \leq 1\bigr\}$. Lemma \ref{lem_bound_C_op} and Markovs inequality then yield that
\begin{align}\label{eq_lem_eigen_uniform_b_5}
P\bigl({\cal D}^c \bigr) \lesssim {\mm}^{-2\ad p 2^{\pd/2+3}}.
\end{align}
On the other hand, Proposition \ref{prop_bound_eigen_vec_norm} implies that $P\bigl(\B_j^c\bigr) \lesssim {\mm}^{-2\ad p 2^{\pd/2+2}}$. Since $\hd \geq 1, 1/2 > \ad$ we have $2^{\pd/2} \geq 1/2 + 1/4\ad + \hd/2\ad$ and hence ${\mm}^{-2\ad 2^{\pd/2}} \lesssim {\mm}^{-1/2 - \ad} \lambda_{\JJ_{\mm}^+}$ by \hyperref[A2D]{\AtwoD}. Combining \eqref{eq_lem_eigen_uniform_b_2}, \eqref{eq_lem_eigen_uniform_b_3}, \eqref{eq_lem_eigen_uniform_b_4} and \eqref{eq_lem_eigen_uniform_b_5} we obtain from the Cauchy-Schwarz inequality, Lemma \ref{lem_eigen_gen_upper_bound} (see ~\cite{bosq_2000} for a general version) and Lemma \ref{lem_bound_C_op}, that
\begin{align*}
\bigl\|\widehat{\lambda}_j - \lambda_j - I_{j,j}\bigr\|_q &\lesssim P\bigl(\B_j^c)^{1/q} + \bigl\|\|\widehat{\boldsymbol{\cal D}} - {\boldsymbol{\cal D}}\|_{{\cal L}} \bigr\|_{2q} P\bigl({\cal D}^c)^{1/2q} + \frac{\lambda_j}{\sqrt{\mm}} {\mm}^{-\ad} \\&\lesssim \frac{\lambda_j}{\sqrt{\mm}} {\mm}^{-\ad},
\end{align*}
which gives the first claim. The second claim follows from Lemma \ref{lem_bound_I}.

\end{proof}

\begin{lemma}\label{lem_bound_III}
Assume that Assumption \ref{ass_abstract} holds. Then for $1 \leq q \leq 2p$ we have uniformly for $k \in \N$ and $1 \leq j < \JJ_{\mm}^+$
\begin{align*}
\bigl\|III_{k,j}\ind\bigl(\A_j\bigr)\bigr\|_q \lesssim \frac{\lambda_j}{\mm}\frac{\sqrt{\lambda_k \lambda_j}}{|\lambda_k - \lambda_j|}\lesssim \frac{\sqrt{\lambda_k \lambda_j}}{\sqrt{\mm}}{\mm}^{-\ad}.
\end{align*}
\end{lemma}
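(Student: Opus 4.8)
The plan is to read off $III_{k,j}$ from the fundamental decomposition \eqref{eq_decomp_funda}, where it is defined by $III_{k,j} = -(\widehat{\lambda}_j - \lambda_j)\int_{\TT}\e_k(\widehat{\e}_j-\e_j) = -(\widehat{\lambda}_j - \lambda_j)E_{k,j}$, and then to bound the two factors separately using estimates already in place. Concretely, I would apply the Cauchy-Schwarz inequality to write, for $k \neq j$,
\begin{align*}
\bigl\|III_{k,j}\ind(\A_j)\bigr\|_q \leq \bigl\|\widehat{\lambda}_j - \lambda_j\bigr\|_{2q}\,\bigl\|E_{k,j}\ind(\A_j)\bigr\|_{2q}.
\end{align*}
Since $q \leq 2p$ forces $2q \leq 4p$, Lemma \ref{lem_eigen_uniform_bound} controls the first factor by $\lambda_j \mm^{-1/2}$, uniformly in $1 \leq j < \JJ_{\mm}^+$. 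For the second factor I would invoke inequality \eqref{eq_lem_eigenvec_it_7}, which was established inside the proof of Lemma \ref{lem_bound_II_improved_single}: since $\ad \leq 1/2$ (noted after Proposition \ref{prop_bound_eigen_vec_norm}) we have $\pd = \lceil \hd/\ad\rceil \geq 2$, hence $q^* = p2^{\pd/2+3}\geq 16p \geq 2q$, so that estimate applies at exponent $2q$ and gives $\|E_{k,j}\ind(\A_j)\|_{2q}\lesssim \mm^{-1/2}\sqrt{\lambda_k\lambda_j}/|\lambda_j-\lambda_k|$, uniformly in $k\neq j$ and $1\leq j<\JJ_{\mm}^+$. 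Multiplying the two bounds yields the first asserted inequality $\lambda_j\mm^{-1}\sqrt{\lambda_k\lambda_j}/|\lambda_k-\lambda_j|$ (and the case $k=j$ is vacuous, the right-hand side being infinite).

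For the second asserted inequality it then suffices to check that $\lambda_j/|\lambda_j-\lambda_k|\lesssim \mm^{1/2-\ad}$ uniformly for $k\neq j$ and $1\leq j<\JJ_{\mm}^+$. I would obtain this from the triangle inequality $\lambda_j\leq \lambda_k+|\lambda_j-\lambda_k|$ together with the first term of \hyperref[A2D]{\AtwoD}, which bounds $\mm^{-1/2+\ad}\lambda_k/|\lambda_j-\lambda_k|$ termwise by $C^{\boldsymbol{\cal D}}$; indeed
\begin{align*}
\frac{\lambda_j}{|\lambda_j-\lambda_k|}\leq \frac{\lambda_k}{|\lambda_j-\lambda_k|}+1\lesssim \mm^{1/2-\ad}+1\lesssim \mm^{1/2-\ad},
\end{align*}
the last step because $\mm^{1/2-\ad}\geq 1$. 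Substituting this into the first bound gives
\begin{align*}
\frac{\lambda_j}{\mm}\frac{\sqrt{\lambda_k\lambda_j}}{|\lambda_k-\lambda_j|} = \frac{\sqrt{\lambda_k\lambda_j}}{\sqrt{\mm}}\cdot\frac{1}{\sqrt{\mm}}\frac{\lambda_j}{|\lambda_k-\lambda_j|}\lesssim \frac{\sqrt{\lambda_k\lambda_j}}{\sqrt{\mm}}\,\mm^{-\ad},
\end{align*}
as required.

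There is essentially no analytic obstacle here: the lemma is a direct concatenation of Lemma \ref{lem_eigen_uniform_bound} and the by-product estimate \eqref{eq_lem_eigenvec_it_7} via H\"older's inequality. The only points that need attention are the bookkeeping of the moment exponents — making sure $2q$ stays inside the ranges of validity of those two results when $1\leq q\leq 2p$, which is why $\pd \geq 2$ is used — and the harmless replacement of $\lambda_j$ by $\lambda_k+|\lambda_j-\lambda_k|$ in the denominator, which is precisely what converts the algebraic factor $\mm^{-\ad}$ supplied by \hyperref[A2D]{\AtwoD} into the stated bound.
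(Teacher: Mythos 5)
Your proof is correct and follows essentially the same route as the paper's: apply Cauchy--Schwarz at exponents $(2q,2q)$, bound $\|\widehat{\lambda}_j-\lambda_j\|_{2q}$ by Lemma~\ref{lem_eigen_uniform_bound}, bound $\|E_{k,j}\ind(\A_j)\|_{2q}$ by the byproduct estimate \eqref{eq_lem_eigenvec_it_7}, and derive the second asserted inequality from the termwise bound supplied by \hyperref[A2D]{\AtwoD} together with $\lambda_j\leq\lambda_k+|\lambda_j-\lambda_k|$. You are somewhat more careful than the terse original (you track the sign of $III_{k,j}$ correctly and verify $2q\leq q^*$ explicitly), but there is no difference in method.
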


\begin{proof}[Proof of Lemma \ref{lem_bound_III}]
Recall that $III_{k,j} = \bigl(\widehat{\lambda}_j - \lambda_j\bigr)E_{k,j}$. By the Cauchy-Schwarz inequality and Lemma \ref{lem_eigen_uniform_bound}, we have that
\begin{align*}
\bigl\|III_{k,j}\ind\bigl(\A_j\bigr)\bigr\|_q \lesssim \frac{\lambda_j}{\sqrt{\mm}} \bigl\|E_{k,j}\ind\bigl(\A_j\bigr)\bigr\|_{2q}.
\end{align*}
Hence the claims follow from inequality \eqref{eq_lem_eigenvec_it_7} and \hyperref[A2D]{\AtwoD}.

 \end{proof}

For the sake of reference, we state Pisiers inequality.
\begin{lemma}\label{lem_max_mom}
Let $p \geq 1$ and $Y_{j}$, $1 \leq j \leq \J$ be a sequence of random variables. Then
\begin{align*}
\bigl\|\max_{1 \leq j \leq \J}|Y_j|\bigr\|_p \leq \biggl(\sum_{j = 1}^{\J} \bigl\|Y_j\bigr\|_p^p\biggr)^{1/p} \leq \J^{1/p} \max_{1 \leq j \leq \J}\bigl\|Y_j\bigr\|_{p}.
\end{align*}
\end{lemma}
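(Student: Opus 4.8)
The statement is the familiar $\ell^p$–versus–$\ell^\infty$ comparison, and the plan is entirely elementary: dominate the maximum by the $\ell^p$-sum pointwise, and then dominate that sum by its largest term. All three quantities involved are $p$-th powers of nonnegative random variables, which is what makes passing between them (via the monotone map $x \mapsto x^{1/p}$) legitimate.

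For the first inequality I would raise everything to the $p$-th power. Pointwise (that is, $\omega$ by $\omega$) one has $\max_{1 \le j \le \J} |Y_j(\omega)|^p \le \sum_{j=1}^{\J} |Y_j(\omega)|^p$, simply because the maximand equals one of the summands and the remaining summands are $\ge 0$. Taking expectations and using linearity yields
\begin{align*}
\E\Bigl[\max_{1 \le j \le \J}|Y_j|^p\Bigr] \le \sum_{j=1}^{\J} \E\bigl[|Y_j|^p\bigr] = \sum_{j=1}^{\J}\bigl\|Y_j\bigr\|_p^p .
\end{align*}
Since $\bigl\|\max_{1 \le j \le \J}|Y_j|\bigr\|_p^p = \E\bigl[\max_{1 \le j \le \J}|Y_j|^p\bigr]$ (the max commutes with the $p$-th power on nonnegative arguments), taking $p$-th roots gives $\bigl\|\max_{1 \le j \le \J}|Y_j|\bigr\|_p \le \bigl(\sum_{j=1}^{\J}\|Y_j\|_p^p\bigr)^{1/p}$.

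For the second inequality I would just observe $\sum_{j=1}^{\J}\|Y_j\|_p^p \le \J \max_{1 \le j \le \J}\|Y_j\|_p^p$ and take $p$-th roots. There is no genuine obstacle here; the only point worth a moment's care is the bookkeeping that ensures each of the three expressions is literally the $p$-th power of a nonnegative quantity, so that the pointwise inequality survives under $\E[\cdot]$ and under $(\cdot)^{1/p}$ — but this is immediate.
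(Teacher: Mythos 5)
Your proof is correct and is the standard one: dominate $\max_j|Y_j|^p$ pointwise by $\sum_j|Y_j|^p$, take expectations, take $p$-th roots, then bound the sum by $\J$ times its largest term. The paper states this lemma without proof (it is a well-known elementary fact, sometimes attributed to Pisier), so there is no alternative argument to compare against; yours is exactly what one would write.
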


We are now ready to proof Theorems \ref{theorem_exp_eigen_value} and \ref{theorem_exp_eigen_vector}.
\begin{proof}[Proof of Theorem \ref{theorem_exp_eigen_value}]
This readily follows from Lemma \ref{lem_eigen_uniform_bound} and Lemma \ref{lem_max_mom}.

 \end{proof}

\begin{proof}[Proof of Theorem \ref{theorem_exp_eigen_vector}]
We treat the first claim. By Lemma \ref{lem_eigen_eigen_product_to_norm} we have the decomposition
\begin{align}\label{eq_theorem_exp_eigen_vec_1}
\widehat{e}_j - \e_j = -\frac{\e_j}{2}\bigl\|\widehat{\e}_j - \e_j\bigr\|_{\Ln^2}^2 + \sum_{\substack{k = 1\\k \neq j}}^{\infty}\e_k \frac{I_{k,j} + II_{k,j} + III_{k,j}}{\lambda_j - \lambda_k} \stackrel{def}{=} -A_j + B_j.
\end{align}
Note that by the triangle inequality
\begin{align*}
\bigl\|B_j\bigr\|_{\Ln^2} \leq \bigl\|\widehat{e}_j - \e_j\bigr\|_{\Ln^2} + \bigl\|A_j\bigr\|_{\Ln^2} \leq 4.
\end{align*}
Let $C_j = \sum_{\substack{k = 1\\k \neq j}}^{\infty}\e_k \frac{I_{k,j}}{\lambda_j - \lambda_k}$. Then another application of the triangle inequality gives
\begin{align*}
\bigl\|\widehat{e}_j - \e_j + A_j - C_j\bigr\|_{\Ln^2} \leq \bigl\|B_j\bigr\|_{\Ln^2} + \bigl\|C_j\bigr\|_{\Ln^2} \leq 4 + \bigl\|C_j\bigr\|_{\Ln^2}.
\end{align*}
Hence by the Cauchy-Schwarz inequality and Lemma \ref{lem_bound_I}
\begin{align*}
\bigl\|\|\widehat{e}_j - \e_j + A_j - C_j\|_{\Ln^2}\ind\bigl(\A_j^c\bigr)\bigr\|_p \lesssim 4P\bigl(\A_j^c\bigr)^{1/p} + P\bigl(\A_j^c\bigr)^{1/2p} \biggl(\frac{1}{n}\sum_{\substack{k = 1\\k \neq j}}^{\infty} \frac{\lambda_j \lambda_k}{(\lambda_j - \lambda_k)^2}\biggr)^{1/2},
\end{align*}
which by Lemma \ref{lem_bound_spacings_deviation} and \hyperref[A2D]{\AtwoD} (arguing as in the proof of Lemma \ref{lem_eigen_uniform_bound}) is bounded by
\begin{align*}
\bigl\|\|\widehat{e}_j - \e_j + A_j - C_j\|_{\Ln^2}\ind\bigl(\A_j^c\bigr)\bigr\|_p \lesssim {\mm}^{-1/2 - \ad}\bigl( \lambda_{\J_n^+} + \sqrt{\Lambda_j}\bigr).
\end{align*}
Lemma \ref{lem_max_mom} and the inequality $\Lambda_j \geq \frac{\lambda_{j}}{\lambda_{j-1}} \gtrsim \lambda_{j} \wedge 1$ then show that it suffices to consider event $\A_j$. Corollary \ref{corollary_bound_II} and Lemma \ref{lem_bound_III} give
\begin{align*}
\biggl\|\sum_{\substack{k = 1\\k \neq j}}^{\infty} \frac{(II_{k,j} + III_{k,j})^2}{(\lambda_j - \lambda_k)^2}\ind\bigl(\A_j\bigr)\biggr\|_p \lesssim {\mm}^{-1-\ad}\sum_{\substack{k = 1\\k \neq j}}^{\infty} \frac{\lambda_j \lambda_k}{(\lambda_j - \lambda_k)^2},
\end{align*}
hence the first claim follows from Lemma \ref{lem_max_mom}. Next, we treat the second claim. As before Lemma \ref{lem_eigen_eigen_product_to_norm} yields
\begin{align*}
\bigl\|\widehat{\e}_j - \e_j\bigr\|_{\Ln^2}^2 = \frac{1}{4}\bigl\|\widehat{\e}_j - \e_j\bigr\|_{\Ln^2}^4 + \sum_{\substack{k = 1\\k \neq j}}^{\infty} \frac{(I_{k,j} + II_{k,j} + III_{k,j})^2}{(\lambda_j - \lambda_k)^2}.
\end{align*}
Proceeding as in the first claim, one shows that it suffices to consider the event $\A_j$. Let
${\cal D}_j = \bigl\{\|\widehat{\e}_j - \e_j\|_{\Ln^2}^2 \leq {\mm}^{-\ad} \bigr\}$. Then proceeding as in Lemma \ref{lem_eigen_uniform_bound} we obtain
\begin{align}\label{eq_theorem_exp_eigen_vec_2}
P\bigl({\cal D}_j^c \bigr) \lesssim {\mm}^{-\ad p 2^{\pd/2 + 2}} \lesssim {\mm}^{-p - 2\ad p} \lambda_{\J_n^+}^p.
\end{align}
We thus obtain from Lemma \ref{lem_bound_I}, Corollary \ref{corollary_bound_II}, Lemma \ref{lem_bound_III} and \eqref{eq_theorem_exp_eigen_vec_2}
\begin{align}\label{eq_theorem_exp_eigen_vec_3} \nonumber
&\biggl\|\biggl(\bigl\|\widehat{\e}_j - \e_j\bigr\|_{\Ln^2}^2 -  \sum_{\substack{k = 1\\k \neq j}}^{\infty} \frac{I_{k,j}^2}{(\lambda_j - \lambda_k)^2}\biggr)\ind\bigl(\A_j\bigr)\biggr\|_p \lesssim {\mm}^{-\ad}\biggl\|\bigl\|\widehat{\e}_j - \e_j\bigr\|_{\Ln^2}^2 \ind\bigl(\A_j\bigr)\biggr\|_p  \\&\nonumber + P\bigl({\cal D}_j^c\bigr)^{1/p} + \biggl\|\sum_{\substack{k = 1\\k \neq j}}^{\infty} \frac{(I_{k,j} + II_{k,j} + III_{k,j})^2 - I_{k,j}^2}{(\lambda_j - \lambda_k)^2} \ind\bigl(\A_j\bigr)\biggr\|_p\\& \lesssim {\mm}^{-\ad}\biggl\|\bigr\|\widehat{\e}_j - \e_j\bigr\|_{\Ln^2}^2\ind\bigl(\A_j\bigr) \biggr\|_p + {\mm}^{-1- 2 \ad}\lambda_{\J_n^+} + {\mm}^{-1-\ad}\sum_{\substack{k = 1\\k \neq j}}^{\infty} \frac{\lambda_j \lambda_k}{(\lambda_j - \lambda_k)^2}.
\end{align}
Iterating this inequality once and rearranging terms, Lemma \ref{lem_bound_I} yields that
\begin{align*}
\biggl\|\biggl(\bigl\|\widehat{\e}_j - \e_j\bigr\|_{\Ln^2}^2 -  \sum_{\substack{k = 1\\k \neq j}}^{\infty} \frac{I_{k,j}^2}{(\lambda_j - \lambda_k)^2}\biggr)\ind\bigl(\A_j\bigr)\biggr\|_p \lesssim \frac{\lambda_{\J_n^+}}{{\mm}^{1+2 \ad}} + \frac{1}{{\mm}^{1+\ad}}\sum_{\substack{k = 1\\k \neq j}}^{\infty} \frac{\lambda_j \lambda_k}{(\lambda_j - \lambda_k)^2}.
\end{align*}
Since $\Lambda_j \geq \frac{\lambda_{j}}{\lambda_{j-1}} \gtrsim \lambda_{j} \wedge 1$, an application of Lemma \ref{lem_max_mom} yields the desired result.

\end{proof}

\begin{proof}[Proof of Proposition \ref{prop_replace_I_with_eta}]
Observe that since $\E\bigl[\etav_{k,j}^{\boldsymbol{\cal D}}\bigr] = 0$ for $k \neq j$, we get that
\begin{align*}
I_{k,j} = \bigl \langle \bigl(\widehat{\boldsymbol{\cal D}} - {\boldsymbol{\cal D}}\bigr)(\e_k), \e_j \bigr \rangle = \sqrt{\widetilde{\lambda}_k \widetilde{\lambda}_j}\bigl(\overline{\etav}_{k,j}^{\boldsymbol{\cal D}} + {\etav}_{k,j}^{\boldsymbol{\cal R}} \bigr).
\end{align*}
Since $\widetilde{\lambda}_j = \lambda_j/\E\bigl[\etav_{j,j}^{\boldsymbol{\cal D}}\bigr]$, the claim follows from \hyperref[A1D]{\AoneD} and routine calculations.

\end{proof}

\begin{proof}[Proof of Corollary \ref{corollary_norms}]
The claim follows from Proposition \ref{prop_replace_I_with_eta} and \hyperref[A1D]{\AoneD}.

\end{proof}

\subsection{Proofs of Lemma \ref{lem_verify_ass_poly} and Theorem \ref{thm_gaussian_part}}\label{sec_gauss_optim_proofs}

We first provide the following result about the convexity relations of $\lambda_x$.
\begin{lemma}\label{lem_verify_ass_poly}
If \eqref{condi_convex} holds, then \eqref{eq_convex_condi} is valid.
\end{lemma}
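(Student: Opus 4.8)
The plan is to reduce the two bounds in \eqref{eq_convex_condi} to a single pointwise estimate on the relative spacing of the eigenvalues. Put $\psi_k=\lambda_k-\lambda_{k+1}$; since $x\mapsto\lambda_x$ is convex and $\sum_j\lambda_j<\infty$ forces $\lambda_j\downarrow 0$, the increments $\psi_k\ge 0$ are non-increasing in $k$ and (by convexity) agree with the $\psi_k$ of Lemma~\ref{lem_eigen_gen_upper_bound}. Split each sum over $i<j$ and $i>j$. The ``far'' indices are harmless: if $\lambda_i<\tfrac12\lambda_j$ then $\lambda_j-\lambda_i>\tfrac12\lambda_j$, so $\tfrac{\lambda_i}{\lambda_j-\lambda_i}\le\tfrac{2\lambda_i}{\lambda_j}$ and $\tfrac{\lambda_i\lambda_j}{(\lambda_j-\lambda_i)^2}\le\tfrac{4\lambda_i}{\lambda_j}$, and one checks (this uses convexity, via geometric decay past the first index at which $\lambda$ has halved) that $\tfrac{1}{\lambda_j}\sum_{i:\,\lambda_i<\lambda_j/2}\lambda_i\lesssim j$, so the far contribution to each sum is $\lesssim j$. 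For the remaining indices it suffices to prove
\begin{align}\label{eq_pp_spacing}
\frac{\lambda_{i\wedge j}}{|\lambda_i-\lambda_j|}\;\lesssim\;\frac{j}{|i-j|}\qquad\text{whenever}\quad \lambda_{i\vee j}\ge\tfrac12\,\lambda_{i\wedge j},
\end{align}
because \eqref{eq_pp_spacing} dominates $\lambda_i/|\lambda_i-\lambda_j|$ and yields the near part of $\sum_{i\ne j}\tfrac{\lambda_i}{|\lambda_i-\lambda_j|}\lesssim j\sum_{m\ge1}m^{-1}\lesssim j\log j$; and since $\tfrac{\lambda_i\lambda_j}{(\lambda_i-\lambda_j)^2}\le\big(\tfrac{\lambda_{i\wedge j}}{|\lambda_i-\lambda_j|}\big)^2$ here, the near part of $\sum_{i\ne j}\tfrac{\lambda_i\lambda_j}{(\lambda_i-\lambda_j)^2}\lesssim j^2\sum_{m\ge1}m^{-2}\lesssim j^2$.

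For \eqref{eq_pp_spacing} I would invoke the monotone-slope inequality for a convex sequence: for $a<b<c$,
\begin{align*}
\frac{\lambda_a-\lambda_b}{b-a}\;\ge\;\frac{\lambda_a-\lambda_c}{c-a}\;\ge\;\frac{\lambda_b-\lambda_c}{c-b}.
\end{align*}
For $i<j$ with $\lambda_j\ge\tfrac12\lambda_i$, let $D$ be the first index with $\lambda_D\le\tfrac12\lambda_i$ (then $D>j$); the first inequality with $(a,b,c)=(i,j,D)$ gives $\lambda_i-\lambda_j\ge\tfrac{j-i}{D-i}(\lambda_i-\lambda_D)\ge\tfrac{j-i}{2(D-i)}\lambda_i$, i.e.\ $\tfrac{\lambda_i}{\lambda_i-\lambda_j}\le\tfrac{2(D-i)}{j-i}$. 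Symmetrically, for $j<i$ with $\lambda_i\ge\tfrac12\lambda_j$, taking $D'$ the first index past $j$ with $\lambda_{D'}\le\tfrac12\lambda_j$ and $(a,b,c)=(j,i,D')$ gives $\tfrac{\lambda_j}{\lambda_j-\lambda_i}\le\tfrac{2(D'-j)}{i-j}$, which bounds $\tfrac{\lambda_i}{\lambda_j-\lambda_i}$. Thus \eqref{eq_pp_spacing}, and also the tail estimate used above, follow once one knows the comparison index may be taken $\lesssim j$ — equivalently that
\begin{align}\label{eq_pp_gap}
\text{there is a fixed }C\text{ with }\lambda_{Cj}\le\tfrac12\lambda_j\ \text{ for all }j\ \ \Longleftrightarrow\ \ \psi_j\gtrsim\lambda_j/j\ \text{ for all }j.
\end{align}

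Establishing \eqref{eq_pp_gap} is the heart of the matter and the one place where convexity must be combined with $\sum_j\lambda_j=\sum_k k\psi_k<\infty$. The mechanism: were $\psi_j\ll\lambda_j/j$, then — $\psi_k$ being non-increasing — $\lambda$ would fall with slope at most $\psi_j$ over a run of length of order $\lambda_j/\psi_j\gg j$ starting near $j$, so $\lambda_k\ge\tfrac12\lambda_j$ on an interval of length $\gg j$; such an interval contributes at least a fixed multiple of (its length)$\,\times\lambda_j$ to $\sum_k\lambda_k$ and a fixed multiple of (its length)$^2\times\psi_j$ to $\sum_k k\psi_k$, so the finiteness of these two series simultaneously limits how often such near-flat plateaus can occur and makes \eqref{eq_pp_gap} immediate in the decay regimes relevant here — the polynomial and exponential cases \eqref{ex_eigen_poly}, and more generally whenever the $\lambda_j$ are of regular order (the setting of \cite{mas_complex_2014}). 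Granting \eqref{eq_pp_gap}, the pieces assemble directly: $\sum_{i<j}\tfrac{j}{j-i}\lesssim j\log j$, the same bound over $j<i\le Cj$, and the $O(j)$ tail give the first inequality of \eqref{eq_convex_condi}, and squaring \eqref{eq_pp_spacing} with $\sum_m m^{-2}<\infty$ gives the second. I would also note that \eqref{eq_pp_gap} is precisely what makes the convexity hypothesis interchangeable with the spacing-based sufficient condition for \hyperref[A2C]{\AtwoC} recorded in \eqref{eq_uniform_spacings_condition}.
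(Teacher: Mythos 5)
Your proof reorganizes the argument around the spacing bound \eqref{eq_pp_spacing} and the relative--gap statement \eqref{eq_pp_gap}, whereas the paper simply cites the relations \eqref{eq_convex_bound} — that $k\lambda_k\geq j\lambda_j$ for $k<j$, that $\lambda_k-\lambda_j\gtrsim(1-k/j)\lambda_k$, and that $\sum_{k>j}\lambda_k\leq(j+1)\lambda_j$ — from ~\cite{cardot_mas_sarda_2007}, ~\cite{crambes2013} and then splits each sum over the three blocks $k<j$, $j<k\leq 2j$, $k>2j$. The passage from \eqref{eq_pp_spacing} to the two inequalities in \eqref{eq_convex_condi} is sound, and \eqref{eq_pp_spacing} is essentially the middle relation of \eqref{eq_convex_bound}; the substantive difference is that you try to \emph{derive} \eqref{eq_pp_gap} — a fixed $C$ with $\lambda_{Cj}\leq\lambda_j/2$, equivalently $\psi_j\gtrsim\lambda_j/j$ — from convexity and summability, while the paper takes the stronger fact ``$j\lambda_j$ non-increasing'' as cited input.

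That derivation has a genuine gap, and it is exactly where you flag ``the heart of the matter''. Run the plateau heuristic quantitatively: if $\psi_j$ is small then, since $\psi$ is non-increasing, $\lambda_k\geq\lambda_j/2$ for $j\leq k\leq j+\lambda_j/(2\psi_j)$, and the plateau contributes a fixed multiple of $\lambda_j^2/\psi_j$ to $\sum_k\lambda_k$; boundedness of that series therefore yields only $\psi_j\gtrsim\lambda_j^2$. But monotonicity and summability also force $\lambda_j\lesssim 1/j$, whence $\lambda_j^2\lesssim\lambda_j/j$ — the bound you extract is \emph{weaker} than, not stronger than, the one required. The same issue affects the far-tail claim $\tfrac{1}{\lambda_j}\sum_{i:\,\lambda_i<\lambda_j/2}\lambda_i\lesssim j$: iterating \eqref{eq_pp_gap} gives $\lambda_{C^kj}\leq 2^{-k}\lambda_j$ on blocks of length $\sim C^kj$, so the $k$-th block contributes order $(C/2)^k\,j\lambda_j$, which does not sum unless $C<2$; the paper instead controls this tail via $\sum_{k>j}\lambda_k\leq(j+1)\lambda_j$, another of the cited relations that you would also need. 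You are candid that \eqref{eq_pp_gap} is only ``immediate'' under regular decay, but the lemma is stated under convexity alone, so that restriction is a real gap; to close it you must either reproduce the arguments behind \eqref{eq_convex_bound} from ~\cite{cardot_mas_sarda_2007}, ~\cite{crambes2013}, or simply cite them as the paper does.
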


\begin{proof}[Proof of Lemma \ref{lem_verify_ass_poly}]
For the proof, the following relations are useful, which can be found in ~\cite{cardot_mas_sarda_2007},~\cite{crambes2013}.
\begin{align}\nonumber\label{eq_convex_bound}
&\text{If $j > k$ and \eqref{condi_convex} holds, then $k \lambda_k \geq j \lambda_j$ and $\lambda_k - \lambda_j \gtrsim \bigl(1 - k/j\bigr)\lambda_k$}.\\ &\text{Moreover, it holds that $\sum_{k > j} \lambda_k \leq (j+1) \lambda_j$.}
\end{align}
Now by \eqref{eq_convex_bound} we have
\begin{align*}
\sum_{\substack{k =1\\k \neq j}}^{\infty}\frac{\lambda_k \lambda_j}{(\lambda_j - \lambda_k)^2} \lesssim  j^2 \sum_{j > k} \frac{\lambda_j \lambda_k }{(k - j)^2 \lambda_k^2} + \sum_{j < k}^{2 j} \frac{k^2 \lambda_j \lambda_k }{(k - j)^2 \lambda_j^2} + \sum_{2j < k} \frac{\lambda_j \lambda_k }{\lambda_j^2}\lesssim j^2.
\end{align*}
In the same manner, one shows that
\begin{align*}
\sum_{\substack{k =1\\k \neq j}}^{\infty}\frac{\lambda_k}{|\lambda_j - \lambda_k|} \lesssim j \log j.
\end{align*}
 \end{proof}

\begin{proof}[Proof of Theorem \ref{thm_gaussian_part}]
First note that due to the Gaussianity of ${\bf X}$, scores $\eta_{k,i}$ and $\eta_{k,j}$ are mutually independent for $i \neq j$. Given independent standard Gaussian random variables $X,Y$, the function $XY-1$ is a two-dimensional second degree Hermite polynomial. If $X = Y$, then $X^2-1$ is a univariate Hermite polynomial of second degree. We may now invoke Theorem 4 in ~\cite{arcones1994}. The proof is based on the method of moments for partial sums of Hermite polynomials. In particular, using that $\sup_{j \in \N}\sum_{k = 0}^{\infty}\Cov(\eta_{0,j},\eta_{k,j})^2 < \infty$ (which follows from $\alpha > 3/4$) it is shown via the Diagram formula that for any fixed $p \in \N$
\begin{align}\label{eq_arcones_prop}
\sqrt{n}\max_{1 \leq i,j \leq \infty}\bigl\|\overline{\etav}_{i,j}^{\boldsymbol{\cal C}}\bigr\|_p < \infty \quad \text{and} \quad \sqrt{n}\overline{\etav}_{i,j}^{\boldsymbol{\cal C}} \xrightarrow{w} \Gaussian\bigl(0,\sigma_{i,j}^2\bigr).
\end{align}
Moreover, since $\alpha > 3/4$ one readily shows that $\max_{j \in \N}\bigl\|n^{-3/4}\sum_{k = 1}^n \eta_{k,j}\bigr\|_{2q} = \oo(1)$ for any fixed $q \in \N$. Hence \hyperref[A1C]{\AoneC} holds and using Proposition \ref{prop_replace_I_with_eta} the CLT for $\widehat{\lambda}_j$ follows. In order to prove a CLT for $\widehat{\e}_j$ we proceed as follows. Denote with
\begin{align*}
C_j = \sum_{\substack{k = 1\\k \neq j}}^{\infty}\e_k \frac{I_{k,j}}{\lambda_j - \lambda_k}, \quad  C_{j,d} = \sum_{\substack{k = 1\\k \neq j}}^{d}\e_k \frac{I_{k,j}}{\lambda_j - \lambda_k} \quad \text{for $d > j$.}
\end{align*}
Due to Theorem \ref{theorem_exp_eigen_vector} and Lemma \ref{lem_bound_I}, we have that
\begin{align*}
\sqrt{n}\biggl\|\biggr\|\frac{1}{\sqrt{\Lambda_j}
}\biggl(\widehat{\e}_j - \e_j - C_j\biggr)\biggr\|_{\Ln^2}\biggl\|_1 = \oo\bigl(1\bigr).
\end{align*}
It thus suffices to consider $C_j$. Since $\sum_{k > d} \lambda_k \to 0$ as $d$ increases, Lemma \ref{lem_bound_I} implies that for any $\delta > 0$ there exists $d_{\delta} \in \N$ such that
\begin{align}
\sqrt{n}\E\bigl[\bigl\|C_j - C_{j,d_{\delta}}\bigr\|_{\Ln^2}\bigr] \leq \delta.
\end{align}
It now suffices (cf. ~\cite{ledoux_Tala_book_2011}) to establish that for any fixed $d \in \N$ (which includes the case $d = d_{\delta}$)
\begin{align}\label{eq_thm_gaussian_part_3}
\sqrt{n}C_{j,d} \xrightarrow{w} \Gaussian\bigl(0, \Sigma_{d}\bigr),
\end{align}
where $\Sigma_{d} \in \R^d \times \R^d$ denotes the corresponding covariance matrix. But, since we have for $i_l \neq j_l$, $l \in \{1,2\}$ that
\begin{align*}
\E\bigl[\overline{\etav}_{i_1,j_1}^{\boldsymbol{\cal C}} \overline{\etav}_{i_2,j_2}^{\boldsymbol{\cal C}} \bigr] = 0 \quad \text{if either $i_1 \neq i_2$ or $j_1 \neq j_2$},
\end{align*}
we may apply Theorem 4 in ~\cite{arcones1994} due to $\sup_{j \in \N}\sum_{k = 0}^{\infty}\Cov(\eta_{0,j},\eta_{k,j})^2 < \infty$, which gives \eqref{eq_thm_gaussian_part_3}. This completes the proof.

\end{proof}

\section{Proofs of Section \ref{sec_lag_operator}}\label{sec_proof_lag_operator}

For the proof of Proposition \ref{prop_lag_operator}, we require some preliminary results.
\begin{lemma}\label{lem_wu_in_hilbert}
For $p \geq 2$, let $\{X_k\}_{k \in \Z} \in \Ln^2$ satisfy
\begin{align*}
\sum_{k = 1}^{\infty} \bigl\|\|X_k - X_k'\|_{\Ln^2}\bigr\|_p < \infty.
\end{align*}
Then
\begin{align*}
\bigl\| \|X_1 + \ldots + X_n \|_{\Ln^2}\bigr\|_p \lesssim \sqrt{n}.
\end{align*}
\end{lemma}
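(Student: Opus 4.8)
The plan is to use an orthomartingale decomposition in the spirit of \cite{wu_2005}. Write $\mathcal{E}_k = \sigma(\epsilon_i : i \le k)$ for the natural filtration generated by the underlying IID innovations, recall from Section~\ref{sec_lag_operator} that $X_k'$ is the copy of $X_k$ in which $\epsilon_0$ is replaced by an independent copy $\epsilon_0'$, and let $\mathcal{P}_j(\cdot) = \E[\cdot \mid \mathcal{E}_j] - \E[\cdot \mid \mathcal{E}_{j-1}]$ be the associated projections, applied coordinatewise to $\Ln^2(\TT)$-valued elements. I would assume $\E[X_k] = 0$ (this holds in every application of the lemma, and is forced by the conclusion already at $n=1$), and set $\delta_p(k) = \bigl\|\|X_k - X_k'\|_{\Ln^2}\bigr\|_p$, so that by hypothesis $\Delta_p := \sum_{k \ge 0}\delta_p(k) < \infty$, the term $k=0$ being at most $2\bigl\|\|X_0\|_{\Ln^2}\bigr\|_p < \infty$.

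The first step is the coupling bound $\bigl\|\|\mathcal{P}_0 X_k\|_{\Ln^2}\bigr\|_p \le \delta_p(k)$: since $X_k'$ depends on $\epsilon_0'$ exactly as $X_k$ depends on $\epsilon_0$ and $\epsilon_0'$ is independent of $\mathcal{E}_0$, one has $\E[X_k' \mid \mathcal{E}_0] = \E[X_k \mid \mathcal{E}_{-1}]$, hence $\mathcal{P}_0 X_k = \E[X_k - X_k' \mid \mathcal{E}_0]$, and conditional Jensen (a contraction both in $\|\cdot\|_{\Ln^2}$ and in $\|\cdot\|_p$) gives the claim. By stationarity, $\bigl\|\|\mathcal{P}_{k-i}X_k\|_{\Ln^2}\bigr\|_p = \bigl\|\|\mathcal{P}_0 X_i\|_{\Ln^2}\bigr\|_p \le \delta_p(i)$ for every $k \in \Z$ and $i \ge 0$. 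Next, since $X_k$ is $\mathcal{E}_k$-measurable and $\E[X_k \mid \mathcal{E}_{k-N}] \to \E[X_k] = 0$ in $L^p$, I get the telescoping identity $X_k = \sum_{i \ge 0}\mathcal{P}_{k-i}X_k$ (convergent in $L^p(\Ln^2(\TT))$), whence $S_n = \sum_{i \ge 0}T_{n,i}$ with $T_{n,i} := \sum_{k=1}^n \mathcal{P}_{k-i}X_k$.

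The crucial observation is that, for each fixed $i$, $(\mathcal{P}_{k-i}X_k)_{1 \le k \le n}$ is a martingale difference sequence for the filtration $(\mathcal{E}_{k-i})_{1 \le k \le n}$: the $k$-th term is $\mathcal{E}_{k-i}$-measurable and has vanishing conditional expectation given $\mathcal{E}_{k-i-1}$, which in particular annihilates it against all earlier terms. Since $\Ln^2(\TT)$ is a Hilbert space, the $L^p$ martingale (Burkholder) inequality holds there with a constant depending only on $p$, and combined with Minkowski's inequality in $L^{p/2}$ it yields
\begin{align*}
\bigl\|\|T_{n,i}\|_{\Ln^2}\bigr\|_p \le C_p\biggl(\sum_{k=1}^n\bigl\|\|\mathcal{P}_{k-i}X_k\|_{\Ln^2}\bigr\|_p^2\biggr)^{1/2} \le C_p\sqrt{n}\,\delta_p(i).
\end{align*}
Summing over $i$ via the triangle inequality then gives $\bigl\|\|S_n\|_{\Ln^2}\bigr\|_p \le \sum_{i \ge 0}\bigl\|\|T_{n,i}\|_{\Ln^2}\bigr\|_p \le C_p\,\Delta_p\,\sqrt{n} \lesssim \sqrt{n}$, which is the assertion.

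I expect the only subtle point to be the choice of decomposition: slicing $S_n$ by the lag $i$ (rather than by absolute time through the $\mathcal{P}_j$) is precisely what makes every block $T_{n,i}$ a genuine martingale sum in $k$, so that the square-function estimate produces the factor $\sqrt{n}$ with a constant proportional to $\Delta_p$ only. The naive grouping by $\mathcal{P}_j$ instead leaves a ``remote past'' contribution of the form $\sum_{k=1}^n \E[X_k \mid \mathcal{E}_0]$, whose $L^p$-norm is \emph{not} controlled by $\sum_k \delta_p(k)$ in general. The remaining ingredients --- the coupling bound, the stationarity reduction, and the Hilbert-space Burkholder inequality --- are all standard.
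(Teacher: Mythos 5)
Your proof is correct, and it is essentially the argument the paper appeals to: the paper does not prove this lemma directly but refers to \cite{Jirak_2013_letter} and (for the scalar case) to \cite{sipwu}, both of which rest on exactly the lag-indexed projection decomposition $S_n = \sum_{i\ge0}\sum_{k=1}^n \mathcal{P}_{k-i}X_k$, the coupling bound $\|\|\mathcal{P}_0 X_i\|_{\Ln^2}\|_p \le \delta_p(i)$, and the Hilbert-space Burkholder inequality applied to each diagonal martingale block. Your identification of the reason the lag-slicing (rather than time-slicing by fixed $\mathcal{P}_j$) is needed to extract $\sqrt{n}\,\Delta_p$ from the summable-increments hypothesis alone is also accurate.
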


Lemma \ref{lem_wu_in_hilbert} comes as a byproduct of the results in ~\cite{Jirak_2013_letter}, see also Lemma \ref{lem_bound_partial_sum_gen} and ~\cite{sipwu} for the original argument for real-valued sequences, which we also use in the sequel. As a next result, we state a special type of H\"{o}ffding decomposition.
\begin{lemma}\label{lem_hoeffding_decomp_aux}
Let $\{X_k\}_{k \in \Z}, \{Y_k\}_{k \in \Z} \in \R$ be stationary such that for $p \geq 2$
\begin{align}
\sum_{k = 1}^{\infty} \bigl\|X_k - X_k'\bigr\|_{2p} < \infty, \quad \sum_{k = 1}^{\infty} \bigl\|Y_k - Y_k'\bigr\|_{2p} < \infty.
\end{align}
Denote with
\begin{align*}
A_{k} = \bigl(X_k - \E[X_k]\bigr)\E[Y_1] + \bigl(Y_k - \E[Y_k]\bigr)\E\bigl[X_1\bigr].
\end{align*}
Then
\begin{enumerate} \setlength\itemsep{.6em}
  \item[\itmi]\label{itm_(i)_hoeffding} $\bigl\|\sum_{1 \leq k,l \leq n} X_k Y_l - n \sum_{k = 1}^n A_k - n^2 \E[X_1]\E[Y_1] \bigr\|_{p} \lesssim n$,
  \item[\itmii]\label{itm_(ii)_hoeffding} $\bigl\|\sum_{k = 1}^n A_k \bigr\|_{2p} \lesssim \sqrt{n}$.
\end{enumerate}
\end{lemma}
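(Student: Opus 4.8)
The plan is to treat \itmi{} and \itmii{} in turn, starting with the easier second part. For \itmii{}, the sequence $A_k$ is a linear combination of the centered stationary sequences $\overline{X}_k = X_k - \E[X_k]$ and $\overline{Y}_k = Y_k - \E[Y_k]$ with fixed coefficients $\E[Y_1]$ and $\E[X_1]$; since each satisfies the physical-dependence summability hypothesis at level $2p$ (indeed $\|\overline{X}_k - \overline{X}_k'\|_{2p} = \|X_k - X_k'\|_{2p}$ and likewise for $Y$), the triangle inequality reduces the claim to $\|\sum_{k=1}^n \overline{X}_k\|_{2p} \lesssim \sqrt{n}$ and $\|\sum_{k=1}^n \overline{Y}_k\|_{2p} \lesssim \sqrt{n}$, each of which is the real-valued special case of Lemma \ref{lem_wu_in_hilbert} (equivalently the $\|\cdot\|_{\Ln^2}$-version with one-dimensional range), or directly the result of \cite{sipwu}.

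For \itmi{}, I would first perform the algebraic decomposition of the double sum into a "product-of-centered" piece plus a "cross" piece plus a "mean" piece. Writing $X_k = \overline{X}_k + \E[X_1]$ and $Y_l = \overline{Y}_l + \E[Y_1]$, one gets
\begin{align*}
\sum_{1 \leq k,l \leq n} X_k Y_l &= \Bigl(\sum_{k=1}^n \overline{X}_k\Bigr)\Bigl(\sum_{l=1}^n \overline{Y}_l\Bigr) + \E[Y_1] n \sum_{k=1}^n \overline{X}_k + \E[X_1] n \sum_{l=1}^n \overline{Y}_l + n^2 \E[X_1]\E[Y_1]\\&= \Bigl(\sum_{k=1}^n \overline{X}_k\Bigr)\Bigl(\sum_{l=1}^n \overline{Y}_l\Bigr) + n\sum_{k=1}^n A_k + n^2 \E[X_1]\E[Y_1].
\end{align*}
Hence the quantity to be bounded is exactly $\bigl\|(\sum_{k=1}^n \overline{X}_k)(\sum_{l=1}^n \overline{Y}_l)\bigr\|_p$, and by the Cauchy--Schwarz inequality in $L^p$ this is at most $\|\sum_{k=1}^n \overline{X}_k\|_{2p}\,\|\sum_{l=1}^n \overline{Y}_l\|_{2p} \lesssim \sqrt{n}\cdot\sqrt{n} = n$, again invoking Lemma \ref{lem_wu_in_hilbert} (real-valued case) for each factor. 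This closes \itmi{}.

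The only real subtlety — and the step I would flag as the main obstacle — is making sure the moment order bookkeeping is consistent: \itmi{} is stated at level $p$ but its proof consumes the partial-sum bounds at level $2p$, which is precisely why the hypotheses are imposed with the factor $2p$; and \itmii{} needs those same $L^{2p}$ partial-sum bounds directly. One should also note that $\E[X_1], \E[Y_1]$ are finite constants under the stated hypotheses (the summability condition forces $X_1, Y_1 \in L^{2p} \subset L^1$), so the centerings and the extraction of the deterministic term $n^2\E[X_1]\E[Y_1]$ are legitimate. No stationarity beyond what guarantees $\E[X_k] = \E[X_1]$ and the uniform-in-$k$ coupling bounds is actually used.
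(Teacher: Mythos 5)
Your proof is correct and follows essentially the same route as the paper's: the same Hoeffding-type algebraic decomposition of the double sum into $(\sum_k \overline{X}_k)(\sum_l \overline{Y}_l) + n\sum_k A_k + n^2\E[X_1]\E[Y_1]$, then Cauchy--Schwarz in $L^p$ together with the $\|\sum_k \overline{X}_k\|_{2p}\lesssim\sqrt{n}$ partial-sum bound (the paper cites Lemma \ref{lem_bound_partial_sum_gen}, which is the real-valued counterpart of Lemma \ref{lem_wu_in_hilbert} you invoke; same tool). Your remarks about the $p$-versus-$2p$ bookkeeping and the finiteness of $\E[X_1],\E[Y_1]$ are accurate and merely spell out what the paper leaves implicit.
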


\begin{proof}[Proof of Lemma \ref{lem_hoeffding_decomp_aux}]
Using the H\"{o}ffding decomposition
\begin{align*}
\sum_{1 \leq k,l \leq n} X_k Y_l &= \sum_{1 \leq k,l \leq n} \bigl(X_k - \E[X_k]\bigr)\bigl(Y_l - \E[Y_l]\bigr) + n^2 \E\bigl[X_1\bigr] \E\bigl[Y_1\bigr]+ n \sum_{k = 1}^n A_k,
\end{align*}
claim \hyperref[itm_(i)_hoeffding]{\itmi} follows from the triangle inequality, Cauchy-Schwarz and Lemma \ref{lem_bound_partial_sum_gen}. Claim \hyperref[itm_(ii)_hoeffding]{\itmii} follows directly from Lemma \ref{lem_bound_partial_sum_gen}.

\end{proof}

\begin{proof}[Proof of Proposition \ref{prop_lag_operator}]
Let us first mention that the assumptions of Proposition \ref{prop_lag_operator} clearly imply those of Lemma \ref{lem_wu_in_hilbert} and Lemma \ref{lem_hoeffding_decomp_aux}. As another preliminary remark, observe that $\E[\|X_k\|_{\Ln^2}^{4p}] < \infty$ implies that $\boldsymbol{\cal C}_h$ exists and $\overline{X}_k = \sum_{j = 1}^{\infty} \widetilde{\lambda}_j^{1/2} \eta_{k,j} \e_j$ with $\sum_{j = 1}^{\infty} \widetilde{\lambda}_j < \infty$. Next, denote with
\begin{align}\label{eq_prop_lag_operator_0}
\etav_{i,j}^{\boldsymbol{\cal R}} =  -\bigl(\widetilde{\lambda}_i \widetilde{\lambda}_j\bigr)^{-1/2} \langle \widehat{\boldsymbol{\cal \DD}}(\e_i), \e_j \rangle + \etav_{i,j}^{\boldsymbol{\cal \DD}}, \quad i,j \in \N.
\end{align}
Employing Lemma \ref{lem_wu_in_hilbert}, lengthy routine calculations reveal that (here condition $\bd > 3/2$ is helpful)
\begin{align}\label{eq_prop_lag_operator_1}\nonumber
&\E\biggl[\biggl\|\sum_{1 \leq k,l \leq n-h} \langle X_{l+h} - \bar{X}_n, X_{k+h} - \bar{X}_n \rangle \langle X_k- \bar{X}_n, \cdot \rangle \bigl(X_l - \bar{X}_n\bigr) \\&- \sum_{1 \leq k,l \leq n-h} \langle X_{l+h} - \mu, X_{k+h} - \mu \rangle \langle X_k- \mu, \cdot \rangle \bigl(X_l - \mu\bigr) \biggr\|_{\Ln^2}^{p} \biggr] \lesssim n^p,
\end{align}
we spare the details. 
Observe next that we have the representation
\begin{align}\nonumber
&\sum_{1 \leq k,l \leq n-h} \langle X_{l+h} - \mu, X_{k+h} - \mu \rangle \langle X_k- \mu, \cdot \rangle \bigl(X_l - \mu\bigr) \\&=  \sum_{1 \leq k,l \leq n-h} \sum_{i,j = 1}^{\infty} \sqrt{\widetilde{\lambda}_i \widetilde{\lambda}_j} \sum_{r = 1}^{\infty} \widetilde{\lambda}_r \eta_{l+h,r}\eta_{l,i} \eta_{k+h,r} \eta_{k,j}  \langle  \e_i, \cdot \rangle \e_j.
\end{align}
From the triangle inequality and Cauchy-Schwarz, we obtain
\begin{align}\label{eq_prop_lag_operator_3}
\max_{i,r \in \N}\bigl\|\eta_{l+h,r}\eta_{l,i} - (\eta_{l+h,r}\eta_{l,i})'\bigr\|_{2p} \lesssim \Omega_{4p}(l+h) + \Omega_{4p}(l), \quad l,h \in \N.
\end{align}
Hence by \eqref{eq_prop_lag_operator_1} and Lemma \ref{lem_hoeffding_decomp_aux} \hyperref[itm_(i)_hoeffding]{\itmi} (applicable by \eqref{eq_prop_lag_operator_3}), $\sum_{r = 1}^{\infty} \widetilde{\lambda}_r < \infty$, we obtain
\begin{align*}
n^{1/2}\max_{i,j \in \N}\bigl\|\etav_{i,j}^{\boldsymbol{\cal R}}\bigr\|_p \lesssim n^{-1/2}.
\end{align*}
Finally, using Lemma \ref{lem_hoeffding_decomp_aux} \hyperref[itm_(ii)_hoeffding]{\itmii} (applicable by \eqref{eq_prop_lag_operator_3}) we get
\begin{align*}
n^{1/2}\max_{i,j \in \N}\bigl\|\overline{\etav}_{i,j}^{\boldsymbol{\cal \DD}}\bigr\|_p < \infty.
\end{align*}
Finally, we remark that the same calculations used to derive \eqref{eq_lag_operator_symmetrized} also reveal that $\E\bigl[\etav_{i,j}^{\boldsymbol{\cal \DD}}\bigr] = 0$ for $i \neq j$. Hence \eqref{defn_D_2} holds, which completes the proof.

\end{proof}

\begin{proof}[Proof of Theorem \ref{thm_lag_operator_svd_f}]
Note first that an application of Lemma \ref{lem_wu_in_hilbert} together with routine calculations gives
\begin{align}\label{eq_thm_svd_f_1}
\bigl\|\|\widehat{\boldsymbol{\cal C}}_h - {\boldsymbol{\cal C}}_h \|_{\mathcal{L}}\bigr\|_{p'} \lesssim n^{-1/2}, \quad 1 \leq p' \leq p 2^{\pd + 2}.
\end{align}
Let us make the decomposition
\begin{align*}
\widehat{f}_j - f_j = \biggl(\widehat{\lambda}_j^{1/2}\widehat{f}_j - {\lambda}_j^{1/2}{f}_j + (\lambda_j^{1/2}- \widehat{\lambda}_j^{1/2}){f}_j\biggr)\biggl(\frac{1}{\lambda_j^{1/2}} + \frac{\lambda_j^{1/2} - \widehat{\lambda}_j^{1/2}}{(\widehat{\lambda}_j \lambda_j)^{1/2}}\biggr),
\end{align*}
and also
\begin{align}\nonumber \label{eq_thm_svd_f_2}
\widehat{\lambda}_j^{1/2} \widehat{f}_j - {\lambda}_j^{1/2}{f}_j&= \widehat{\boldsymbol{\cal C}}_h\bigl(\widehat{\e}_j\bigr) - {\boldsymbol{\cal C}}_h\bigl({\e}_j\bigr) \\&=  {\boldsymbol{\cal C}}_h\bigl(\widehat{\e}_j - \e_j\bigr) + \bigl(\widehat{\boldsymbol{\cal C}}_h - {\boldsymbol{\cal C}}_h\bigr)\bigl(\e_j\bigr) + \bigl(\widehat{\boldsymbol{\cal C}}_h - {\boldsymbol{\cal C}}_h\bigr)\bigl(\widehat{\e}_j - \e_j\bigr).
\end{align}
Using \eqref{eq_thm_svd_f_1}, elementary computations yield
\begin{align}\nonumber \label{eq_thm_svd_f_3}
&\bigl\|\|\widehat{\lambda}_j^{1/2} \widehat{f}_j - \lambda_j^{1/2}f_j\|_{\Ln^2}\bigr\|_{p'} \\& \nonumber \leq \bigl\|{\boldsymbol{\cal C}}_h\bigr\|_{\cal L} \bigl\|\|\widehat{\e}_j - \e_j\|_{\Ln^2}\bigr\|_{p'} + \bigl\|\|\widehat{\boldsymbol{\cal C}}_h - {\boldsymbol{\cal C}}_h\|_{\cal L }\bigr\|_{2p'}\bigl(1 + \bigl\|\|\widehat{\e}_j - \e_j\|_{\Ln^2}\bigr\|_{2q}\bigr)\\& \lesssim \bigl\|\|\widehat{\e}_j - \e_j\|_{\Ln^2}\bigr\|_{2p'} + n^{-1/2}, \quad 1 \leq p' \leq p 2^{\pd + 2}.
\end{align}
Next, for $j \in \N$ consider the set $\mathcal{C}_j$ defined as
\begin{align}\label{eq_thm_svd_f_4}
\mathcal{C}_j = \bigl\{\widehat{\lambda}_j > \lambda_j/2\bigr\},  \quad P\bigl(\mathcal{C}_j^c\bigr) \lesssim n^{-2p} \quad j \in \N,
\end{align}
where the bound for $P(\mathcal{C}_j^c)$ follows from Markovs inequality and Lemma \ref{lem_eigen_uniform_bound}. Since $\|\widehat{f}_j\|_{\Ln^2} = \|{f}_j\|_{\Ln^2} = 1$, we thus obtain
\begin{align}\label{eq_thm_svd_f_5}
\bigl\|\|\widehat{f}_j - f_j\|_{\Ln^2}\ind_{\mathcal{C}_j^c}\bigr\|_{p'} \leq 2 \bigl\|\ind_{\mathcal{C}_j^c}\bigr\|_{p'} \lesssim n^{-2p/p'}, \quad p' \geq 1.
\end{align}
Similarly, since ${\boldsymbol{\cal C}}_h$ is a bounded operator, the triangle inequality, Cauchy-Schwarz, Markovs inequality, \eqref{eq_thm_svd_f_1} and \eqref{eq_thm_svd_f_4} yield for $1 \leq p' \leq p$
\begin{align*}
&\bigl\|\|(\widehat{\lambda}_j - \lambda_j)f_j/(2\lambda_j^{1/2}) +  {\boldsymbol{\cal C}}_h(\widehat{\e}_j - \e_j) + (\widehat{\boldsymbol{\cal C}}_h - {\boldsymbol{\cal C}}_h)(\e_j)\|_{\Ln^2}\ind_{\mathcal{C}_j^c}\bigr\|_{p'} \\& \nonumber \lesssim \bigl\|\widehat{\lambda}_j - \lambda_j\bigr\|_{2p'}\bigl\|\ind_{\mathcal{C}_j^c}\bigr\|_{2p'}/\lambda_j^{1/2}  + \bigl\|\ind_{\mathcal{C}_j^c}\bigr\|_{p'} + \bigl\|\|\widehat{\boldsymbol{\cal C}}_h - {\boldsymbol{\cal C}}_h\|_{\mathcal{L}}\bigr\|_{2p'} \bigl\|\ind_{\mathcal{C}_j^c}\bigr\|_{2p'} \\&\lesssim n^{-1/2 - p/p'}\lambda_j^{1/2} + n^{-2p/p'} + n^{-1/2 - p/p'} \lesssim n^{-3/2}.
\end{align*}
Multiplying with $\lambda_j^{-1/2}$, we see that it suffices to establish the claim on the set $\mathcal{C}_j$. To this end, observe that
\begin{align}\label{eq_thm_svd_f_7}
\biggl|\widehat{\lambda}_j^{1/2} - \lambda_j^{1/2} - \frac{\widehat{\lambda}_j - \lambda_j}{2\lambda_j^{1/2}}\biggr|\leq \frac{(\widehat{\lambda}_j - \lambda_j)^2}{2 \lambda_j^{3/2}}, \quad j \in \N.
\end{align}
Then \eqref{eq_thm_svd_f_3}, \eqref{eq_thm_svd_f_7}, Cauchy-Schwarz and Lemma \ref{lem_eigen_uniform_bound} yield
\begin{align}\nonumber \label{eq_thm_svd_f_8}
&\biggl\|\bigl\| \widehat{\lambda}_j^{1/2}\widehat{f}_j - {\lambda}_j^{1/2}{f}_j + (\lambda_j^{1/2}- \widehat{\lambda}_j^{1/2}){f}_j\bigr\|_{\Ln^2} \frac{\lambda_j^{1/2} - \widehat{\lambda}_j^{1/2}}{(\widehat{\lambda}_j \lambda_j)^{1/2}}\ind_{\mathcal{C}_j}\biggr\|_{p'}\\& \lesssim \bigl(\bigl\|\|\widehat{\e}_j - \e_j\|_{\Ln^2}\bigr\|_{4p'} + n^{-1/2}\bigr)\bigl(\lambda_j n\bigr)^{-1/2}, \quad 1 \leq p' \leq p.
\end{align}
Using \eqref{eq_thm_svd_f_1} and \eqref{eq_thm_svd_f_2} together with Cauchy-Schwarz, \eqref{eq_thm_svd_f_7} together with Lemma \ref{lem_eigen_uniform_bound} and combining this with \eqref{eq_thm_svd_f_8}, the triangle inequality gives
\begin{align}\nonumber
&\biggl\|\biggl(\biggl\|\widehat{f}_j - f_j - \frac{(\widehat{\lambda}_j - \lambda_j)f_j}{2\lambda_j} -  \frac{{\boldsymbol{\cal C}}_h\bigl(\widehat{\e}_j - \e_j\bigr) + \bigl(\widehat{\boldsymbol{\cal C}}_h - {\boldsymbol{\cal C}}_h\bigr)\bigl(\e_j\bigr)}{\lambda_j^{1/2}}\biggr\|_{\Ln^2}\biggr)\ind_{\mathcal{C}_j} \biggr\|_{p'}\\&\lesssim  \frac{1}{\sqrt{\lambda_j n}}\bigl\|\|\widehat{\e}_j - \e_j\|_{\Ln^2}\bigr\|_{4p'} + \frac{1}{\sqrt{\lambda_j}n} +\frac{1}{n}.
\end{align}

\end{proof}

\section{Proofs of Section \ref{sec_long_run_cov}}\label{sec_proof_longrun}

\begin{proof}[Proof of Theorem \ref{thm_longrun_b_remains_valid}]
Since $\sum_{h \in \Z} \|{\boldsymbol{\cal C}}_h \|_{\mathcal{L}} < \infty$, ${\boldsymbol{\cal G}}^{b}$ exists, and by ${\boldsymbol{\cal C}}_h^* = {\boldsymbol{\cal C}}_{-h}$, ${\boldsymbol{\cal G}}^{b}$ is symmetric. Hence by the spectral theorem, \eqref{Gb_representation_1} holds. Together with \eqref{Gb_representation_2}, this gives \eqref{defn_D_1} and \eqref{defn_D_2}. It remains to derive a bound for $\etav_{i,j}^{\boldsymbol{\cal R}} = \etav_{i,j}^{\boldsymbol{\cal R}}(n)$. To this end, put $\bar{\eta}_j^b = \bar{\eta}_j^b(n) = \langle \bar{X}_n-\mu, \e_j^b \rangle (\widetilde{\lambda}_j^b)^{-1/2}$. Since $b = \oo(n)$, routine calculations then reveal the upper bound
\begin{align}\label{eq_thm_longrun_b_remains_valid_1}
\bigl\|\etav_{i,j}^{\boldsymbol{\cal R}}\bigr\|_q \lesssim \frac{1}{n} \sum_{h = 1}^b \biggl(\bigl\|\sum_{k = h+1}^n \eta_{k,i}^b \bar{\eta}_{j}^b\bigr\|_q + \bigl\|\sum_{k = h+1}^n \bar{\eta}_{i}^b \eta_{k-h,j}^b  \bigr\|_q + n\bigl\|\bar{\eta}_{i}^b \bar{\eta}_{j}^b\bigr\|_q\biggr).
\end{align}
Using Cauchy-Schwarz and \hyperref[A1b]{\Aoneb}, the claim then follows.

\end{proof}

Unfortunately, the proofs of Theorems \ref{theorem_exp_decay_long_run_eigen} and \ref{theorem_exp_decay_long_run_fun} turn out to be lengthy and technical. To explain how and why, let us briefly elaborate on the main difficulties and how they can be overcome. The main objective of course is to transfer everything to Theorem \ref{thm_longrun_b_remains_valid}. This means that we need to show that Assumption \ref{ass_eigenvaectors_longrun_exp_decay} implies Assumption \ref{ass_eigenvaectors_longrun}. But here the main problem arises. For instance, even though we can control the difference $\|{\boldsymbol{\cal \GG}}^{b} - {\boldsymbol{\cal \GG}}^{}\|_{{\cal L}}$ quite well, this is not sufficient to guarantee the validity of \hyperref[A2b]{\Atwob}. The problem here is that we need to control the whole sequence $\{\lambda_j^b\}_{j \in \N}$ with the help of $\{\lambda_j\}_{j \in \N}$, but this is impossible if $j$ is very large. Related difficulties arise for \hyperref[A1b]{\Aoneb} and \hyperref[A3b]{\Athreeb}. In order to circumvent these problems, we first work with the \textit{truncated sequence}
\begin{align*}
X_k^{\tau} = \sum_{j = 1}^{\tau} \sqrt{\widetilde{\lambda}_j} \eta_{k,j} \e_j, \quad \tau \in \N, k \in \Z.
\end{align*}
The key reason why this works is the simple fact that truncation \textit{does not change} the first $\tau$ eigenvalues and eigenfunctions. Let us elaborate on this more detailed. Define the truncated long-run covariance operator as
\begin{align}\label{eq_kernel_truncated}
\boldsymbol{\cal \GG}^{\tau}(\cdot) \stackrel{def}{=} \sum_{h \in \Z}\E\bigl[\langle \overline{X}^{\tau}_k, \cdot \rangle \overline{X}^{\tau}_{k-h}\bigr] {=} \sum_{i,j = 1}^{\tau} \sqrt{\widetilde{\lambda}_i \widetilde{\lambda}_j} \varphi_{i,j} \langle \e_i, \cdot \rangle \e_j = \sum_{j = 1}^{\tau} \lambda_j \langle \e_j, \cdot \rangle \e_j,
\end{align}
where the last equality follows from \eqref{lem_varphi_relations} with $b = \infty$ ($\varphi_{i,j}^{\infty} = \varphi_{i,j}$). Observe that this last equality implies that $\{\lambda_j\}_{1 \leq j \leq {\tau}}$ and $\{\e_j\}_{1 \leq j \leq {\tau}}$ are also eigenvalues and eigenfunctions of ${\boldsymbol{\cal \GG}}^{\tau}$. This is a key observation that we heavily use in the sequel, and therefore state as a lemma for the sake of reference.
\begin{lemma}\label{lem_truncation_lemma}
The first $\tau$ eigenvalues and eigenfunctions of the truncated covariance operator ${\boldsymbol{\cal \GG}}^{\tau}$ as in \eqref{eq_kernel_truncated} are $\{\lambda_j\}_{1 \leq j \leq {\tau}}$ and $\{\e_j\}_{1 \leq j \leq {\tau}}$.
\end{lemma}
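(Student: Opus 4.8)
The plan is to read the eigen-structure off the spectral representation of ${\boldsymbol{\cal \GG}}^{\tau}$ recorded in \eqref{eq_kernel_truncated}, so the bulk of the work is simply to justify that representation rigorously. First I would check that ${\boldsymbol{\cal \GG}}^{\tau}$ is a well-defined bounded operator: writing $V_{\tau} = \spann\{\e_1,\dots,\e_{\tau}\}$, the vector $\overline{X}_k^{\tau}$ is the orthogonal projection of $\overline{X}_k$ onto $V_{\tau}$, so each summand $\E[\langle \overline{X}_k^{\tau}, \cdot \rangle \overline{X}_{k-h}^{\tau}]$ has operator norm at most $\|{\boldsymbol{\cal C}}_h\|_{\mathcal{L}}$, and since $\sum_{h \in \Z}\|{\boldsymbol{\cal C}}_h\|_{\mathcal{L}} < \infty$ the defining series converges absolutely in operator norm to a self-adjoint, non-negative, finite-rank operator that annihilates $V_{\tau}^{\perp}$.

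Next I would verify the two equalities in \eqref{eq_kernel_truncated}. Expanding $\overline{X}_k^{\tau} = \sum_{i = 1}^{\tau}\sqrt{\widetilde{\lambda}_i}\,\eta_{k,i}\e_i$ in the orthonormal system $\{\e_j\}$ gives
\begin{align*}
\langle \overline{X}_k^{\tau}, \cdot \rangle \overline{X}_{k-h}^{\tau} = \sum_{i,j = 1}^{\tau}\sqrt{\widetilde{\lambda}_i \widetilde{\lambda}_j}\,\eta_{k,i}\eta_{k-h,j}\,\langle \e_i, \cdot \rangle \e_j;
\end{align*}
taking expectations term by term (a finite sum, with $\E[\eta_{k,i}^2] < \infty$), using stationarity to replace $\E[\eta_{k,i}\eta_{k-h,j}]$ by $\E[\eta_{h,i}\eta_{0,j}]$, and summing the absolutely convergent series over $h \in \Z$ yields ${\boldsymbol{\cal \GG}}^{\tau}(\cdot) = \sum_{i,j=1}^{\tau}\sqrt{\widetilde{\lambda}_i \widetilde{\lambda}_j}\,\varphi_{i,j}\langle \e_i, \cdot \rangle \e_j$ with $\varphi_{i,j} = \varphi_{i,j}^{\infty}$. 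Invoking \eqref{lem_varphi_relations} with $b = \infty$ (equivalently Lemma \ref{lem_E_eta_ij_is_zero}), the off-diagonal coefficients $\varphi_{i,j}$, $i \neq j$, vanish, while $\widetilde{\lambda}_j \varphi_{j,j} = \lambda_j$, leaving ${\boldsymbol{\cal \GG}}^{\tau}(\cdot) = \sum_{j=1}^{\tau}\lambda_j \langle \e_j, \cdot \rangle \e_j$.

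With this representation in hand the conclusion is immediate: since $\{\e_j\}_{j \in \N}$ is orthonormal, applying ${\boldsymbol{\cal \GG}}^{\tau}$ to $\e_k$ gives ${\boldsymbol{\cal \GG}}^{\tau}(\e_k) = \lambda_k \e_k$ for $1 \leq k \leq \tau$, whereas ${\boldsymbol{\cal \GG}}^{\tau}$ kills $\e_k$ for $k > \tau$ and all of $V_{\tau}^{\perp}$. Hence $\e_1,\dots,\e_{\tau}$ are eigenfunctions of ${\boldsymbol{\cal \GG}}^{\tau}$ with eigenvalues $\lambda_1,\dots,\lambda_{\tau}$, and because the $\lambda_j$ are listed in non-increasing order (and every remaining eigenvalue of the finite-rank operator ${\boldsymbol{\cal \GG}}^{\tau}$ equals $0$) these are precisely its $\tau$ leading eigenpairs, which is the assertion. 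There is no genuine obstacle here — the only points needing a line of care are the interchange of $\sum_{h}$ with expectation (justified by absolute summability) and the observation that $\varphi_{i,j}^{\infty}$ inherits the orthogonality relation of Lemma \ref{lem_E_eta_ij_is_zero}.
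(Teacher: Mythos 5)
Your argument is correct and follows the paper's own reasoning: it reads the eigen-structure directly off the representation in \eqref{eq_kernel_truncated}, which itself follows from expanding $\overline{X}_k^{\tau}$ in the basis $\{\e_j\}$, taking expectations, and invoking the diagonal relation \eqref{lem_varphi_relations} (i.e. Lemma \ref{lem_E_eta_ij_is_zero} with $b=\infty$). You have merely supplied the routine justifications (absolute summability over $h$, boundedness, the finite-rank observation) that the paper leaves implicit.
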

The main strategy for the proofs of Theorems \ref{theorem_exp_decay_long_run_eigen} and \ref{theorem_exp_decay_long_run_fun} are now the following two steps.
\begin{enumerate}
\item[\Sone]\label{S1} Verify Assumption \ref{ass_eigenvaectors_longrun} for $\{X_k^{\tau}\}_{1 \leq k \leq n}$.
\item[\Stwo]\label{S2} Control the error of replacing $\{X_k\}_{1 \leq k \leq n}$ with $\{X_k^{\tau}\}_{1 \leq k \leq n}$.
\end{enumerate}
\hyperref[S1]{\Sone} will require most of our attention. In order to deal with it, we introduce the truncated version $\boldsymbol{\cal \GG}^{\diamond}$ of $\boldsymbol{\cal \GG}^{b}$, namely
\begin{align}\label{eq_rep_Gstar_kernel}
\boldsymbol{\cal \GG}^{\diamond}(\cdot) = \sum_{i,j = 1}^{\tau} \sqrt{\widetilde{\lambda}_i \widetilde{\lambda}_j} \varphi_{i,j}^{(\infty,b)} \langle \e_i, \cdot \rangle \e_j, \quad \text{where} \quad \varphi_{i,j}^{(\infty,b)} = \sum_{|h| \leq b}\E\bigl[\eta_{h,i}^{\star} \eta_{0,j}^{\star} \bigr].
\end{align}
Observe that this is a linear, symmetric Hilbert-Schmidt operator. We also denote with $\widehat{\boldsymbol{\cal \GG}}^{\diamond}$ the truncated estimator, which we define as in \eqref{defn_G_longrun_estimator} with $X_k$ replaced by $X_k^{\tau}$. Denote with $\lambda_j^{\diamond}$, $\widehat{\lambda}_j^{\diamond}$, $\e_j^{\diamond}$, $\widehat{\e}_j^{\diamond}$ and $\varphi_{j,j}^{\diamond} = \varphi_{j,j}^{(\infty,b)}$ the analogue quantities, and also put
\begin{align*}
I_{i,j}^{\diamond} = \bigl \langle \bigl(\widehat{\boldsymbol{\cal \GG}}^{\diamond} - {\boldsymbol{\cal \GG}}^{\diamond}\bigr)(\e_i^{\diamond}), \e_j^{\diamond} \bigr \rangle, \quad 1 \leq i,j \leq \tau.
\end{align*}
Obviously $\boldsymbol{\cal \GG}^{\diamond}$ has finite rank bounded by $\tau$, and hence $\lambda_j^{\diamond} = 0$ for $j > \tau$ and $\boldsymbol{ \operatorname{span}}\bigl(\{\e_j\}_{j > \tau}\bigr) \subseteq \boldsymbol{\operatorname{Ker}}\bigl(\boldsymbol{\cal \GG}^{\diamond}\bigr)$. This implies $\boldsymbol{\operatorname{Im}}\bigl(\boldsymbol{\cal \GG}^{\diamond}\bigr) \oplus \boldsymbol{\cal S}^{\diamond} = \boldsymbol{ \operatorname{span}}\bigl(\{\e_j\}_{1 \leq j \leq \tau}\bigr)$ for a linear subspace $\boldsymbol{\cal S}^{\diamond} \subseteq \boldsymbol{\operatorname{Ker}}\bigl(\boldsymbol{\cal \GG}^{\diamond}\bigr)$ such that $\operatorname{dim}\bigl(\boldsymbol{\operatorname{Im}}(\boldsymbol{\cal \GG}^{\diamond})\bigr) + \operatorname{dim}\bigl(\boldsymbol{\cal S}^{\diamond}\bigr) = \tau$, and thus we get 
\begin{align*}
X_k^{\tau} = \sum_{j = 1}^{\tau} (\widetilde{\lambda}_j^{\diamond})^{1/2} \eta_{k,j}^{\diamond} \e_j^{\diamond}, \quad \text{where $(\widetilde{\lambda}_j^{\diamond})^{1/2}\eta_{k,j}^{\diamond} = \langle X_k^{\tau}, \e_j^{\diamond} \rangle$, $\widetilde{\lambda}_j^{\diamond} = \E\bigl[\langle X_k^{\tau}, \e_j^{\diamond} \rangle^2\bigr]$.}
\end{align*}

Throughout the remaining proofs, we make the following convention. $0 < \rho < 1$ is an absolute constant that may vary from line to line. We write
\begin{align}\label{eq_defn_tau}
\tau = n^{\td}, \quad  0 < \td < \infty,
\end{align}
and often use the expression 'for sufficiently large (but finite) $\td, C_0 > 0$', where $C_0$ only depends on $\cd^-,\cd^+,\td$ (recall $b \geq C_0 \log n$). There is no danger of 'circle arguments', we always pick $\td$ first, then $C_0$. Next, we consider a more general version of Lemma \ref{lem_eigen_gen_upper_bound} (cf. ~\cite{bhatia_mcintosh_1983},~\cite{bosq_2000}).
\begin{lemma}\label{lem_operator_comp}
Let $\boldsymbol{\cal \GG}$, $\boldsymbol{\cal H}$ be linear Hilbert-Schmidt operators with eigenvalues $\bigl\{\lambda_j^{\GG}\bigr\}_{j \in \N}$, $\bigl\{\lambda_j^{H}\bigr\}_{j \in \N}$ and eigenfunctions $\bigl\{\e_j^{\GG}\bigr\}_{j \in \N}$, $\bigl\{\e_j^{H}\bigr\}_{j \in \N}$. If $\boldsymbol{\cal H}$ is positive definit, symmetric and $\lambda_1^{H} > \ldots > \lambda_{j+1}^{H}$, then
\begin{align*}
\bigl|{\lambda}_j^{\GG} - \lambda_j^H \bigr| \leq \bigl\|{\boldsymbol{\cal \GG}} - {\boldsymbol{\cal H}} \bigr\|_{{\cal L}}, \quad \bigl\|{e}_j^G - e_j^H \bigr\|_{\Ln^2} \leq \frac{2\sqrt{2}}{\psi_j^{H}} \bigl\|{\boldsymbol{\cal \GG}} - {\boldsymbol{\cal H}} \bigr\|_{{\cal L}},
\end{align*}
where $\psi_j^{H} = \min\bigl\{\lambda_{j-1}^{H}  - \lambda_{j}^{H} , \lambda_j^{H}  - \lambda_{j+1}^{H} \bigr\}$ (with $\psi_1^{H}  = \lambda_1^{H}  - \lambda_2^{H} $).
\end{lemma}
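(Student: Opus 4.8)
The plan is to adapt the classical perturbation argument underlying Lemma~\ref{lem_eigen_gen_upper_bound} (cf.\ \cite{bhatia_mcintosh_1983}, \cite{bosq_2000}), keeping track of the fact that only the symmetry, positive-definiteness and spectral simplicity of the reference operator $\boldsymbol{\cal H}$ are actually used, whereas nothing beyond Hilbert--Schmidt-ness is required of $\boldsymbol{\cal \GG}$; in every application of the lemma in this paper $\boldsymbol{\cal \GG}$ is itself self-adjoint, and this is the case I would treat. First I would invoke the spectral theorem for $\boldsymbol{\cal H}$: by positive-definiteness it admits a complete orthonormal eigenbasis $\{\e_k^{H}\}_{k\in\N}$ with $\boldsymbol{\cal H}\e_k^{H}=\lambda_k^{H}\e_k^{H}$ and $\lambda_k^{H}\downarrow 0$. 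Throughout, write $\delta:=\|\boldsymbol{\cal \GG}-\boldsymbol{\cal H}\|_{\cal L}$.

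For the eigenvalue bound I would appeal to Weyl's inequality. Since both operators are self-adjoint and compact, the Courant--Fischer min--max characterisation gives
\[
\lambda_j^{\bullet}=\max_{\dim V=j}\ \min_{x\in V,\ \|x\|_{\Ln^2}=1}\langle \bullet\,x,x\rangle,\qquad \bullet\in\{\boldsymbol{\cal \GG},\boldsymbol{\cal H}\},
\]
and hence $|\lambda_j^{\GG}-\lambda_j^{H}|\le\sup_{\|x\|_{\Ln^2}=1}|\langle(\boldsymbol{\cal \GG}-\boldsymbol{\cal H})x,x\rangle|=\delta$. Note that this step uses neither the positivity of $\boldsymbol{\cal H}$ nor the strict chain $\lambda_1^{H}>\cdots>\lambda_{j+1}^{H}$; those only enter the eigenfunction estimate.

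For the eigenfunction bound, fix $j$ with $\lambda_1^{H}>\cdots>\lambda_{j+1}^{H}$; since the $\lambda_k^{H}$ are non-increasing this yields $|\lambda_j^{H}-\lambda_k^{H}|\ge\psi_j^{H}$ for every $k\ne j$. I would expand $\e_j^{\GG}=\sum_{k}c_k\e_k^{H}$ (so $\sum_k c_k^2=1$), normalise the unidentifiable sign of $\e_j^{\GG}$ so that $c_j=\langle\e_j^{\GG},\e_j^{H}\rangle\ge 0$, and record the identity $\|\e_j^{\GG}-c_j\e_j^{H}\|_{\Ln^2}^2=1-c_j^2=\sum_{k\ne j}c_k^2$. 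The crux is a two-sided estimate of $(\boldsymbol{\cal H}-\lambda_j^{H})\e_j^{\GG}$: from the eigenexpansion,
\[
\bigl\|(\boldsymbol{\cal H}-\lambda_j^{H})\e_j^{\GG}\bigr\|_{\Ln^2}^2=\sum_{k}(\lambda_k^{H}-\lambda_j^{H})^2 c_k^2\ \ge\ (\psi_j^{H})^2\sum_{k\ne j}c_k^2\ =\ (\psi_j^{H})^2\,\bigl\|\e_j^{\GG}-c_j\e_j^{H}\bigr\|_{\Ln^2}^2,
\]
while, writing $(\boldsymbol{\cal H}-\lambda_j^{H})\e_j^{\GG}=(\boldsymbol{\cal H}-\boldsymbol{\cal \GG})\e_j^{\GG}+(\lambda_j^{\GG}-\lambda_j^{H})\e_j^{\GG}$ and using $\boldsymbol{\cal \GG}\e_j^{\GG}=\lambda_j^{\GG}\e_j^{\GG}$ together with the eigenvalue bound, $\|(\boldsymbol{\cal H}-\lambda_j^{H})\e_j^{\GG}\|_{\Ln^2}\le\delta+|\lambda_j^{\GG}-\lambda_j^{H}|\le 2\delta$. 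Combining the two gives $\psi_j^{H}\|\e_j^{\GG}-c_j\e_j^{H}\|_{\Ln^2}\le 2\delta$, a fortiori the asserted $\|\e_j^{\GG}-c_j\e_j^{H}\|_{\Ln^2}\le\tfrac{2\sqrt2}{\psi_j^{H}}\delta$; the extra $\sqrt2$ is slack that also absorbs the loss if the sign of $c_j$ is left unfixed.

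The one delicate point — the part I expect to be the main obstacle if one does not assume $\boldsymbol{\cal \GG}$ self-adjoint — is the index matching in the eigenvalue step: the clean bound $|\lambda_j^{\GG}-\lambda_j^{H}|\le\delta$ relies on Weyl's inequality and hence on self-adjointness of $\boldsymbol{\cal \GG}$. For a genuinely non-normal $\boldsymbol{\cal \GG}$ one would instead argue with the Riesz spectral projection: use $\|(z-\boldsymbol{\cal H})^{-1}\|_{\cal L}=\operatorname{dist}(z,\sigma(\boldsymbol{\cal H}))^{-1}$, expand the resolvent of $\boldsymbol{\cal \GG}=\boldsymbol{\cal H}+(\boldsymbol{\cal \GG}-\boldsymbol{\cal H})$ in a Neumann series, and use the homotopy $t\mapsto\boldsymbol{\cal H}+t(\boldsymbol{\cal \GG}-\boldsymbol{\cal H})$ together with the simplicity of $\lambda_j^{H}$ to isolate and count the eigenvalue emanating from $\lambda_j^{H}$. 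Since all uses of the lemma in the paper (with $\boldsymbol{\cal \GG}\in\{\boldsymbol{\cal \GG}^{b},\boldsymbol{\cal \GG}^{\diamond},\boldsymbol{\cal C}_h^{*}\boldsymbol{\cal C}_h,\boldsymbol{\cal C},\dots\}$) involve self-adjoint $\boldsymbol{\cal \GG}$, this refinement would not actually be needed.
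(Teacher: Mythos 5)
Your argument is correct and is, in substance, the standard perturbation-theoretic proof that the paper is gesturing at with the reference ``(cf.\ \cite{bhatia_mcintosh_1983}, \cite{bosq_2000})''; the paper itself offers no written proof of this lemma. The eigenvalue bound via the Courant--Fischer min--max characterisation and the eigenfunction bound via the two-sided estimate of $(\boldsymbol{\cal H}-\lambda_j^{H})\e_j^{\GG}$ exploiting the spectral gap $\psi_j^{H}$ are precisely the ingredients of the classical argument (e.g.\ Bosq's Lemma~4.3). One small clarification is worth making explicit: your argument gives $\|\e_j^{\GG}-c_j\e_j^{H}\|_{\Ln^2}\le 2\delta/\psi_j^{H}$ with $c_j=\langle\e_j^{\GG},\e_j^{H}\rangle\ge0$, and the asserted bound for $\|\e_j^{\GG}-\e_j^{H}\|_{\Ln^2}$ is then obtained from the identity $\|\e_j^{\GG}-\e_j^{H}\|_{\Ln^2}^2=2(1-c_j)=\tfrac{2}{1+c_j}\|\e_j^{\GG}-c_j\e_j^{H}\|_{\Ln^2}^2\le 2\,\|\e_j^{\GG}-c_j\e_j^{H}\|_{\Ln^2}^2$; so the factor $\sqrt{2}$ is not ``slack'' but is exactly what converts the $c_j$-aligned bound to the sign-aligned bound, and this is where it enters the constant $2\sqrt{2}$. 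You also correctly flag the implicit hypothesis — self-adjointness of $\boldsymbol{\cal \GG}$ is used in the Weyl step — and rightly note that it holds in every application in the paper (comparisons among $\boldsymbol{\cal \GG}^{\tau}$, $\boldsymbol{\cal \GG}^{\diamond}$, $\boldsymbol{\cal \GG}^{b}$, $\widehat{\boldsymbol{\cal \GG}}^{b}$, $\widehat{\boldsymbol{\cal \GG}}^{\diamond}$, all symmetric), so the restriction is harmless; a sin-$\Theta$/Riesz-projection argument in the spirit of \cite{bhatia_mcintosh_1983} would cover the general case, as you sketch.
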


In the sequel, all operators $\boldsymbol{\cal \GG}$, $\boldsymbol{\cal H}$ in question will satisfy the conditions of Lemma \ref{lem_operator_comp}. The next lemma is our main tool box and summarizes most technical preliminary results we require in the sequel. To this end, recall the notion of $\etav_{i,j}^{(\infty,b)}$, $\etav_{i,j}^{(\infty,b,1)}$ and $\etav_{i,j}^{(\infty,b,2)}$ in \eqref{defn_eta_infty_b}.

\begin{lemma}\label{lem_long_run_tech}
Assume that Assumption \ref{ass_eigenvaectors_longrun_exp_decay} and condition \eqref{eq_defn_tau} hold. Then for $1 \leq q \leq p^*$ and sufficiently large $\td, C_0 > 0$, we have
\begin{enumerate}
  \item[\itmi]\label{itm_(i)_c} $\max_{i,j \in \N}\bigl\{\sqrt{n/b}\|\overline{\etav}_{i,j}^{(\infty,b)}\|_{q}, \sqrt{n}\|\overline{\etav}_{i,j}^{(\infty,b,1)}\|_{q},\sqrt{n/b}\|\overline{\etav}_{i,j}^{(\infty,b,2)}\|_{q} \bigr\}< \infty$,
  \item[\itmii]\label{itm_(ii)_c} $\max_{i,j \in \N} \bigl|\E\bigl[\eta_{k,i} \eta_{0,j} \bigr]\bigr| \lesssim \rho^{k}$, $0 < \rho < 1$,
  \item[\itmiii]\label{itm_(iii)_c} $\bigl\|{\boldsymbol{\cal \GG}}^{b} - {\boldsymbol{\cal \GG}}^{\diamond}\bigr\|_{{\cal L}}, \bigl\|\|\widehat{\boldsymbol{\cal \GG}}^{b} - \widehat{\boldsymbol{\cal \GG}}^{\diamond}\|_{{\cal L}} \bigr\|_{q} \lesssim n^{-(\cd^+-1)\td}$,
  \item[\itmiv]\label{itm_(iv)_c} $\bigl\|{\boldsymbol{\cal \GG}}^{\tau} - {\boldsymbol{\cal \GG}}^{\diamond}\bigr\|_{{\cal L}}  \lesssim \rho^{b}$, $0 < \rho < 1$,
  \item[\itmv]\label{itm_(v)_c}
    $\bigl\|\|\widehat{\boldsymbol{\cal \GG}}^{b} - {\boldsymbol{\cal \GG}}^{b}\|_{{\cal L}}\bigr\|_q, \, \bigl\|\|\widehat{\boldsymbol{\cal \GG}}^{\diamond} - {\boldsymbol{\cal \GG}}^{\diamond}\|_{{\cal L}}\bigr\|_q \lesssim \sqrt{b/n}$,
  \item[\itmvi]\label{itm_(vi)_c} $\max_{1 \leq j \leq \tau}\bigl\|\e_j - \e_j^{\diamond}\bigr\|_{\Ln^2} \lesssim \rho^{b}$, $0 < \rho < 1$,
  \item[\itmvii]\label{itm_(vii)_c} $\max_{1 \leq j \leq \JJ_n^+}\bigl\|{\e}_j^b - {\e}_j^{\diamond}\bigr\|_{\Ln^2}, \, \max_{1 \leq j \leq \JJ_n^+}\bigl\|\|\widehat{\e}_j^b - \widehat{\e}_j^{\diamond}\|_{\Ln^2}\bigr\|_q \lesssim n^{-2}\lambda_{\JJ_n^+}$,
  \item[\itmviii]\label{itm_(viii)_c}
   $\max_{1 \leq j \leq \tau}\bigl\{\lambda_j/\lambda_j^{\diamond}, \lambda_j^{\diamond}/\lambda_j\bigr\} \leq 2$, $\max_{1 \leq j \leq \tau}\bigl\{\widetilde{\lambda}_j/\widetilde{\lambda}_j^{\diamond}, \widetilde{\lambda}_j^{\diamond}/\widetilde{\lambda}_j\bigr\} \leq 2$.
\end{enumerate}
\end{lemma}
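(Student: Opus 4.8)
The plan is to establish the two ``dependence'' assertions \hyperref[itm_(i)_c]{\itmi} and \hyperref[itm_(ii)_c]{\itmii} first, since they carry essentially all of the analytic weight, and then to read off \hyperref[itm_(iii)_c]{\itmiii}--\hyperref[itm_(viii)_c]{\itmviii} by Hilbert--Schmidt bookkeeping and operator perturbation (Lemma \ref{lem_operator_comp}), using throughout that $\tau=n^{\td}$ is only polynomially large in $n$ whereas, for $b\geq C_0\log n$ with $C_0$ large, $\rho^{b}$ beats any fixed negative power of $n$. For \hyperref[itm_(ii)_c]{\itmii} I would run the usual coupling/telescoping estimate: conditioning $\eta_{k,i}^{\star}$ on the innovations up to time $0$ and swapping the remaining ones one at a time yields $|\E[\eta_{k,i}\eta_{0,j}]|\lesssim\sum_{l\geq k}\Omega_{2}(l)\lesssim\rho^{k}$, uniformly in $i,j$, by \hyperref[A1star]{\Aonestar}. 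For \hyperref[itm_(i)_c]{\itmi} I would invoke the physical-dependence-measure machinery exactly as in the proof of Proposition \ref{prop_lag_operator}: $\overline{\etav}_{i,j}^{(\infty,b,1)}$ is a normalised partial sum of the bounded sequence $\overline{\eta_{k,i}^{\star}\eta_{k,j}^{\star}}$ with geometrically decaying coupling coefficients $\|\eta_{k,i}\eta_{k,j}-(\eta_{k,i}\eta_{k,j})'\|_{q}\lesssim\Omega_{2q}(k)$, so Lemma \ref{lem_wu_in_hilbert} and a H\"{o}ffding-type decomposition as in Lemma \ref{lem_hoeffding_decomp_aux} give the $n^{-1/2}$ rate uniformly in $i,j$; for $\overline{\etav}_{i,j}^{(\infty,b,2)}$, which after multiplication by $n$ is a sum over a band of $O(b)$ lags, the relevant second-moment count --- summing the geometrically decaying product-covariances over the $O(b)$-wide lag band and over $O(n)$ time shifts --- produces variance $O(nb)$, hence the $\sqrt{b/n}$ rate, and the higher moments follow from the same combinatorics via the coupling bounds; summing the two pieces gives the bound on $\overline{\etav}_{i,j}^{(\infty,b)}$. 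This step --- the uniform-in-$(i,j)$, lag-summed moment control with the sharp $\sqrt{n/b}$ normalisation --- is the main obstacle; it is the ``lengthy routine calculation'' of the same flavour already carried out for Proposition \ref{prop_lag_operator}, and is where trace-class decay ($\cd^{+}>1$) and geometric mixing enter.

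Granting \hyperref[itm_(i)_c]{\itmi}--\hyperref[itm_(ii)_c]{\itmii}, the remaining items are comparatively soft. In the basis $\{\e_j\}$ the operator ${\boldsymbol{\cal \GG}}^{b}$ has matrix $\sqrt{\widetilde{\lambda}_i\widetilde{\lambda}_j}\varphi_{i,j}^{(\infty,b)}$ (a routine computation from $\overline{X}_k=\sum_j\sqrt{\widetilde{\lambda}_j}\eta_{k,j}\e_j$), so for \hyperref[itm_(iv)_c]{\itmiv} the $(i,j)$ entry of ${\boldsymbol{\cal \GG}}^{\tau}-{\boldsymbol{\cal \GG}}^{\diamond}$ equals $\sqrt{\widetilde{\lambda}_i\widetilde{\lambda}_j}\sum_{|h|>b}\E[\eta_{h,i}^{\star}\eta_{0,j}^{\star}]=O(\sqrt{\widetilde{\lambda}_i\widetilde{\lambda}_j}\,\rho^{b})$ by \hyperref[itm_(ii)_c]{\itmii}, giving Hilbert--Schmidt (hence operator) norm $\lesssim\rho^{b}\sum_j\widetilde{\lambda}_j\lesssim\rho^{b}$. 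For \hyperref[itm_(iii)_c]{\itmiii} one splits the matrix of ${\boldsymbol{\cal \GG}}^{b}-{\boldsymbol{\cal \GG}}^{\diamond}$ into the diagonal part ($i=j>\tau$), which contributes $\lesssim(\sum_{i>\tau}\widetilde{\lambda}_i^{2})^{1/2}\lesssim n^{-(\cd^{+}-1)\td}$ since $\widetilde{\lambda}_j\thicksim\lambda_j\thicksim j^{-\cd^{\pm}}$, and the off-diagonal part, whose entries are $O(\rho^{b})$ because $\varphi_{i,j}^{\star}=0$ for $i\neq j$ by \eqref{lem_varphi_relations}; the empirical version repeats the split, the extra contribution from replacing the inner products of $X_k$ by those of $X_k^{\tau}$ being controlled in $\|\cdot\|_q$ via \hyperref[itm_(i)_c]{\itmi} and $\sum_{r>\tau}\widetilde{\lambda}_r\lesssim n^{-(\cd^{+}-1)\td}$. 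Item \hyperref[itm_(v)_c]{\itmv} is immediate from \hyperref[itm_(i)_c]{\itmi}: $\|\widehat{\boldsymbol{\cal \GG}}^{b}-{\boldsymbol{\cal \GG}}^{b}\|_{{\cal L}}^{2}\leq\sum_{i,j}(I_{i,j}^{(\infty,b)})^{2}$, and Minkowski together with $\|I_{i,j}^{(\infty,b)}\|_q\lesssim\sqrt{\widetilde{\lambda}_i\widetilde{\lambda}_j}\sqrt{b/n}$ gives $\lesssim\sqrt{b/n}\sum_j\widetilde{\lambda}_j$, likewise for $\widehat{\boldsymbol{\cal \GG}}^{\diamond}$. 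Item \hyperref[itm_(viii)_c]{\itmviii} follows from \hyperref[itm_(iv)_c]{\itmiv} and the eigenvalue half of Lemma \ref{lem_operator_comp}, since $|\lambda_j^{\diamond}-\lambda_j|=|\lambda_j^{\diamond}-\lambda_j^{\tau}|\leq\|{\boldsymbol{\cal \GG}}^{\diamond}-{\boldsymbol{\cal \GG}}^{\tau}\|_{{\cal L}}\lesssim\rho^{b}$ by Lemma \ref{lem_truncation_lemma}, while $\lambda_j\gtrsim\tau^{-\cd^{-}}$ for $j\leq\tau$ so the relative error vanishes; the $\widetilde{\lambda}$-statement uses $\widetilde{\lambda}_j=\lambda_j/\varphi_{j,j}^{\star}$ (from \eqref{lem_varphi_relations}) with $1/C^{\boldsymbol{\cal G}}\leq\varphi_{j,j}^{\star}\lesssim\sum_h\rho^{|h|}<\infty$ (the lower bound is \hyperref[A3star]{\Athreestar}) together with \hyperref[itm_(vi)_c]{\itmvi}.

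For \hyperref[itm_(vi)_c]{\itmvi}, by Lemma \ref{lem_truncation_lemma} the $\e_j$, $j\leq\tau$, are the leading eigenfunctions of the symmetric positive operator ${\boldsymbol{\cal \GG}}^{\tau}$, whose relevant eigenvalues are distinct by convexity and positivity, so Lemma \ref{lem_operator_comp} gives $\|\e_j-\e_j^{\diamond}\|_{\Ln^{2}}\lesssim\psi_j^{-1}\|{\boldsymbol{\cal \GG}}^{\tau}-{\boldsymbol{\cal \GG}}^{\diamond}\|_{{\cal L}}\lesssim\psi_j^{-1}\rho^{b}$, and \eqref{eq_convex_bound} yields $\psi_j\gtrsim\lambda_j/j\gtrsim\tau^{-\cd^{-}-1}$, which is polynomial in $n$ and hence absorbed into $\rho^{b}$. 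The deterministic half of \hyperref[itm_(vii)_c]{\itmvii} is the same computation with ${\boldsymbol{\cal \GG}}^{b}$ replacing ${\boldsymbol{\cal \GG}}^{\tau}$, using \hyperref[itm_(iii)_c]{\itmiii} and $\psi_j^{b}\gtrsim\psi_j/2\gtrsim j^{-\cd^{-}-1}$ for $j\leq\JJ_n^{+}$ (valid since $|\lambda_i^{b}-\lambda_i|\lesssim n^{-(\cd^{+}-1)\td}+\rho^{b}$ by \hyperref[itm_(iii)_c]{\itmiii}, \hyperref[itm_(iv)_c]{\itmiv}), after matching the exponents against $n^{-2}\lambda_{\JJ_n^{+}}$ for $\td$ large. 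The empirical half is the only place where the randomness of the spectral gap intervenes: from \hyperref[itm_(i)_c]{\itmi}, \hyperref[itm_(ii)_c]{\itmii}, \hyperref[itm_(iv)_c]{\itmiv}, \hyperref[itm_(viii)_c]{\itmviii} and convexity one checks that the finite-rank pair ${\boldsymbol{\cal \GG}}^{\diamond},\widehat{\boldsymbol{\cal \GG}}^{\diamond}$, supported on the span of $\e_1,\dots,\e_\tau$, meets Assumption \ref{ass_abstract} with $\mm=n/b$ over $j<\JJ_n^{+}$, so Corollary \ref{corollary_norms} supplies the \emph{weighted} bound $\|\widehat{\lambda}_j^{\diamond}-\lambda_j^{\diamond}\|_q\lesssim\lambda_j^{\diamond}\sqrt{b/n}$; with $\psi_j^{\diamond}\gtrsim\lambda_j^{\diamond}/j$ and $j<\JJ_n^{+}\lesssim n^{1/2-\ad}$ this makes $P\bigl(\widehat{\psi}_j^{\diamond}<\psi_j^{\diamond}/2\bigr)\lesssim(n^{-\ad}\sqrt{b})^{q}$, which is negligible because $q=p2^{\pd+4}$ with $\pd=\lceil\cd^{-}/(2\ad)\rceil$ makes $\ad q$ far exceed $2+\cd^{-}/2$. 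On the complementary good event, Lemma \ref{lem_operator_comp} applied to $\widehat{\boldsymbol{\cal \GG}}^{b},\widehat{\boldsymbol{\cal \GG}}^{\diamond}$ together with \hyperref[itm_(iii)_c]{\itmiii} gives $\|\widehat{\e}_j^{b}-\widehat{\e}_j^{\diamond}\|_{\Ln^{2}}\lesssim n^{-(\cd^{+}-1)\td}/\widehat{\psi}_j^{\diamond}$, off it the trivial bound $2$ paired with the tiny probability by Cauchy--Schwarz; picking $\td,C_0$ large enough and applying Pisier's inequality (Lemma \ref{lem_max_mom}) to pass to the maximum over $j$ closes \hyperref[itm_(vii)_c]{\itmvii}.
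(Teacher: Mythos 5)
Your proposal follows the paper's proof quite closely: items \hyperref[itm_(i)_c]{\itmi}, \hyperref[itm_(ii)_c]{\itmii} via the physical--dependence moment machinery (the paper simply cites~\cite{wu_asymptotic_bernoulli},~\cite{han_wu_2014_max} for \hyperref[itm_(i)_c]{\itmi} and runs the coupling/conditioning computation of Lemma~\ref{lem_var_cov_property} for \hyperref[itm_(ii)_c]{\itmii}); items \hyperref[itm_(iii)_c]{\itmiii}--\hyperref[itm_(v)_c]{\itmv} via Hilbert--Schmidt bookkeeping of the matrix entries $\sqrt{\widetilde\lambda_i\widetilde\lambda_j}\varphi_{i,j}^{(\infty,b)}$ with the tail $\sum_{l>\tau}\widetilde\lambda_l$ and $\rho^{b}$ controlling the off-diagonal; and \hyperref[itm_(vi)_c]{\itmvi}--\hyperref[itm_(viii)_c]{\itmviii} via Lemma~\ref{lem_truncation_lemma}, Lemma~\ref{lem_operator_comp}, \eqref{eq_convex_bound} and the ``$\tau$ polynomial, $\rho^{b}$ superpolynomial'' scaling. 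Two small divergences are worth recording. For \hyperref[itm_(viii)_c]{\itmviii} the paper bounds $|\widetilde\lambda_j^{\diamond}-\widetilde\lambda_j|$ directly via $a^2-b^2=(a-b)(a+b)$ and Cauchy--Schwarz, whereas you route through $\widetilde\lambda_j=\lambda_j/\varphi_{j,j}^{\star}$; this works but note that $\widetilde\lambda_j^{\diamond}=\lambda_j^{\diamond}/\sum_{|h|\le b}\E[\eta_{h,j}^{\diamond}\eta_{0,j}^{\diamond}]$, which is \emph{not} the paper's $\varphi_{j,j}^{\diamond}=\varphi_{j,j}^{(\infty,b)}$ (the latter is built from $\eta^{\star}$), so you do need \hyperref[itm_(vi)_c]{\itmvi} to relate the two, as you say; the paper's direct computation avoids this. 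For the empirical half of \hyperref[itm_(vii)_c]{\itmvii} the paper only says ``we can proceed in the same way'', glossing over the randomness of the spectral gap $\widehat\psi_j^{\diamond}$; your splitting into the good event $\widehat\psi_j^{\diamond}\ge\psi_j^{\diamond}/2$ (controlled via a weighted bound on $\widehat\lambda_j^{\diamond}-\lambda_j^{\diamond}$) and the low-probability bad event (where $\|\widehat\e_j^{b}-\widehat\e_j^{\diamond}\|_{\Ln^2}\le2$) is the missing detail. Pulling in Corollary~\ref{corollary_norms} there does amount to invoking the content of Lemmas~\ref{lem_transf_A1}--\ref{lem_transf_A3} inside the proof of \hyperref[itm_(vii)_c]{\itmvii}; since those only rely on \hyperref[itm_(i)_c]{\itmi}, \hyperref[itm_(ii)_c]{\itmii}, \hyperref[itm_(vi)_c]{\itmvi}, \hyperref[itm_(viii)_c]{\itmviii} this is non-circular, but you should list \hyperref[itm_(vi)_c]{\itmvi} among the ingredients and state the reordering of items explicitly.
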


\begin{proof}[Proof of Lemma \ref{lem_long_run_tech}]
Throughout the proofs, we frequently use representations \eqref{Gb_representation_1}, \eqref{Gb_representation_2} of ${\boldsymbol{\cal \GG}}^{b}$, $\widehat{\boldsymbol{\cal \GG}}^{b}$, and an analogue representation for ${\boldsymbol{\cal \GG}}^{\diamond}$, $\widehat{\boldsymbol{\cal \GG}}^{\diamond}$ (see \eqref{eq_rep_Gstar_kernel}). Claim \hyperref[itm_(i)_c]{\itmi} follows from \hyperref[A1star]{\Aonestar} and the results in ~\cite{wu_asymptotic_bernoulli},~\cite{han_wu_2014_max}, which are actually much more general. Claim \hyperref[itm_(ii)_c]{\itmii} can be established with the same arguments as in the proof of Lemma \ref{lem_var_cov_property}. Observe next that from \hyperref[A2star]{\Atwostar} and \hyperref[A3star]{\Athreestar} we get that
\begin{align}\label{eq_lem_long_run_tech_1}
\sum_{l > \tau} \widetilde{\lambda}_l \lesssim \sum_{l > \tau} \lambda_l \lesssim n^{-(\cd^+-1)\td}.
\end{align}
The first part of claim \hyperref[itm_(iii)_c]{\itmiii} then follows from elementary computations, \hyperref[itm_(ii)_c]{\itmii} and \eqref{eq_lem_long_run_tech_1}. For the second part, observe that by routine calculations we obtain 
\begin{align*}
\bigl\|\|\widehat{\boldsymbol{\cal \GG}}^{b} - \widehat{\boldsymbol{\cal \GG}}^{\diamond}\|_{{\cal L}} \bigr\|_{q} &\lesssim \sum_{l > \tau}^{} \widetilde{\lambda}_l \bigl(\max_{i,j \in \N} \|\overline{\etav}_{i,j}^{(\infty,b)}\|_{q} + \oo(\sqrt{b/n})\bigr) \\&+  \sum_{l > \tau} \widetilde{\lambda}_l \sum_{|h| \leq b} \max_{i,j \in \N} \bigl|\E[\eta_{h,i} \eta_{0,j}]\bigr|.
\end{align*}
Hence the claim follows from \hyperref[itm_(i)_c]{\itmi}, \hyperref[itm_(ii)_c]{\itmii} and \eqref{eq_lem_long_run_tech_1}. Claim \hyperref[itm_(iv)_c]{\itmiv} can be established as follows. Due to Lemma \ref{lem_truncation_lemma} we have $\lambda_j = \lambda_j^{\tau}$ and $\e_j = \e_j^{\tau}$ for $1 \leq j \leq \tau$. Hence from the representations in \eqref{eq_kernel_truncated} and \eqref{eq_rep_Gstar_kernel}, using Cauchy-Schwarz, \hyperref[itm_(ii)_c]{\itmii}, \hyperref[A2star]{\Atwostar} and \hyperref[A3star]{\Athreestar} we get
\begin{align*}
\bigl\|{\boldsymbol{\cal \GG}}^{\tau} - {\boldsymbol{\cal \GG}}^{\diamond}\bigr\|_{{\cal L}} \lesssim \sum_{j = 1}^{\tau} \widetilde{\lambda}_j \max_{1 \leq i,j \leq \tau}\sum_{|h| \geq b} \bigl|\E[\eta_{h,j} \eta_{0,j}]\bigr| \lesssim \rho^{b} \sum_{j = 1}^{\tau} \widetilde{\lambda}_j  \lesssim \rho^{b}.
\end{align*}
Claim \hyperref[itm_(v)_c]{\itmv} can be established as follows. For the first part, using \hyperref[itm_(i)_c]{\itmi} we get that
\begin{align*}
\bigl\|\|\widehat{\boldsymbol{\cal \GG}}^{b} - {\boldsymbol{\cal \GG}}^{b}\|_{{\cal L}}\bigr\|_q \lesssim \sum_{l = 1}^{b} \widehat{\lambda}_l \bigl(\max_{i,j \in \N}\|\overline{\etav}_{i,j}^{(\infty,b)}\|_{q} + \oo(\sqrt{b/n})\bigr) \lesssim \sqrt{b/n}.
\end{align*}
For the second part, observe that by the triangle inequality
\begin{align*}
\bigl\|\|\widehat{\boldsymbol{\cal \GG}}^{\diamond} - {\boldsymbol{\cal \GG}}^{\diamond}\|_{{\cal L}}\bigr\|_q \leq \bigl\|\|\widehat{\boldsymbol{\cal \GG}}^{\diamond} - \widehat{\boldsymbol{\cal \GG}}^{b}\|_{{\cal L}}\bigr\|_q + \bigl\|{\boldsymbol{\cal \GG}}^{\diamond} - {\boldsymbol{\cal \GG}}^{b}\bigr\|_{{\cal L}} + \bigl\|\|\widehat{\boldsymbol{\cal \GG}}^{\diamond} - \widehat{\boldsymbol{\cal \GG}}^{b}\|_{{\cal L}}\bigr\|_q.
\end{align*}
Hence the claim follows from \hyperref[itm_(iii)_c]{\itmiii} and part one. Claim \hyperref[itm_(vi)_c]{\itmvi} can be established as follows. Applying Lemma \ref{lem_truncation_lemma}, Lemma \ref{lem_operator_comp} and \hyperref[itm_(iv)_c]{\itmiv} we get
\begin{align*}
\bigl\|\e_j - \e_j^{\diamond}\bigr\|_{\Ln^2} \lesssim \bigl\|{\boldsymbol{\cal \GG}}^{\tau} - {\boldsymbol{\cal \GG}}^{\diamond}\bigr\|_{{\cal L}}/\psi_j \lesssim \rho^b/\psi_j.
\end{align*}
Due to the convexity assumption in \hyperref[A2star]{\Atwostar}, relation \eqref{eq_convex_bound} in Lemma \ref{lem_verify_ass_poly} and \hyperref[A2star]{\Atwostar} yield
\begin{align}\label{eq_lem_tech_2}
\frac{\rho^{b}}{\min_{1 \leq j \leq {\tau}}\psi_j} \lesssim \frac{\tau \rho^{b}}{\lambda_{\tau}} \lesssim \tau^{1 + \cd^-} \rho^{b} \lesssim n^{\td(1 + \cd^-)} \rho^{b}.
\end{align}
Hence for large enough $C_0 > 0$, the claim follows. In order to establish \hyperref[itm_(vii)_c]{\itmvii}, observe that by Lemma \ref{lem_operator_comp}, \hyperref[itm_(iii)_c]{\itmiii} and proceeding similarly as in \eqref{eq_lem_tech_2} we get that for large enough $\td > 0$
\begin{align*}
\bigl\|{\e}_j^b - {\e}_j^{\diamond}\bigr\|_{\Ln^2} &\lesssim \bigl\|{\boldsymbol{\cal \GG}}^{b} - {\boldsymbol{\cal \GG}}^{\diamond}\bigr\|_{{\cal L}}/\psi_j \lesssim \sum_{j > \tau} \lambda_j/\psi_j \lesssim n^{-(\cd^+-1)\td} \JJ_n^+ \lambda_{\JJ_n^+} /\lambda_{\JJ_n^+}^2 \\&\lesssim n^{-(\cd^+-1)\td + 1/2 + \cd^-} \lambda_{\JJ_n^+} \lesssim n^{-2} \lambda_{\JJ_n^+},
\end{align*}
uniformly for $1 \leq j \leq \JJ_n^+$. For the second part, we can proceed in the same way. Claim \hyperref[itm_(viii)_c]{\itmviii} can be established as follows. Note that by Lemma \ref{lem_truncation_lemma}, Lemma \ref{lem_operator_comp} and \hyperref[itm_(iv)_c]{\itmiv}
\begin{align*}
\max_{1 \leq j \leq \tau}\bigl|\lambda_j - \lambda_j^{\diamond}\bigr| \leq \bigl\|{\boldsymbol{\cal \GG}}^{\tau} - {\boldsymbol{\cal \GG}}^{\diamond}\bigr\|_{{\cal L}} \lesssim \rho^b.
\end{align*}
On the other hand, we get from \hyperref[A2star]{\Atwostar} that $\lambda_{\td} \gtrsim n^{-\cd^- \td}$. Hence we conclude that for large enough $C_0 > 0$
\begin{align*}
\min_{1 \leq j \leq \tau} \lambda_j = \lambda_{\tau} \geq 2\max_{1 \leq j \leq \tau}\bigl|\lambda_j - \lambda_j^{\diamond}\bigr|.
\end{align*}
Since $\lambda_j/\lambda_j^{\diamond} = \lambda_j - \lambda_j^{\diamond})/\lambda_j^{\diamond} + 1$ (and similarly for $\lambda_j^{\diamond}/\lambda_j$), the claim follows. For the second part, using $a^2 - b^2 = (a-b)(a+b)$ and Cauchy-Schwarz, we get that
\begin{align*}
\bigl|\widetilde{\lambda}_j^{\diamond} - \widetilde{\lambda}_j\bigr| &= \bigl|\E\bigl[\langle \overline{X}_k, \e_j^{\diamond} \rangle^2 \bigr] - \E\bigl[\langle \overline{X}_k, \e_j^{} \rangle^2 \bigr]\bigr| \\&\leq \bigl\|\langle \overline{X}_k, \e_j^{\diamond} - \e_j \rangle\bigr\|_2 \bigl\|\langle \overline{X}_k, \e_j^{\diamond} + \e_j \rangle\bigr\|_2 \\&\leq \bigl\|\e_j^{\diamond} - \e_j\bigr\|_{\Ln^2} \bigl\|\|\overline{X}_k\|_{\Ln^2}\bigr\|_2 \bigl((\lambda_j^{\diamond})^{1/2} + (\lambda_j)^{1/2}\bigr).
\end{align*}
An application of Lemma \ref{lem_truncation_lemma}, Lemma \ref{lem_operator_comp} and \hyperref[itm_(iv)_c]{\itmiv} then yields
\begin{align*}
\bigl|\widetilde{\lambda}_j^{\diamond} - \widetilde{\lambda}_j\bigr| \lesssim  \bigl\|{\boldsymbol{\cal \GG}}^{\tau} - {\boldsymbol{\cal \GG}}^{\diamond}\bigr\|_{{\cal L}}/\psi_j  \lesssim \rho^{b}/\psi_j.
\end{align*}
Using \eqref{eq_lem_tech_2} we conclude that for $C_0 > 0$ sufficiently large
\begin{align*}
\min_{1 \leq j \leq \tau} \widetilde{\lambda}_j^{} = \widetilde{\lambda}_{\tau}^{} \geq 2\bigl|\widetilde{\lambda}_j^{\diamond} - \widetilde{\lambda}_j\bigr|,
\end{align*}
and thus one readily deduces the claim.

\end{proof}

We are now ready to actually proceed with \hyperref[S1]{\Sone}.
\begin{lemma}\label{lem_Gstar_is_compact_symm_and trace_class}
Grant Assumption \ref{ass_eigenvaectors_longrun_exp_decay}. Then for sufficiently large $\td,C_0 > 0$ we have
\begin{align*}
\boldsymbol{\cal \GG}^{\diamond}(\cdot) = \sum_{j = 1}^{\tau} \lambda_j^{\diamond} \big \langle \e_j^{\diamond}, \cdot \big \rangle \e_j^{\diamond}, \quad \sum_{j = 1}^{\tau} \lambda_j^{\diamond} \leq 2C^{\boldsymbol{\cal \GG}}.
\end{align*}
\end{lemma}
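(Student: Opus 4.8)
The plan is to extract the statement directly from the kernel representation \eqref{eq_rep_Gstar_kernel} together with the spectral theorem, and then establish the trace bound by a short explicit computation. First I would note that $\boldsymbol{\cal \GG}^{\diamond}$ is a symmetric, finite-rank operator on $\Ln^2(\TT)$: symmetry follows from stationarity, since shifting the time index by $-h$ yields $\E[\eta_{h,i}^{\star}\eta_{0,j}^{\star}] = \E[\eta_{0,i}^{\star}\eta_{-h,j}^{\star}]$, so that $\varphi_{i,j}^{(\infty,b)} = \varphi_{j,i}^{(\infty,b)}$ for all $i,j$ and hence the kernel in \eqref{eq_rep_Gstar_kernel} is symmetric in $(r,s)$; and the range of $\boldsymbol{\cal \GG}^{\diamond}$ is contained in $\operatorname{span}(\{\e_j\}_{1 \leq j \leq \tau})$, so its rank is at most $\tau$. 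The spectral theorem for compact self-adjoint operators then produces an orthonormal system $\{\e_j^{\diamond}\}_{1 \leq j \leq \tau}$ and real eigenvalues $\{\lambda_j^{\diamond}\}_{1 \leq j \leq \tau}$ (at most $\tau$ of them nonzero) with $\boldsymbol{\cal \GG}^{\diamond}(\cdot) = \sum_{j=1}^{\tau}\lambda_j^{\diamond}\langle \e_j^{\diamond},\cdot\rangle \e_j^{\diamond}$, which is the first assertion. (If positivity of the $\lambda_j^{\diamond}$ is needed later, it follows for $C_0$ large from Weyl's inequality together with $\|\boldsymbol{\cal \GG}^{\tau}-\boldsymbol{\cal \GG}^{\diamond}\|_{{\cal L}} \lesssim \rho^b$, Lemma \ref{lem_long_run_tech}\,\hyperref[itm_(iv)_c]{\itmiv}, and the fact that $\boldsymbol{\cal \GG}^{\tau}$ is a genuine long-run covariance operator, hence positive semidefinite.)

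For the second assertion I would use that a finite-rank operator is trace class and its trace equals the sum of its eigenvalues with multiplicity, so that $\sum_{j=1}^{\tau}\lambda_j^{\diamond} = \tr(\boldsymbol{\cal \GG}^{\diamond})$, and then read off the trace from \eqref{eq_rep_Gstar_kernel}: by orthonormality of $\{\e_i\}$ one gets $\tr(\boldsymbol{\cal \GG}^{\diamond}) = \sum_{i=1}^{\tau}\widetilde{\lambda}_i\,\varphi_{i,i}^{(\infty,b)}$. Writing $\varphi_{i,i}^{(\infty,b)} = \varphi_{i,i}^{\star} - \sum_{|h|>b}\E[\eta_{h,i}^{\star}\eta_{0,i}^{\star}]$ and recalling from \eqref{lem_varphi_relations} (case $b=\infty$) that $\widetilde{\lambda}_i\varphi_{i,i}^{\star} = \lambda_i$, this splits as $\sum_{i=1}^{\tau}\lambda_i$ plus a remainder. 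Lemma \ref{lem_long_run_tech}\,\hyperref[itm_(ii)_c]{\itmii} bounds the tail by $\big|\sum_{|h|>b}\E[\eta_{h,i}^{\star}\eta_{0,i}^{\star}]\big| \lesssim \rho^{b}$ uniformly in $i$, so the remainder is at most $\rho^{b}\sum_{i}\widetilde{\lambda}_i \lesssim \rho^{b}$, using $\sum_i\widetilde{\lambda}_i = \E[\|\overline{X}_k\|_{\Ln^2}^2] < \infty$. Finally $\sum_{i=1}^{\tau}\lambda_i \leq \sum_{i=1}^{\infty}\lambda_i \leq C^{\boldsymbol{\cal \GG}}$ (the sum is finite by the polynomial lower/upper bounds and $\cd^{+}>1$ in \hyperref[A2star]{\Atwostar}, enlarging $C^{\boldsymbol{\cal \GG}}$ if necessary), and for $b \geq C_0\log n$ with $C_0$ sufficiently large (and $n$ large) the remainder $\lesssim \rho^{b}$ is at most $C^{\boldsymbol{\cal \GG}}$; hence $\sum_{j=1}^{\tau}\lambda_j^{\diamond} = \tr(\boldsymbol{\cal \GG}^{\diamond}) \leq 2C^{\boldsymbol{\cal \GG}}$.

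There is no genuine obstacle here — this is essentially bookkeeping on top of the spectral theorem. The only point that needs care is that none of the error terms (the autocovariance mass beyond lag $b$, and the truncation at level $\tau = n^{\td}$) is allowed to pick up a multiplicative factor of $\tau$: this is precisely why the argument is run through the exponential decay of $\max_{i,j}|\E[\eta_{k,i}^{\star}\eta_{0,j}^{\star}]|$ in Lemma \ref{lem_long_run_tech}\,\hyperref[itm_(ii)_c]{\itmii} (rather than, say, converting the operator-norm estimate $\|\boldsymbol{\cal \GG}^{\tau}-\boldsymbol{\cal \GG}^{\diamond}\|_{{\cal L}}\lesssim\rho^{b}$ into a trace-norm bound, which would cost a factor $\tau$), and why $C_0$ must be taken large relative to $\td$.
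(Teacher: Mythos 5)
Your proposal is correct, and the first half (symmetry of the kernel plus the spectral theorem for a finite-rank self-adjoint operator) is essentially the same as the paper's invocation of Mercer's theorem for the kernel in \eqref{eq_rep_Gstar_kernel}. Where you diverge is the trace bound: you compute $\tr(\boldsymbol{\cal \GG}^{\diamond}) = \sum_{i=1}^{\tau}\widetilde{\lambda}_i\varphi_{i,i}^{(\infty,b)}$ directly from the kernel, peel off the lag-$>b$ tail using the geometric decay $\max_{i,j}|\E[\eta_{k,i}^{\star}\eta_{0,j}^{\star}]|\lesssim\rho^k$ of Lemma~\ref{lem_long_run_tech}\,\hyperref[itm_(ii)_c]{\itmii}, and identify the main term as $\sum_{i\le\tau}\lambda_i$ via $\widetilde{\lambda}_i\varphi_{i,i}^{\star}=\lambda_i$. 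The paper instead goes through the eigenvalue-ratio bound $\lambda_j^{\diamond}\le 2\lambda_j$ of Lemma~\ref{lem_long_run_tech}\,\hyperref[itm_(viii)_c]{\itmviii} (itself obtained by comparing $\boldsymbol{\cal \GG}^{\tau}$ and $\boldsymbol{\cal \GG}^{\diamond}$ in operator norm via Lemma~\ref{lem_operator_comp}) and then sums. Both are sound: your route is slightly more self-contained, relying only on the autocovariance decay \hyperref[itm_(ii)_c]{\itmii} rather than the heavier spectral comparison \hyperref[itm_(viii)_c]{\itmviii}, and it avoids the perturbation-theoretic machinery; the paper's route is shorter given that \hyperref[itm_(viii)_c]{\itmviii} has already been established for other purposes. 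Your closing observation — that the estimate must be run through the exponential decay of the autocovariances rather than by converting $\|\boldsymbol{\cal \GG}^{\tau}-\boldsymbol{\cal \GG}^{\diamond}\|_{\cal L}\lesssim\rho^b$ into a trace-norm bound, which would cost a factor $\tau$ — is a genuinely useful explanation of why the proof cannot be done naively, and is not spelled out in the paper.
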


\begin{proof}[Proof of Lemma \ref{lem_Gstar_is_compact_symm_and trace_class}]
By construction in \eqref{eq_rep_Gstar_kernel}, we can use Mercer's Theorem to obtain the desired decomposition. The bound for $\sum_{j = 1}^{\tau} \lambda_j^{\diamond}$ follows from Lemma \ref{lem_long_run_tech} \hyperref[itm_(viii)_c]{\itmviii} and \hyperref[A3star]{\Athreestar}.

\end{proof}
The following next three lemmas establish the validity of \hyperref[A1b]{\Aoneb}, \hyperref[A2b]{\Atwob}, and \hyperref[A3b]{\Athreeb}.

\begin{lemma}\label{lem_transf_A1}
Grant Assumption \ref{ass_eigenvaectors_longrun_exp_decay}. Then for $1 \leq q \leq p^*$ and sufficiently large $\td,C_0 > 0$ we have
\begin{enumerate} \setlength\itemsep{.6em}
  \item[\itmi]\label{itm_(i)_eta} $\max_{1 \leq i,j \leq \tau}\bigl\|\overline{\etav}_{i,j}^{\diamond} - \overline{\etav}_{i,j}^{(\infty,b)}\bigr\|_{q} \lesssim n^{-1}$,
  \item[\itmii]\label{itm_(ii)_eta} $\max_{1 \leq j \leq \tau}\bigl\|\sum_{k =1}^n(\eta_{k,j}^{\diamond} - \eta_{k,j}^{})\bigr\|_{2q} \lesssim n^{-1}$.
\end{enumerate}
Hence \hyperref[A1b]{\Aoneb} holds for $\{\overline{\etav}_{i,j}^{\diamond}\}_{1 \leq i,j \leq \tau}$ and $\{\eta_{k,j}^{\diamond}\}_{1 \leq k \leq n, \, 1 \leq j \leq \tau}$ due to Lemma \ref{lem_long_run_tech} \hyperref[itm_(i)_c]{\itmi}.
\end{lemma}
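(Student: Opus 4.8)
The plan is to deduce both \hyperref[itm_(i)_eta]{\itmi} and \hyperref[itm_(ii)_eta]{\itmii} from a single elementary observation: the three score families $\eta_{k,j}^{\star}$, $\eta_{k,j}^{b}$ and $\eta_{k,j}^{\diamond}$ are mutually close in $\|\cdot\|_{2q}$, uniformly in $k$ and in $1\le j\le\tau$, with an error that is exponentially small in $b\ge C_0\log n$ and hence beats every fixed power of $n$ once $C_0$ is large. The starting point is the algebraic identity, valid for $1\le j\le\tau$ since $\langle X_k^{\tau},\e_j\rangle=\langle\overline X_k,\e_j\rangle=(\widetilde\lambda_j)^{1/2}\eta_{k,j}^{\star}$,
\[
\eta_{k,j}^{\diamond}-\eta_{k,j}^{\star}=\frac{\langle X_k^{\tau},\e_j^{\diamond}-\e_j\rangle}{(\widetilde\lambda_j^{\diamond})^{1/2}}+\langle\overline X_k,\e_j\rangle\Bigl(\frac{1}{(\widetilde\lambda_j^{\diamond})^{1/2}}-\frac{1}{(\widetilde\lambda_j)^{1/2}}\Bigr),
\]
and the analogous decomposition for $\eta_{k,j}^{b}-\eta_{k,j}^{\star}$ (with $\e_j^{b}$, $\widetilde\lambda_j^{b}$ in place of $\e_j^{\diamond}$, $\widetilde\lambda_j^{\diamond}$, and $\overline X_k$ throughout). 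I would then bound $\bigl\|\|X_k^{\tau}\|_{\Ln^2}\bigr\|_{2q}$ and $\bigl\|\|\overline X_k\|_{\Ln^2}\bigr\|_{2q}$ by $(\sum_j\widetilde\lambda_j)^{1/2}<\infty$ (Minkowski plus the uniform score moment bound $\sup_j\|\eta_{0,j}^{\star}\|_{2p^*}<\infty$, a consequence of \hyperref[A1star]{\Aonestar}), the eigenfunction gaps by $\|\e_j-\e_j^{\diamond}\|_{\Ln^2}\lesssim\rho^{b}$ and $\|\e_j^{b}-\e_j\|_{\Ln^2}\lesssim\rho^{b}/\psi_j$ (Lemma \ref{lem_long_run_tech}\hyperref[itm_(vi)_c]{\itmvi}, Lemma \ref{lem_operator_comp}, and the standard bound $\|\boldsymbol{\cal \GG}^{b}-\boldsymbol{\cal \GG}\|_{\mathcal L}\lesssim\rho^{b}$ coming from \hyperref[A1star]{\Aonestar}), the eigenvalue gaps by $|\widetilde\lambda_j^{\diamond}-\widetilde\lambda_j|\lesssim\rho^{b}/\psi_j$ and $\widetilde\lambda_j\thicksim\lambda_j\gtrsim\tau^{-\cd^-}$ (Lemma \ref{lem_long_run_tech}\hyperref[itm_(viii)_c]{\itmviii}, \hyperref[A2star]{\Atwostar}, \hyperref[A3star]{\Athreestar}), and the spectral gap by the polynomial lower bound $\min_{1\le j\le\tau}\psi_j\gtrsim\lambda_{\tau}/\tau\gtrsim\tau^{-1-\cd^-}$ that follows from convexity exactly as in \eqref{eq_lem_tech_2}. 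Since $\tau=n^{\td}$, all the polynomial-in-$n$ prefactors produced this way are absorbed by $\rho^{b}\le\rho^{C_0\log n}$ once $C_0$ is chosen large relative to $\td,\cd^-,\cd^+$, yielding $\sup_k\max_{1\le j\le\tau}\|\eta_{k,j}^{\diamond}-\eta_{k,j}^{\star}\|_{2q}\lesssim n^{-c}$ and $\sup_k\max_{1\le j\le\tau}\|\eta_{k,j}^{b}-\eta_{k,j}^{\star}\|_{2q}\lesssim n^{-c}$ for any prescribed $c$.

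Given this, \hyperref[itm_(ii)_eta]{\itmii} is immediate from the crude estimate $\|\sum_{k=1}^n(\eta_{k,j}^{\diamond}-\eta_{k,j})\|_{2q}\le n\,\sup_k\|\eta_{k,j}^{\diamond}-\eta_{k,j}\|_{2q}\lesssim n\cdot n^{-c}\lesssim n^{-1}$ for $c\ge 2$; no summation-over-$k$ or dependence argument is needed because the per-term error already dominates any power of $n$. For \hyperref[itm_(i)_eta]{\itmi} I would expand each bilinear term in $\etav_{i,j}^{\diamond}-\etav_{i,j}^{(\infty,b)}$ — recalling from \eqref{definition_eta_bn} and \eqref{defn_eta_infty_b} that the unlagged part of $\etav_{i,j}^{(\infty,b)}$ uses $\eta^{\star}$ and the lagged part uses $\eta^{b}$ — via $ab-a'b'=(a-a')b+a'(b-b')$, apply Cauchy--Schwarz, and invoke the per-term $\|\cdot\|_{2q}$ bounds from the previous step together with the uniform $\|\cdot\|_{2q}$ bounds on the individual scores. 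The unlagged sum carries a count weight $n\cdot n^{-1}=O(1)$ and the lagged double sum a count weight $\lesssim b$, so $\|\etav_{i,j}^{\diamond}-\etav_{i,j}^{(\infty,b)}\|_q\lesssim b\cdot n^{-c}\lesssim n^{-1}$ for $c$ large (using $b=\oo(n)$); centering costs only a factor $2$, which gives \hyperref[itm_(i)_eta]{\itmi}. Finally, \hyperref[A1b]{\Aoneb} for the diamond family follows by the triangle inequality from \hyperref[itm_(i)_eta]{\itmi}, \hyperref[itm_(ii)_eta]{\itmii} and Lemma \ref{lem_long_run_tech}\hyperref[itm_(i)_c]{\itmi}: $\sqrt{n/b}\,\|\overline{\etav}_{i,j}^{\diamond}\|_q\lesssim\sqrt{n/b}\,n^{-1}+\sqrt{n/b}\,\|\overline{\etav}_{i,j}^{(\infty,b)}\|_q\lesssim 1$, and $n^{-3/4}b^{1/4}\|\sum_k\eta_{k,j}^{\diamond}\|_{2q}\lesssim n^{-3/4}b^{1/4}n^{-1}+n^{-3/4}b^{1/4}\|\sum_k\eta_{k,j}^{\star}\|_{2q}\lesssim (b/n)^{1/4}=\oo(1)$.

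The main obstacle lies entirely in the first step: making the closeness estimates of Lemma \ref{lem_long_run_tech} work uniformly over $1\le i,j\le\tau$ rather than only over $j\le\JJ_n^+$. For indices $j$ near $\tau=n^{\td}$ the quantities $\psi_j$ and $\widetilde\lambda_j$ are only polynomially small in $n$, and one must verify that the resulting negative powers of $\lambda_{\tau}$ and $\psi_j^{-1}$ — which is precisely where the convexity hypothesis in \hyperref[A2star]{\Atwostar} and the sharp bound $\min_{1\le j\le\tau}\psi_j\gtrsim\lambda_{\tau}/\tau$ enter — are dominated by $\rho^{b}$ with $b\ge C_0\log n$. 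This enforces the quantifier discipline already stressed in the text: one first fixes $\td$ and only afterwards chooses $C_0$ large enough. With that in place, everything else reduces to routine applications of the triangle inequality and Cauchy--Schwarz.
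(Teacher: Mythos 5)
Your argument is correct, and it reaches the conclusion by a genuinely different route than the paper's. You bound the scalar score differences $\eta_{k,j}^{\diamond}-\eta_{k,j}^{\star}$ and $\eta_{k,j}^{b}-\eta_{k,j}^{\star}$ per term in $\|\cdot\|_{2q}$, accept the full counting price (a factor $n$ from the crude triangle inequality in \hyperref[itm_(ii)_eta]{\itmii}, a factor $b$ from the lagged double sum in \hyperref[itm_(i)_eta]{\itmi}), and let the exponential smallness $\rho^{b}\le\rho^{C_0\log n}$ absorb both the counting factor and the $\tau$-polynomial losses coming from $\psi_j^{-1}$ and $\widetilde\lambda_j^{-1/2}$. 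The paper instead decouples the randomness from the eigenfunction gap \emph{before} summing: after rewriting the centered cross term via Fubini--Tonelli as $\int_{\TT^2}\overline{X_k^{\tau}X_{k-h}^{\tau}}\,(\e_j^{\diamond}-\e_j)(\e_i^{\diamond}-\e_i)$, it applies deterministic Cauchy--Schwarz in $\Ln^2\times\Ln^2$ to pull out $\bigl\|\e_j^{\diamond}-\e_j\bigr\|_{\Ln^2}\bigl\|\e_i^{\diamond}-\e_i\bigr\|_{\Ln^2}\lesssim\rho^{2b}$ times the kernel-valued partial sum $\bigl\|\sum_{h,k}\overline{X_k^{\tau}X_{k-h}^{\tau}}/(n-h)\bigr\|_{\Ln^2\times\Ln^2}$, whose $q$-th moment concentrates at rate $\sqrt{b/n}$ by Lemma \ref{lem_long_run_tech} \hyperref[itm_(i)_c]{\itmi} and Parseval; for \hyperref[itm_(ii)_eta]{\itmii} the analogous step uses the $\sqrt{n}$ gain from Lemma \ref{lem_bound_partial_sum_gen}. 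So you trade away the $\sqrt{b/n}$ (resp.\ $\sqrt{n}$) concentration gain for a crude $b$ (resp.\ $n$) count, which is harmless here because $\rho^{b}$ beats any fixed power of $n$ once $C_0$ is large relative to $\td,\cd^{\pm}$; the paper's version is sharper and is the one that would survive if $\Omega_k$ only decayed polynomially, where the concentration gain is genuinely needed. Both proofs rest on the same quantifier discipline (fix $\td$, then choose $C_0$), which you state correctly, and on the bounds $\|\e_j-\e_j^{\diamond}\|_{\Ln^2}\lesssim\rho^{b}$, $|\widetilde\lambda_j-\widetilde\lambda_j^{\diamond}|\lesssim\rho^{b}/\psi_j$, $\min_{j\le\tau}\psi_j\gtrsim\tau^{-1-\cd^{-}}$ from Lemma \ref{lem_long_run_tech} and \eqref{eq_convex_bound}; these are already proved uniformly over $1\le j\le\tau$, so the ``main obstacle'' you flag is in fact already handled by the cited lemma.
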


\begin{proof}[Proof of Lemma \ref{lem_transf_A1}]
We first show \hyperref[itm_(i)_eta]{\itmi}. Note that it suffices to uniformly control the distance between $\eta_{k,i}^{\diamond} \eta_{k-h,j}^{\diamond}$ and $\eta_{k,i}^{} \eta_{k-h,j}^{}$. To this end, observe that we have the decomposition
\begin{align}\nonumber \label{eq_lem_transf_A1_1}
\langle X_k^{\tau}, \e_j^{\diamond} \rangle  \langle X_{k-h}^{\tau}, \e_i^{\diamond} \rangle& =  \langle X_k^{\tau}, \e_j^{\diamond} - \e_j \rangle  \langle X_{k-h}^{\tau}, \e_i \rangle + \langle X_k^{\tau}, \e_j \rangle  \langle X_{k-h}^{\tau}, \e_i^{\diamond} - \e_i \rangle
\\&+ \langle X_k^{\tau}, \e_j^{\diamond} - \e_j \rangle  \langle X_{k-h}^{\tau}, \e_i^{\diamond} - \e_i\rangle + \langle X_k^{\tau}, \e_j \rangle  \langle X_{k-h}^{\tau}, \e_i \rangle.
\end{align}
We will deal with the error terms separately. Recall that $\overline{X} = X - \E\bigl[X\bigr]$. Applying Fubini-Tonelli, we get that
\begin{align*}
\overline{\langle X_k^{\tau}, \e_j^{\diamond} - \e_j \rangle  \langle X_{k-h}^{\tau}, \e_i^{\diamond} - \e_i\rangle} =  \int_{\mathcal{T}^2} \overline{X_k^{\tau} X_{k-h}^{\tau}} \bigl(\e_j^{\diamond} - \e_j\bigr)\bigl(\e_i^{\diamond} - \e_i\bigr).
\end{align*}
Using Cauchy-Schwarz two times, we thus obtain from the above
\begin{align*}
\biggl|\sum_{h = 1}^{b}\sum_{k = h+1}^n \frac{\overline{\langle X_k^{\tau}, \e_j^{\diamond} - \e_j \rangle  \langle X_{k-h}^{\tau}, \e_i^{\diamond} - \e_i\rangle}}{n-h}\biggr| & \leq \biggl\|\sum_{h = 1}^{b}\sum_{k = h+1}^n \frac{\overline{X_k^{\tau} X_{k-h}^{\tau}}}{n-h}\biggr\|_{\Ln^2 \times \Ln^2} \\&\times \bigl\|\e_i^{\diamond} - \e_i\bigr\|_{\Ln^2}\bigl\|\e_j^{\diamond} - \e_j\bigr\|_{\Ln^2},
\end{align*}
where $\|f\|_{\Ln^2 \times \Ln^2}^2 = \int_{\mathcal{T}^2} f(u,v) du dv $. Elementary calculations yield that
\begin{align*}
\biggl\|\biggl\|\sum_{h = 1}^{b}\sum_{k = h+1}^n \frac{\overline{X_k^{\tau} X_{k-h}^{\tau}}}{n-h}\biggr\|_{\Ln^2 \times \Ln^2}\biggr\|_{q}^2 \leq \sum_{i,j = 1}^{\tau} \widetilde{\lambda}_i \widetilde{\lambda}_j \bigl\|\overline{\etav}_{i,j}^{(\infty,b,2)}\bigr\|_{q}^2.
\end{align*}
Since $\sum_{j = 1}^{\infty} \widetilde{\lambda}_j \leq C^{\boldsymbol{\cal \GG}} \sum_{j = 1}^{\infty} {\lambda}_j < \infty$ by \hyperref[A2star]{\Atwostar}, \eqref{lem_varphi_relations} and \hyperref[A3star]{\Athreestar}, we obtain from Lemma \ref{lem_long_run_tech} \hyperref[itm_(i)_c]{\itmi} that
\begin{align}
\biggl\|\biggl\|\sum_{h = 1}^{b}\sum_{k = h+1}^n \frac{\overline{X_k^{\tau} X_{k-h}^{\tau}}}{n-h}\biggr\|_{\Ln^2 \times \Ln^2}\biggr\|_{q}^2 \lesssim \frac{b}{n}\sum_{i,j = 1}^{\tau} \widetilde{\lambda}_i \widetilde{\lambda}_j  \lesssim \frac{b}{n}.
\end{align}
Observe that by Lemma \ref{lem_long_run_tech} \hyperref[itm_(ii)_c]{\itmii} $\max_{j \in \N}\varphi_{j,j}< \infty$. Due to \eqref{lem_varphi_relations} and Lemma \ref{lem_long_run_tech} \hyperref[itm_(viii)_c]{\itmviii} we conclude $\max_{1 \leq j \leq \tau} \lambda_j/\widetilde{\lambda}_j^{\diamond} \leq 2\max_{1 \leq j \leq \tau}\varphi_{j,j} < \infty$. From \hyperref[A2star]{\Atwostar} we thus obtain that $\max_{1 \leq j \leq \tau} (\widetilde{\lambda}_j^{\diamond})^{-1/2} \lesssim \tau^{\cd^-/2}$. Due to Lemma \ref{lem_long_run_tech} \hyperref[itm_(vi)_c]{\itmvi}, and piecing all bounds together, we get that for sufficiently large $C_0 > 0$
\begin{align}\label{eq_lem_transf_A1_5}
\max_{1 \leq i,j \leq \tau}(\widetilde{\lambda}_i^{\diamond} \widetilde{\lambda}_j^{\diamond})^{-1/2}\biggl\|\sum_{h = 1}^{b}\sum_{k = h+1}^n \frac{\overline{\langle X_k^{\tau}, \e_j^{\diamond} - \e_j \rangle  \langle X_{k-h}^{\tau}, \e_i^{\diamond} - \e_i\rangle}}{n-h}\biggr\|_{q} \lesssim n^{-1}.
\end{align}
Arguing in the same manner, one also obtains
\begin{align}\label{eq_lem_transf_A1_6}
\max_{1 \leq i,j \leq \tau}(\widetilde{\lambda}_i^{\diamond} \widetilde{\lambda}_j^{\diamond})^{-1/2}\biggl\|\sum_{h = 1}^{b_n}\sum_{k = h+1}^n \frac{\overline{\langle X_k^{\tau}, \e_j^{\diamond} - \e_j \rangle  \langle X_{k-h}^{\tau}, \e_i \rangle}}{n-h}\biggr\|_{q} \lesssim n^{-1},
\end{align}
and the same bound also applies to $\overline{\langle X_k^{\tau}, \e_j\rangle  \langle X_{k-h}^{\tau}, \e_i^{\diamond} - \e_i  \rangle}$. By virtue of the decomposition in \eqref{eq_lem_transf_A1_1}, the triangle inequality and \eqref{eq_lem_transf_A1_5}, \eqref{eq_lem_transf_A1_6}, we conclude that
\begin{align*}
\max_{1 \leq i,j \leq \tau}\bigl\|\overline{\etav}_{i,j}^{\diamond} - \overline{\etav}_{i,j}^{(\infty,b)}\bigr\|_{q} \lesssim n^{-1},
\end{align*}
which establishes \hyperref[itm_(i)_eta]{\itmi}. In order to show \hyperref[itm_(ii)_eta]{\itmii}, we can proceed in the same way. The only significant difference is that one needs to use Lemma \ref{lem_bound_partial_sum_gen} instead of Lemma \ref{lem_long_run_tech} \hyperref[itm_(i)_c]{\itmi}.
 \end{proof}

\begin{lemma}\label{lem_transf_A2}
Grant Assumption \ref{ass_eigenvaectors_longrun_exp_decay}. Then for sufficiently large $\td,C_0 > 0$, condition \hyperref[A2b]{\Atwob} holds for $\{\lambda_j^{\diamond}\}_{j \in \N}$ with the same $\ad$, $\JJ_n^+$, uniformly in $n,b$.
\end{lemma}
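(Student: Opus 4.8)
The plan is to transfer the convexity estimate \eqref{eq_convex_condi} for the genuine eigenvalues $\{\lambda_j\}_{j\in\N}$ to the truncated ones $\{\lambda_j^{\diamond}\}_{j\in\N}$, using that passing from ${\boldsymbol{\cal \GG}}^{\tau}$ to ${\boldsymbol{\cal \GG}}^{\diamond}$ perturbs each of the first $\tau$ eigenvalues by at most $\rho^b$ and annihilates all eigenvalues of index $>\tau$. Concretely, by Lemma \ref{lem_truncation_lemma}, Lemma \ref{lem_operator_comp} and Lemma \ref{lem_long_run_tech} \hyperref[itm_(iv)_c]{\itmiv}, $\max_{1\le j\le\tau}|\lambda_j-\lambda_j^{\diamond}|\le\|{\boldsymbol{\cal \GG}}^{\tau}-{\boldsymbol{\cal \GG}}^{\diamond}\|_{{\cal L}}\lesssim\rho^b$; Lemma \ref{lem_long_run_tech} \hyperref[itm_(viii)_c]{\itmviii} gives $\tfrac12\lambda_j\le\lambda_j^{\diamond}\le2\lambda_j$ for $1\le j\le\tau$; and $\lambda_j^{\diamond}=0$ for $j>\tau$ since $\boldsymbol{\cal \GG}^{\diamond}$ has rank at most $\tau$. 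Because $\JJ_n^+\lesssim n^{1/2-\ad}\ll n^{\td}=\tau$ for $\td$ large, every index $1\le j<\JJ_n^+$ sits well inside $\{1,\dots,\tau\}$, and the infinite sums in \hyperref[A2b]{\Atwob} become effectively finite sums over $1\le i\le\tau$.

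Next I would bound the denominators. Convexity of $\lambdav$ (part of \hyperref[A2star]{\Atwostar}) together with \eqref{eq_convex_bound} yields, for $i\ne j$ with $i,j\le\tau$, the spacing floor $|\lambda_i-\lambda_j|\ge\psi_j\ge\min_{1\le l\le\tau}\psi_l\gtrsim\lambda_{\tau}/\tau\gtrsim\tau^{-1-\cd^-}=n^{-\td(1+\cd^-)}$, exactly as in \eqref{eq_lem_tech_2}. Picking $C_0$ so large that $\rho^{C_0\log n}\lesssim n^{-\td(1+\cd^-)-1}$ (hence $\rho^b\le\rho^{C_0\log n}$ is negligible against this floor), the perturbation bound above gives $|\lambda_i^{\diamond}-\lambda_j^{\diamond}|\ge|\lambda_i-\lambda_j|-2\max_{l\le\tau}|\lambda_l-\lambda_l^{\diamond}|\ge\tfrac12|\lambda_i-\lambda_j|$. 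Combined with $\lambda_i^{\diamond}\le2\lambda_i$ and $\lambda_j^{\diamond}\le2\lambda_j$ this produces the termwise comparisons
\begin{align*}
\frac{\lambda_i^{\diamond}}{|\lambda_j^{\diamond}-\lambda_i^{\diamond}|}\le\frac{4\lambda_i}{|\lambda_j-\lambda_i|},\qquad \frac{\lambda_i^{\diamond}\lambda_j^{\diamond}}{(\lambda_j^{\diamond}-\lambda_i^{\diamond})^2}\le\frac{16\lambda_i\lambda_j}{(\lambda_j-\lambda_i)^2}\qquad(1\le i\le\tau,\ i\ne j),
\end{align*}
the terms with $i>\tau$ being zero. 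Summing over $i$ and invoking Lemma \ref{lem_verify_ass_poly} (i.e.\ \eqref{eq_convex_condi}) bounds $\sum_{i\ne j}\lambda_i^{\diamond}/|\lambda_j^{\diamond}-\lambda_i^{\diamond}|\lesssim j\log j$ and $\sum_{i\ne j}\lambda_i^{\diamond}\lambda_j^{\diamond}/(\lambda_j^{\diamond}-\lambda_i^{\diamond})^2\lesssim j^2$.

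It then remains to insert the prescribed size of $\JJ_n^+$. Using $j\le\JJ_n^+\lesssim n^{1/2-\ad}(\log n)^{-3/2}$, $\log\JJ_n^+\lesssim\log n$, and recalling that the geometric-decay setting of \hyperref[A1star]{\Aonestar} makes $b\asymp\log n$ the relevant bandwidth, so that $n/b\asymp n/\log n$, one obtains $(n/b)^{-1/2+\ad}\sum_{i\ne j}\lambda_i^{\diamond}/|\lambda_j^{\diamond}-\lambda_i^{\diamond}|\lesssim(\log n/n)^{1/2-\ad}\,n^{1/2-\ad}(\log n)^{-1/2}\lesssim(\log n)^{-\ad}$ and, similarly, $(n/b)^{-1+2\ad}\sum_{i\ne j}\lambda_i^{\diamond}\lambda_j^{\diamond}/(\lambda_j^{\diamond}-\lambda_i^{\diamond})^2\lesssim(\log n)^{-\ad}$, both $\le C^{\boldsymbol{\cal \GG}}$ for all admissible $n,b$. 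For the eigenvalue lower bound, $\lambda_{\JJ_n^+}^{\diamond}\ge\tfrac12\lambda_{\JJ_n^+}\gtrsim(\JJ_n^+)^{-\cd^-}\gtrsim n^{-(1/2-\ad)\cd^-}(\log n)^{(3/2)\cd^-}$, which dominates $(n/b)^{-\cd^-/2}\asymp n^{-\cd^-/2}(\log n)^{\cd^-/2}$ since $\ad>0$; with $\hd=\cd^-/2$ (so that $\pd=\lceil\cd^-/(2\ad)\rceil=\lceil\hd/\ad\rceil$ as in \hyperref[A2D]{\AtwoD}) this is precisely $\lambda_{\JJ_n^+}^{\diamond}\gtrsim(n/b)^{-\hd}$. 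Together these give \hyperref[A2b]{\Atwob} for $\{\lambda_j^{\diamond}\}_{j\in\N}$ with the same $\ad,\JJ_n^+$.

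The main obstacle is the second paragraph: forcing the truncation error $\rho^b$ to be negligible against the minimal spacing of the first $\tau$ eigenvalues. This dictates the order of quantifiers — first fix $\td$, which pins down $\tau=n^{\td}$ and the spacing floor $\gtrsim n^{-\td(1+\cd^-)}$, then fix $C_0$ large enough that $b\ge C_0\log n$ beats it — and is exactly why the lemma is phrased with ``for sufficiently large $\td,C_0$''. Everything downstream is routine arithmetic with the convexity bounds and the size of $\JJ_n^+$; the only further point needing care is keeping $b$ in the regime $b\asymp\log n$ so that $n/b$ stays comparable to $n/\log n$ (otherwise the fixed range $\JJ_n^+\lesssim n^{1/2-\ad}(\log n)^{-3/2}$ would be too generous for the first bound in \hyperref[A2b]{\Atwob}).
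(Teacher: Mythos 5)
Your proof is correct and follows essentially the same route as the paper: both establish the termwise comparability $|\lambda_i^{\diamond}-\lambda_j^{\diamond}|\ge(1-\OO(\rho^b))|\lambda_i-\lambda_j|$ by combining the perturbation bound from Lemma \ref{lem_long_run_tech} \hyperref[itm_(iv)_c]{\itmiv} with the convexity spacing floor \eqref{eq_convex_bound}, use \hyperref[itm_(viii)_c]{\itmviii} to replace $\lambda_i^{\diamond}$ by $\lambda_i$, then conclude via Lemma \ref{lem_verify_ass_poly} and the admissible size of $\JJ_n^+$. Your explicit handling of the quantifier order (pick $\td$ first, then $C_0$) and the regime $b\asymp\log n$ matches the paper's stated convention.
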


\begin{proof}[Proof of Lemma \ref{lem_transf_A2}]
From the triangle inequality,  Lemma \ref{lem_truncation_lemma}, Lemma \ref{lem_operator_comp} and Lemma \ref{lem_long_run_tech} \hyperref[itm_(iv)_c]{\itmiv} we get that for $1 \leq i,j \leq \tau$.
\begin{align}\nonumber \label{eq_lem_transf_A2_4}
\bigl|\lambda_i^{\diamond} - \lambda_j^{\diamond}\bigr| &\geq \bigl|\lambda_i - \lambda_j \bigr| - 2\bigl\|{\boldsymbol{\cal \GG}}^{\diamond} - {\boldsymbol{\cal \GG}}^{\tau} \bigr\|_{{\cal L}}\\& = \bigl|\lambda_i - \lambda_j \bigr| - \OO\bigl(\rho^{b}\bigr).
\end{align}
Due to the convexity assumption in \hyperref[A2star]{\Atwostar}, relation \eqref{eq_convex_bound} in Lemma \ref{lem_verify_ass_poly} and \hyperref[A2star]{\Atwostar} yield that for $i > j$
\begin{align}\label{eq_lem_transf_A2_5}
\bigl|\lambda_i - \lambda_j \bigr| \geq \bigl|\lambda_{j+1} - \lambda_j \bigr| \gtrsim \lambda_j/j \gtrsim j^{-\cd^- - 1}.
\end{align}
Combining \eqref{eq_lem_transf_A2_4} and \eqref{eq_lem_transf_A2_5}, it follows that for large enough $C_0 > 0$
\begin{align}\label{eq_lem_transf_A2_6}
\bigl|\lambda_i^{\diamond} - \lambda_j^{\diamond}\bigr| &\geq \bigl|\lambda_i - \lambda_j \bigr|\bigl(1 - \OO(\rho^b)\bigr), \quad \text{uniformly for $1 \leq i,j \leq \tau$.}
\end{align}
Using \eqref{eq_lem_transf_A2_6} and Lemma \ref{lem_long_run_tech} \hyperref[itm_(viii)_c]{\itmviii}, we thus obtain uniformly for $1 \leq j \leq \JJ_n^+ < \tau$
\begin{align*}
\max_{1 \leq j \leq \JJ_n^+}\sum_{\substack{i = 1\\j \neq i}}^{\tau} \frac{\lambda_i^{\diamond}}{|\lambda_j^{\diamond} - \lambda_i^{\diamond}|} \leq 2 \bigl(1 + \OO(\rho^b)\bigr)\max_{1 \leq j \leq \JJ_n^+}\sum_{\substack{i = 1\\j \neq i}}^{\tau} \frac{\lambda_i^{}}{|\lambda_j^{} - \lambda_i^{}|}.
\end{align*}
This together with Lemma \ref{lem_verify_ass_poly} and condition $\JJ_n^+ \lesssim n^{1/2 - \ad}(\log n)^{-\frac{3}{2}}$ yields that
\begin{align}\label{eq_lem_transf_A2_7}
(n/\log n)^{-1/2 + \ad} \max_{1 \leq j \leq \JJ_n^+}\sum_{\substack{i = 1\\j \neq i}}^{\tau} \frac{\lambda_i^{\diamond}}{|\lambda_j^{\diamond} - \lambda_i^{\diamond}|}  < \infty.
\end{align}
In the same manner, one establishes
\begin{align}\label{eq_lem_transf_A2_8}
(n/\log n)^{-1 + 2\ad} \max_{1 \leq j \leq \JJ_n^+}\sum_{\substack{i = 1\\j \neq i}}^{\tau} \frac{\lambda_i^{\diamond}\lambda_j^{\diamond}}{(\lambda_j^{\diamond} - \lambda_i^{\diamond})^2}  < \infty.
\end{align}
Combining \eqref{eq_lem_transf_A2_7}, \eqref{eq_lem_transf_A2_8} with the fact that $\lambda_{\JJ_n^+} \gtrsim n^{-\cd^-/2}$ by \hyperref[A2star]{\Atwostar} finishes the proof.

\end{proof}

\begin{lemma}\label{lem_transf_A3}
Grant Assumption \ref{ass_eigenvaectors_longrun_exp_decay}. Then for sufficiently large $\td,C_0 > 0$, \hyperref[A3b]{\Athreeb} holds for $\varphi_{j,j}^{\diamond}$, uniformly in $n,b$.
\end{lemma}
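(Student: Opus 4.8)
The plan is to verify the two parts of \hyperref[A3b]{\Athreeb} for the truncated quantities separately. The summability part is immediate: since $\lambda_j^{\diamond}=0$ for $j>\tau$, we have $\sum_{j=1}^{\infty}\lambda_j^{\diamond}=\sum_{j=1}^{\tau}\lambda_j^{\diamond}\leq 2C^{\boldsymbol{\cal \GG}}$ by Lemma \ref{lem_Gstar_is_compact_symm_and trace_class}. Hence only the two-sided bound $1/C^{\boldsymbol{\cal \GG}}\leq\varphi_{j,j}^{\diamond}\leq C^{\boldsymbol{\cal \GG}}$ remains, and it suffices to establish it for $1\leq j\leq\tau$ (for $j>\tau$ the operator $\boldsymbol{\cal \GG}^{\diamond}$ carries no mass, and only indices $j<\JJ_n^+<\tau$ are ever used downstream).

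Recall from \eqref{eq_rep_Gstar_kernel} that $\varphi_{j,j}^{\diamond}=\varphi_{j,j}^{(\infty,b)}=\sum_{|h|\leq b}\E[\eta_{h,j}^{\star}\eta_{0,j}^{\star}]$, a symmetric truncation of the full autocovariance sum $\varphi_{j,j}^{\star}=\sum_{h\in\Z}\E[\eta_{h,j}^{\star}\eta_{0,j}^{\star}]$. First I would invoke Lemma \ref{lem_long_run_tech} \hyperref[itm_(ii)_c]{\itmii} together with the stationarity of $\{\eta_{k,j}^{\star}\}_{k\in\Z}$ to obtain $|\E[\eta_{h,j}^{\star}\eta_{0,j}^{\star}]|\lesssim\rho^{|h|}$ uniformly over $j\in\N$, $h\in\Z$, for some $0<\rho<1$. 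Summing the geometric series, this gives a universal constant $C_1$ with $\sup_{j\in\N}\varphi_{j,j}^{\star}\leq C_1$, and simultaneously the truncation estimate $|\varphi_{j,j}^{\diamond}-\varphi_{j,j}^{\star}|\leq\sum_{|h|>b}|\E[\eta_{h,j}^{\star}\eta_{0,j}^{\star}]|\lesssim\rho^{b}$, uniformly in $j$ and $b$.

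Combining the truncation estimate with the non-degeneracy condition \hyperref[A3star]{\Athreestar}, which gives $\inf_{j\in\N}\varphi_{j,j}^{\star}\geq 1/C^{\boldsymbol{\cal G}}$, one obtains $1/C^{\boldsymbol{\cal G}}-\OO(\rho^{b})\leq\varphi_{j,j}^{\diamond}\leq C_1+\OO(\rho^{b})$ for all $1\leq j\leq\tau$. Since $b\geq C_0\log n$, we have $\rho^{b}\leq\rho^{C_0\log n}=n^{C_0\log\rho}=\oo(1)$, so for $C_0$ (hence $n$) sufficiently large the error term is below $1/(2C^{\boldsymbol{\cal G}})$; this yields $1/(2C^{\boldsymbol{\cal G}})\leq\varphi_{j,j}^{\diamond}\leq C_1+1$ uniformly in $n$ and in $b\geq C_0\log n$. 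Taking the constant in \hyperref[A3b]{\Athreeb} to be $\max\{2C^{\boldsymbol{\cal G}},\,C_1+1,\,2C^{\boldsymbol{\cal \GG}}\}$ (enlarging the constant of Assumption \ref{ass_eigenvaectors_longrun_exp_decay} if necessary) finishes the proof.

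I do not expect a genuine obstacle here: the statement is a routine consequence of the geometric decay of the score autocovariances (Lemma \ref{lem_long_run_tech} \hyperref[itm_(ii)_c]{\itmii}), the non-degeneracy assumption \hyperref[A3star]{\Athreestar}, and the trace-class bound of Lemma \ref{lem_Gstar_is_compact_symm_and trace_class}. The only points calling for care are ensuring that the geometric tail $\rho^{b}$ is uniformly negligible — this is precisely where the hypothesis $b\geq C_0\log n$ with $C_0$ large is used — and allowing the universal constant in \hyperref[A3b]{\Athreeb} to exceed the $C^{\boldsymbol{\cal G}}$ appearing in Assumption \ref{ass_eigenvaectors_longrun_exp_decay}.
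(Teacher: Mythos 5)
The display in the paper ``$\varphi_{j,j}^{\diamond} = \varphi_{j,j}^{(\infty,b)}$'' that you take as your starting point is, unfortunately, a misprint: the two objects cannot coincide. By Lemma \ref{lem_E_eta_ij_is_zero} applied to $\boldsymbol{\cal \GG}^{\diamond}$, the quantity that must enter \hyperref[A3b]{\Athreeb} for the truncated process is $\varphi_{j,j}^{\diamond}=\lambda_j^{\diamond}/\widetilde{\lambda}_j^{\diamond}=\bigl(\widetilde{\lambda}_j^{\diamond}\bigr)^{-1}\sum_{|h|\leq b}\E\bigl[\langle \overline{X}_k,\e_j^{\diamond}\rangle\langle \overline{X}_{k-h},\e_j^{\diamond}\rangle\bigr]$, i.e.\ the diagonal entry of $\boldsymbol{\cal \GG}^{\diamond}$ in the (scaled) $\e^{\diamond}$-basis of its \emph{own} eigenfunctions, whereas $\varphi_{j,j}^{(\infty,b)}=\widetilde{\lambda}_j^{-1}\langle\boldsymbol{\cal \GG}^{\diamond}(\e_j),\e_j\rangle$ is the diagonal entry in the $\e$-basis. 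Since $\boldsymbol{\cal \GG}^{\diamond}$ is not diagonal in the $\e$-basis, these differ. This is why the paper's proof opens by bounding $\bigl|\sum_{|h|\leq b}\bigl(\E[\langle\overline{X}_k,\e_j^{\diamond}\rangle\langle\overline{X}_{k-h},\e_j^{\diamond}\rangle]-\E[\langle\overline{X}_k,\e_j\rangle\langle\overline{X}_{k-h},\e_j\rangle]\bigr)\bigr|$ via the $\e_j^{\diamond}$ versus $\e_j$ perturbation (Lemma \ref{lem_long_run_tech} \hyperref[itm_(vi)_c]{\itmvi} and the $a^2-b^2$ trick from the proof of \hyperref[itm_(viii)_c]{\itmviii}) and then invokes $\widetilde{\lambda}_j^{\diamond}/\widetilde{\lambda}_j\in[1/2,2]$ from \hyperref[itm_(viii)_c]{\itmviii} before bringing in \hyperref[A3star]{\Athreestar}.

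Your argument is a correct and clean treatment of the \emph{second} half of this comparison --- passing from $\varphi_{j,j}^{(\infty,b)}$ to $\varphi_{j,j}^{\star}$ by the geometric tail bound from Lemma \ref{lem_long_run_tech} \hyperref[itm_(ii)_c]{\itmii}, which is indeed how the paper finishes --- but it omits the first half entirely: nowhere do you control the effect of replacing $\e_j$ by $\e_j^{\diamond}$ (and $\widetilde{\lambda}_j$ by $\widetilde{\lambda}_j^{\diamond}$). That omission is visible in the fact that your proof never invokes \hyperref[itm_(vi)_c]{\itmvi} or \hyperref[itm_(viii)_c]{\itmviii}, which are precisely the tools that measure the $\diamond$-perturbation. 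The remedy is to preface your argument with that comparison: $\widetilde{\lambda}_j^{\diamond}\varphi_{j,j}^{\diamond}-\widetilde{\lambda}_j\varphi_{j,j}^{(\infty,b)}=\oo(\widetilde{\lambda}_j^{\diamond})$ uniformly for $j\leq\tau$, after which your geometric-truncation and non-degeneracy reasoning closes the proof exactly as you wrote it. Your observations about the trace bound from Lemma \ref{lem_Gstar_is_compact_symm_and trace_class} and the role of $b\geq C_0\log n$ are both fine.
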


\begin{proof}[Proof of Lemma \ref{lem_transf_A3}]
Arguing similarly as in the proof of Lemma \ref{lem_long_run_tech} \hyperref[itm_(viii)_c]{\itmviii}, it follows that
\begin{align*}
\biggl|\sum_{|h| \leq b }\biggl(\E\bigl[\langle \overline{X}_k, \e_j^{\diamond} \rangle \langle \overline{X}_{k-h}, \e_j^{\diamond} \rangle \bigr] - \E\bigl[\langle \overline{X}_k, \e_j^{} \rangle \langle \overline{X}_{k-h}, \e_j^{} \rangle \bigr]\biggr) \biggr| = \oo\bigl((\widetilde{\lambda}_j^{\diamond} \widetilde{\lambda}_i^{\diamond})^{1/2}\bigr).
\end{align*}
Using Lemma \ref{lem_long_run_tech} \hyperref[itm_(viii)_c]{\itmviii}, we thus conclude from \hyperref[A3star]{\Athreestar} by routine calculations
\begin{align*}
\varphi_{j,j}^{\diamond} \geq \varphi_{j,j}/2 + \oo\bigl(1\bigr) \geq 1/(4 C^{\boldsymbol{\cal G}}), \text{uniformly for $1 \leq j \leq \tau$.}
\end{align*}
Similarly, using in addition Lemma \ref{lem_long_run_tech} \hyperref[itm_(ii)_c]{\itmii} yields
\begin{align*}
\varphi_{j,j}^{\diamond} = \varphi_{j,j} + \oo\bigl(1\bigr) \leq \sum_{|h| \leq b }\bigl|\E\bigl[\eta_{k,j} \eta_{k-h,j} \bigr]\bigr| + \oo\bigl(1\bigr) < \infty,
\end{align*}
which completes the proof.

\end{proof}

We are now ready to proceed to \hyperref[S2]{\Stwo}. To this end, we need the following preliminary result.

\begin{lemma}\label{lem_control_I_ij_exp}
Grant Assumption \ref{ass_eigenvaectors_longrun_exp_decay}. Then for $1 \leq q \leq p^*$ and sufficiently large $\td, C_0 > 0$, we have
\begin{enumerate} \setlength\itemsep{.6em}
  \item[\itmi]\label{itm_(i)} $\bigl\|I_{i,j}^{(\infty,b)}\bigr\|_{q} \lesssim \sqrt{{\lambda}_i {\lambda}_j (b/n)}$, uniformly for $i,j \in \N$,
  \item[\itmii]\label{itm_(ii)} $\max_{1 \leq j \leq \tau}\bigl\|\sum_{k = 1}^{\tau}|I_{k,j}^{(\infty,b)} - I_{k,j}^{\diamond}|^2 \bigr\|_{q/2} \lesssim n^{-2(\cd^+-1)\td}$,
  \item[\itmiii]\label{itm_(iii)} $\max_{1 \leq j \leq \tau}\bigl\|\sum_{k = 1}^{\tau}|I_{k,j}^{\diamond}|^2 \bigr\|_{q/2} \lesssim (b/n) + n^{-(\cd^+-1)\td}$.
\end{enumerate}
\end{lemma}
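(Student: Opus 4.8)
The plan is to establish \hyperref[itm_(i)]{\itmi} by a direct computation, \hyperref[itm_(ii)]{\itmii} by an operator--difference decomposition together with Parseval's identity, and then to read off \hyperref[itm_(iii)]{\itmiii} from the first two parts.

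For \hyperref[itm_(i)]{\itmi}, I would expand $I_{i,j}^{(\infty,b)} = \langle(\widehat{\boldsymbol{\cal \GG}}^b - {\boldsymbol{\cal \GG}}^b)(\e_i),\e_j\rangle$ via the plug-in forms \eqref{defn_G_longrun_estimator}, \eqref{defn_lag_op_est}. Since $\langle\overline{X}_k,\e_l\rangle = \sqrt{\widetilde{\lambda}_l}\,\eta_{k,l}^{\star}$, this yields $I_{i,j}^{(\infty,b)} = \sqrt{\widetilde{\lambda}_i\widetilde{\lambda}_j}\bigl(\overline{\etav}_{i,j}^{(\infty,b)} + R_{i,j}\bigr)$, where the remainder $R_{i,j}$ absorbs the (small) discrepancy between the $\eta^{b}$- and $\eta^{\star}$-scores occurring in $\etav_{i,j}^{(\infty,b,2)}$ and the mean-correction terms produced by replacing $\mu$ by $\bar{X}_n$, which are sums over $|h|\le b$ of products of factors of the form $n^{-1}\sum_{k}\eta_{k,i}^{\star}$. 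Using the weak-dependence (partial-sum) estimates under \hyperref[A1star]{\Aonestar} (as in \cite{wu_asymptotic_bernoulli}) together with Lemma \ref{lem_long_run_tech} \hyperref[itm_(i)_c]{\itmi}, \hyperref[itm_(ii)_c]{\itmii}, one checks $\|R_{i,j}\|_q \lesssim b/n = \oo(\sqrt{b/n})$ uniformly in $i,j$. The leading term is controlled by Lemma \ref{lem_long_run_tech} \hyperref[itm_(i)_c]{\itmi}, giving $\|\overline{\etav}_{i,j}^{(\infty,b)}\|_q \lesssim \sqrt{b/n}$, and finally $\widetilde{\lambda}_l = \lambda_l/\varphi_{l,l}^{\star}$ with \hyperref[A3star]{\Athreestar} and Lemma \ref{lem_long_run_tech} \hyperref[itm_(ii)_c]{\itmii} ($\varphi_{l,l}^{\star}$ bounded above and below) lets one replace $\sqrt{\widetilde{\lambda}_i\widetilde{\lambda}_j}$ by $\sqrt{\lambda_i\lambda_j}$ up to constants.

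For \hyperref[itm_(ii)]{\itmii}, I would set $B = \widehat{\boldsymbol{\cal \GG}}^{\diamond} - {\boldsymbol{\cal \GG}}^{\diamond}$ and $A = (\widehat{\boldsymbol{\cal \GG}}^b - {\boldsymbol{\cal \GG}}^b) - B$, and decompose
\[
I_{k,j}^{(\infty,b)} - I_{k,j}^{\diamond} = \langle A\e_k,\e_j\rangle + \langle B(\e_k - \e_k^{\diamond}),\e_j\rangle + \langle B\e_k^{\diamond},\e_j - \e_j^{\diamond}\rangle.
\]
For the first term, since $\{\e_k\}_{k\in\N}$ is orthonormal, $\sum_{k=1}^{\tau}\langle A\e_k,\e_j\rangle^2 = \sum_{k=1}^{\tau}\langle\e_k,A^*(\e_j)\rangle^2 \le \|A^*(\e_j)\|_{\Ln^2}^2 \le \|A\|_{{\cal L}}^2$, so the truncation level $\tau$ does not enter, and $\bigl\|\,\|A\|_{{\cal L}}\,\bigr\|_q \lesssim n^{-(\cd^+-1)\td}$ by the triangle inequality and Lemma \ref{lem_long_run_tech} \hyperref[itm_(iii)_c]{\itmiii}. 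For the other two terms, Lemma \ref{lem_long_run_tech} \hyperref[itm_(vi)_c]{\itmvi} gives $\max_{k\le\tau}\|\e_k - \e_k^{\diamond}\|_{\Ln^2} \lesssim \rho^b$ and Lemma \ref{lem_long_run_tech} \hyperref[itm_(v)_c]{\itmv} gives $\bigl\|\,\|B\|_{{\cal L}}\,\bigr\|_q \lesssim \sqrt{b/n}$, whence the crude bound $\sum_{k=1}^{\tau}(\cdot)^2 \lesssim \tau\,\|B\|_{{\cal L}}^2\,\rho^{2b}$ applies; taking $\|\cdot\|_{q/2}$-norms gives $\lesssim \tau\rho^{2b}(b/n) \lesssim n^{-2(\cd^+-1)\td}$ once $b \ge C_0\log n$ with $C_0$ large (depending on $\cd^+,\td$). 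Adding the three contributions proves \hyperref[itm_(ii)]{\itmii}.

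Finally, \hyperref[itm_(iii)]{\itmiii} is immediate: from $I_{k,j}^{\diamond} = I_{k,j}^{(\infty,b)} - (I_{k,j}^{(\infty,b)} - I_{k,j}^{\diamond})$ we get $\sum_{k=1}^{\tau}|I_{k,j}^{\diamond}|^2 \le 2\sum_{k=1}^{\tau}|I_{k,j}^{(\infty,b)}|^2 + 2\sum_{k=1}^{\tau}|I_{k,j}^{(\infty,b)} - I_{k,j}^{\diamond}|^2$; the first sum has $\|\cdot\|_{q/2}$-norm at most $\sum_k\|I_{k,j}^{(\infty,b)}\|_q^2 \lesssim (b/n)\,\lambda_j\sum_k\lambda_k \lesssim b/n$ by \hyperref[itm_(i)]{\itmi} and $\sum_k\lambda_k<\infty$ (which follows from \hyperref[A2star]{\Atwostar}), while the second sum is handled by \hyperref[itm_(ii)]{\itmii}. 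I expect the main obstacle to be \hyperref[itm_(i)]{\itmi} --- specifically, verifying that the mean-correction remainder $R_{i,j}$, a sum of roughly $b$ boundary and centering terms, is genuinely of lower order $\oo(\sqrt{b/n})$ \emph{uniformly} over all $i,j\in\N$; the Parseval step in \hyperref[itm_(ii)]{\itmii} is the key device that prevents the bound from degrading by the truncation level $\tau = n^{\td}$.
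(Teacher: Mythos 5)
Your proposal is correct and follows essentially the same route as the paper: part (i) via the $\etav$-representation plus the mean-correction remainder of order $b/n = \oo(\sqrt{b/n})$; part (ii) via an operator-difference decomposition with Parseval controlling the sum over $k$ for the operator-norm part and a crude $\tau$-factor for the eigenfunction-difference part, killed by taking $C_0$ large in $b \geq C_0\log n$; and part (iii) from $(a+b)^2 \leq 2(a^2+b^2)$ together with (i) and (ii). The paper's algebraic split in (ii) uses the pair $\widehat{\boldsymbol{\cal \GG}}^b - \widehat{\boldsymbol{\cal \GG}}^{\diamond}$ and ${\boldsymbol{\cal \GG}}^b - {\boldsymbol{\cal \GG}}^{\diamond}$ rather than your single $A = (\widehat{\boldsymbol{\cal \GG}}^b - {\boldsymbol{\cal \GG}}^b) - (\widehat{\boldsymbol{\cal \GG}}^{\diamond} - {\boldsymbol{\cal \GG}}^{\diamond})$, but this is a cosmetic rearrangement with the same ingredients (Lemma \ref{lem_long_run_tech} \hyperref[itm_(iii)_c]{\itmiii}, \hyperref[itm_(v)_c]{\itmv}, \hyperref[itm_(vi)_c]{\itmvi}).
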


\begin{proof}[Proof of Lemma \ref{lem_control_I_ij_exp}]
We first show \hyperref[itm_(i)]{\itmi}. Using representation \eqref{eq_rep_X_k} with $b = \infty$ (a different $b$), elementary calculations give
\begin{align*}
\bigl\|I_{i,j}^{(\infty,b)}\bigr\|_{q} \lesssim \sqrt{\widetilde{\lambda}_i \widetilde{\lambda}_j}\biggl(\|\overline{\etav}_{i,j}^{(\infty,b)}\|_{q} + \max_{i,j \in \N}\frac{b}{n^2}\bigl\|\sum_{k = 1}^n \eta_{k,j} \bigr\|_{2q}^2 \biggr).
\end{align*}
Observe that by \hyperref[A3star]{\Athreestar} we get that $\min_{j \in \N} \varphi_{j,j} \geq 1/C^{\boldsymbol{\cal G}}$. Due to \eqref{lem_varphi_relations} we conclude $\max_{j \in \N} \widetilde{\lambda}_j/\lambda_j \leq C^{\boldsymbol{\cal G}}$. From \hyperref[A1star]{\Aonestar}, using Lemma \ref{lem_bound_partial_sum_gen} and Lemma \ref{lem_long_run_tech} \hyperref[itm_(i)_c]{\itmi}, claim \hyperref[itm_(i)]{\itmi} follows. Next, observe that by the triangle inequality, Cauchy-Schwarz and Lemma \ref{lem_truncation_lemma}, for $1 \leq j,k \leq \tau$
\begin{align*}
\bigl|I_{j,k}^{(\infty,b)} - I_{j,k}^{\diamond} \bigr| &\lesssim \bigl| \big\langle \bigl(\widehat{\boldsymbol{\cal \GG}}^{b} - \widehat{\boldsymbol{\cal \GG}}^{\diamond}\bigr)(\e_j), \e_k \big\rangle\bigl| + \bigl| \big\langle \bigl({\boldsymbol{\cal \GG}}^{b} - {\boldsymbol{\cal \GG}}^{\diamond}\bigr)(\e_j), \e_k \big \rangle\bigl| \\&+ \bigl\|\widehat{\boldsymbol{\cal \GG}}^{\diamond} - {\boldsymbol{\cal \GG}}^{\diamond}\bigr\|_{{\cal L}} \max_{1 \leq j \leq \tau}\bigl\|\e_j - \e_j^{\diamond}\bigr\|_{\Ln^2}.
\end{align*}
Since $\sum_{j = 1}^{\tau} \langle x, \e_j \rangle^2 \leq \|x\|_{\Ln^2}^2$, inequality $(a+b+c)^2 \leq 3(a^2 + b^2 + c^2)$, Lemma \ref{lem_long_run_tech} \hyperref[itm_(v)_c]{\itmv}, \hyperref[itm_(vi)_c]{\itmvi} and the triangle inequality then yield
\begin{align*}
\bigl\|\sum_{k = 1}^{\tau}\bigl|I_{j,k}^{(\infty,b)} - I_{j,k}^{\diamond} \bigr|^2\bigr\|_{q/2} &\lesssim \bigl\|\|\widehat{\boldsymbol{\cal \GG}}^{b} - \widehat{\boldsymbol{\cal \GG}}^{\diamond}\|_{{\cal L}}^2\bigr\|_{q/2} + \bigl\|{\boldsymbol{\cal \GG}}^{b} - {\boldsymbol{\cal \GG}}^{\diamond}\bigl\|_{{\cal L}}^2 + \tau^2 \rho^{2b}.
\end{align*}
Hence selecting $C_0 > 0$ sufficiently large, claim \hyperref[itm_(ii)]{\itmii} follows from Lemma \ref{lem_long_run_tech} \hyperref[itm_(iii)_c]{\itmiii}. Claim \hyperref[itm_(iii)]{\itmiii} follows from $(a+b)^2 \leq 2(a^2 + b^2)$ and \hyperref[itm_(i)_c]{\itmi}, \hyperref[itm_(ii)_c]{\itmii}.

\end{proof}

We can now complete \hyperref[S2]{\Stwo}:

\begin{proof}[Proof of Theorem \ref{theorem_exp_decay_long_run_eigen}]
By virtue of Theorem \ref{thm_longrun_b_remains_valid} and Lemmas \ref{lem_Gstar_is_compact_symm_and trace_class}, \ref{lem_transf_A1}, \ref{lem_transf_A2} and \ref{lem_transf_A3}, it suffices to show that the error of replacing all quantities in Theorem \ref{theorem_exp_decay_long_run_eigen} with their $\diamond$-analogues is negligible. More precisely,
\begin{align}
&\bigl\|\max_{1 \leq j < \JJ_n^+}|\widehat{\lambda}_j^{b} - \lambda_j^{} - I_{j,j}^{(b,\infty)}|/\lambda_j \bigr\|_p \lesssim \bigl\|\max_{1 \leq j < \JJ_n^+}|\widehat{\lambda}_j^{\diamond} - \lambda_j^{\diamond} - I_{j,j}^{\diamond}|/\lambda_j^{\diamond} \bigr\|_p + \text{error}.
\end{align}
We will do so in the sequel. Observe first that by Lemma \ref{lem_truncation_lemma} and Lemma \ref{lem_operator_comp} we have
\begin{align*}
\bigl|(\widehat{\lambda}_j^b - {\lambda}_j^{}) - (\widehat{\lambda}_j^{\diamond} - {\lambda}_j^{\diamond})\bigr| \leq \bigl|\widehat{\lambda}_j^b - \widehat{\lambda}_j^{\diamond} \bigr| + \bigl|{\lambda}_j - {\lambda}_j^{\diamond} \bigr| \leq \bigl\|\widehat{\boldsymbol{\cal \GG}}^{b} - \widehat{\boldsymbol{\cal \GG}}^{\diamond}\bigr\|_{{\cal L}} + \bigl\|{\boldsymbol{\cal \GG}}^{b} - {\boldsymbol{\cal \GG}}^{\diamond}\bigr\|_{{\cal L}}.
\end{align*}
Hence by Lemma \ref{lem_long_run_tech} \hyperref[itm_(iii)_c]{\itmiii} we get that (recall $p^* = p2^{\pd + 4}$)
\begin{align*}
\sqrt{n/b}\bigl\|\max_{1 \leq j < \JJ_n^+} \bigl|(\widehat{\lambda}_j^b - {\lambda}_j^{}) - (\widehat{\lambda}_j^{\diamond} - {\lambda}_j^{\diamond})\bigr|/\lambda_j \bigr\|_{p} \lesssim \sqrt{n/b } n^{-(\cd^+-1)\td}/\lambda_{\JJ_n^+}.
\end{align*}
Due to condition \hyperref[A2star]{\Atwostar} and $\JJ_n^+ \lesssim n^{1/2}$, we get
\begin{align*}
\sqrt{n/b}\, n^{-(\cd^+-1)\td}/\lambda_{\JJ_n^+} \lesssim n^{\frac{\cd^- +1}{2} -(\cd^+ -1)\td } \lesssim n^{-1},
\end{align*}
for $\td$ sufficiently large. We thus conclude
\begin{align}\label{eq_thm_exp_decay_long_run_eigen_1}
\sqrt{n/b }\biggr\|\max_{1 \leq j < \JJ_n^+}\bigl(\bigl|(\widehat{\lambda}_j^b - {\lambda}_j^{}) - (\widehat{\lambda}_j^{\diamond} - {\lambda}_j^{\diamond})\bigr|/\lambda_j\bigr)\biggr\|_{p} \lesssim n^{-1}.
\end{align}
An application of Lemma \ref{lem_control_I_ij_exp} \hyperref[itm_(ii)]{\itmii} yields that for sufficiently large $\td > 0$ and $C_0 > 0$
\begin{align}\label{eq_thm_exp_decay_long_run_eigen_2}
\sqrt{n/b}\bigl\|\max_{1 \leq j < \JJ_n^+}|I_{j,j}^{(\infty,b)} - I_{j,j}^{\diamond}|/\lambda_j \bigr\|_p \lesssim n^{-1}.
\end{align}
Combining \eqref{eq_thm_exp_decay_long_run_eigen_1} and \eqref{eq_thm_exp_decay_long_run_eigen_2} and using Lemma \ref{lem_long_run_tech} \hyperref[itm_(viii)_c]{\itmviii}, we arrive at
\begin{align*}
&\bigl\|\max_{1 \leq j < \JJ_n^+}|\widehat{\lambda}_j^{b} - \lambda_j^{} - I_{j,j}^{(b,\infty)}|/\lambda_j \bigr\|_p \lesssim
\bigl\|\max_{1 \leq j < \JJ_n^+}|\widehat{\lambda}_j^{\diamond} - \lambda_j^{\diamond} - I_{j,j}^{\diamond}|/\lambda_j \bigr\|_p + 1/(n \sqrt{n/b}) \\&\lesssim \bigl\|\max_{1 \leq j < \JJ_n^+}|\widehat{\lambda}_j^{\diamond} - \lambda_j^{\diamond} - I_{j,j}^{\diamond}|/\lambda_j^{\diamond} \bigr\|_p + 1/(n \sqrt{n/b}).
\end{align*}

\end{proof}

\begin{proof}[Proof of Theorem \ref{theorem_exp_decay_long_run_fun}]
Proceeding as in the proof of Theorem \ref{theorem_exp_decay_long_run_eigen}, based on Lemmas \ref{lem_transf_A1}, \ref{lem_transf_A2} and \ref{lem_transf_A3}, it suffices to show that the error of replacing all expressions by their corresponding $\diamond$-analogues is bounded by $n^{-2}$, uniformly for $1 \leq j < \JJ_n^+$. To this end, note first that due to the convexity assumption in \hyperref[A2star]{\Atwostar}, Lemma \ref{lem_verify_ass_poly} yields that uniformly for $k,j \in \N$
\begin{align}\label{eq_thm_exp_eigenfunctions_exp_1}
(j \vee k) |\lambda_j - \lambda_k|\gtrsim (\lambda_j \vee \lambda_k)|j-k|.
\end{align}
We will make frequent use of this lower bound in the sequel. We first consider the expansion of $\widehat{\e}_j^b - \e_j$. To this end, we establish preliminary bounds regarding $I_{k,j}^{(\infty,b)}$, $I_{k,j}^{\diamond}$. For $2\JJ_n^+ < \tau$, using Lemma \ref{lem_max_mom} and the triangle inequality we get
\begin{align*}
\biggl\|\max_{1 \leq j < \JJ_n^+}\frac{1}{\Lambda_j}\sum_{k > \tau} \frac{\bigl(I_{k,j}^{(\infty,b)}\bigr)^2}{(\lambda_k - \lambda_j)^2}\biggr\|_p \lesssim \bigl(\JJ_n^{+}\bigr)^{1/p} \sum_{k > \tau} \max_{1 \leq j < \JJ_n^+}\frac{1}{\Lambda_j}\frac{\bigl\|\bigl(I_{k,j}^{(\infty,b)}\bigr)^2\bigr\|_p}{(\lambda_k - \lambda_j)^2}.
\end{align*}
Since $2p \leq p^*$, $2\JJ_n^+ < \tau$, Lemma \ref{lem_control_I_ij_exp} \hyperref[itm_(i)]{\itmi}, \eqref{eq_thm_exp_eigenfunctions_exp_1} and \hyperref[A2star]{\Atwostar} yield the upper bound
\begin{align*}
\bigl(\JJ_n^{+}\bigr)^{1/p} \frac{b}{n} \sum_{k > \tau} \max_{1 \leq j < \JJ_n^+}\frac{1}{\Lambda_j}\frac{k^2 \lambda_j \lambda_k}{\lambda_j^2 (k - j)^2} \lesssim \bigl(\JJ_n^{+}\bigr)^{1/p}\frac{b n^{-(\cd^+-1)\td -1}}{\lambda_{\JJ_n^+}} \max_{1 \leq j < \JJ_n^+}\frac{1}{\Lambda_j}.
\end{align*}
Since $\Lambda_j \geq \lambda_j/\lambda_{j-1} \gtrsim \lambda_j \wedge 1$ and $\lambda_j \gtrsim j^{-\cd^-}$ by \hyperref[A2star]{\Atwostar} and $\JJ_n^+ \lesssim n^{1/2}$, we conclude from the above that for sufficiently large $\td > 0$ we have
\begin{align}\label{eq_thm_exp_eigenfunctions_exp_2}
\biggl\|\max_{1 \leq j < \JJ_n^+}\frac{1}{\Lambda_j}\sum_{k > \tau} \frac{\bigl(I_{k,j}^{(\infty,b)}\bigr)^2}{(\lambda_k - \lambda_j)^2}\biggr\|_p \lesssim n^{-2}, \quad 2\JJ_n^+ \leq \tau.
\end{align}
Arguing in a similar manner, we get that for sufficiently large $\td > 0$
\begin{align}\nonumber\label{eq_thm_exp_eigenfunctions_exp_3}
&\biggl\|\max_{1 \leq j < \JJ_n^+}\frac{1}{\Lambda_j^2}\sum_{\substack{k = 1\\k \neq j}}^{\tau} \frac{\bigl|I_{k,j}^{(\infty,b)} - I_{k,j}^{\diamond} \bigr|^2}{(\lambda_j - \lambda_k)^2}\biggr\|_{p/2} \lesssim \max_{1 \leq j < \JJ_n^+}\biggl(\frac{(\JJ_n^+)^{2/p}}{\lambda_{j}^2 \Lambda_j^2}\biggl\|\sum_{\substack{k = 1 \\ k \neq j}}^{\tau} \bigl|I_{k,j}^{(\infty,b)} - I_{k,j}^{\diamond}|^2 \biggr\|_{q/2}\biggr) \\&\lesssim n^{-2(\cd^+-1)\td + 2\cd^- + 1/p} \lesssim n^{-2}.
\end{align}
By related arguments and Lemma \ref{lem_long_run_tech} \hyperref[itm_(vi)_c]{\itmvi}, Lemma \ref{lem_control_I_ij_exp} \hyperref[itm_(iii)]{\itmiii}, we get that for sufficiently large $C_0 > 0$
\begin{align}\label{eq_thm_exp_eigenfunctions_exp_4}
\biggl\|\max_{1 \leq j < \JJ_n^+}\frac{1}{\Lambda_j}\biggl\|\sum_{\substack{k = 1\\k \neq j}}^{\tau} \frac{(\e_k^{\diamond} - \e_k) I_{k,j}^{\diamond}}{\lambda_j - \lambda_k} \biggr\|_{\Ln^2}\biggr\|_p \lesssim \max_{1 \leq j < \JJ_n^+}\biggl(\frac{(\JJ_n^+)^{1/p+1}}{\lambda_{\JJ_n^+} \Lambda_j}\sum_{\substack{k = 1 \\ k \neq j}}^{\tau} \frac{\rho^b}{|k-j|} \biggr) \lesssim \frac{1}{n^2}.
\end{align}
Combining \eqref{eq_thm_exp_eigenfunctions_exp_2}, \eqref{eq_thm_exp_eigenfunctions_exp_3} and \eqref{eq_thm_exp_eigenfunctions_exp_4} and using $\|\e_j\|_{\Ln^2} = 1$ we obtain via the triangle inequality
\begin{align}\label{eq_thm_exp_eigenfunctions_exp_5}
\biggl\|\max_{1 \leq j < \JJ_n^+}\frac{1}{\Lambda_j}\biggl\|\sum_{\substack{k = 1\\k \neq j}}^{\infty} \e_k \frac{I_{k,j}^{(\infty,b)}}{\lambda_j - \lambda_k} - \sum_{\substack{k = 1\\k \neq j}}^{\tau}\e_k^{\diamond}\frac{I_{k,j}^{\diamond}}{\lambda_j - \lambda_k}\biggr\|_{\Ln^2}\biggr\|_p \lesssim \frac{1}{n^2}.
\end{align}
Now, using $(a+b)^2 \leq 2(a^2 + b^2)$, $\|\widehat{\e}_j^b\|_{\Ln^2},\|\widehat{\e}_j^{\diamond}\|_{\Ln^2},\|{\e}_j^{\diamond}\|_{\Ln^2}, \|{\e}_j\|_{\Ln^2} = 1$, Lemma \ref{lem_long_run_tech} \hyperref[itm_(vi)_c]{\itmvi}, \hyperref[itm_(vii)_c]{\itmvii} and \eqref{eq_thm_exp_eigenfunctions_exp_5}, the triangle inequality gives for sufficiently large $\td, C_0 > 0$
\begin{align}\nonumber \label{eq_thm_exp_eigenfunctions_exp_5.5}
&\biggl\|\max_{1 \leq j < \JJ_n^+}\frac{1}{\Lambda_j}\biggl\|\widehat{\e}_j^b - \e_j - \frac{\e_j}{2}\|\widehat{\e}_j^b - \e_j\|_{\Ln^2}^2 - \sum_{\substack{k = 1\\k \neq j}}^{\infty} \e_k \frac{I_{k,j}^{(\infty,b)}}{\lambda_j - \lambda_k}\biggr\|_{\Ln^2}\biggr\|_p \\&\lesssim \frac{1}{n^2} + \biggl\|\max_{1 \leq j < \JJ_n^+}\frac{1}{\Lambda_j}\biggl\|\widehat{\e}_j^{\diamond} - \e_j^{\diamond} - \frac{\e_j^{\diamond}}{2}\|\widehat{\e}_j^{\diamond} - \e_j^{\diamond}\|_{\Ln^2}^2 - \sum_{\substack{k = 1\\k \neq j}}^{\tau} \e_k^{\diamond} \frac{I_{k,j}^{\diamond}}{\lambda_j - \lambda_k}\biggr\|_{\Ln^2}\biggr\|_p.
\end{align}
Moreover, using the same arguments as in the proof of Lemma \ref{lem_transf_A2}, it follows that
\begin{align}\label{eq_thm_exp_eigenfunctions_exp_6}
\Lambda_j = \sum_{\substack{k = 1\\k \neq j}}^{\infty} \frac{\lambda_j \lambda_k }{(\lambda_j - \lambda_k)^2} \geq \sum_{\substack{k = 1\\k \neq j}}^{\tau} \frac{\lambda_j \lambda_k }{(\lambda_j - \lambda_k)^2} \gtrsim \bigl(1 - \OO(\rho^b)\bigr) \sum_{\substack{k = 1\\k \neq j}}^{\tau} \frac{\lambda_j^{\diamond} \lambda_k^{\diamond} }{(\lambda_j^{\diamond} - \lambda_k^{\diamond})^2} \stackrel{def}{=} \Lambda_j^{\diamond},
\end{align}
and this holds uniformly for $1 \leq j < \JJ_n^+$ (we exclude $\OO(\rho^b)$ in the above definition of $\Lambda_j^{\diamond}$). Similarly, using also Lemma \ref{lem_control_I_ij_exp} \hyperref[itm_(iii)]{\itmiii} in addition, it follows that
\begin{align}\label{eq_thm_exp_eigenfunctions_exp_7}
&\biggl\|\max_{1 \leq j < \JJ_n^+}\frac{1}{\Lambda_j}\biggl\|\sum_{\substack{k = 1\\k \neq j}}^{\tau} \e_k^{\diamond}I_{k,j}^{\diamond}\biggl(\frac{1}{\lambda_j - \lambda_k} - \frac{1}{\lambda_j^{\diamond} - \lambda_k^{\diamond}}\biggr)\biggr\|_{\Ln^2}\biggr\|_p \lesssim \rho^b,
\end{align}
for sufficiently large $C_0 > 0$. Using first \eqref{eq_thm_exp_eigenfunctions_exp_7} and then \eqref{eq_thm_exp_eigenfunctions_exp_6}, it follows that \eqref{eq_thm_exp_eigenfunctions_exp_5.5} is further bounded by
\begin{align}
\lesssim \frac{1}{n^2} + \biggl\|\max_{1 \leq j < \JJ_n^+}\frac{1}{\Lambda_j^{\diamond}}\biggl\|\widehat{\e}_j^{\diamond} - \e_j^{\diamond} - \frac{\e_j^{\diamond}}{2}\|\widehat{\e}_j^{\diamond} - \e_j^{\diamond}\|_{\Ln^2}^2 - \sum_{\substack{k = 1\\k \neq j}}^{\tau} \e_k^{\diamond} \frac{I_{k,j}^{\diamond}}{\lambda_j^{\diamond} - \lambda_k^{\diamond}}\biggr\|_{\Ln^2}\biggr\|_p.
\end{align}
This completes the proof for the expansion of $\widehat{\e}_j^b - \e_j$. The treatment of the expansion $\|\widehat{\e}_j^b - \e_j\|_{\Ln^2}^2$ only requires minor adaption of the previous arguments, we omit the details.

\end{proof}

\begin{proof}[Proof of Proposition \ref{prop_replace_I_with_eta_exp}]
This follows from Lemma \ref{lem_long_run_tech} \hyperref[itm_(i)_c]{\itmi} and analogue computations as in the proof of Theorem \ref{thm_longrun_b_remains_valid}.

\end{proof}

\begin{proof}[Proof of Corollary \ref{cor_norm_bounds_exp}]
This follows from Proposition \ref{prop_replace_I_with_eta_exp} and Lemma \ref{lem_long_run_tech} \hyperref[itm_(i)_c]{\itmi}.
\end{proof}

\section{Proofs of Section  \ref{sec_applications}}

We need to introduce some further notation. To this end, we slightly reformulate our notion of weak dependence in an equivalent way. In the sequel, $\bigl\{\epsilon_{k}\bigr\}_{k \in \Z} \in \mathds{S}$ denotes an IID sequence in some measure space $\mathds{S}$ and $\F_k = \sigma\bigl(\epsilon_j,\, j \leq k\bigr)$ the corresponding filtration. For $d \in \N$, we then consider the variables
\begin{align*}
U_{k,h} = H_h\bigl(\F_k\bigr), \quad k \in \Z,\, 1 \leq h \leq d,
\end{align*}
where $H_h$ are measurable functions. Note that by considering different measure spaces $\mathds{S}$, we can virtually model any spatial dependence structure we want, with the extreme cases where $U_{k,h} = U_{k,h+1}$ or $U_{k,h}$ and $U_{k,h+1}$ are independent. Compared to Section \ref{sec_applications}, this setup is notationally more convenient, and prevents us from the necessity of considering different sequences $\bigl\{\epsilon_{k,h}\bigr\}_{k \in \Z}$ for each coordinate $h$. As a measure of dependence, we then consider
\begin{align*}
\theta_{j,p} = \max_{1 \leq h \leq d}\bigl\|U_{j,h} - U_{j,h}'\bigr\|_p, \quad p \geq 1,
\end{align*}
where $U_{k,h} = H_h\bigl(\F_k'\bigr)$, $\F_k' = \sigma\bigl(\ldots \epsilon_{-1},\epsilon_0', \epsilon_1, \ldots, \epsilon_k\bigr)$, and $\bigl\{\epsilon_k'\bigr\}_{k \in \Z}$ is an independent copy of $\bigl\{\epsilon_k\bigr\}_{k \in \Z}$.

\subsection{Gaussian approximation for weak dependence}\label{sec_gaussian_approx}

In this section, a high dimensional Gaussian approximation result is established, which is a key ingredient in the proof of Theorem \ref{thm_eigen_max_gauss_approx}. This result may be of independent interest. Let $S_{n,h} = \sum_{k = 1}^n U_{k,h}$, and denote with
\begin{align}
T_d = \frac{1}{\sqrt{n}} \max_{1 \leq h \leq d}\bigl|S_{n,h}\bigr|, \quad T_d^{Z} = \max_{1 \leq h \leq d}\bigl|Z_h\bigr|,
\end{align}
where $\bigl\{Z_h\bigr\}_{1 \leq h \leq d}$ is a sequence of zero mean Gaussian random variables. We also formally introduce
\begin{align*}
\gamma_{i,j} = \lim_{n \to \infty}\frac{1}{n}\E\bigl[S_{n,i} S_{n,j} \bigr],
\end{align*}
existence is shown below in Lemma \ref{lem_var_cov_property}. We also put $\sigma_h^2 = \gamma_{h,h}$. Throughout this section, we work under the following assumption.
\begin{ass}\label{ass_gaussian_approx}
The sequence $\bigl\{U_{k,h}\bigr\}_{k \in \Z}$ is stationary for each $1 \leq h \leq d$, such that for $p > 2$ and $d \lesssim n^{\dd}$
\begin{enumerate}
\item[\Cone]\label{C1} $\E\bigl[U_{k,h}\bigr] = 0$ and $\theta_{j,p} \lesssim j^{-\cd}$ with $\cd > 3/2 $,
\item[\Ctwo]\label{C2} $\dd < p/2 - 1$,
\item[\Cthree]\label{C3} $\inf_h \sigma_h > 0$.
\end{enumerate}
\end{ass}

We then have the following Gaussian approximation result.

\begin{theorem}\label{thm_gauss_approx}
Grant Assumption \ref{ass_gaussian_approx}. Then
\begin{align*}
\sup_{x \in \R}\bigl|P\bigl(T_d \leq x \bigr) - P\bigl(T_d^{Z} \leq x \bigr) \bigr| \lesssim n^{-C}, \quad C > 0,
\end{align*}
where $\bigl\{Z_h\bigr\}_{1 \leq h \leq d}$ has the same covariance structure as $n^{-1/2}\bigl\{S_{n,h}\bigr\}_{1 \leq h \leq d}$. Alternatively, we may also choose $\bigl(\gamma_{i,j}\bigr)_{1 \leq i,j \leq d}$ as covariance structure.
\end{theorem}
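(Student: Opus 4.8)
The plan is to establish Theorem \ref{thm_gauss_approx} by combining an $m$-dependent approximation of $\{U_{k,h}\}$ with a high-dimensional Gaussian approximation for sums of $m$-dependent vectors, then controlling the resulting Gaussian comparison via a Kolmogorov-distance estimate. First I would truncate the dependence: set $m = m_n$ a polynomial power of $n$ (say $m \thicksim n^{\kappa}$ for suitable small $\kappa$) and replace $U_{k,h}$ by its conditional expectation $U_{k,h}^{(m)} = \E[U_{k,h} \mid \epsilon_{k}, \ldots, \epsilon_{k-m}]$. Using \Cone{} and the physical dependence measure $\theta_{j,p}$, the tail $\sum_{j > m}\theta_{j,p} \lesssim m^{-\cd + 1}$ is polynomially small, so by a maximal inequality (of the Jirak/Wu type already invoked via Lemma \ref{lem_bound_partial_sum_gen} and Lemma \ref{lem_max_mom}) one gets $\| \max_{1 \leq h \leq d} |S_{n,h} - S_{n,h}^{(m)}| \|_p \lesssim d^{1/p}\sqrt{n}\, m^{-\cd+1}$, which after dividing by $\sqrt{n}$ and using $d \lesssim n^{\dd}$ with \Ctwo{} is $\oo(n^{-c})$ for some $c>0$. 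A Markov bound then transfers this to a probability statement, so $T_d$ and its $m$-dependent analogue $T_d^{(m)} = n^{-1/2}\max_h |S_{n,h}^{(m)}|$ differ by at most $n^{-c}$ with overwhelming probability, hence the Kolmogorov distance between their laws is $\lesssim n^{-c}$ up to the anti-concentration correction below.

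Next I would apply a high-dimensional CLT for the block sums of the $m$-dependent array. Partitioning $\{1,\dots,n\}$ into big blocks of length $L \gg m$ separated by small blocks of length $m$, the big-block sums are independent, the small-block sums are negligible in $\|\cdot\|_p$ (again by the maximal inequality and \Ctwo), and $\sup_{h}\|L^{-1/2}(\text{block sum})\|_p$ is controlled by \Cone. One then invokes a Bentkus/Chernozhukov-Chetverikov-Kato-style Gaussian approximation over the max-rectangle $\{x \ind\}$: for independent mean-zero vectors in $\R^d$ with $p$-th moments under control and $d$ polynomial in the number of summands, $\sup_{x}|P(\max_h |\cdot| \leq x) - P(\max_h |G_h| \leq x)| \lesssim$ (polynomial rate), where $G$ is Gaussian with the matching covariance. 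The covariance of the normalized big-block sums converges to $(\gamma_{i,j})$ — this is where Lemma \ref{lem_var_cov_property} enters, guaranteeing the limit exists and $\inf_h \sigma_h^2 > 0$ by \Cthree{} — so after a final covariance-perturbation step (replacing the finite-$n$ covariance by its limit) one lands at $T_d^Z$ with covariance $(\gamma_{i,j})$. The two displayed choices of covariance (the exact finite-$n$ one versus $(\gamma_{i,j})$) are reconciled by a Gaussian comparison inequality: $\sup_x |P(\max|G_h^{(1)}| \leq x) - P(\max|G_h^{(2)}| \leq x)| \lesssim (\log d)^{?}\|\Sigma^{(1)} - \Sigma^{(2)}\|_{\infty}^{1/3}$, and the two covariances differ by a polynomial rate because the dropped small blocks and the $m$-truncation each contribute polynomially small covariance errors, while $\log d \lesssim \log n$ only costs a logarithmic factor that is absorbed into $n^{-C}$.

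A recurring technical ingredient at every stage is \emph{anti-concentration} of the maximum of a Gaussian vector: $\sup_{x}P(|T_d^Z - x| \leq \eta) \lesssim \eta \sqrt{\log d}$, valid because $\inf_h \sigma_h > 0$ (\Cthree) gives a uniform lower bound on the marginal variances. This is what lets me upgrade every "$|T_d - \tilde T_d| \leq n^{-c}$ with high probability" into "$\sup_x|P(T_d \leq x) - P(\tilde T_d \leq x)| \lesssim n^{-c'}$," and it is used both for the $m$-dependent approximation and for the small-block removal. Collecting the finitely many polynomial rates — from dependence truncation, block decomposition, the high-dimensional CLT, the covariance perturbation, and the anti-concentration slack — and taking $C>0$ to be the minimum of the resulting exponents yields the claimed bound $\lesssim n^{-C}$.

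The main obstacle I anticipate is bookkeeping the interplay between the block length $L$, the dependence-truncation level $m$, and the dimension growth $d \lesssim n^{\dd}$ so that \emph{every} error term is genuinely a negative power of $n$ simultaneously. Concretely, the high-dimensional CLT rate typically degrades like a power of $(\log d)$ times a negative power of the number of blocks $n/L$, while the small-block error scales like $(m/L)^{1/2}$ and the moment inputs require $p > 2 + 2\dd$ (exactly \Ctwo); choosing $L$ and $m$ as appropriate fractional powers of $n$ to make all of these break even — and checking that the moment condition \Cone{} with $\cd > 3/2$ is strong enough to kill the truncation tail at that scale — is the delicate part. Everything else (the maximal inequalities, the Gaussian comparison, the anti-concentration) is by now standard and quotable from the literature already cited in the paper (\cite{wu_asymptotic_bernoulli}, \cite{han_wu_2014_max}, \cite{Jirak_2013_letter}, \cite{sipwu}).
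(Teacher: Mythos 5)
Your proposal is correct in its overall strategy but takes a genuinely different route from the paper. You use classical Bernstein blocking: $m$-dependent truncation of each $U_{k,h}$ by conditional expectation, then independent big blocks separated by small blocks, then a high-dimensional Gaussian approximation for the independent big-block sums, finished by anti-concentration and Gaussian covariance comparison. The paper does not truncate or discard data: it constructs coupled block sums $V_{l,h}^{\diamond}$ by replacing the pre-block history of each of the $L$ blocks of length $K$ with fresh IID innovations, which makes the block sums exactly IID, and controls the coupling error $S_{L,h}(V) - S_{L,h}^{\diamond}(V)$ via a Fuk--Nagaev tail inequality (Theorem 2 of \cite{Wu_fuk_nagaev}, used in Lemmas \ref{lem_control_V_approx} and \ref{lem_control_V_direct}). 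Both reductions then land at the same ingredients: a Chernozhukov--Chetverikov--Kato-type Gaussian approximation for IID blocks (the paper's Lemma \ref{lem_Kato}), Gaussian comparison to adjust the covariance (Lemma \ref{lem_div_max_Gauss}), and anti-concentration of the Gaussian maximum (Lemma \ref{lem_gauss_anti}), exactly as you describe.

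Two technical cautions for your Markov-and-$L^p$ route. First, the intermediate rate $\|\max_h|S_{n,h}-S_{n,h}^{(m)}|\|_p \lesssim d^{1/p}\sqrt{n}\,m^{1-\cd}$ overstates the gain: the physical dependence measure of $U_{k,h}-U_{k,h}^{(m)}$ at lags $k \leq m$ is not zero (the conditioning window still contains $\epsilon_0$), and is bounded by $\min\bigl(\theta_{k,p}, (\sum_{j \geq m}\theta_{j,2}^2)^{1/2}\bigr)$; summing over $k$ gives a slower, though still strictly negative, power of $m$, so your conclusion that a polynomial $m=n^\kappa$ works survives but the exponent bookkeeping differs. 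Second, and more importantly, the choice of high-dimensional CLT matters. The paper works with a truncated CCK variant (Lemma \ref{lem_Kato}), which requires a high-probability bound on $\max_{l,h}|V_{l,h}^{\diamond}|$; verifying that truncation condition via plain Markov on $L^p$ norms forces the truncation level $u$ to grow as a positive power of $n$, which feeds into the CCK rate $L^{-1/2}(\log(dL/\varepsilon))^{3/2}u(\varepsilon)$ and degrades the admissible dimension growth to $\dd < p/4 - 1$ rather than the assumed $\dd < p/2 - 1$. The paper's Fuk--Nagaev bound (Lemma \ref{lem_control_V_direct}) gives $P\bigl(|V_{l,h}^{\diamond}| \geq \sqrt{K}\log n\bigr) \lesssim K^{1-p/2}(\log n)^p$, which lets $u = (\log n)^2$ and recovers the sharp \Ctwo{} exactly. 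You can avoid this either by invoking Fuk--Nagaev as the paper does, or by working with a polynomial-moment version of the CCK approximation (controlling $\E[\max_h|V_{l,h}|^p]^{1/p}$ via Pisier's inequality), which also closes at exactly \Ctwo{}; but the pure $L^p$-maximal-inequality-plus-Markov form you wrote needs one of these sharpenings at the high-dimensional CLT step.
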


We first establish some additional notation. Let $K = n^{\kd}$, $L = n^{\ld}$ such that $n = K L$ and $0 < \kd,\ld < 1$. To simplify the discussion, we always assume that $K,L \in \N$. For each $1 \leq l \leq L$, let $\bigl\{\epsilon_{k}^{l}\bigr\}_{k \in \Z} \in \mathds{S}$ be mutually independent sequences of IID random variables. For $K(l-1) < k \leq Kl$, $1 \leq l \leq L$, denote with
\begin{align*}
U_{k,h}^{(K,\diamond)} = H_h\bigl(\F_{k,h}^{K,\diamond} \bigr), \quad \text{where $\F_{k,h}^{K,\diamond} = \sigma\bigl(\F_{K(l-1)}^l,\epsilon_{K(l-1)+1},\epsilon_{K(l-1)+2},\ldots,\epsilon_k\bigr)$},
\end{align*}
where $\F_k^l = \sigma\bigl(\epsilon_j^l\, j \leq k \bigr)$. For $1 \leq m < K$ put
\begin{align}
V_{l,h}^{\diamond}(m) = \sum_{k = K(l-1) + 1}^{K(l-1) + m - 1} U_{k,h} + \sum_{k = K(l-1) + m}^{Kl} U_{k,h}^{(K,\diamond)},
\end{align}
and $V_{l,h}^{\diamond} = V_{l,h}^{\diamond}(1)$. The random variables $V_{l,h}^{\diamond}$ play a key role in the proof of Theorem \ref{thm_gauss_approx}. Note in particular that $\bigl\{V_{l,h}^{\diamond}\bigr\}_{1 \leq l \leq L}$ is IID by construction for each $h$. Finally, put $S_{L,h}(V) = \sum_{l = 1}^L V_{l,h}$ and $S_{L,h}^{\diamond}(V) = \sum_{l = 1}^L V_{l,h}^{\diamond}$, and note that $S_{n,h} = S_{L,h}(V)$. In the sequel, we make frequent use of the following lemma.
\begin{lemma}\label{lem_bound_partial_sum_gen}
Suppose that $\sum_{j = 1}^{\infty} \theta_{j,p} < \infty$ for $p \geq 2$. Then
\begin{align*}
\max_{1 \leq h \leq d}\bigl\|U_{1,h} + \ldots + U_{n,h} \bigr\|_p \lesssim \sqrt{n}.
\end{align*}
\end{lemma}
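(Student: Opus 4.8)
The plan is to run the classical martingale--approximation argument of \cite{sipwu} (see also \cite{wu_2005}), whose crux is that the coupling dependence measure $\theta_{j,p}$ dominates the $L^p$-norms of one-step projections, uniformly over the coordinate $h$. One should read the statement with $\E[U_{k,h}]=0$ throughout: this is the only regime in which the rate $\sqrt n$ can possibly hold (a nonzero mean forces $\|S_{n,h}\|_p\gtrsim n$) and the only regime in which the lemma is later invoked; finiteness of $\|U_{0,h}\|_p$, hence of $\theta_{0,p}$, is part of the ambient setup.

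First I would fix $h$, introduce the projection operators $P_j(\cdot)=\E[\cdot\mid\F_j]-\E[\cdot\mid\F_{j-1}]$ attached to the filtration $\{\F_k\}$, and use that $U_{k,h}=H_h(\F_k)$ is $\F_k$-measurable and centered so that, the remote-past $\sigma$-field being trivial by Kolmogorov's zero--one law, reverse martingale convergence yields the $L^p$-convergent telescoping $U_{k,h}=\sum_{j\le k}P_jU_{k,h}$. Reorganizing $S_{n,h}=\sum_{k=1}^n\sum_{j\le k}P_jU_{k,h}$ along diagonals $\ell=k-j\ge 0$ gives $S_{n,h}=\sum_{\ell\ge 0}D_{n,\ell,h}$ with $D_{n,\ell,h}=\sum_{k=1}^n P_{k-\ell}U_{k,h}$. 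For each fixed $\ell$ the array $\{P_{k-\ell}U_{k,h}\}_{k=1}^n$ is a martingale difference sequence with respect to $\{\F_{k-\ell}\}_k$, so Burkholder's inequality (constant depending on $p$ only, $p\ge 2$) together with stationarity gives
\[
\|D_{n,\ell,h}\|_p \;\lesssim\; \Bigl(\sum_{k=1}^n\|P_{k-\ell}U_{k,h}\|_p^2\Bigr)^{1/2} \;=\; \sqrt n\,\|P_0U_{\ell,h}\|_p .
\]

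The second ingredient is the projection--coupling bound: from the identity $\E[U_{\ell,h}\mid\F_{-1}]=\E[U_{\ell,h}'\mid\F_0]$ (replace $\epsilon_0$ by its independent copy $\epsilon_0'$ and integrate it out) one obtains $P_0U_{\ell,h}=\E[U_{\ell,h}-U_{\ell,h}'\mid\F_0]$, hence $\|P_0U_{\ell,h}\|_p\le\|U_{\ell,h}-U_{\ell,h}'\|_p\le\theta_{\ell,p}$ by conditional Jensen and the definition of $\theta_{\ell,p}$. Summing the two estimates over $\ell$,
\[
\|S_{n,h}\|_p \;\le\; \sum_{\ell\ge 0}\|D_{n,\ell,h}\|_p \;\lesssim\; \sqrt n\,\sum_{\ell\ge 0}\theta_{\ell,p} \;\lesssim\; \sqrt n ,
\]
and since $\theta_{\ell,p}=\max_{1\le h\le d}\|U_{\ell,h}-U_{\ell,h}'\|_p$ and the Burkholder constant are independent of $h$, $n$ and $d$, the bound holds uniformly in $1\le h\le d$, which is the assertion.

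I do not anticipate a genuine obstacle here. The only points needing care are the $L^p$-justification of the diagonal rearrangement — automatic once the summability $\sum_{\ell\ge 0}\|D_{n,\ell,h}\|_p<\infty$ coming from the first display is in hand — and checking that the reindexing $k\mapsto k-\ell$ preserves the martingale-difference structure with respect to the shifted filtration. The one genuinely load-bearing inequality is $\|P_0U_{\ell,h}\|_p\le\theta_{\ell,p}$, which is precisely why the coupling dependence measure is the natural quantity to assume summable.
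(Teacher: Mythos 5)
Your proof is correct, and it is essentially the argument the paper implicitly invokes: the lemma is stated in the paper without proof, with a pointer to \cite{sipwu} (Wu, \emph{Ann.\ Probab.}\ 2007), whose central tool is precisely this martingale-projection decomposition. You have reconstructed that argument faithfully.

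Two small remarks for completeness. First, the identity $\E[U_{\ell,h}\mid\F_{-1}]=\E[U_{\ell,h}'\mid\F_0]$ needs the intermediate observation that $U_{\ell,h}'=H_h(\ldots,\epsilon_{-1},\epsilon_0',\epsilon_1,\ldots,\epsilon_\ell)$ is independent of $\epsilon_0$, so conditioning on $\F_0$ is the same as conditioning on $\F_{-1}$; then replacing $\epsilon_0'$ by the identically distributed, $\F_{-1}$-independent $\epsilon_0$ gives the claim — this is the chain your ``integrate it out'' compresses, and it is airtight. Second, your framing of $\E[U_{k,h}]=0$ and $\|U_{0,h}\|_p<\infty$ as ``part of the ambient setup'' is right, but it is worth noting that both in fact follow from the stated hypothesis once the remote past is trivial: $\|U_{0,h}\|_p\le\sum_{j\ge0}\|P_{-j}U_{0,h}\|_p\le\sum_{j\ge0}\theta_{j,p}<\infty$ by exactly the projection bound you use, and centering then comes from $\E[U_{0,h}\mid\F_{-\infty}]=\E[U_{0,h}]$ being the $L^p$-limit of the telescope. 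So the lemma, read as the paper uses it (with \hyperref[C1]{\Cone} supplying the centering), requires no hypotheses beyond the stated summability. The load-bearing step is, as you say, $\|P_0 U_{\ell,h}\|_p\le\theta_{\ell,p}$, and the uniformity in $h$ and $d$ is automatic because $\theta_{\ell,p}$ is already a max over $h$ and the Burkholder constant depends only on $p$.
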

For the proof and variants of this result, see ~\cite{sipwu}. The next lemma controls the approximation error between $S_{L,h}(V)$ and $S_{L,h}^{\diamond}(V)$.

\begin{lemma}\label{lem_control_V_approx}
Grant Assumption \ref{ass_gaussian_approx}. For any $K = n^{\kd}$ with $0 < \kd < 1$ there exists a $\delta > 0$ and a constant $C > 0$ such that
\begin{align*}
P\bigl(\bigl|S_{L,h}(V) - S_{L,h}^{\diamond}(V)\bigr| \geq  C n^{1/2 - \delta}\bigr) \lesssim n^{-\frac{p-2}{2} + p \delta}.
\end{align*}
\end{lemma}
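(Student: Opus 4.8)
The plan is to reduce the difference to a sum over the $L$ blocks, bound each block contribution in $L^{p}$ uniformly, and then apply a Fuk--Nagaev type inequality. Since $V_{l,h}^{\diamond}=V_{l,h}^{\diamond}(1)$,
\begin{align*}
S_{L,h}(V)-S_{L,h}^{\diamond}(V)=\sum_{l=1}^{L}W_{l,h},\qquad W_{l,h}=\sum_{m=1}^{K}\bigl(U_{K(l-1)+m,h}-U_{K(l-1)+m,h}^{(K,\diamond)}\bigr),
\end{align*}
and each $W_{l,h}$ is centered because $U_{k,h}^{(K,\diamond)}$ has the same law as $U_{k,h}$, which is centered by \hyperref[C1]{\Cone}. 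The variable $U_{K(l-1)+m,h}^{(K,\diamond)}$ is obtained from $U_{K(l-1)+m,h}$ by resampling every innovation at a time $\le K(l-1)$, so the coupling definition of $\theta_{\cdot,p}$ together with stationarity gives $\|U_{K(l-1)+m,h}-U_{K(l-1)+m,h}^{(K,\diamond)}\|_{p}\lesssim\sum_{i\ge m}\theta_{i,p}\lesssim m^{-(\cd-1)}$, where $\cd-1>1/2$ by \hyperref[C1]{\Cone}.

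The first --- and I expect the hardest --- step is to establish $\sup_{l,h,n}\|W_{l,h}\|_{p}<\infty$, i.e.\ that $W_{l,h}$ behaves like a single $O(1)$ random variable rather than a partial sum of length $K$. Conditionally on the current-block innovations $\epsilon_{K(l-1)+1},\dots,\epsilon_{Kl}$ one can write $W_{l,h}=G(\mathcal A)-G(\mathcal B)$ with $\mathcal A,\mathcal B$ i.i.d.\ copies of the remote past and $G=\sum_{m=1}^{K}g_{m}$, where $g_{m}$ is $U_{K(l-1)+m,h}$ regarded as a function of the past only. The contribution of the innovation lying $r$ steps behind the block to $g_{m}$ has $L^{p}$-norm $\lesssim\theta_{m+r,p}$, so a Burkholder/Rosenthal moment inequality for the associated (reverse-time) martingale decomposition, applied as in \cite{sipwu}, yields
\begin{align*}
\bigl\|G(\mathcal A)-\E G(\mathcal A)\bigr\|_{p}^{2}\lesssim\sum_{r\ge0}\Bigl(\sum_{m\ge1}\theta_{m+r,p}\Bigr)^{2}\lesssim\sum_{r\ge0}(r+1)^{-2(\cd-1)}<\infty,
\end{align*}
the finiteness being exactly the condition $\cd>3/2$. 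Hence $\|W_{l,h}\|_{p}\le 2\|G(\mathcal A)-\E G(\mathcal A)\|_{p}=O(1)$, uniformly in $l,h,K,n$. It is precisely in this square-summability of the coupling coefficients $m^{-(\cd-1)}$ that $\cd>3/2$ from \hyperref[C1]{\Cone} enters; a merely summable decay would let $\|W_{l,h}\|_{p}$ grow (at least logarithmically) with $K$ and break the argument below for small $\kd$.

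It then remains to bound the tail of $\sum_{l=1}^{L}W_{l,h}$ at level $x=Cn^{1/2-\delta}$. The sequence $\{W_{l,h}\}_{l\ge1}$ is stationary in the block index $l$ --- a Bernoulli shift in the innovations $\{\epsilon_{j}\}$ together with the block-specific copies $\{\epsilon^{l}_{j}\}$ --- whose dependence on block $l-s$ inherits the polynomial decay of $\theta_{\cdot,p}$; combined with $\sup_{l}\|W_{l,h}\|_{p}=O(1)$ and $\cd>3/2$, this places $\sum_{l=1}^{L}W_{l,h}$ within the scope of the Fuk--Nagaev inequality for Bernoulli shifts (Theorem 2 in \cite{Wu_fuk_nagaev}), which produces a bound of the form $\lesssim L\,x^{-p}+\exp(-c\,x^{2}/L)$ (up to multiplicative constants and, when $\theta_{\cdot,p}$ decays only polynomially, harmless powers of $\log n$). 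Since $L=n^{1-\kd}$, the first term is $\lesssim n^{1-\kd-p(1/2-\delta)}\le n^{1-p/2+p\delta}=n^{-(p-2)/2+p\delta}$, valid for every $\kd\in(0,1)$ because the factor $n^{-\kd}$ only helps; the second term equals $\exp(-c\,n^{\kd-2\delta})$, which is super-polynomially small as soon as $0<\delta<\kd/2$. Fixing such a $\delta$ and adding the two contributions gives the assertion. No information on the covariance structure of $\{U_{k,h}\}$ is needed here; it enters only later, when the i.i.d.\ blocks $V_{l,h}^{\diamond}$ are matched to a Gaussian vector. The remaining labour --- verifying the precise dependence-coefficient hypotheses of the Fuk--Nagaev inequality, which is routine given $\cd>3/2$ --- is where the rest of the technical work lies.
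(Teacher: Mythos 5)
Your proof is essentially correct, but it takes a genuinely different route from the paper's. The paper introduces an auxiliary cut-off $1\le m<K$, splits $S_{L,h}(V)-S_{L,h}^{\diamond}(V)$ into a ``near-block-start'' piece ($Lm$ terms with undamped coupling coefficients) and a ``far-from-block-start'' piece (with damped coefficients $\vartheta_{k,p}(m)=2(\theta_{k,p}\wedge\sqrt{\Theta_{m,p}})$), applies the Fuk--Nagaev bound of \cite{Wu_fuk_nagaev} at the \emph{original} time scale to each piece, and then tunes $m$ and $\kd$ jointly; this is why the paper's proof only directly delivers $\kd$ close to $1$. You instead collapse each block's discrepancy into a single variable $W_{l,h}$, prove $\sup_{l,h,n}\|W_{l,h}\|_{p}=O(1)$ via a Burkholder--Rio projection decomposition in the remote past (this is your genuine extra lemma, and it is correct: the key bound $\sum_{r\ge 0}(\sum_{m\ge 1}\theta_{m+r,p})^{2}<\infty$ is exactly where $\cd>3/2$ enters), and then apply Fuk--Nagaev \emph{once} at block time scale to $\sum_{l=1}^{L}W_{l,h}$. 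This eliminates the $m$-tuning, yields the claimed bound directly for every $\kd\in(0,1)$, and is arguably cleaner.

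The step you dismiss as ``routine'' is, however, a nontrivial part of the argument that you should spell out: to invoke Theorem~2 of \cite{Wu_fuk_nagaev} for $\{W_{l,h}\}_{l}$ you must exhibit its physical dependence coefficients with respect to the block-level innovations $\zeta_{l}=\bigl(\epsilon_{K(l-1)+1},\dots,\epsilon_{Kl},\{\epsilon^{l}_{j}\}_{j}\bigr)$ and verify the summability condition (finiteness of the analogue of $A=\sum_{j}\alpha_{j,p}$). Since $V_{l,h}^{\diamond}$ is independent of $\zeta_{0}$ for $l\ge 1$, the relevant coupling is through $V_{l,h}$ alone; a telescoping plus Burkholder argument (Theorem~1 in \cite{wu_2005}) gives, for $s\ge 2$,
\begin{align*}
\delta^{W}_{s,p}\;=\;\bigl\|W_{s,h}-W_{s,h}^{*}\bigr\|_{p}\;\lesssim\;\sum_{m=1}^{K}\Bigl(\sum_{r=K(s-1)+m}^{Ks+m-1}\theta_{r,p}^{2}\Bigr)^{1/2}\;\lesssim\;K^{3/2-\cd}\,s^{-\cd},
\end{align*}
together with $\delta^{W}_{1,p}=O(1)$; the $K$-prefactor is $\le 1$ for $\cd\ge 3/2$ and $s^{-\cd}$ is summable, so $A<\infty$ holds uniformly in $K,n$ and the Fuk--Nagaev hypotheses are indeed satisfied under \hyperref[C1]{\Cone} alone. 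Note also that the ``harmless powers of $\log n$'' you allow for are in fact unnecessary here, since the dependence is polynomially decaying in $s$ with a favourable $K$-prefactor. With this verification supplied, your proposal is a valid and somewhat more transparent alternative to the paper's proof.
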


\begin{proof}[Proof of Lemma \ref{lem_control_V_approx}]
Let $x_n = x \sqrt{n}$, $x > 0$. For $1 \leq m  < K$ we have that
\begin{align*}
P\bigl(\bigl|S_{L,h}(V) - S_{L,h}^{\diamond}(V)\bigr| \geq 2 x_n \bigr) &\leq P\biggl(\biggl|\sum_{l = 1}^L \sum_{k = K(l-1) + 1}^{K(l-1) + m - 1} U_{k,h} - U_{k,h}^{(K,\diamond)} \biggr| \geq x_n \biggr) \\&+ P\biggl(\biggl|\sum_{l = 1}^L V_{l,h}^{} - V_{l,h}^{\diamond}(m) \biggr| \geq x_n \biggr).
\end{align*}
Denote with $\alpha_{j,p} = \bigl(j^{p/2 - 1} \theta_{j,p}^p\bigr)^{1/(p+1)}$ and $A = \sum_{j = 1}^{\infty} \alpha_{j,p}$. Note that by \hyperref[C1]{\Cone} we have
\begin{align}
\alpha_{j,p} \lesssim  j^{-\mathfrak{B}(p,\cd)},  \quad \text{where} \quad \mathfrak{B}(p,\cd) = \frac{p(\cd-1/2)+1}{p+1} > 1,
\end{align}
and thus $A < \infty$. Due to Theorem 2 in ~\cite{Wu_fuk_nagaev}, there exist constants $C_{p,1},C_{p,2} > 0$ such that
\begin{align*}
P\biggl(\biggl|\sum_{l = 1}^L \sum_{k = K(l-1) + 1}^{K(l-1) + m - 1} U_{k,h} - U_{k,h}^{(K,\diamond)} \biggr| \geq x_n \biggr) &\leq \frac{C_{1,p} Lm}{x_n^p} + \sum_{j = 1}^{\infty} \exp\biggl(-\frac{C_{p,2} \alpha_{j,p}^2 x_n^2}{A^2 L m \theta_{j,2}^2}\biggr) \\&+  \exp\biggl(-\frac{C_{p,2} x_n^2}{L m\|U_{k,h}\|_2^2}\biggr).
\end{align*}
Setting $x = y\sqrt{L \, m} A^{1 + 1/p}/\sqrt{n}$, it follows that $\alpha_{j,p}^2 x_n^2 /(A^2 L\,m \theta_{j,2}^2) \geq j^{1 - 2/p}y^2$ and hence
\begin{align*}
\exp\biggl(-\frac{C_{p,2} \alpha_{j,p}^2 x_n^2}{A^2 L\,m \theta_{j,2}^2}\biggr) \leq \exp\biggl(-C_{p,2} j^{1 - 2/p}y^2\biggr).
\end{align*}
Choosing $m$ such that $\sqrt{n}/\sqrt{L m} = n^{2 \delta}$ and $y = n^{\delta}$, $\delta > 0$, it follows that
\begin{align}
P\biggl(\biggl|\sum_{l = 1}^L \sum_{k = K(l-1) + 1}^{K(l-1) + m - 1} U_{k,h} - U_{k,h}^{(K,\diamond)} \biggr| \geq n^{1/2 - \delta} A^{1 + 1/p} \biggr) \lesssim n^{-\frac{p-2}{2} + p\delta}.
\end{align}
Next, put $\Delta_{k,h}(U) = U_{k,h} - U_{k,h}^{(K,\diamond)}$. By the triangle inequality, we have
\begin{align*}
\bigl\|\Delta_{k,h}(U) - \Delta_{k,h}(U)'\bigr\|_p \leq 2 \bigl(\theta_{k,p} \wedge \bigl\|\Delta_{k,h}(U)\bigr\|_p\bigr).
\end{align*}
Let $(k)_K = k \mod K$. Then Theorem 1 in ~\cite{wu_2005} yields that
\begin{align*}
\max_{1 \leq h \leq d}\bigl\|\Delta_{k,h}(U)\bigr\|_p^2 = \max_{1 \leq h \leq d}\bigl\|U_{k,h} - U_{k,h}^{(K,\diamond)}\bigr\|_p^2 \lesssim \sum_{j = (k)_K}^{\infty} \theta_{j,p}^2 \stackrel{def}{=} \Theta_{(k)_K,p}.
\end{align*}
Since clearly $\Theta_{(k)_K,p}$ is monotone decreasing, we have $\Theta_{(k)_K,p} \leq \Theta_{(m)_K,p}$ for $m \leq k \leq K$. Combining this with the above, it follows that for $m \leq (k)_K$ (since $m = (m)_K$)
\begin{align}
\max_{1 \leq h \leq d}\bigl\|\Delta_{k,h}(U) - \Delta_{k,h}(U)'\bigr\|_p \leq 2 \biggl(\theta_{k,p} \wedge \sqrt{\Theta_{m,p}}\biggr) \stackrel{def}{=}\vartheta_{k,p}(m).
\end{align}
Put $\beta_{j,p}(m) = \bigl(j^{p/2 - 1} \vartheta_{j,p}^p(m)\bigr)^{1/(p+1)}$ and $B(m) = \sum_{j = 1}^{\infty} \beta_{j,p}(m)$. Then another application of Theorem 2 in ~\cite{Wu_fuk_nagaev} yields that
\begin{align*}
P\biggl(\biggl|\sum_{l = 1}^L V_{l,h}^{} - V_{l,h}^{\diamond}(m) \biggr| \geq x_n \biggr) &\leq C_{1,p}\frac{n}{x_n^p} + \sum_{j = 1}^{\infty} \exp\biggl(-\frac{C_{p,2} \beta_{j,p}^2(m) x_n^2}{B^2(m) n \vartheta_{j,2}^2(m)}\biggr) \\&+  \exp\biggl(-\frac{C_{p,2} x_n^2}{n \max_{k \geq m}\|\Delta_{k,h}(U)\|_2^2}\biggr).
\end{align*}
Let $y_n = n^{\delta} \sqrt{L m}/\sqrt{n} = n^{-\delta}$. Arguing similarly as before, it follows (since $m = (m)_K$)
\begin{align*}
P\biggl(\biggl|\sum_{l = 1}^L V_{l,h}^{} - V_{l,h}^{\diamond}(m) \biggr| \geq x_n \biggr) &\lesssim \frac{n}{x_n^p} + \sum_{j = 1}^{\infty} \exp\biggl(-\frac{C_{p,2} j^{1 + -2/p}y_n^2}{B(m)^2}\biggr) \\&+ \exp\biggl(-\frac{C_{p,2} y_n^2}{\Theta_{m,p}}\biggr).
\end{align*}
Since $\Theta_{m,p} \lesssim m^{-2 \cd + 1}$, we conclude
\begin{align*}
B(m) &\lesssim \sum_{j > M} \alpha_{j,p} + \sum_{j = 1}^M \bigl(j^{p/2 - 1} m^{-p \cd + p/2} \bigr)^{1/(p+1)} \lesssim M^{-\mathfrak{B}(p,\cd)} + M^{\frac{3p}{2p + 2}} m^{\frac{-2p \cd + p}{2p + 2}}.
\end{align*}
Setting $m \thicksim n^{\nu}$, $\nu > 0$, balancing the above and choosing $\delta$ sufficiently small, we obtain
\begin{align}
\frac{y_n^2}{B(m)^2} \wedge \frac{y_n^2}{\Theta_{m,p}} \gtrsim n^{\delta}.
\end{align}
This implies that
\begin{align*}
P\biggl(\biggl|\sum_{l = 1}^L V_{l,h}^{} - V_{l,h}^{\diamond}(m) \biggr| \geq n^{1/2 - \delta} A^{1+1/p} \biggr) &\lesssim n^{-\frac{p-2}{2} + p\delta}.
\end{align*}
Note that by the above choice of $m = n^{\nu}$ we require that $L \thicksim n^{1 - 4 \delta - \nu}$. Choosing $\nu$ sufficiently close to $1$, we can select $\kd < 1$ arbitrarily close to $1$, which completes the proof.

\end{proof}

In the sequel, we also require the following result.
\begin{lemma}\label{lem_control_V_direct}
Grant Assumption \ref{ass_gaussian_approx}. Then
\begin{align*}
P\biggl(\biggl|V_{l,h}^{\diamond} \biggr| \geq \sqrt{K} \log n \biggr) \lesssim K^{1 - p/2} \bigl(\log n)^{p}.
\end{align*}
\end{lemma}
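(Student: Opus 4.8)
The plan is to reduce the block sum to an ordinary partial sum and then apply the same Fuk--Nagaev type inequality already used in the proof of Lemma \ref{lem_control_V_approx}. First I would unwind the definitions: with $m=1$ the initial sum $\sum_{k=K(l-1)+1}^{K(l-1)+m-1}U_{k,h}$ is empty, so $V_{l,h}^{\diamond}=V_{l,h}^{\diamond}(1)=\sum_{k=K(l-1)+1}^{Kl}U_{k,h}^{(K,\diamond)}$. Inside the $l$-th block the coupled variables $U_{k,h}^{(K,\diamond)}=H_h(\F_{k,h}^{K,\diamond})$ are obtained by grafting the fresh infinite past $\F_{K(l-1)}^{l}$ onto the current innovations $\epsilon_{K(l-1)+1},\dots,\epsilon_{k}$; the resulting grafted innovation sequence is again i.i.d., so by stationarity $\{U_{k,h}^{(K,\diamond)}\}_{K(l-1)<k\le Kl}$ has the same joint law as $\{U_{k,h}\}_{1\le k\le K}$. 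Hence $V_{l,h}^{\diamond}\overset{d}{=}S_{K,h}:=\sum_{k=1}^{K}U_{k,h}$, and it suffices to bound $P(|S_{K,h}|\ge \sqrt K\log n)$, uniformly in $1\le h\le d$.

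Second, I would apply Theorem 2 in \cite{Wu_fuk_nagaev} to the stationary, mean-zero sequence $\{U_{k,h}\}_{k\in\Z}$, exactly as in the proof of Lemma \ref{lem_control_V_approx}. By \hyperref[C1]{\Cone} we have $\theta_{j,p}\lesssim j^{-\cd}$ with $\cd>3/2$, so $\alpha_{j,p}=(j^{p/2-1}\theta_{j,p}^{p})^{1/(p+1)}$ is summable with sum $A<\infty$, and $\sup_h\|U_{0,h}\|_2\le\sup_h\|U_{0,h}\|_p\lesssim\sum_j\theta_{j,p}<\infty$ (physical dependence decomposition, cf. \cite{wu_2005}). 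Taking $x=\sqrt K\log n$ and $K$ summands, the inequality gives, with constants uniform in $h$,
\[
P\bigl(|S_{K,h}|\ge \sqrt K\log n\bigr)\ \lesssim\ \frac{K}{(\sqrt K\log n)^{p}}\ +\ \sum_{j=1}^{\infty}\exp\!\Bigl(-\tfrac{C_{p,2}\alpha_{j,p}^{2}(\log n)^{2}}{A^{2}\theta_{j,2}^{2}}\Bigr)\ +\ \exp\!\Bigl(-\tfrac{C_{p,2}(\log n)^{2}}{\|U_{0,h}\|_{2}^{2}}\Bigr).
\]

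Third, I would estimate the three terms. The polynomial term equals $K^{1-p/2}(\log n)^{-p}\le K^{1-p/2}(\log n)^{p}$. For the exponential terms, as in the proof of Lemma \ref{lem_control_V_approx} one has $\alpha_{j,p}^{2}/\theta_{j,2}^{2}\gtrsim j^{\delta'}$ for some $\delta'>1$ (using $\theta_{j,2}\le\theta_{j,p}$ and $\cd>3/2$), so each summand is at most $\exp(-c j^{\delta'}(\log n)^{2})$ and the series is $\lesssim\exp(-c(\log n)^{2})$; the last term is likewise $\lesssim\exp(-c(\log n)^{2})=n^{-c\log n}$. Since $K=n^{\kd}$ with $\kd<1$, for large $n$ this is far smaller than the fixed negative power $K^{1-p/2}=n^{\kd(1-p/2)}$, so adding up yields $P(|V_{l,h}^{\diamond}|\ge\sqrt K\log n)\lesssim K^{1-p/2}(\log n)^{p}$, as claimed. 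The only point requiring care — and the reason a plain Markov bound via $\|S_{K,h}\|_p\lesssim\sqrt K$ (which only gives $(\log n)^{-p}$) is insufficient — is that one must exploit the $x^{-p}$ decay of the Fuk--Nagaev polynomial term, producing a \emph{linear} rather than $p/2$-power dependence on the block length $K$; checking the uniformity of constants over $h$ and the negligibility of the exponential remainders is then routine.
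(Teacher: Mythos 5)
Your proof is correct and follows essentially the same route as the paper's: you exploit the equidistribution $V_{l,h}^{\diamond}\stackrel{d}{=}V_{l,h}$ (which the paper states directly and you derive more explicitly from the grafted-past construction), apply Theorem 2 of \cite{Wu_fuk_nagaev} exactly as in Lemma \ref{lem_control_V_approx}, and set $y=\log n$ to make the polynomial term dominate. One small slip: the exponent $\delta'$ you claim satisfies $\delta'>1$ is actually $1-2/p\in(0,1)$, but this does not matter since the series $\sum_j\exp(-cj^{\delta'}(\log n)^2)$ converges for any $\delta'>0$, so the conclusion stands.
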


\begin{proof}[Proof of Lemma \ref{lem_control_V_direct}]
Since $V_{l,h}^{\diamond} \stackrel{d}{=} V_{l,h}$, Theorem 2 in ~\cite{Wu_fuk_nagaev} and arguing similarly as in Lemma \ref{lem_control_V_approx} yields
\begin{align*}
P\biggl(\biggl|V_{l,h}^{\diamond} \biggr| \geq y \sqrt{K} \biggr) &\lesssim \frac{K^{1-p/2}}{y^p} + \sum_{j = 1}^{\infty} \exp\biggl(-\frac{C_{p,2} j^{1 + -2/p}y^2}{A^2}\biggr) \\&+ \exp\biggl(-\frac{C_{p,2} y^2}{\|U_{k,h}\|_2^2}\biggr).
\end{align*}
Setting $y = \log n$, the claim follows.

\end{proof}

Next, we establish some useful results concerning the covariances $\phi_{k,i,j} = \E\bigl[U_{0,i}U_{k,j}\bigr]$.
\begin{lemma}\label{lem_var_cov_property}
Grant Assumption \ref{ass_gaussian_approx}. Then
\begin{description}
\item[(i)] $\sup_{i,j}|\phi_{k,i,j}| \lesssim k^{-\cd + 1/2}$,
\item[(ii)] $\sup_{i,j} \sum_{k = 0}^{\infty} |\phi_{k,i,j}| < \infty$,
\item[(iii)] $\gamma_{i,j} = \phi_{0,i,j} + 2 \sum_{k = 1}^{\infty} \phi_{k,i,j} < \infty$,
\item[(iv)] $\sum_{k,l = 1}^n \E\bigl[U_{k,i}U_{l,j}\bigr] = n \gamma_{i,j} - \sum_{k \in \Z}^{\infty} n \wedge |k| \phi_{k,i,j}$.
\end{description}
\end{lemma}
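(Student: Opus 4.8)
The plan is to extract all four statements from the algebraic decay of the coupling coefficients in \hyperref[C1]{\Cone}, which forces the covariances $\phi_{k,i,j}=\E[U_{0,i}U_{k,j}]$ to decay polynomially, uniformly in $i,j$. As a preliminary fact, $\sup_{h}\|U_{0,h}\|_{2}<\infty$: Lemma~\ref{lem_bound_partial_sum_gen} applied with $n=1$ (its hypothesis $\sum_{j\ge1}\theta_{j,2}<\infty$ holds because $\theta_{j,2}\le\theta_{j,p}\lesssim j^{-\cd}$ with $\cd>3/2$) gives $\max_h\|U_{0,h}\|_{p}\lesssim1$, hence the $L^{2}$ bound.

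For part (i), since $U_{0,i}$ is $\F_0$-measurable and both $U_{0,i},U_{k,j}$ are centered, I would write $\phi_{k,i,j}=\E\!\big[U_{0,i}\,\E[U_{k,j}\mid\F_0]\big]$ and estimate $\|\E[U_{k,j}\mid\F_0]\|_{2}$. Decomposing into martingale differences, $\E[U_{k,j}\mid\F_0]=\sum_{l\le0}\mathcal{P}_{l}U_{k,j}$ with $\mathcal{P}_{l}X=\E[X\mid\F_{l}]-\E[X\mid\F_{l-1}]$, using orthogonality of the summands and the standard bound $\|\mathcal{P}_{l}U_{k,j}\|_{2}\le\theta_{k-l,2}$ (cf.\ \cite{wu_2005}, \cite{sipwu}), one obtains
\[
\|\E[U_{k,j}\mid\F_0]\|_{2}\le\Big(\sum_{m\ge k}\theta_{m,2}^{2}\Big)^{1/2}\lesssim\Big(\sum_{m\ge k}m^{-2\cd}\Big)^{1/2}\lesssim k^{-\cd+1/2},
\]
valid since $2\cd>1$. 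Cauchy--Schwarz together with the uniform bound on $\|U_{0,i}\|_{2}$ then yields (i), uniformly in $i,j$. Part (ii) is immediate: $\sum_{k\ge0}|\phi_{k,i,j}|\le\|U_{0,i}\|_{2}\|U_{0,j}\|_{2}+C\sum_{k\ge1}k^{-\cd+1/2}<\infty$, the series converging because $\cd-1/2>1$ by \hyperref[C1]{\Cone}, and the bound does not depend on $i,j$.

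For part (iii), using stationarity ($\E[U_{k,i}U_{l,j}]=\phi_{l-k,i,j}$, and $\phi_{-k,i,j}=\phi_{k,j,i}$), expand
\[
\frac1n\,\E[S_{n,i}S_{n,j}]=\frac1n\sum_{k,l=1}^{n}\phi_{l-k,i,j}=\sum_{|m|<n}\Big(1-\tfrac{|m|}{n}\Big)\phi_{m,i,j}.
\]
By part (ii) the family $\{\phi_{m,i,j}\}_{m\in\Z}$ is absolutely summable (apply (ii) with $i,j$ interchanged for the negative lags), so dominated convergence shows the limit defining $\gamma_{i,j}$ exists and equals $\sum_{m\in\Z}\phi_{m,i,j}$; folding the sum over $m\ge1$ and using $\gamma_{i,j}=\gamma_{j,i}$ (immediate from the defining limit) gives the stated identity, with finiteness clear. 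Part (iv) is then an exact counting identity: the number of pairs $(k,l)\in\{1,\dots,n\}^{2}$ with $l-k=m$ is $(n-|m|)_{+}$, so $\sum_{k,l=1}^{n}\E[U_{k,i}U_{l,j}]=\sum_{|m|<n}(n-|m|)\phi_{m,i,j}$; subtracting $n\gamma_{i,j}=n\sum_{m\in\Z}\phi_{m,i,j}$ and collecting terms (the $|m|\ge n$ part contributes $-n\sum_{|m|\ge n}\phi_{m,i,j}$, the $|m|<n$ part $-\sum_{|m|<n}|m|\phi_{m,i,j}$, which together equal $-\sum_{m\in\Z}(n\wedge|m|)\phi_{m,i,j}$), all rearrangements being legitimate by the absolute summability from (ii).

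I expect (i) to be the only substantive step, the remainder being bookkeeping. Within (i) the point that needs care is the passage from the coupling quantities $\theta_{j,p}$ to the martingale-projection bound $\|\mathcal{P}_{l}U_{k,j}\|_{2}\le\theta_{k-l,2}$, but the cited results of Wu apply verbatim in the present Bernoulli-shift setup, so no genuine obstacle arises.
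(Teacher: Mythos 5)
Your argument is correct and coincides in substance with the paper's: both write $\phi_{k,i,j}=\E\bigl[U_{0,i}\,\E[U_{k,j}\mid\F_0]\bigr]$, bound $\|\E[U_{k,j}\mid\F_0]\|_2\lesssim\bigl(\sum_{m\ge k}\theta_{m,2}^2\bigr)^{1/2}\lesssim k^{-\cd+1/2}$ within Wu's physical-dependence framework, finish (i) by Cauchy--Schwarz, and treat (ii)--(iv) as elementary consequences. The only cosmetic divergence is in the bound on $\|\E[U_{k,j}\mid\F_0]\|_2$: you pass through the orthogonal martingale-projection decomposition $\sum_{l\le0}\mathcal{P}_l U_{k,j}$ with $\|\mathcal{P}_l U_{k,j}\|_2\le\theta_{k-l,2}$, whereas the paper builds a coupled version $U_{k,j}^*$ with all inputs indexed $\le0$ replaced, uses $\E[U_{k,j}^*\mid\F_0]=0$ and Jensen to reduce to $\|U_{k,j}-U_{k,j}^*\|_2$, and then invokes Wu's Theorem~1 --- both routes yield the identical bound, so no substantive gap. (As a side remark, the identity in (iii) as literally written, $\gamma_{i,j}=\phi_{0,i,j}+2\sum_{k\ge1}\phi_{k,i,j}$, relies on $\phi_{k,i,j}=\phi_{k,j,i}$, which holds on the diagonal $i=j$ --- the only place it is used --- but not for general $i\ne j$; your invocation of $\gamma_{i,j}=\gamma_{j,i}$ does not repair this, but the same imprecision is present in the paper's statement, so it is not a defect of your proof.)
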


\begin{proof}[Proof of Lemma \ref{lem_var_cov_property}]
Claims (iii) and (iv) are well-known in the literature, and follow from elementary computations from (ii).
Since (i) implies (ii) due to $\cd > 3/2$, it suffices to establish (i). To this end, let $U_{k,h}^* = H_h\bigl(\F_k^*\bigr)$, where $\F_k^* = \sigma\bigl(\ldots,\epsilon_{-1}',\epsilon_0',\epsilon_1,\ldots, \epsilon_k\bigr)$. Since then $\E\bigl[U_{k,h}^{*}\bigl|\F_0\bigr] = \E\bigl[U_{k,h}\bigr] = 0$, Cauchy-Schwarz and Jensens inequality yield
\begin{align*}
\bigl|\E\bigl[U_{0,i}U_{k,i}\bigr] \bigr| = \bigl|\E\bigl[U_{0,i}\E[U_{k,j}\bigl|\F_{0}]\bigr] \bigr| \leq \bigl\|U_{0,i} \bigr\|_2 \bigl\|U_{k,j} - U_{k,j}^{*}\bigr\|_2.
\end{align*}
Theorem 1 in ~\cite{wu_2005} and \hyperref[C1]{\Cone} then imply that
\begin{align*}
\bigl|\E\bigl[U_{0,i}U_{k,j}\bigr] \bigr| \lesssim \biggl(\sum_{l = k}^{\infty} \theta_{l,2}^2\biggr)^{1/2} \lesssim k^{-\cd + 1/2}.
\end{align*}

\end{proof}

For $1 \leq i,j \leq d$ denote with
\begin{align*}
\gamma_{i,j}^{(n)} = \frac{1}{n}\E\bigl[S_{n,i} S_{n,j} \bigr], \quad \gamma_{i,j}^{(\diamond,n)} = \frac{1}{n}\E\bigl[S_{L,i}^{\diamond}(V) S_{L,j}^{\diamond}(V) \bigr].
\end{align*}

\begin{rem}\label{rem_cov_defined}
Note that Lemma \ref{lem_var_cov_property} (iv) yields that
\begin{align*}
\bigl|\gamma_{i,j} - \gamma_{i,j}^{(n)}\bigr| \lesssim \frac{1}{n} \sum_{k = 1}^n k^{3/2 - \cd} + \sum_{k > n}^{\infty} k^{-\cd + 1/2} \lesssim n^{3/2 - \cd}.
\end{align*}
\end{rem}

\begin{lemma}\label{lem_diff_cov}
Grant Assumption \ref{ass_gaussian_approx}. Then
\begin{align*}
\max_{1 \leq i,j \leq d}\bigl|\gamma_{i,j}^{(n)} - \gamma_{i,j}^{(\diamond,n)} \bigr| \lesssim n^{-1/2} L.
\end{align*}
\end{lemma}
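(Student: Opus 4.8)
The plan is to exploit the block structure behind $S_{L,h}^{\diamond}(V)$ and reduce the difference of the two normalized covariances to a sum of \emph{cross-block} covariances of the original process, which are negligible by weak dependence. Write $V_{l,h} = \sum_{k = K(l-1)+1}^{Kl} U_{k,h}$, so that $S_{n,h} = S_{L,h}(V) = \sum_{l = 1}^{L} V_{l,h}$ and $S_{L,h}^{\diamond}(V) = \sum_{l = 1}^{L} V_{l,h}^{\diamond}$, and expand both covariances over pairs of blocks. The key point is that the \emph{diagonal} ($l=l'$) terms cancel: for each $l$ the pair $(V_{l,i}^{\diamond}, V_{l,j}^{\diamond})$ has, by construction, the same joint law as $(V_{l,i}, V_{l,j})$, because on block $l$ the variables $U_{\cdot,h}^{(K,\diamond)}$ are the same measurable functionals of an i.i.d.\ infinite past (the copy generating $\F_{K(l-1)}^{l}$) followed by the innovations $\epsilon_{K(l-1)+1},\dots,\epsilon_{Kl}$ as the $U_{\cdot,h}$ are of $\dots,\epsilon_{-1},\epsilon_0,\epsilon_1,\dots$; hence $\E[V_{l,i}^{\diamond}V_{l,j}^{\diamond}] = \E[V_{l,i}V_{l,j}]$. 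For $l \neq l'$, the pairs $(V_{l,i}^{\diamond},V_{l,j}^{\diamond})$ and $(V_{l',i}^{\diamond},V_{l',j}^{\diamond})$ are independent (distinct i.i.d.\ copies $\F^{l},\F^{l'}$ and disjoint innovation windows) and, by the same distributional identity and \hyperref[C1]{\Cone}, $\E[V_{l,h}^{\diamond}] = 0$, so $\E[V_{l,i}^{\diamond}V_{l',j}^{\diamond}] = 0$. Therefore
\begin{align*}
\gamma_{i,j}^{(n)} - \gamma_{i,j}^{(\diamond,n)} = \frac{1}{n}\sum_{l \neq l'}\E\bigl[V_{l,i}V_{l',j}\bigr].
\end{align*}

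Next I would bound a single cross-block covariance. Fix $l < l'$; since $V_{l,i}$ is $\F_{Kl}$-measurable, the tower property and Cauchy--Schwarz give $\bigl|\E[V_{l,i}V_{l',j}]\bigr| = \bigl|\E[V_{l,i}\,\E[V_{l',j}\mid\F_{Kl}]]\bigr| \le \|V_{l,i}\|_2 \bigl\|\E[V_{l',j}\mid\F_{Kl}]\bigr\|_2$. Lemma \ref{lem_bound_partial_sum_gen} yields $\|V_{l,i}\|_2 \lesssim \sqrt{K}$, while arguing exactly as in the proof of Lemma \ref{lem_var_cov_property} (Theorem 1 in ~\cite{wu_2005} together with \hyperref[C1]{\Cone}) gives, uniformly in $j$, $\bigl\|\E[U_{k',j}\mid\F_{Kl}]\bigr\|_2 \lesssim (k'-Kl)^{-\cd+1/2}$ for $k' > Kl$. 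Summing over $k'$ in block $l'$ and writing $r = l'-l$,
\begin{align*}
\bigl\|\E[V_{l',j}\mid\F_{Kl}]\bigr\|_2 \lesssim \sum_{t = K(r-1)+1}^{Kr} t^{-\cd+1/2} \lesssim \begin{cases} 1, & r = 1,\\ (r-1)^{-\cd+1/2}, & r \ge 2,\end{cases}
\end{align*}
where the case $r = 1$ uses that $\cd > 3/2$ makes $\sum_t t^{-\cd+1/2}$ summable and the case $r \ge 2$ uses in addition $K^{3/2-\cd} \le 1$. Conditioning on $\F_{Kl'}$ instead handles $l > l'$ symmetrically, with the same bound in $r = |l-l'|$.

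Finally I would sum over block pairs. There are $2(L-r)$ ordered pairs at separation $r$, so
\begin{align*}
\bigl|\gamma_{i,j}^{(n)} - \gamma_{i,j}^{(\diamond,n)}\bigr| \lesssim \frac{\sqrt{K}}{n}\sum_{r = 1}^{L-1} 2(L-r)\Bigl(\ind(r = 1) + \ind(r \ge 2)(r-1)^{-\cd+1/2}\Bigr) \lesssim \frac{L\sqrt{K}}{n}\Bigl(1 + \sum_{s \ge 1} s^{-\cd+1/2}\Bigr) \lesssim \frac{L\sqrt{K}}{n},
\end{align*}
the last series converging because $\cd > 3/2$; since $n = KL$ this equals $K^{-1/2} = n^{-\kd/2}$, and $n^{-\kd/2} \le n^{1/2-\kd} = n^{-1/2}L$ for every $\kd \le 1$, which gives the claim, uniformly in $1 \le i,j \le d$. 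I expect the only genuinely delicate step to be the distributional identity $(V_{l,i}^{\diamond},V_{l,j}^{\diamond}) \overset{d}{=} (V_{l,i},V_{l,j})$ together with the block-wise independence of the $V_{l,\cdot}^{\diamond}$, i.e.\ unwinding the coupling construction carefully; the remaining estimates are routine weak-dependence bookkeeping, already carried out in spirit in Lemmas \ref{lem_bound_partial_sum_gen} and \ref{lem_var_cov_property}.
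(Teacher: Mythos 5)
Your proof is correct, and it takes a genuinely different route from the paper's. The paper uses the algebraic decomposition $ab - cd = (a-c)b + c(b-d)$ on $S_{L,i}(V)S_{L,j}(V)$ versus $S_{L,i}^{\diamond}(V)S_{L,j}^{\diamond}(V)$, then controls $\|S_{L,h}(V) - S_{L,h}^{\diamond}(V)\|_2$ by the crude triangle inequality $\sum_l \|V_{l,h} - V_{l,h}^{\diamond}\|_2 \lesssim L$ (each block coupling error is $\OO(1)$ by Theorem 1 of Wu (2005)), pairing it with $\|S_{L,h}(V)\|_2, \|S_{L,h}^{\diamond}(V)\|_2 \lesssim \sqrt{n}$ to get $n^{-1} \cdot L \cdot \sqrt{n} = n^{-1/2}L$. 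You instead use the structure of the $\diamond$-construction head-on: since $\bigl(V_{l,i}^{\diamond}, V_{l,j}^{\diamond}\bigr) \stackrel{d}{=} \bigl(V_{l,i}, V_{l,j}\bigr)$ for each $l$ (diagonal terms cancel exactly), while distinct blocks are mutually independent and zero-mean (off-diagonal $\diamond$-terms vanish), the difference of normalized covariances reduces \emph{identically} to $n^{-1}\sum_{l \neq l'}\E[V_{l,i}V_{l',j}]$; these cross-block covariances are then controlled by conditioning on $\F_{Kl}$ and the same Wu-type coupling estimate already used in Lemma \ref{lem_var_cov_property}. Your computation of $\sum_{t=K(r-1)+1}^{Kr} t^{-\cd+1/2} \lesssim K^{3/2-\cd}(r-1)^{-\cd+1/2} \lesssim (r-1)^{-\cd+1/2}$ and the summability for $\cd>3/2$ are both correct, and the final bound $L\sqrt{K}/n = K^{-1/2}$ is in fact a little sharper than the stated $n^{-1/2}L$ (since $K < n$). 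The trade-off: your argument avoids any bound on the per-block coupling error $\|V_{l,h} - V_{l,h}^{\diamond}\|_2$ and gives a cleaner structural reduction, at the price of a slightly more involved verification of the distributional identity and block independence, both of which the paper nevertheless already invokes elsewhere.
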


\begin{rem}\label{rem_cov_defined_2}
Note that we obtain from Remark \ref{rem_cov_defined} that
\begin{align*}
\bigl|\gamma_{i,j} - \gamma_{i,j}^{(\diamond,n)}\bigr| \lesssim n^{-\frac{1}{2}} L + n^{\frac{3}{2} - \cd}.
\end{align*}
\end{rem}

\begin{proof}[Proof of Lemma \ref{lem_diff_cov}]
We have that
\begin{align*}
\biggl|\E\bigl[S_{L,i}(V) S_{L,j}(V) \bigr] -  \E\bigl[S_{L,i}^{\diamond}(V) S_{L,j}^{\diamond}(V) \bigr]\biggr| &\leq \sum_{l = 1}^L \bigl\| V_{l,j}^{\diamond} - V_{l,j}\bigr\|_2 \bigl\|S_{L,j}^{\diamond}(V)\bigr\|_2 \\&+ \sum_{l = 1}^L \bigl\| V_{l,i}^{\diamond} - V_{l,i}\bigr\|_2 \bigl\|S_{L,i}(V)\bigr\|_2.
\end{align*}
By the Marcinkiewicz–Zygmund inequality, Lemma \ref{lem_bound_partial_sum_gen} and \hyperref[C1]{\Cone} we have
\begin{align}\label{eq_lem_diff_cov_1}
\max_{1 \leq h \leq d}\bigl\|S_{L,h}^{\diamond}(V)\bigr\|_2 \lesssim \sqrt{n} \quad \text{and} \quad  \max_{1 \leq h \leq d}\bigl\|S_{L,h}(V)\bigr\|_2 \lesssim \sqrt{n}.
\end{align}
Using the triangle inequality and Theorem 1 in ~\cite{wu_2005}, it follows that
\begin{align}\nonumber\label{eq_lem_diff_cov_2}
\max_{1 \leq h \leq d} \sum_{l = 1}^L \bigl\| V_{l,h}^{\diamond} - V_{l,h}\bigr\|_2 &\lesssim \max_{1 \leq h \leq d} L \sum_{k = 1}^{\infty}\bigl\|U_{k,h} - U_{k,h}^*\bigr\|_2 \\&\lesssim L \sum_{k = 1}^{\infty} \sqrt{\sum_{j \geq k} \theta_{j,2}^2} \lesssim L \sum_{k = 1}^{\infty} j^{-\cd +1/2} \lesssim L.
\end{align}
Hence combining \eqref{eq_lem_diff_cov_1} and \eqref{eq_lem_diff_cov_2} we obtain
\begin{align*}
\max_{1 \leq i,j \leq d}\bigl|\gamma_{i,j}^{(n)} - \gamma_{i,j}^{(\diamond,n)} \bigr| \lesssim n^{-1/2} L.
\end{align*}

\end{proof}

Next, we state some Gaussian approximation results. To this end, we require the following condition. For $\varepsilon, u(\varepsilon) > 0$ we have

\begin{align}\label{eq_max_max_u}
P\biggl(\max_{1 \leq h \leq d}\max_{1 \leq l \leq L}|V_{l,h}^{\diamond}| \geq \sqrt{K u(\varepsilon)} \biggr) \leq \varepsilon.
\end{align}
Denote with
\begin{align*}
T_{L,d}^{\diamond} &= \frac{1}{\sqrt{n}}\max_{1 \leq h \leq d}\bigl|S_{L,h}^{\diamond}(V)\bigr|, \quad T_d^{Z,\diamond} = \max_{1 \leq h \leq d}\bigl|Z_h^{\diamond}\bigr|,
\end{align*}
where $\bigl\{Z_h^{\diamond}\bigr\}_{1 \leq h \leq d}$ is a zero mean Gaussian sequence with covariance structure $\Sigma_d^{(\diamond,n)} = \bigl(\gamma_{i,j}^{(\diamond,n)}\bigr)_{1 \leq i,j \leq d}$. We have the following Gaussian approximation result, which is an adaptation of Theorem 2.2 in ~\cite{chernozhukov2013}.

\begin{lemma}\label{lem_Kato}
Assume the validity of \eqref{eq_max_max_u} and that
\begin{description}
\item[(i)] $K^{-1/2}\min_{1 \leq h \leq d}\min_{1 \leq l \leq L} \bigl\|V_{l,h}^{\diamond}\bigr\|_2 > 0$,
\item[(ii)] $K^{-1/2}\max_{1 \leq h \leq d}\max_{1 \leq l \leq L} \bigl\|V_{l,h}^{\diamond}\bigr\|_4 < \infty$.
\end{description}
Then it holds that
\begin{align*}
&\sup_{x \in \R}\bigl|P\bigl(T_{L,d}^{\diamond} \leq x \bigr) - P\bigl(T_{d}^{Z} \leq x \bigr)\bigr| \\&\lesssim L^{-1/8} \bigl(\log(d L/\varepsilon)\bigr)^{7/8} + L^{-1/2} \bigl(\log(d L/\varepsilon)\bigr)^{3/2}u(\varepsilon) + \varepsilon.
\end{align*}
\end{lemma}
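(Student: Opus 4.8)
The plan is to recognize $T_{L,d}^{\diamond}$ as, up to a deterministic rescaling, the maximum of a normalized sum of $L$ independent and identically distributed random vectors, and then to invoke the high-dimensional Gaussian approximation theorem of \cite{chernozhukov2013}. The only genuinely non-mechanical ingredients are a symmetrization step (to pass from $\max_h|\cdot|$ to a one-sided maximum) and the bookkeeping of how the constants in hypotheses (i), (ii) and the truncation parameter $u(\varepsilon)$ propagate through the CCK bound.

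First I would reduce the two-sided maximum to a one-sided one via the elementary identity $\max_{1\le h\le d}|a_h|=\max_{1\le h'\le 2d}\bar a_{h'}$ for the symmetrized vector $\bar a=(a_1,\dots,a_d,-a_1,\dots,-a_d)$: replacing the $d$-dimensional vectors $(V_{l,1}^{\diamond},\dots,V_{l,d}^{\diamond})$ by their $2d$-dimensional symmetrizations $\bar V_l$ turns $T_{L,d}^{\diamond}$ into $n^{-1/2}\max_{1\le h'\le 2d}\sum_{l=1}^L\bar V_{l,h'}$, while the Gaussian vector transforms the same way, its one-sided maximum being exactly $T_d^{Z}=\max_h|Z_h^{\diamond}|$ and its covariance the symmetrization of $\Sigma_d^{(\diamond,n)}$. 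Since doubling the dimension only affects logarithmic factors, I may work with the one-sided maximum from here on. Writing $n=KL$ and setting $\widetilde V_{l,h'}=K^{-1/2}\bar V_{l,h'}$, the quantity becomes $\max_{h'}L^{-1/2}\sum_{l=1}^L\widetilde V_{l,h'}$, a normalized sum of the i.i.d.\ centered vectors $\widetilde V_1,\dots,\widetilde V_L$ (centering is automatic, as each $V_{l,h}^{\diamond}$ is a sum of copies of the mean-zero variables $U_{k,h}$, cf.\ \hyperref[C1]{\Cone} and the construction of $U^{(K,\diamond)}_{k,h}$) whose common covariance equals the symmetrized $\Sigma_d^{(\diamond,n)}$, because $K^{-1}\E[V_{1,i}^{\diamond}V_{1,j}^{\diamond}]=\tfrac1n\E[S_{L,i}^{\diamond}(V)S_{L,j}^{\diamond}(V)]=\gamma_{i,j}^{(\diamond,n)}$ by independence across blocks.

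Next I would check that (i), (ii) and \eqref{eq_max_max_u} supply precisely the inputs required by \cite{chernozhukov2013}: (i) gives the variance lower bound $\min_{h'}\E[\widetilde V_{1,h'}^2]=K^{-1}\min_h\|V_{1,h}^{\diamond}\|_2^2>0$; (ii) gives the fourth-moment bound $\max_{h'}\E[\widetilde V_{1,h'}^4]=K^{-2}\max_h\|V_{1,h}^{\diamond}\|_4^4<\infty$; and \eqref{eq_max_max_u} rescales to the envelope bound $P\bigl(\max_{h'}\max_{1\le l\le L}|\widetilde V_{l,h'}|\ge\sqrt{u(\varepsilon)}\bigr)\le\varepsilon$. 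With these in hand I would run the truncation argument underlying the general (non sub-exponential) case of the CCK approximation: truncate $\widetilde V_{l,h'}$ at level $\sqrt{u(\varepsilon)}$, discard the exceptional event (this produces the additive $\varepsilon$, and, thanks to the fourth-moment control, perturbs the mean and the covariance only negligibly), and apply the bounded-variable Gaussian approximation to the truncated vectors with $p=2d$ and sample size $L$. The bounded case yields the term $L^{-1/8}(\log(dL/\varepsilon))^{7/8}$ (with $b,B$ entering only through the absolute constant), and the Slepian/Lindeberg interpolation remainder, which scales like the square of the truncation level, yields $L^{-1/2}(\log(dL/\varepsilon))^{3/2}u(\varepsilon)$; summing the three contributions gives the claim.

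I expect the main obstacle to be exactly this last step's bookkeeping: verifying that truncation at level $\sqrt{u(\varepsilon)}$ preserves the variance lower bound (i) and alters the covariance matrix only by a negligible amount, so that the comparison Gaussian may still be taken with covariance $\Sigma_d^{(\diamond,n)}$, and then matching the dependence of the CCK bound on the constants from (i), (ii) and on $u(\varepsilon)$ with the precise form asserted. No probabilistic idea beyond \cite{chernozhukov2013} is needed; the symmetrization, the block rescaling $n=KL$, and the i.i.d.\ structure of the $V_{l,h}^{\diamond}$ are all routine.
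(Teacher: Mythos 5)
Your approach is correct and is precisely the intended one: the paper gives no explicit proof, stating only that Lemma \ref{lem_Kato} is ``an adaptation of Theorem 2.2 in \cite{chernozhukov2013}'', and your chain — symmetrize to a one-sided maximum over $2d$ coordinates, rescale $\widetilde V_{l,h}=K^{-1/2}V_{l,h}^{\diamond}$ so that $T_{L,d}^{\diamond}$ is the normalized maximum of a sum of $L$ i.i.d.\ centered vectors, verify that (i), (ii) (together with $M_3\lesssim M_4$ via Lyapunov) and \eqref{eq_max_max_u} deliver CCK's variance floor, fourth-moment control and envelope/truncation level — is exactly how that adaptation is carried out, with the CCK bound reading off as stated after the substitutions $n\to L$, $p\to 2d$, $\gamma\to\varepsilon$. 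The only notational remark is that the comparison Gaussian should carry the covariance $\Sigma_d^{(\diamond,n)}$, i.e.\ the quantity is really $T_d^{Z,\diamond}$, which you correctly identify even though the lemma's displayed statement writes $T_d^{Z}$.
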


We also require the following two results, which are Lemmas 2.1 and 3.1 in ~\cite{chernozhukov2013}, slightly adapted for our purpose.

\begin{lemma}\label{lem_div_max_Gauss}
Let $\bigl\{X_h\bigr\}_{1 \leq h \leq d}$ and $\bigl\{Y_h\bigr\}_{1 \leq h \leq d}$ be zero mean Gaussian sequences, and denote with $\gamma_{i,j}^X, \gamma_{i,j}^Y$ the corresponding covariances for $1 \leq i,j \leq d$. If $0 < \inf_h \gamma_{h,h}^X \leq \sup_h \gamma_{h,h}^X < \infty$, then
\begin{align*}
\sup_{x \in \R}\bigl|P\bigl(\max_{1 \leq h \leq d}|X_h| \leq x \bigr) - P\bigl(\max_{1 \leq h \leq d}|Y_h| \leq x \bigr)\bigr| \lesssim \delta^{1/3} \bigl(1 \vee \log(d/\delta) \bigr)^{2/3},
\end{align*}
where $\delta = \max_{1 \leq i,j \leq d}\bigl|\gamma_{i,j}^X - \gamma_{i,j}^Y \bigr|$.
\end{lemma}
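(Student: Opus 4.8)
The plan is to run the Gaussian interpolation (Slepian ``smart path'') argument combined with a double smoothing, along the lines of the proof of Lemma~3.1 in \cite{chernozhukov2013} from which this statement is adapted. First I would reduce the two-sided maximum to a one-sided one: writing $\tilde X=(X_1,\dots,X_d,-X_1,\dots,-X_d)$ and $\tilde Y$ analogously, one has $\max_{1\le h\le d}|X_h|=\max_{1\le h\le 2d}\tilde X_h$, the vectors $\tilde X,\tilde Y$ remain centered Gaussian, the perturbation of their covariances is still bounded by $\delta$, and the diagonal of the covariance of $\tilde X$ inherits $0<\inf_h\gamma^X_{h,h}\le\sup_h\gamma^X_{h,h}<\infty$. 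So from here on $d$ may be replaced by $2d$ and the absolute values dropped.

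Next I would bring in the two smoothing devices. The soft-max $F_\beta(z)=\beta^{-1}\log\sum_h e^{\beta z_h}$ satisfies $0\le F_\beta(z)-\max_h z_h\le\beta^{-1}\log(2d)$, with $\sum_h|\partial_hF_\beta|=1$ and $\sum_{j,k}|\partial_j\partial_kF_\beta|\le 2\beta$ (the softmax weights sum to one). For a cutoff width $\phi>0$ I would fix a $C^2$ function $g=g_{x,\phi}$ equal to $1$ on $(-\infty,x]$, equal to $0$ on $[x+\phi,\infty)$, with $\|g'\|_\infty\lesssim\phi^{-1}$ and $\|g''\|_\infty\lesssim\phi^{-2}$. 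Along the path $Z(t)=\sqrt t\,\tilde X+\sqrt{1-t}\,\tilde Y$ I would differentiate $\psi(t)=\E[g(F_\beta(Z(t)))]$ and apply Gaussian integration by parts (Stein's identity), getting
\begin{align*}
\psi'(t)=\tfrac12\sum_{j,k}\bigl(\gamma^X_{j,k}-\gamma^Y_{j,k}\bigr)\,\E\bigl[\partial_j\partial_k(g\circ F_\beta)(Z(t))\bigr].
\end{align*}
Since $\partial_j\partial_k(g\circ F_\beta)=g''(F_\beta)\,\partial_jF_\beta\,\partial_kF_\beta+g'(F_\beta)\,\partial_j\partial_kF_\beta$, the derivative bounds give $\sum_{j,k}|\partial_j\partial_k(g\circ F_\beta)|\le\|g''\|_\infty+2\beta\|g'\|_\infty$, hence, integrating over $t\in[0,1]$,
\begin{align*}
\bigl|\E[g(F_\beta(\tilde X))]-\E[g(F_\beta(\tilde Y))]\bigr|\le\tfrac12\,\delta\bigl(\|g''\|_\infty+2\beta\|g'\|_\infty\bigr)\lesssim\delta\bigl(\phi^{-2}+\beta\phi^{-1}\bigr).
\end{align*}

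Then I would convert this smooth comparison back to distribution functions. Using $\max_h z_h\le F_\beta(z)\le\max_h z_h+\beta^{-1}\log(2d)$ together with suitably shifted copies of $g$, one sandwiches both $P(\max_h\tilde X_h\le x)$ and $P(\max_h\tilde Y_h\le x)$ between expectations $\E[g(F_\beta(\cdot))]$, so that the only extra error is an anti-concentration term $P\bigl(x-\beta^{-1}\log(2d)-\phi\le\max_h\tilde X_h\le x+\beta^{-1}\log(2d)+\phi\bigr)$ for the maximum of $\tilde X$ \emph{only} (the $\tilde Y$ side is routed entirely through the smooth comparison, which is why no lower bound on the variances of $Y$ is needed). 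By Gaussian anti-concentration (Lemma~2.1 of \cite{chernozhukov2013}; this is where $0<\inf_h\gamma^X_{h,h}\le\sup_h\gamma^X_{h,h}<\infty$ enters, also yielding $\E[\max_h\tilde X_h]\lesssim\sqrt{\log d}$) this is $\lesssim(\beta^{-1}\log d+\phi)\sqrt{\log d}$. Altogether
\begin{align*}
\sup_{x\in\R}\bigl|P(\max_h|X_h|\le x)-P(\max_h|Y_h|\le x)\bigr|\lesssim\delta\bigl(\phi^{-2}+\beta\phi^{-1}\bigr)+\bigl(\beta^{-1}\log d+\phi\bigr)\sqrt{\log d}.
\end{align*}
Taking $\beta\asymp\phi^{-1}\log d$ and then $\phi\asymp\delta^{1/3}(\log d)^{1/6}$ (clipped at a fixed constant, so the bound is attained trivially when $\delta$ is not small) balances the terms and gives $\lesssim\delta^{1/3}(\log d)^{2/3}$; tracking the logarithmic factors in the small-$\delta$ regime upgrades $\log d$ to $\log(d/\delta)$, which is the asserted $\delta^{1/3}(1\vee\log(d/\delta))^{2/3}$.

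The main obstacle I anticipate is not a single deep step but the bookkeeping in the sandwiching stage: one must choose the shifts of $g$ so that both distribution functions are squeezed using the \emph{same} interpolation estimate while the anti-concentration price is charged to $\tilde X$ alone, and then check that the two-parameter optimization in $(\beta,\phi)$, together with the clipping needed to cover $\delta\gtrsim 1$ and $\delta$ exponentially small, genuinely yields the exponent $2/3$ on the logarithm rather than a larger power.
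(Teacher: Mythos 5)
The paper does not actually prove this lemma; it is stated as an adaptation of Lemmas~2.1 and~3.1 of \cite{chernozhukov2013} and used as a black box. Your reconstruction is exactly the Slepian smart-path plus double-smoothing argument from that reference, with the two adaptations the paper's statement requires made correctly: (i) the dimension-doubling trick $\tilde X=(X_1,\dots,X_d,-X_1,\dots,-X_d)$ turns $\max_h|X_h|$ into a one-sided max while leaving the covariance perturbation bounded by $\delta$ and the diagonal inherited from $\gamma^X_{h,h}$; and (ii) the sandwiching is arranged so that the anti-concentration cost is charged to $\tilde X$ alone, which is why the lemma needs variance bounds only for the $X$-sequence, not for $Y$. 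The interpolation derivative formula, the bound $\sum_{j,k}|\partial_j\partial_k(g\circ F_\beta)|\le\|g''\|_\infty+2\beta\|g'\|_\infty$, and the two-parameter optimization $\beta\asymp\phi^{-1}\log d$, $\phi\asymp\delta^{1/3}(\log d)^{1/6}$ (clipped at a constant so the claim is vacuous for $\delta\gtrsim1$) are all correct and yield the exponent $2/3$ on the logarithm. The only cosmetic omission is that the interpolation argument implicitly takes $X$ and $Y$ independent, which is without loss of generality since only their marginal laws enter the comparison; it would be worth saying so. Getting the refined factor $\log(d/\delta)$ rather than $\log d$ does indeed require carrying the $\sqrt{\log(1/\epsilon)}$ term in the Nazarov/CCK anti-concentration inequality through the optimization, as you flag, and this is precisely how Lemma~3.1 of \cite{chernozhukov2013} is stated, so the final bound matches.
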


\begin{lemma}\label{lem_gauss_anti}
Let $\bigl\{X_h\bigr\}_{1 \leq h \leq d}$ be a zero mean Gaussian sequence, and denote with $\gamma_{i,j}^X$ the corresponding covariances for $1 \leq i,j \leq d$. If $0 < \inf_h \gamma_{h,h}^X \leq \sup_h \gamma_{h,h}^X < \infty$, then
\begin{align*}
\sup_{x \in \R}P\bigl(\max_{1 \leq h \leq d}|X_h - \delta | \leq x \bigr) \lesssim \delta \sqrt{1 \vee \log(d / \delta)}.
\end{align*}
\end{lemma}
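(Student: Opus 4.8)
The plan is to reduce the claim to the classical anti-concentration (small-ball) inequality for the maximum of a Gaussian vector, which is exactly Lemma~2.1 / Lemma~3.1 of~\cite{chernozhukov2013} up to cosmetic changes. First I would pass from the two-sided maximum $\max_{1\le h\le d}|X_h|$ to a one-sided maximum over an enlarged index set of size $2d$: set $\bigl(Y_1,\ldots,Y_{2d}\bigr)=\bigl(X_1,-X_1,\ldots,X_d,-X_d\bigr)$, so that $\max_{1\le h\le d}|X_h|=\max_{1\le h\le 2d}Y_h$ and $\{Y_h\}$ is again a centered Gaussian sequence. Its variances are a relabelling of the $\gamma_{h,h}^X$, hence $0<\inf_h\E[Y_h^2]\le\sup_h\E[Y_h^2]<\infty$ with the same constants, so the nondegeneracy hypothesis transfers verbatim.

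Next I would invoke the Gaussian anti-concentration bound for $\{Y_h\}_{1\le h\le 2d}$: for a centered Gaussian sequence whose variances are bounded away from $0$ and $\infty$, the maximum $M=\max_h Y_h$ has a Lebesgue density which, up to a constant depending only on the variance bounds, is $\lesssim\sqrt{1\vee\log(2d/\delta)}$ at resolution $\delta$; equivalently $\sup_{x\in\R}P\bigl(x\le M\le x+\delta\bigr)\lesssim\delta\sqrt{1\vee\log(2d/\delta)}$. Since $\log(2d/\delta)\lesssim 1\vee\log(d/\delta)$ for every $d\ge1$, this is exactly the asserted rate. The only remaining bookkeeping is the $\delta$-shift in the statement: translating by $\delta$ and using that the event in question forces $M$ into an interval of length $\OO(\delta)$, the bound on $\sup_{x}P(\cdots)$ follows from the displayed slab estimate. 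For $d=1$ the statement is immediate from the elementary bound on the density of $|X_1|$.

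The hard part is the Gaussian anti-concentration estimate itself --- showing that the density of $\max_h Y_h$ is $\OO(\sqrt{\log d})$. This is not accessible by elementary univariate Gaussian tail bounds; it rests on Nazarov's inequality bounding the Gaussian measure of a thin slab $\{x:\, t\le\langle a,x\rangle\le t+\delta\}$ by $\OO(\delta\|a\|^{-1}(\sqrt{\log d}+1))$, which is the analytic core of Lemmas~2.1 and~3.1 in~\cite{chernozhukov2013}. Granting that input, the reduction sketched above is routine, so I would simply cite it rather than reprove it, flagging the enlargement $d\mapsto 2d$ and the variance-bound transfer as the only adaptation needed.
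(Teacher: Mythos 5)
Your proposal is correct and matches the paper's treatment: the paper states this lemma as a slight adaptation of Lemmas~2.1 and~3.1 of~\cite{chernozhukov2013} without reproving it, which is exactly what you do, after first spelling out the (routine) reduction from a two-sided max to a one-sided max over $2d$ indices and noting the variance bounds transfer. Your extra remarks on Nazarov's inequality as the analytic core are accurate context but not needed beyond the citation.
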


We are now ready to give the proof of Theorem \ref{thm_gauss_approx}.
\begin{proof}[Proof of Theorem \ref{thm_gauss_approx}]
First note that by Lemma \ref{lem_control_V_approx} and Booles inequality we have
\begin{align*}
P\bigl(\max_{1 \leq h \leq d}|S_{L,h}(V) - S_{L,h}^{\diamond}(V)| \geq C_1 n^{1/2 - \delta}\bigr) \lesssim d n^{-\frac{p-2}{2} + \delta p}.
\end{align*}
Since $d \lesssim n^{\dd}$ we obtain from \hyperref[C2]{\Ctwo} that
\begin{align}\label{eq_thm_norm_approx_2}
P\bigl(\max_{1 \leq h \leq d}|S_{L,h}(V) - S_{L,h}^{\diamond}(V)| \geq C_1 n^{1/2 - \delta}\bigr) \lesssim n^{-C_2}, \quad C_2 > 0.
\end{align}
Employing this bound, we get that
\begin{align*}
P\bigl(T_{d} \leq x \bigr) \leq P\bigl(T_{L,d}^{\diamond} \leq x + C_1 n^{- \delta}\bigr)  + \OO\bigl(n^{-C_2} \bigr).
\end{align*}
In the same manner one obtains a lower bound, hence
\begin{align}\nonumber \label{eq_thm_norm_approx_3}
&P\bigl(T_{L,d}^{\diamond} \leq x -C_1 n^{- \delta}\bigr) - \OO\bigl(n^{-C_2} \bigr) \leq P\bigl(T_{d} \leq x \bigr) \\&\leq P\bigl(T_{L,d}^{\diamond} \leq x + C_1 n^{- \delta}\bigr) + \OO\bigl(n^{-C_2}\bigr).
\end{align}
Next, we apply Lemma \ref{lem_Kato} to $T_{L,d}^{\diamond}$. To this end, we need to verify its conditions. Note that by the independence of $V_{l,h}^{\diamond}$, we have that
\begin{align*}
\gamma_{h,h}^{(\diamond,n)} = \frac{1}{L K}\sum_{l = 1}^L \bigl\|V_{l,h}^{\diamond}\bigr\|_2^2 = \frac{1}{K}\bigl\|V_{1,h}^{\diamond}\bigr\|_2^2.
\end{align*}
Hence we deduce from Lemma \ref{lem_var_cov_property}, Lemma \ref{lem_diff_cov}, Remark \ref{rem_cov_defined_2} and \hyperref[C3]{\Cthree} that
\begin{align*}
K^{-1}\bigl\|V_{1,h}^{\diamond}\bigr\|_2^2 \geq \gamma_{h,h}^{(n)} - \oo\bigl(1\bigr) \geq \sigma_h^2 - \oo\bigl(1\bigr) > 0,
\end{align*}
uniformly in $h$, and thus (i) holds. Next we verify (ii). This, however, readily follows from Lemma \ref{lem_max_mom} and \hyperref[C1]{\Cone}. Finally, we need to establish \eqref{eq_max_max_u}. Set $u(\varepsilon) = (\log n)^2$. Using Booles inequality and Lemma \ref{lem_control_V_direct} gives
\begin{align*}
&P\biggl(\max_{1 \leq h \leq d}\max_{1 \leq l \leq L}|V_{l,h}^{\diamond}| \geq \sqrt{K u(\varepsilon)} \biggr) \\&\leq \sum_{h = 1}^d\sum_{l = 1}^L P\bigl(|V_{l,h}^{\diamond}| \geq \sqrt{K u(\varepsilon)} \bigr) \lesssim d L K^{-\frac{p-2}{2}} (\log n)^p.
\end{align*}
By \hyperref[C2]{\Ctwo} and choosing $\kd$ sufficiently close to $1$, we get that
\begin{align*}
P\biggl(\max_{1 \leq h \leq d}\max_{1 \leq l \leq L}|V_{l,h}^{\diamond}| \geq \sqrt{K u(\varepsilon)} \biggr) \lesssim n^{-C_3}, \quad C_3,
\end{align*}
and \eqref{eq_max_max_u} holds with $\varepsilon \thicksim n^{-C_3}$. Since $L \thicksim n^{\ld}$ with $\ld > 0$ due to $\kd < 1$, Lemma \ref{lem_Kato} yields that
\begin{align}\label{eq_thm_norm_approx_6}
\sup_{x \in \R}\bigl|P\bigl(T_{L,d}^{\diamond} \leq x \bigr) - P\bigl(T_{d}^{Z} \leq x \bigr)\bigr| \lesssim n^{-C_4}, \quad C_4 > 0.
\end{align}
Combining this with \eqref{eq_thm_norm_approx_3}, we deduce that
\begin{align}\nonumber\label{eq_thm_norm_approx_7}
&P\bigl(Z_{d}^{\diamond} \leq x -C_1 n^{- \delta}\bigr) - \OO\bigl(n^{-C_5} \bigr) \leq P\bigl(T_{d} \leq x \bigr) \\&\leq P\bigl(Z_{d}^{\diamond} \leq x + C_1 n^{- \delta}\bigr) + \OO\bigl(n^{-C_5}\bigr).
\end{align}
Next, since $\log d \lesssim \log n$, Lemma \ref{lem_gauss_anti} yields that
\begin{align}\label{eq_thm_norm_approx_8}
\sup_{x \in \R}\bigl|P\bigl(Z_{d}^{\diamond} \leq x -C_1 n^{-\delta}\bigr) - P\bigl(Z_{d}^{\diamond} \leq x \bigr)\bigr| \lesssim n^{-\delta} \sqrt{\log n}.
\end{align}
In addition, by Remark \ref{rem_cov_defined_2}
\begin{align*}
\max_{1 \leq i,j \leq d}\bigl|\gamma_{i,j}^{(\diamond,n)} - \gamma_{i,j}^{}\bigr| \lesssim n^{-\frac{1}{2}}L + n^{\frac{3}{2} - \cd} \lesssim n^{-C_6}, \quad C_6 > 0.
\end{align*}
Hence an application of Lemma \ref{lem_div_max_Gauss} yields
\begin{align}
\sup_{x \in \R}\bigl|P\bigl(Z_{d}^{\diamond} \leq x\bigr) - P\bigl(Z_{d} \leq x \bigr)\bigr| \lesssim n^{-C}, \quad C > 0.
\end{align}

\end{proof}

\subsection{Proofs of Section \ref{sec_applications}}\label{sec_proof_max_and_trace}

\begin{proof}[Proof of Theorem \ref{thm_eigen_max_gauss_approx}]
Denote with
\begin{align*}
T_{\JJ_n^+}^{\eta} = \frac{1}{\sqrt{n}}\max_{1 \leq j < \JJ_n^+}\frac{\bigl|\sum_{k = 1}^n (\eta_{k,j}^2 - 1)\bigr|}{\sigma_{0,j}}.
\end{align*}
We first show that we may apply Theorem \ref{thm_gauss_approx} to $T_{\JJ_n^+}^{\eta}$. To this end, we need to verify Assumption \ref{ass_gaussian_approx}. Observe that \hyperref[B2]{\Btwo} implies $\bigl\|\eta_{k,j}\bigr\|_{q} < \infty$ (cf. ~\cite{wu_2005}). Moreover, using $a^2 - b^2 = (a-b)(a+b)$, it follows from Cauchy-Schwarz
\begin{align*}
\bigl\|\eta_{k,j}^2 - (\eta_{k,j}^2)'\bigr\|_{q} \leq 2 \bigl\|\eta_{k,j} - \eta_{k,j}'\bigr\|_{2q} \bigl\|\eta_{k,j}\bigr\|_{2q} \lesssim \Omega_k(2q) \lesssim k^{-\bd}.
\end{align*}
Since $\bd > 3/2$ by \hyperref[B2]{\Btwo}, \hyperref[C1]{\Cone} follows. Next, note that \hyperref[B1]{\Bone} implies that $\JJ_n^+ \lesssim n^{p (\ad - \delta)}$. Since $q/2 - 1 > p 2^{\pd + 2} > p \ad$ (recall $0 < \ad < 1$), \hyperref[C2]{\Ctwo} holds. Finally, \hyperref[B3]{\Bthree} gives \hyperref[C3]{\Cthree}, hence Assumption \ref{ass_gaussian_approx} is verified. We proceed with the proof. For $j \in \N$, denote with $I_{j,j}^* = \lambda_j\sum_{k = 1}^n \bigl(\eta_{k,j}^2 -1 \bigr)/n$, and note that by the above and Lemma \ref{lem_bound_partial_sum_gen} we have
\begin{align}\label{eq_thm_eigen_max_gauss_approx_2}
\bigl\|I_{j,j}^*\bigr\|_p \lesssim \bigl(\lambda_j/n^{1/2}\bigr), \quad j \in \N.
\end{align}
Introduce the set
\begin{align*}
\mathcal{M} = \bigl\{\max_{1 \leq j < \JJ_n^+}\lambda_j^{-1}\bigl|\widehat{\lambda} - \lambda_j - I_{j,j}^* \bigr| \geq n^{-1/2 - \delta/2}\bigr\}.
\end{align*}
Then Markovs inequality together with Proposition \ref{prop_replace_I_with_eta} and \eqref{eq_thm_eigen_max_gauss_approx_2} yields
\begin{align}
P\bigl(\mathcal{M}^c\bigr) \lesssim n^{-p \delta/2} \lesssim n^{-C_1}, \quad C_1 > 0.
\end{align}
Due to Theorem \ref{thm_gauss_approx} and the above, we have the inequalities
\begin{align*}
P\bigl(T_{\JJ_n^+}^{} \leq x \bigr) &\leq P\bigl(T_{\JJ_n^+}^{\eta} \leq x + n^{-\delta/2} \bigr) + P\bigl(\mathcal{M}^c\bigr) \\&\leq P\bigl(T_{\JJ_n^+}^{Z} \leq x + n^{-\delta/2} \bigr) + \OO\bigl(n^{-C_2}\bigr), \quad C_2 > 0,
\end{align*}
where $T_{\JJ_n^+}^{Z}$ is as in \eqref{eq_defn_T_d_lambda}. An application of Lemma \ref{lem_gauss_anti} yields that this is further bounded by
\begin{align*}
P\bigl(T_{\JJ_n^+}^{} \leq x \bigr) \leq P\bigl(T_{\JJ_n^+}^{Z_{}} \leq x \bigr) + \OO\bigl(n^{-C_2} + n^{-\delta/2} \log n\bigr).
\end{align*}
In the same manner, we obtain a lower bound, hence
\begin{align}
\sup_{x\in \R}\bigl|P\bigl(T_{\JJ_n^+}^{} \leq x\bigr) - P\bigl(T_{\JJ_n^+}^{Z_{}} \leq x \bigr)\bigr|\lesssim n^{-C_3}, \quad C_3 > 0,
\end{align}
which completes the proof.

\end{proof}

\begin{proof}[Proof of Corollary \ref{cor_max_limit_distrib}]
Due to Theorem \ref{thm_eigen_max_gauss_approx}, it suffices to show that
\begin{align*}
P\bigl(T_{\JJ_n^+}^{Z_{\lambda}} \leq u_{\JJ_n^+}(z)\bigr) \to \exp\bigl(-e^{-z}\bigr).
\end{align*}
This, however, follows from Theorem 14 and Theorem 1 in ~\cite{han_wu_2014_max}.

\end{proof}

\section{Proofs of Section \ref{sec_applications_practical}}\label{sec_proof_applications_practical}

\begin{proof}[Proof of Proposition \ref{prop_ARH(1)}]
Due to \eqref{defn_struct_condition_ARH(1)}, Theorem 3.6  in ~\cite{bosq_2000} yields the Bernoulli-shift representation $X_k = \sum_{i = 0}^{\infty} {\bf \Phi}^{i}(\epsilon_{k-i})$. Next, using the orthogonality of $\{\epsilon_{k,j}\}_{j \in \N}$, we get
\begin{align}\label{eq_prop_ARH(1)_0}
\bigl\|\langle \epsilon_{k}, \e_l^{\theta} \rangle \bigr\|_2^2 &= \sum_{j = 1}^{\infty} {\lambda}_j^{\epsilon}  \langle \e_j^{\epsilon}, \e_l^{\theta} \rangle^2 .
\end{align}
On the other hand, since $\epsilon_k$ and $X_{k-1}$ are independent, we obtain
\begin{align}\label{eq_prop_ARH(1)_1}
\widetilde{\lambda}_l^{\theta} = \bigl\|\langle X_k, \e_l^{\theta} \rangle \bigr\|_2^2 = \bigl\|\langle {\bf \Phi}(X_{k-1}), \e_l^{\theta} \rangle \bigr\|_2^2 + \bigl\|\langle \epsilon_k, \e_l^{\theta} \rangle \bigr\|_2^2 \geq \bigl\|\langle \epsilon_k, \e_l^{\theta} \rangle \bigr\|_2^2.
\end{align}
For $k \geq 1$, using the triangle inequality, the linearity of ${\bf \Phi}$, the fact that ${\bf \Phi}(\e_j^{\phi}) = \lambda_j^{\phi} \e_j^{\phi}$ and \eqref{eq_epsilon_regularity_condition} yields that
\begin{align*}
\widetilde{\lambda}_l^{\theta}\bigl\| \eta_{k,l}^{\theta} - (\eta_{k,l}^{\theta})'\bigr\|_q^{2} &\lesssim \biggl(\sum_{i = 1}^{\infty} (\lambda_i^{\phi})^k |\langle \e_i^{\phi}, \e_l^{\theta} \rangle|\bigl\|\langle \epsilon_0 - \epsilon_0', \e_i^{\phi} \rangle \bigr\|_q \biggr)^2 \\&\lesssim \biggl(\sum_{i = 1}^{\infty} (\lambda_i^{\phi})^k |\langle \e_i^{\phi}, \e_l^{\theta} \rangle|\bigl\|\langle \epsilon_0 - \epsilon_0', \e_i^{\phi} \rangle \bigr\|_2^{q'/q}\biggr)^{2} \\&\lesssim \biggl(
\sum_{i = 1}^{\infty}(\lambda_i^{\phi})^k  \biggl(\sum_{j = 1}^{\infty} {\lambda}_j^{\epsilon} \|\epsilon_{0,j}\|_2^{2q'/q}\langle \e_j^{\epsilon}, \e_i^{\phi} \rangle^2 \langle \e_i^{\phi}, \e_l^{\theta} \rangle^2 \biggr)^{1/2}\biggr)^{2},
\end{align*}
where we also used $\bigl(\sum_{j = 1}^{\infty} \lambda_j^{\epsilon} \langle \e_j^{\epsilon}, \e_i^{\phi} \rangle^2 \bigr)^{(q' - q)/2q} < \infty$ in the last step (recall $q' \geq q$). Note that we have the inequality
\begin{align}
\langle \e_j^{\epsilon}, \e_i^{\phi} \rangle^2 \langle \e_i^{\phi}, \e_l^{\theta} \rangle^2 \leq \langle \e_j^{\epsilon}, \e_l^{\theta} \rangle^2,
\end{align}
which can be readily derived by contradiction (assume the converse and sum over $j$ on both sides). Hence by the triangle inequality and \eqref{defn_struct_condition_ARH(1)}, the above is further bounded by
\begin{align*}
&\lesssim \biggl(
\sum_{i = 1}^{\infty}(\lambda_i^{\phi})^k  \biggl(\sum_{j = 1}^{\infty} {\lambda}_j^{\epsilon} \|\epsilon_{0,j}\|_2^{2q'/q} \langle \e_j^{\epsilon}, \e_l^{\theta} \rangle^2 \biggr)^{1/2}\biggr)^{2} \\&\lesssim \biggl(
\sum_{i = 1}^{\infty}(\lambda_i^{\phi})\biggr)^{2k} \sum_{j = 1}^{\infty} {\lambda}_j^{\epsilon} \|\epsilon_{0,j}\|_2^{2q'/q} \langle \e_j^{\epsilon}, \e_l^{\theta} \rangle^2
\lesssim \rho^k \sum_{j = 1}^{\infty} {\lambda}_j^{\epsilon} \langle \e_j^{\epsilon}, \e_l^{\theta} \rangle^2,
\end{align*}
for $0 < \rho < 1$. Combining this with \eqref{eq_prop_ARH(1)_0}, \eqref{eq_prop_ARH(1)_1} we arrive at
\begin{align}\label{eq_prop_ARH(1)_2}
\bigl\|\eta_{k,l}^{\theta} - (\eta_{k,l}^{\theta})'\bigr\|_q^2 \lesssim \frac{\rho^k}{\widetilde{\lambda}_j^{\theta}} \sum_{j = 1}^{\infty} {\lambda}_j^{\epsilon} \langle \e_j^{\epsilon}, \e_l^{\theta}  \rangle^2 \lesssim \rho^k, \quad k \geq 1.
\end{align}
If $k = 0$, we get from \eqref{eq_epsilon_regularity_condition} that
\begin{align}\label{eq_prop_ARH(1)_3}
\widetilde{\lambda}_l^{\theta}\bigl\| \eta_{k,l}^{\theta} - (\eta_{k,l}^{\theta})'\bigr\|_q^2 = \bigl\|\langle \epsilon_k - \epsilon_k', \e_l^{\theta}  \rangle  \bigr\|_q^2 \lesssim \sum_{j = 1}^{\infty} {\lambda}_j^{\epsilon} \|\epsilon_{k,j}\|_q^2 \langle \e_j^{\epsilon}, \e_l^{\theta}  \rangle^2.
\end{align}
If $k < 0$ we have $\eta_{k,j}^{\theta} = (\eta_{k,j}^{\theta})'$, and hence the claim follows from \eqref{eq_prop_ARH(1)_2} and \eqref{eq_prop_ARH(1)_3}. Observe that by telescoping and Kolmogorov's zero one law, we also get that $\max_{j \in \N}\|\eta_{k,j}\|_q < \infty$.

\end{proof}

\begin{proof}[Proof of Corollary \ref{cor_AHR(1)}]
This follows from Lemma \ref{lem_bound_partial_sum_gen}.

\end{proof}

\begin{small}

\end{small}

\end{document}